%% file: main.tex
\documentclass[11pt]{amsart}

\usepackage{amsmath}
\usepackage{amssymb}
\usepackage{amsthm}
\usepackage{mathtools}
\usepackage[dvipsnames]{xcolor}
\usepackage[pdfusetitle,colorlinks=true,linkcolor=MidnightBlue,citecolor=ForestGreen,urlcolor=BrickRed]{hyperref}
\usepackage{pdfpages}
\usepackage{eso-pic}
\usepackage{bookmark}
\input{supplement_navigation.tex}
\AtBeginDocument{\bookmark[level=0,dest={Doc-Start}]{Main article}}

\allowdisplaybreaks
\theoremstyle{plain}
\newtheorem{thm}{Theorem}[section]
\newtheorem{lem}[thm]{Lemma}
\newtheorem{prop}[thm]{Proposition}
\newtheorem{cor}[thm]{Corollary}

\newcommand{\supplementproof}[2]{\par\smallskip\noindent\emph{Proof.}
See {\hypersetup{linkcolor=Cyan}\hyperlink{#1}{Supplementary #2}}.\par}

\theoremstyle{definition}

\theoremstyle{remark}

\newcommand{\R}{\mathbb{R}}
\newcommand{\N}{\mathbb{N}}
\newcommand{\E}{\mathbb{E}}
\newcommand{\Prob}{\mathbb{P}}
\newcommand{\ind}{\mathbf{1}}
\newcommand{\dd}{\,\mathrm{d}}
\newcommand{\ii}{\mathrm{i}}
\newcommand{\ee}{\mathrm{e}}
\newcommand{\abs}[1]{\lvert #1\rvert}
\newcommand{\norm}[1]{\lVert #1\rVert}
\newcommand{\set}[1]{\left\{#1\right\}}
\newcommand{\op}{\mathrm{op}}

\DeclareMathOperator{\Tr}{Tr}
\DeclareMathOperator{\rank}{rank}
\DeclareMathOperator{\diag}{diag}

\DeclareMathOperator{\spec}{spec}

\DeclareMathOperator{\Var}{Var}

\DeclareMathOperator{\Ai}{Ai}

\DeclareMathOperator{\cl}{cl}

\input{external_inputs.tex}
\makeatletter
\newcommand{\aliasref}[2]{%
  \AtBeginDocument{%
    \@ifundefined{r@#2}{}{%
      \expandafter\let\expandafter\NCLtmpref\csname r@#2\endcsname
      \expandafter\global\expandafter\let\csname r@#1\endcsname\NCLtmpref}}}
\makeatother
\aliasref{eq:coordinate-Y}{eq:direction-integral}
\aliasref{eq:low-homogeneous-ou-flow}{eq:recursion-flow}
\aliasref{eq:comparison-ou-flow}{eq:recursion-flow}
\aliasref{eq:comparison-edge-window}{eq:recursion-endpoints}
\aliasref{eq:comparison-opened-monomial}{eq:recursion-monomial}
\aliasref{eq:comparison-column-self-coefficient}{eq:recursion-row-self}
\aliasref{eq:comparison-row-self-coefficient}{eq:recursion-column-self}
\aliasref{eq:comparison-column-pair-coefficient}{eq:recursion-row-neutral}
\aliasref{eq:comparison-row-pair-coefficient}{eq:recursion-column-neutral}
\aliasref{eq:comparison-third-cumulant-normalization}{eq:recursion-cubic-normalization}
\aliasref{eq:comparison-old-label-error}{eq:recursion-old-cubic}
\aliasref{eq:high-recursion-zero}{eq:recursion-h0}
\aliasref{eq:comparison-recursion-unflagged}{eq:recursion-h0}
\aliasref{eq:high-recursion-one}{eq:recursion-h1}
\aliasref{eq:comparison-recursion-flagged}{eq:recursion-h1}
\aliasref{eq:high-exact-third-source}{eq:recursion-third-source}
\aliasref{eq:comparison-third-cumulant-source}{eq:recursion-third-source}
\aliasref{eq:low-homogeneous-error-envelope}{eq:high-derivative-envelope}
\aliasref{eq:low-homogeneous-integrated-error}{eq:high-integrated-envelope}
\aliasref{eq:low-normalization}{eq:main-normalization-identities}
\aliasref{eq:low-good-sum}{eq:high-good-sum}
\aliasref{eq:low-conditional}{eq:high-conditional}
\begin{document}

\title[Fractional Logarithm for Nested Covariance Matrices]
{A Law of Fractional Logarithm for Nested Complex Sample Covariance Matrices}
\author{Xiufan Yang}
\address{School of Science, Nanjing University of Posts and Telecommunications, Nanjing 210023, China}
\email{B25100020@njupt.edu.cn}

\hypersetup{
  pdftitle={A Law of Fractional Logarithm for Nested Complex Sample Covariance Matrices},
  pdfauthor={Xiufan Yang},
  pdfsubject={Almost-sure soft-edge extremes for nested complex sample covariance matrices},
  pdfkeywords={sample covariance matrices, largest eigenvalue, law of fractional logarithm, Tracy-Widom fluctuations, nested random matrices}
}

\subjclass[2020]{Primary 60B20; Secondary 60F15, 60G55}
\keywords{sample covariance matrices, largest eigenvalue, law of fractional logarithm, Tracy--Widom fluctuations, nested random matrices}
\date{}

\begin{abstract}
We prove a law of fractional logarithm for the largest eigenvalue along a
northwest-nested path of complex sample covariance matrices from one infinite
array. The entries are independent and centered, with unit variance,
vanishing complex second moment, and uniformly bounded moments of every
fixed order. The row dimension is nondecreasing, has bounded increments,
and has a positive limiting aspect ratio. After finite-size edge centering
and scaling, the almost-sure \mbox{limsup} on the $(\log N)^{2/3}$ scale is
$(1/4)^{2/3}$, and the \mbox{liminf} on the $(\log N)^{1/3}$ scale is $-4^{1/3}$.
The corresponding cluster sets in $\R$ are $[0,(1/4)^{2/3}]$ and
$[-4^{1/3},\infty)$. The proof compares the Laplace transform of a single smoothed count over
a growing grid of full nested matrices with its Gaussian counterpart.
Gaussian count concentration gives block occurrences with probability
tending to one. Dyadic tail bounds yield the endpoints, and deterministic
interpolation gives the cluster sets.
\end{abstract}

\maketitle

\section{Introduction}\label{sec:intro}

We study the almost-sure logarithmic fluctuations and cluster sets of the
largest eigenvalue of northwest-nested sample covariance matrices. All
matrix sizes are restrictions of one infinite array. Their almost-sure
extremes therefore depend on the joint law of the sequence, while
fixed-size edge limits describe its individual marginals. Our comparison
acts on one count of excursions over a growing grid and preserves the
shared entries throughout the interpolation.

Paquette and Zeitouni introduced the fractional-logarithm problem for the
GUE minor process \cite{PZ17}. Baslingker, Basu, Bhattacharjee, and
Krishnapur proved its lower endpoint \cite{BBBK25LFL}. Bao, Cipolloni,
Erd\H{o}s, Henheik, and Kolupaiev established both endpoints and cluster
sets for general Wigner minors \cite[Theorem 1.2 and Corollary 1.3]{BCEHK25}.
The complex covariance path has the same constants. The result here
concerns independent-entry rectangular arrays with a nondecreasing row
path, bounded increments, and a positive limiting aspect ratio.
The covariance linearization has deterministic zero diagonal blocks;
its variance pattern differs from the strictly positive profile in the
Wigner-type extension described in \cite[Remark 1.4]{BCEHK25}.

The two logarithmic scales reflect the complex soft-edge tail exponents
$4x^{3/2}/3$ and $x^3/12$ \cite{BBBK24}. Heuristically, the $N^{2/3}$
correlation scale gives $N^{1/3}$ effective tests in a block of $N$ levels
\cite{FN11,BCEHK25Dec}. Balancing the tail exponents against this count
gives $\frac43a^{3/2}=\frac13$ and $b^3/12=\frac13$, hence
$a=(1/4)^{2/3}$ and $b=4^{1/3}$. The proof combines concentrating Gaussian
counts at these thresholds with a comparison of their smoothed analogues.

The Gaussian input is supplied by Paper I
\cite[Corollary~\ref*{cor:gaussian-grid-moments}]{YangWishart}.
On separated grids it gives a diverging first moment and a second moment
asymptotic to its square, uniformly under bounded deterministic threshold
shifts. Its product-scale estimates are established in that paper.
Quantitative fixed-size limits and covariance edge universality appear in
\cite{EK06,Ma12,PY14,SX21}; Gaussian Wishart and Laguerre growth processes
are treated in \cite{FN11,DW09,AVMW13,FF10}, with related hard-edge Bessel
fields in \cite{BHW26}.

Schnelli and Xu already develop unmatched-index expansions for sample
covariance matrices, including their use with a smooth scalar observable
\cite[Definition 4.2, Proposition 4.3, and Section 7]{SX21}. We extend that
calculus to the derivative tensors of one global cutoff count on a
polynomially growing matrix family. Lemma~\ref{lem:all-order-ladder}
sums all matrix and cutoff indices, and
Theorem~\ref{thm:four-orientation-recursion} keeps those tensors with their
resolvent factors inside a joint expectation. Together with the covariance
local laws \cite{BEKYY14,KY17}, this yields the full-array Laplace
comparison in Proposition~\ref{prop:full-grid-comparison}.
Its application to Gaussian counts is the non-Gaussian transfer used
for Theorem~\ref{thm:main}.

We also prove conditional versions with a revealed northwest corner.
For the left tail, Gaussian interpolation removes the two cross strips
and reaches a block-diagonal reference family; the corresponding
conditional comparison has a related precedent in \cite{BLX25}.
These results describe the additional effect of retaining a prescribed
history. The main theorem uses the full-matrix comparison with an empty
old corner.

\subsection*{Main theorem}

\begin{thm}[Law of fractional logarithm and cluster sets]\label{thm:main}
Let $(x_{ij})_{i,j\geq 1}$ be independent complex random variables satisfying
\[
 \E x_{ij}=0,\qquad
 \E\abs{x_{ij}}^2=1,\qquad
 \E(x_{ij})^2=0,
\]
and, for every fixed $p\geq 1$,
\[
 \sup_{i,j\geq 1}\E\abs{x_{ij}}^p<\infty.
\]
Let $\gamma>0$, and let $(M_N)_{N\geq 1}$ be a nondecreasing sequence of positive integers such that
\[
 \frac{M_N}{N}\longrightarrow\gamma,
 \qquad
 K:=\sup_{N\geq 1}(M_{N+1}-M_N)<\infty.
\]
For every $N\geq 1$, define
\[
 X^{(N)}=(x_{ij})_{\substack{1\leq i\leq M_N\\1\leq j\leq N}},
 \qquad
 Q_N=\frac{1}{N}X^{(N)}(X^{(N)})^*,
 \qquad
 \lambda_1^{(N)}=\lambda_{\max}(Q_N),
\]
and
\[
 \mu_{+,N}=\frac{(\sqrt{M_N}+\sqrt N)^2}{N},
 \qquad
 \sigma_{+,N}
 =\frac{\sqrt{M_N}+\sqrt N}{N}
   \left(M_N^{-1/2}+N^{-1/2}\right)^{1/3},
\]
\[
 \chi_N^+
 =\frac{\lambda_1^{(N)}-\mu_{+,N}}{\sigma_{+,N}}.
\]
Put
\[
 A_2=\left(\frac14\right)^{2/3},
 \qquad
 B_2=4^{1/3},
 \qquad
 r_N=(\log N)^{2/3},
 \qquad
 s_N=(\log N)^{1/3}
\]
for $N\geq 3$. Then, with probability one,
\[
 \limsup_{N\to\infty}\frac{\chi_N^+}{r_N}=A_2,
 \qquad
 \liminf_{N\to\infty}\frac{\chi_N^+}{s_N}=-B_2,
\]
and
\[
 \bigcap_{m=3}^{\infty}
 \cl_{\R}\set{\frac{\chi_N^+}{r_N}\mid N\geq m}
 =[0,A_2],
\]
\[
 \bigcap_{m=3}^{\infty}
 \cl_{\R}\set{\frac{\chi_N^+}{s_N}\mid N\geq m}
 =[-B_2,\infty).
\]
\end{thm}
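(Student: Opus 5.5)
The proof splits into four endpoint statements: the upper bounds $\limsup \chi_N^+/r_N\le A_2$ and $\liminf \chi_N^+/s_N\ge -B_2$, and the matching lower and upper bounds. The cluster sets then follow by interpolation.

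\emph{Upper bound for the limsup and lower bound for the liminf.} I will use a dyadic blocking argument. For blocks $N\in[2^k,2^{k+1})$, the nested structure and the bounded increments $K$ give approximate monotonicity. Interlacing under the addition of rows or columns controls $\lambda_1^{(N)}$ inside a block by its values on a grid of spacing $2^{2k/3-\epsilon k}$, up to an error $o(\sigma_{+,N}r_N)$. Next I apply the fixed-size right-tail bound $\Prob(\chi_N^+>x)\le \exp(-(\frac43-\epsilon)x^{3/2})$ and the left-tail bound $\exp(-(\frac1{12}-\epsilon)x^3)$ for the non-Gaussian ensemble, uniformly in $x\le(\log N)^{C}$. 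These come from the full-array Laplace comparison in Proposition~\ref{prop:full-grid-comparison}, applied to a single smoothed count, together with the Gaussian input. A union bound over the roughly $2^{k/3+\epsilon k}$ effective grid points, taken at $x=(A_2+\delta)r_N$ and $x=(B_2+\delta)s_N$, gives summable probabilities. Borel--Cantelli then yields both easy inequalities.

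\emph{Achievability.} For the lower bound on the limsup, and symmetrically the upper bound on the liminf, I fix $\delta>0$ and take a separated grid $G_k$ of about $2^{k(1/3-\epsilon)}$ full nested matrices in block $k$. Let $S_k$ be a smoothed count of grid points with $\chi_N^+>(A_2-\delta)r_N$, or $<-(B_2-\delta)s_N$ respectively. For Gaussian entries, Paper I \cite[Corollary~\ref*{cor:gaussian-grid-moments}]{YangWishart} gives $\E S_k\to\infty$ and $\E S_k^2\sim(\E S_k)^2$, uniformly under bounded threshold shifts. Chebyshev's inequality then shows $\Prob(S_k=0)\to0$. Proposition~\ref{prop:full-grid-comparison} compares the Laplace transforms $\E e^{-tS_k}$ with their Gaussian counterparts. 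I sandwich the sharp indicator between smoothed counts at slightly shifted thresholds, which is why uniformity in the shift matters. Letting $t\to\infty$ slowly, this transfers $\Prob(S_k\ge1)\to1$ to the general array.

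Probability tending to one is not summable, so I must upgrade it to an almost-sure statement. My first approach is to pass to sparse blocks $k_j$ and condition on the northwest corner of size $2^{k_{j-1}+1}$, using the conditional comparison results mentioned in the introduction with a revealed old corner. The left tail is handled via the block-diagonal reference family, and this makes the block events asymptotically independent. Failing that, I would use a second-moment/Kochen--Stone argument, showing that the correlations between distant blocks vanish. Either way, infinitely many blocks succeed almost surely.

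\emph{Cluster sets.} Increments satisfy $|\lambda_1^{(N+1)}-\lambda_1^{(N)}|=O(N^{-1}\cdot\text{poly}\log)$ almost surely, by eigenvalue perturbation with bounded added rows and columns and moment bounds. Hence $\chi_N^+$ changes by $o(1)\cdot N^{2/3}N^{-1}\to0$ per step, and the normalizations $r_N,s_N$ vary slowly. A deterministic interpolation lemma then applies: a sequence with vanishing increments that attains values near both $A_2$ and a value $\le 0$ infinitely often has all intermediate points as cluster points. Points near $0$ on the $r_N$ scale come from the liminf result, since $\chi_N^+/r_N\le Cs_N/r_N\to0$ at those times. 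Every value in $[-B_2,\infty)$ is a cluster point on the $s_N$ scale, because the limsup on that scale is $+\infty$.

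The main obstacle is the achievability transfer with almost-sure strength: the comparison controls only one count's Laplace transform, and it must be strong enough, with error vanishing and uniform under shifts, to survive the conditioning or decorrelation needed for Borel--Cantelli.
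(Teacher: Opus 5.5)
Your achievability half is essentially the paper's Corollary~\ref{cor:full-grid-occurrence}. However, both ingredients of your supercritical half fail as stated.

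\textbf{Fixed-size tails.} Proposition~\ref{prop:full-grid-comparison} has additive error $CN^{-1/4}$, while at $x=(A_2+\delta)r_N$ the probability you must certify is $N^{-1/3-\delta'}$. Applied to one matrix, it therefore gives at best $\Prob(\chi_N^+>x)\lesssim N^{-1/4}$, and your union bound over $2^{k(1/3+\epsilon)}$ grid points diverges. A bound $\exp(-(\frac43-\epsilon)x^{3/2})$ holding uniformly for $x\le(\log N)^C$ is even further beyond any comparison with polynomial error. What you need is a single-size comparison with additive error $N^{-\theta}$, $\theta>1/3$. The paper obtains this in Theorem~\ref{thm:fixed-level-margins} by a different mechanism. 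Along the Ornstein--Uhlenbeck flow, the fourth-cumulant term for each smoothed cutoff is bounded by $N^{-1/3+\nu}$ times the expectation of the next shifted cutoff (Proposition~\ref{prop:fixed-level-derivative}). Iterating over a fixed number of shifts (Proposition~\ref{prop:shifted-gronwall}) then leaves only an $N^{-\sigma}$ remainder with $\sigma>1/3$.

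\textbf{Between grid points.} Nesting gives only $L_N\le L_{N'}$ for $N\le N'$. Meanwhile the center $a_N=(\sqrt{M_N}+\sqrt N)^2$ grows by $\asymp g$ across a gap of $g$ indices, which is $\asymp gN^{-1/3}$ in $\chi$-units. At your spacing $g=N^{2/3-\epsilon}$ this is $N^{1/3-\epsilon}$, far larger than $r_N$ or $s_N$. Interlacing alone would need gaps $o(N^{1/3}r_N)$, hence roughly $N^{2/3}$ grid points per block and tail exponents above $2/3$. The missing idea is Lemma~\ref{lem:one-step-growth}:
\begin{itemize}
\item each added column raises $L-a$ by at least $(1+\sqrt{m/n})(W_{\rm col}-1)-Cn^{-1/3+\rho}$;
\item each added row raises it by at least $(1+\sqrt{n/m})(W_{\rm row}-1)-Cn^{-1/3+\rho}$.
\end{itemize}
Here $W-1$ is conditionally centered, and the coefficients match $\partial_nA$ and $\partial_mA$ via Lemma~\ref{lem:edge-resolvent-derivatives}. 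Over a gap of size $o(N^{2/3})$, the possible loss is then a martingale of size $o(N^{1/3})$ with overwhelming probability. Proposition~\ref{prop:mesh-path-control} uses this on the mesh $\lceil k^\alpha\rceil$ with $\alpha\theta>1$ and $\alpha<3$, anchoring at the right endpoint for the upper tail and at the left endpoint for the lower tail.

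\textbf{Other points.} The almost-sure upgrade you call the main obstacle is not one. For the block events $E_l$ at $N=2^l$, $\Prob(\limsup_l E_l)\ge\limsup_l\Prob(E_l)=1$, which is exactly how the paper concludes. So no conditioning on a revealed corner, Kochen--Stone argument, or $\tau\to\infty$ is needed; a fixed $\tau$ suffices because $m_N\to\infty$.

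For the cluster sets, your two-sided increment bound does not follow from eigenvalue perturbation: a new column $g$ can a priori move $\lambda_{\max}(XX^*)$ by $\|g\|^2\asymp N$. It is also unnecessary. The crossing argument needs only small downward jumps, and these follow deterministically from $L_{N+1}\ge L_N$ and bounded center increments, as in Proposition~\ref{prop:tail-cluster-closure}.
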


Both limits concern the largest eigenvalue at the upper spectral edge.
Throughout the applications, the array, path, and edge variables are those
of Theorem~\ref{thm:main}, unless an auxiliary statement gives its own
assumptions.

\paragraph{Comparison of the whole count.}
For a grid $\mathcal T_N$ of at most $N^\theta$ sizes, $\theta<1/3$,
Proposition~\ref{prop:full-grid-comparison} constructs a nonnegative
smoothed count $S_N$ and proves
\[
 \sup_{0\le\tau\le\tau_*}
 \big|\E e^{-\tau S_N(\mathbf X)}-\E e^{-\tau S_N(\mathbf W)}\big|
 \le C N^{-1/4}.
\]
The count is a function of the entire common array. The absolute sums of
its spectral derivative tensors cost $N^{o(1)}$, even as the number of
matrices increases. On the separated grids supplied by Paper I,
$S_N(\mathbf W)$ dominates a retained hit count $R_N$ with $m_N=\E R_N\to\infty$ and
$\Var(R_N)=o(m_N^2)$. Thus, for fixed $\tau>0$,
\[
 \E e^{-\tau S_N(\mathbf W)}
 \le 4\Var(R_N)/m_N^2+e^{-\tau m_N/2}\longrightarrow0.
\]
The support of $S_N$ then transfers a hit to the original array.
Corollary~\ref{cor:full-grid-occurrence} obtains block occurrence
probabilities tending to one for both tails. These probabilities give
infinitely many occurrences on the single original array, as shown in
Section~\ref{sec:main}.

\paragraph{Organization of the proof.}
Section~\ref{sec:setup} collects normalization and probability estimates.
Sections~\ref{sec:detectors}--\ref{sec:comparison} establish the joint
count comparison; Section~\ref{sec:full-grid-comparison} applies it to
full matrices. Sections~\ref{sec:high-occurrence} and
\ref{sec:low-occurrence} give the conditional refinements.
Sections~\ref{sec:fixed-level}--\ref{sec:recurrence} prove the summable
supercritical bounds, and Section~\ref{sec:main} assembles the endpoints
and cluster sets. Appendix~\ref{app:companion-estimates} records two
auxiliary Gaussian estimates. Appendix~\ref{app:recurrence-refinements}
collects the common-filtration construction for the conditional results.

\paragraph{Notation.}
We write $A^*$ and $A^{\mathsf T}$ for the adjoint and transpose of a
matrix $A$.

\section{Normalization and probabilistic preliminaries}\label{sec:setup}

A second-moment estimate converts the comparison of a joint count into
an occurrence bound. For the main theorem, the Gaussian counts concentrate
and the resulting block probabilities tend to one. We also record the
conditional probability tools used for prescribed histories.

\subsection{Edge normalization and submatrix comparison}

We use the singular-value normalization for smoothed spectral statistics
and coordinate restrictions. The covariance variable \(\chi_N^+\) agrees
exactly with the Gram variable \(\xi_N\), whose numerator contains the
monotone eigenvalue \(L_N\). This identity allows us to apply
Proposition~\ref{prop:tail-cluster-closure} to the endpoint limits for
\(\chi_N^+\).

\begin{lem}[Edge normalization]\label{lem:normalization-dictionary}
For positive $m,n$, write
\[
 a(m,n)=(\sqrt m+\sqrt n)^2,\qquad
 b(m,n)=(\sqrt m+\sqrt n)(m^{-1/2}+n^{-1/2})^{1/3}.
\]
\begin{enumerate}
\item\label{item:project-normalization}
For an $m$-by-$n$ matrix $X$, put $Q=n^{-1}XX^*$,
$\mu=a(m,n)/n$, $\sigma=b(m,n)/n$, and
$\chi=(\norm X_{\op}^2-a(m,n))/b(m,n)$. Then
\[
 \lambda_{\max}(Q)=n^{-1}\norm X_{\op}^2,\qquad
 (\lambda_{\max}(Q)-\mu)/\sigma=\chi.
\]
\item\label{item:gram-covariance-normalization}
For $X^{(N)}$ of size $M_N$-by-$N$, define
$L_N=\norm{X^{(N)}}_{\op}^2$, $a_N=a(M_N,N)$, and $b_N=b(M_N,N)$.
With the covariance normalization of Theorem~\ref{thm:main},
\[
 L_N=N\lambda_1^{(N)},\quad a_N=N\mu_{+,N},\quad
 b_N=N\sigma_{+,N},\quad
 \xi_N:=\frac{L_N-a_N}{b_N}=\chi_N^+.
\]
\item\label{item:finite-shifts}
Suppose $M_N/N\to\gamma>0$. For fixed $\delta>0$, a fixed positive
integer $J$, $r_N=(\log N)^{2/3}$, and $\ell_N=N^{-2/3-\delta}$,
\[
 \frac{J\ell_N}{\sigma_{+,N}r_N}\longrightarrow0.
\]
Hence, for fixed real numbers $A<q_*<q$ and large $N$,
$qr_N\sigma_{+,N}-J\ell_N\ge q_*r_N\sigma_{+,N}$.
\end{enumerate}
\end{lem}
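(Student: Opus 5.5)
This lemma is elementary, and the plan is to verify its three items in order by direct computation from the definitions; nothing earlier in the paper is needed beyond the statement of Theorem~\ref{thm:main}.

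For item~\eqref{item:project-normalization}, the starting point is that $XX^*$ is positive semidefinite with largest eigenvalue $\norm{X}_{\op}^2$, because $\norm{X}_{\op}^2=\norm{XX^*}_{\op}$. Multiplying by $n^{-1}$ gives $\lambda_{\max}(Q)=n^{-1}\norm X_{\op}^2$. Substituting $\mu=a/n$ and $\sigma=b/n$ into $(\lambda_{\max}(Q)-\mu)/\sigma$, the common factor $n^{-1}$ cancels and leaves $(\norm X_{\op}^2-a)/b=\chi$.

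Item~\eqref{item:gram-covariance-normalization} is item~\eqref{item:project-normalization} with $m=M_N$, $n=N$ and $X=X^{(N)}$. One checks directly from Theorem~\ref{thm:main} that $N\mu_{+,N}=(\sqrt{M_N}+\sqrt N)^2=a_N$ and $N\sigma_{+,N}=(\sqrt{M_N}+\sqrt N)(M_N^{-1/2}+N^{-1/2})^{1/3}=b_N$. The identity $\lambda_1^{(N)}=N^{-1}L_N$ is the first display of item~\eqref{item:project-normalization}. Consequently $\xi_N=\chi_N^+$.

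Item~\eqref{item:finite-shifts} is the only place where asymptotics enter, and it is still routine. Using $M_N\sim\gamma N$, the factor $\sqrt{M_N}+\sqrt N$ is asymptotic to $(1+\sqrt\gamma)N^{1/2}$. The factor $(M_N^{-1/2}+N^{-1/2})^{1/3}$ is asymptotic to $(1+\gamma^{-1/2})^{1/3}N^{-1/6}$. Therefore
\[
\sigma_{+,N}\sim c_\gamma N^{-2/3},\qquad c_\gamma=(1+\sqrt\gamma)(1+\gamma^{-1/2})^{1/3}>0.
\]
It follows that
\[
\frac{J\ell_N}{\sigma_{+,N}r_N}\sim \frac{J}{c_\gamma}\,N^{-\delta}(\log N)^{-2/3}\longrightarrow0.
\]
For the final inequality, the goal is $J\ell_N\le(q-q_*)r_N\sigma_{+,N}$. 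Since $q-q_*>0$ is fixed and the ratio above tends to zero, this holds for all large $N$. The hypothesis $A<q_*$ is not used; it only records the intended regime.

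There is no real obstacle in this lemma. The one point needing care is to keep the $\gamma$-dependent constant $c_\gamma$ explicit, so that the $N^{-2/3}$ order of $\sigma_{+,N}$ is justified from $M_N/N\to\gamma>0$.
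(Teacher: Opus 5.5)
Your proposal is correct and follows the paper's proof essentially step for step. The paper likewise derives the first two items from $\lambda_{\max}(XX^*)=\norm X_{\op}^2$ and scaling by $n^{-1}$. For the third it writes $\sigma_{+,N}=N^{-2/3}(1+\sqrt{\gamma_N})^{4/3}\gamma_N^{-1/6}$ with $\gamma_N=M_N/N$, which is your constant $c_\gamma$ since $(1+\gamma^{-1/2})^{1/3}=(1+\sqrt\gamma)^{1/3}\gamma^{-1/6}$. It then bounds the ratio by $O(N^{-\delta}(\log N)^{-2/3})$, which is eventually at most $q-q_*$.
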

\begin{proof}
The identity $\lambda_{\max}(XX^*)=\|X\|_{\op}^2$, followed by
scaling by $n^{-1}$, proves the first two assertions. With $\gamma_N=M_N/N$,
\[
 \sigma_{+,N}=N^{-2/3}(1+\sqrt{\gamma_N})^{4/3}\gamma_N^{-1/6}
 \asymp N^{-2/3}.
\]
Thus the ratio in the third assertion is $O(N^{-\delta}(\log N)^{-2/3})$.
Eventually it is at most $q-q_*$, which proves the last inequality.
\end{proof}

The two derivatives of the rectangular edge center correspond to the two
matrix updates. At aspect ratio \(\gamma=m/n\), the resolvent factor
matches \(\partial_nA(m,n)\) for column growth. The reciprocal aspect
ratio gives \(\partial_mA(m,n)\) for row growth.

\begin{lem}[Marchenko--Pastur transform]
\label{lem:edge-resolvent-derivatives}
Let \(\gamma>0\). Define
\[
 a_\gamma=(1-\sqrt\gamma)^2,\qquad
 b_\gamma=(1+\sqrt\gamma)^2,
\]
and, for \(z\geq b_\gamma\), define
\[
 g_\gamma(z)
 =
 \frac{z+\gamma-1-\sqrt{(z-a_\gamma)(z-b_\gamma)}}
      {2\gamma z}.
\]
Here $g_\gamma(z)=\int (z-x)^{-1}\nu_\gamma(\dd x)$, where
$\nu_\gamma$ is the row-side Marchenko--Pastur probability law,
including its atom at zero. Thus $g_\gamma$ has the opposite sign to
the Stieltjes-transform convention $\int(x-z)^{-1}\nu_\gamma(\dd x)$.
Then
\[
 \gamma g_\gamma(b_\gamma)
 =\frac{\sqrt\gamma}{1+\sqrt\gamma},
 \qquad
 \bigl[1-\gamma g_\gamma(b_\gamma)\bigr]^{-1}
 =1+\sqrt\gamma.
\]
Applying these formulas with \(\gamma\) replaced by \(\gamma^{-1}\) gives
\[
 \gamma^{-1}g_{\gamma^{-1}}(b_{\gamma^{-1}})
 =\frac1{1+\sqrt\gamma},
 \qquad
 \bigl[1-\gamma^{-1}g_{\gamma^{-1}}(b_{\gamma^{-1}})\bigr]^{-1}
 =1+\gamma^{-1/2}.
\]
If
\[
 A(m,n)=(\sqrt m+\sqrt n)^2,\qquad \gamma=\frac mn,
\]
for \(m,n>0\), then
\[
 \partial_nA(m,n)=1+\sqrt\gamma,
 \qquad
 \partial_mA(m,n)=1+\gamma^{-1/2}.
\]
\end{lem}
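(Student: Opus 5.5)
The plan is to evaluate $g_\gamma$ at the right edge directly, then get the reciprocal-ratio formulas by substitution, and finish with the two partial derivatives of $A$.

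\emph{Edge value.} At $z=b_\gamma$ the square root $\sqrt{(z-a_\gamma)(z-b_\gamma)}$ vanishes. Hence
\[
 \gamma g_\gamma(b_\gamma)=\frac{b_\gamma+\gamma-1}{2b_\gamma}.
\]
Since $b_\gamma=1+2\sqrt\gamma+\gamma$, the numerator is $2\gamma+2\sqrt\gamma=2\sqrt\gamma(1+\sqrt\gamma)$ and the denominator is $2(1+\sqrt\gamma)^2$. The quotient is therefore $\sqrt\gamma/(1+\sqrt\gamma)$, as claimed. Consequently $1-\gamma g_\gamma(b_\gamma)=1/(1+\sqrt\gamma)$, and inverting gives $1+\sqrt\gamma$.

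\emph{Reciprocal ratio.} The displayed formula for $g_\gamma$ is an algebraic identity valid for every $\gamma>0$, so I will simply substitute $\gamma^{-1}$ for $\gamma$. Writing $\sqrt{\gamma^{-1}}=\gamma^{-1/2}$, the first formula becomes
\[
 \gamma^{-1}g_{\gamma^{-1}}(b_{\gamma^{-1}})=\frac{\gamma^{-1/2}}{1+\gamma^{-1/2}}=\frac1{\sqrt\gamma+1},
\]
after multiplying numerator and denominator by $\sqrt\gamma$. The bracket formula becomes $1+\gamma^{-1/2}$ directly.

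\emph{Partial derivatives.} I write $A(m,n)=m+n+2\sqrt{mn}$. Differentiating in $n$ gives $\partial_nA=1+\sqrt{m/n}=1+\sqrt\gamma$. Differentiating in $m$ gives $\partial_mA=1+\sqrt{n/m}=1+\gamma^{-1/2}$.

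\emph{Remarks.} The identification of $g_\gamma$ with $\int(z-x)^{-1}\nu_\gamma(\dd x)$ is part of the definition and statement rather than something to prove. If one wanted it, it follows because $g_\gamma$ solves the Marchenko--Pastur self-consistent quadratic, and the sign of the root is chosen so that $g_\gamma(z)\sim 1/z$ as $z\to\infty$. None of the computed identities depend on that identification. There is no genuine obstacle here; the only care needed is the branch of the square root, which vanishes at $z=b_\gamma$ in any case.
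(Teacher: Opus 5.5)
Your proof is correct. It is the direct computation the lemma calls for: the square root vanishes at $b_\gamma$, the edge value reduces to $\sqrt\gamma/(1+\sqrt\gamma)$, the reciprocal-ratio case follows by substituting $\gamma^{-1}$, and the derivatives come from $A=m+n+2\sqrt{mn}$. This is essentially the paper's argument, and your remark that the displayed identities do not depend on the identification of $g_\gamma$ with the transform of $\nu_\gamma$ is accurate.
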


\supplementproof{supp.B.1}{Section B.1}

Deleting rows and columns can only decrease the operator norm.
Proposition~\ref{prop:coordinate-submatrix-transfer} expresses this
inequality in edge coordinates. When the number of deleted rows and
columns is \(o(N^{1/3}(\log N)^{2/3})\), the amplitude gap absorbs both
the change of scale and the loss in the edge center. A right-tail event
for the submatrix then implies a right-tail event for the full matrix.

\begin{prop}[Comparison with submatrices]
\label{prop:coordinate-submatrix-transfer}
Fix \(0<c_0<C_0<\infty\). For each positive integer \(N\), let \(m_N,n_N\) be positive integers satisfying
\[
 c_0N\leq m_N,n_N\leq C_0N.
\]
Let \(r_N,s_N\) be nonnegative integers such that
\[
 r_N<m_N,\qquad s_N<n_N,\qquad r_N+s_N=o(N),
\]
and let \(X_N\) be a complex \(m_N\)-by-\(n_N\) matrix. Let \(Y_N\) be any \((m_N-r_N)\)-by-\((n_N-s_N)\) coordinate submatrix obtained by deleting \(r_N\) rows and \(s_N\) columns. For \(m,n>0\), set
\[
 a(m,n)=(\sqrt m+\sqrt n)^2,
 \qquad
 b(m,n)=(\sqrt m+\sqrt n)
        (m^{-1/2}+n^{-1/2})^{1/3},
\]
and define
\[
 \chi_{X,N}
 =\frac{\norm{X_N}_{\op}^2-a(m_N,n_N)}{b(m_N,n_N)},
\]
\[
 \chi_{Y,N}
 =\frac{\norm{Y_N}_{\op}^2-a(m_N-r_N,n_N-s_N)}
        {b(m_N-r_N,n_N-s_N)}.
\]
Then
\begin{equation}\label{eq:submatrix-exact-transfer}
 \chi_{X,N}
 \geq
 \frac{b(m_N-r_N,n_N-s_N)}{b(m_N,n_N)}\chi_{Y,N}
 -
 \frac{a(m_N,n_N)-a(m_N-r_N,n_N-s_N)}
      {b(m_N,n_N)}.
\end{equation}
Uniformly under these dimension hypotheses,
\begin{equation}\label{eq:submatrix-center-loss}
 0\leq a(m_N,n_N)-a(m_N-r_N,n_N-s_N)
 \leq C(r_N+s_N),
\end{equation}
the quantity \(b(m_N,n_N)\) is bounded above and below by positive constants times \(N^{1/3}\), and
\begin{equation}\label{eq:submatrix-scale-ratio}
 \frac{b(m_N-r_N,n_N-s_N)}{b(m_N,n_N)}
 =1+O\left(\frac{r_N+s_N}{N}\right).
\end{equation}
The constants depend only on \(c_0,C_0\).

Suppose additionally that
\[
 r_N+s_N=o\bigl(N^{1/3}(\log N)^{2/3}\bigr).
\]
For fixed \(0<a_-<a_+\), all sufficiently large \(N\) satisfy
\begin{equation}\label{eq:submatrix-high-event}
 \chi_{Y,N}\geq a_+\bigl(\log(n_N-s_N)\bigr)^{2/3}
 \quad\Longrightarrow\quad
 \chi_{X,N}\geq a_-(\log n_N)^{2/3}.
\end{equation}
If \(Q_N=n_N^{-1}X_NX_N^*\) is equipped with the edge center and scale, then its edge-normalized largest eigenvalue is exactly \(\chi_{X,N}\). Thus \eqref{eq:submatrix-high-event} is a high-event implication in the edge normalization.
\end{prop}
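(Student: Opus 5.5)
The plan has four steps: the exact inequality \eqref{eq:submatrix-exact-transfer} from norm monotonicity, the center and scale estimates from smoothness of \(a\) and \(b\), the high-event implication from a logarithm comparison, and a final one-line identification with Lemma~\ref{lem:normalization-dictionary}.

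\emph{Step 1: the exact inequality.} A coordinate submatrix is \(Y_N=PX_NR\), where \(P\) and \(R\) are coordinate projections of operator norm one. Hence \(\norm{Y_N}_{\op}\le\norm{X_N}_{\op}\). Write \(a'=a(m_N-r_N,n_N-s_N)\) and \(b'=b(m_N-r_N,n_N-s_N)\). Then
\[
 \norm{X_N}_{\op}^2-a(m_N,n_N)\ge \norm{Y_N}_{\op}^2-a'-(a(m_N,n_N)-a')=b'\chi_{Y,N}-(a(m_N,n_N)-a').
\]
Dividing by \(b(m_N,n_N)>0\) gives \eqref{eq:submatrix-exact-transfer}. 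This step is purely algebraic.

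\emph{Step 2: center loss and scale ratio.} The function \(a\) is nondecreasing in each variable, which gives the lower bound in \eqref{eq:submatrix-center-loss}. For the upper bound, use the mean value theorem along the segment from \((m_N-r_N,n_N-s_N)\) to \((m_N,n_N)\). Since \(r_N+s_N=o(N)\), both coordinates on this segment lie in \([c_0N/2,C_0N]\) for large \(N\). There the aspect ratio is bounded above and below, so the partial derivatives \(\partial_mA=1+\gamma^{-1/2}\) and \(\partial_nA=1+\sqrt\gamma\) from Lemma~\ref{lem:edge-resolvent-derivatives} are bounded by a constant. Small \(N\) is absorbed into \(C\), using \(a\le C N\) and \(r_N+s_N\ge1\) whenever the difference is nonzero.

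The factorization \(b(m,n)=(\sqrt m+\sqrt n)(m^{-1/2}+n^{-1/2})^{1/3}\) shows \(b\asymp N^{1/2}N^{-1/6}=N^{1/3}\) on this region. Also \(\log b\) has gradient \(O(1/N)\) there, so the mean value theorem on \(\log b\) gives \eqref{eq:submatrix-scale-ratio}. I expect the main care to be in this uniformity: checking that the segment stays in a region of comparable dimensions, with constants depending only on \(c_0,C_0\).

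\emph{Step 3: the high-event implication.} Assume \(\chi_{Y,N}\ge a_+(\log(n_N-s_N))^{2/3}\). Since this lower bound is positive, \eqref{eq:submatrix-exact-transfer} combined with Step 2 gives
\[
 \chi_{X,N}\ge \bigl(1-O((r_N+s_N)/N)\bigr)a_+(\log(n_N-s_N))^{2/3}-C\,\frac{r_N+s_N}{N^{1/3}}.
\]
Three facts control the right-hand side:
\begin{itemize}
\item \(\log(n_N-s_N)=\log n_N+O(s_N/N)\), so the ratio \((\log(n_N-s_N))^{2/3}/(\log n_N)^{2/3}\) tends to \(1\), using \(\log n_N\ge\log(c_0N)\to\infty\);
\item \((r_N+s_N)/N\to0\);
\item by the added hypothesis, \((r_N+s_N)N^{-1/3}=o((\log N)^{2/3})=o((\log n_N)^{2/3})\).
\end{itemize}
Hence the right-hand side equals \((a_++o(1))(\log n_N)^{2/3}\). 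This exceeds \(a_-(\log n_N)^{2/3}\) for large \(N\), because \(a_-<a_+\).

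\emph{Step 4: identification.} The final sentence of the proposition follows directly from Lemma~\ref{lem:normalization-dictionary}\eqref{item:project-normalization}.
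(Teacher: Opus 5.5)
Your proof is correct and follows the paper's route. Monotonicity of the operator norm under coordinate deletion gives the exact inequality, mean-value bounds for $a$ and $\log b$ on a region of comparable dimensions give the center and scale estimates, and the amplitude gap $a_+-a_-$ absorbs both losses once $r_N+s_N=o(N^{1/3}(\log N)^{2/3})$.

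One small tightening is needed for the center-loss bound, if its constant is to depend only on $c_0,C_0$. Split according to whether $r_N+s_N<c_0N/2$, where your mean-value argument applies on $[c_0N/2,C_0N]^2$, or $r_N+s_N\ge c_0N/2$, where $a(m_N,n_N)\le 4C_0N\le(8C_0/c_0)(r_N+s_N)$. Absorbing a threshold in $N$ would not do this, since that threshold depends on the rate in $r_N+s_N=o(N)$.
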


\supplementproof{supp.G.1}{Section G.1}

\subsection{Nested matrices and cluster sets}

Northwest nesting makes the unnormalized largest eigenvalue monotone.
Bounded dimension increments control the changes in its center and scale,
so the normalized process has no asymptotically macroscopic downward
jumps. A crossing argument then fills the intervals between its endpoint
limits, giving Proposition~\ref{prop:tail-cluster-closure}.

\begin{prop}[Cluster sets from endpoint limits]
\label{prop:tail-cluster-closure}
Let \(\gamma>0\), and let \((M_N)_{N\geq1}\) be nondecreasing positive integers such that
\[
 \frac{M_N}{N}\longrightarrow\gamma,
 \qquad
 K:=\sup_N(M_{N+1}-M_N)<\infty.
\]
Let \(X^{(N)}\) be a nested sequence of complex \(M_N\)-by-\(N\) matrices, meaning that \(X^{(N)}\) is the upper-left \(M_N\)-by-\(N\) submatrix of \(X^{(N+1)}\). Put
\[
 L_N=\lambda_{\max}\bigl(X^{(N)}(X^{(N)})^*\bigr),
\]
\[
 a_N=(\sqrt{M_N}+\sqrt N)^2,\qquad
 b_N=(\sqrt{M_N}+\sqrt N)
 \left(\frac1{\sqrt{M_N}}+\frac1{\sqrt N}\right)^{1/3},
\]
and
\[
 \xi_N=\frac{L_N-a_N}{b_N},\qquad
 r_N=(\log N)^{2/3},\qquad
 s_N=(\log N)^{1/3}
\]
for \(N\geq2\). If \(A,B>0\) and
\[
 \limsup_{N\to\infty}\frac{\xi_N}{r_N}=A,
 \qquad
 \liminf_{N\to\infty}\frac{\xi_N}{s_N}=-B,
\]
then
\[
 \bigcap_{m\geq2}
 \cl_{\R}\set{\frac{\xi_N}{r_N}\mid N\geq m}
 =[0,A]
\]
and
\[
 \bigcap_{m\geq2}
 \cl_{\R}\set{\frac{\xi_N}{s_N}\mid N\geq m}
 =[-B,\infty).
\]
\end{prop}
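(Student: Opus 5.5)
The argument is deterministic and uses only two properties of the nested sequence: monotonicity of $L_N$, and slow, controlled variation of $a_N$ and $b_N$. Northwest nesting gives $X^{(N)}$ as a coordinate submatrix of $X^{(N+1)}$, so $L_{N+1}\ge L_N$. Lemma~\ref{lem:edge-resolvent-derivatives}, or a direct mean-value computation with $M_{N+1}-M_N\le K$, gives
\[
 0\le a_{N+1}-a_N\le C(1+K),\qquad b_N\asymp N^{1/3},\qquad \frac{b_{N+1}}{b_N}=1+O(N^{-1}).
\]
Combining these with \eqref{eq:submatrix-exact-transfer} from Proposition~\ref{prop:coordinate-submatrix-transfer}, with one deleted column and at most $K$ deleted rows, we obtain
\[
 \xi_{N+1}\ge \bigl(1+O(N^{-1})\bigr)\xi_N - CN^{-1/3}.
\]
Dividing by $r_{N+1}$ or $s_{N+1}$, and using $r_{N+1}/r_N=1+O(1/(N\log N))$, the normalized sequences $u_N=\xi_N/r_N$ and $v_N=\xi_N/s_N$ satisfy
\[
 u_{N+1}\ge u_N-\varepsilon_N(1+\abs{u_N}),\qquad \varepsilon_N\to0,
\]
and the same bound holds for $v_N$. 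Downward jumps are therefore asymptotically negligible, while upward jumps may be large.

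Next comes the crossing argument. For the $r_N$-scale, the limsup equals $A$, and the liminf hypothesis on the $s_N$ scale gives $\xi_N\le -(B/2)s_N<0$ infinitely often, so $u_N<0$ infinitely often. Fix $c\in(0,A)$ and $\eta>0$. Choose $N_1$ large with $u_{N_1}>c+\eta$, and then $N_2>N_1$ with $u_{N_2}<0$. Let $N$ be the last index in $[N_1,N_2]$ with $u_N\ge c$. Then $u_{N+1}<c$, and the one-sided jump bound gives $u_{N+1}\ge c-\varepsilon_N(1+A+1)$. Here $\abs{u_N}$ is bounded eventually, because $u_N\le A+1$ eventually and $u_N\ge c$ at this index. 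Thus $u_{N+1}\in(c-\eta,c)$ for large $N_1$. Hence $c$ is a cluster point. The endpoints $0$ and $A$ follow by closedness: $A$ is the limsup, and $0$ is a limit of interior points. Points above $A$ are excluded by the limsup. For the exclusion of negative points, $\xi_N\ge -(B+1)s_N$ eventually, so $u_N\ge -(B+1)s_N/r_N\to0$, which gives $\liminf u_N\ge 0$. This proves the first identity.

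For the $s_N$-scale, the liminf is $-B$, so $v_N\ge -B-1$ eventually, and no point below $-B$ is a cluster point. Also $v_N=u_N r_N/s_N=u_N(\log N)^{1/3}$, and $u_N>A/2$ infinitely often, so $\limsup v_N=+\infty$. For $c\in(-B,\infty)$, run the same crossing argument downward. Pick $N_1$ with $v_{N_1}<c-\eta$, then $N_2>N_1$ with $v_{N_2}>c+1$. Let $N$ be the last index in $[N_1,N_2)$ with $v_N<c$. Then $v_{N+1}\ge c$, but this alone gives no upper control on $v_{N+1}$, since upward jumps are uncontrolled. So instead I would take $N$ to be the first index after $N_2$ with $v_N<c$. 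This index exists because $v$ drops below $c-\eta$ again infinitely often. Then $v_{N-1}\ge c$, and $v_N\ge v_{N-1}-\varepsilon(1+\abs{v_{N-1}})$. The difficulty is that $v_{N-1}$ may be large, so I need a bound of the form
\[
 v_{N-1}-v_N\le \varepsilon_N(1+\abs{v_{N-1}}) \quad\text{with a small factor even for large } v_{N-1}.
\]
This holds because the multiplicative error $O(N^{-1})$ is tiny. Concretely, $v_N\ge v_{N-1}(1-C/N)-\varepsilon_N$. If $v_{N-1}\le N^{1/2}$, this gives $v_N\ge c-o(1)$. In the other case $v_{N-1}>N^{1/2}$ with $c$ fixed, the same inequality forces $v_N\ge N^{1/2}/2>c$, a contradiction. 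Hence $v_N\in(c-\eta,c)$ and $c$ is a cluster point, while $-B$ is obtained by closure.

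The main obstacle is obtaining the one-step lower bound uniformly, that is, controlling the multiplicative scale error when $\abs{\xi_N}$ is large. I would handle this with the crude bound $\abs{\xi_N}\le$ polynomial in $N$. This bound is not available in general, but the two-case argument above avoids needing it.
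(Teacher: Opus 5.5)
Your proof is correct and follows the same route as the paper: northwest nesting makes $L_N$ monotone, and bounded increments of $a_N$ and $b_N$ leave the normalized sequences with only asymptotically negligible downward jumps. A downward-crossing argument, driven by the endpoint limits on both scales, then fills $[0,A]$ and $[-B,\infty)$.

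The case split at $v_{N-1}\le N^{1/2}$ is not needed. The one-step bound $v_N\ge\lambda_Nv_{N-1}-\varepsilon_N$, with $\lambda_N=b_{N-1}s_{N-1}/(b_Ns_N)=1+O(N^{-1})>0$, is already monotone in $v_{N-1}$. Hence $v_{N-1}\ge c$ gives $v_N\ge c-C\abs{c}/N-\varepsilon_N$ directly, however large $v_{N-1}$ is.
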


\supplementproof{supp.G.2}{Section G.2}

After revealing a northwest rectangle, delete its rows and columns.
The remaining southeast array is independent of the revealed sigma-algebra
and retains the limiting aspect ratio and bounded dimension increments.
Lemma~\ref{lem:fresh-corner} records these properties. Together with
Proposition~\ref{prop:coordinate-submatrix-transfer}, they transfer a
right-tail occurrence from the southeast submatrix to the full future
matrix.

\begin{lem}[Independent submatrices]
\label{lem:fresh-corner}
Let \((x_{ij})_{i,j\geq1}\) be an infinite array of mutually independent complex random variables. Let \((M_N)_{N\geq1}\) be nondecreasing and satisfy
\[
 \frac{M_N}{N}\longrightarrow\gamma\in(0,\infty),
 \qquad
 K=\sup_N(M_{N+1}-M_N)<\infty.
\]
Fix a positive integer \(P\), and define
\[
 \mathcal P_P=\set{(i,j)\mid 1\leq i\leq M_P,\ 1\leq j\leq P},
 \qquad
 \mathcal F_P=\sigma(x_{ij}\mid(i,j)\in\mathcal P_P).
\]
For \(j\geq1\), put
\[
 \widehat M_j=M_{P+j}-M_P
\]
and define the \(\widehat M_j\)-by-\(j\) matrix
\[
 Y^{(j)}
 =
 \bigl(x_{M_P+i,P+h}\bigr)_
 {1\leq i\leq\widehat M_j,\ 1\leq h\leq j}.
\]
Then:

\begin{enumerate}
\item[(a)] The family \((Y^{(j)})_{j\geq1}\) is a nested rectangular path. Moreover, \(Y^{(j)}\) is the coordinate submatrix of
\[
 X^{(P+j)}
 =\bigl(x_{ih}\bigr)_
 {1\leq i\leq M_{P+j},\ 1\leq h\leq P+j}
\]
obtained by deleting its first \(M_P\) rows and first \(P\) columns.

\item[(b)] The entire independent-submatrix array
\[
 (x_{M_P+i,P+h})_{i,h\geq1}
\]
is independent of \(\mathcal F_P\). In particular, its conditional joint law given \(\mathcal F_P\) equals its unconditional product law.

\item[(c)] As \(j\to\infty\),
\[
 \frac{\widehat M_j}{j}\longrightarrow\gamma.
\]
The sequence \((\widehat M_j)\) is nondecreasing and satisfies
\[
 0\leq\widehat M_{j+1}-\widehat M_j\leq K.
\]
\end{enumerate}
\end{lem}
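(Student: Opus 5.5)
The three assertions are elementary consequences of the definitions, so the plan is to verify each directly from the index bookkeeping.

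For (a), the nesting of $(Y^{(j)})_{j\geq1}$ follows by comparing index ranges. $Y^{(j)}$ has entries $x_{M_P+i,P+h}$ with $1\le i\le \widehat M_j$ and $1\le h\le j$. Since $(M_N)$ is nondecreasing, we have $\widehat M_j\le\widehat M_{j+1}$, so $Y^{(j)}$ is the upper-left $\widehat M_j$-by-$j$ block of $Y^{(j+1)}$. For the second claim, start from $X^{(P+j)}=(x_{ih})$ with $i\le M_{P+j}$ and $h\le P+j$. Deleting the rows $i\le M_P$ and the columns $h\le P$ leaves the indices $M_P<i\le M_{P+j}$ and $P<h\le P+j$. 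Reindexing by $i=M_P+i'$ and $h=P+h'$ gives exactly $1\le i'\le \widehat M_j$ and $1\le h'\le j$, which is $Y^{(j)}$.

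For (b), the key observation is that the index set $\{(M_P+i,P+h): i,h\ge1\}$ is disjoint from $\mathcal P_P$, since its row indices exceed $M_P$. Mutual independence of the whole array then gives independence of the two sigma-algebras generated by disjoint coordinate families. The standard route is a $\pi$-$\lambda$ argument: independence holds for finite-dimensional cylinder events by the product structure, and those events generate the sigma-algebras. The statement about the conditional law is the usual consequence that, for $\mathcal G$ independent of $\mathcal F_P$, we have $\Prob(G\mid\mathcal F_P)=\Prob(G)$ almost surely, applied to the product sigma-algebra on $\mathbb C^{\N\times\N}$. This measure-theoretic step is the only point that needs any care, and it is entirely standard.

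For (c), write
\[
 \frac{\widehat M_j}{j}=\frac{M_{P+j}}{P+j}\cdot\frac{P+j}{j}-\frac{M_P}{j}.
\]
As $j\to\infty$ with $P$ fixed, the three factors tend to $\gamma$, $1$, and $0$, so the ratio tends to $\gamma$. Finally, $\widehat M_{j+1}-\widehat M_j=M_{P+j+1}-M_{P+j}$, which lies in $[0,K]$ by monotonicity and the definition of $K$. No step is a genuine obstacle.
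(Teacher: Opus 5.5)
Your proposal is correct and follows the same direct verification the paper's argument amounts to. It uses index bookkeeping for (a), and for (b) the disjointness of the southeast index set from $\mathcal P_P$ together with grouping of mutually independent coordinates. For (c) it uses the decomposition $\widehat M_j/j=\frac{M_{P+j}}{P+j}\cdot\frac{P+j}{j}-\frac{M_P}{j}$ and the identity $\widehat M_{j+1}-\widehat M_j=M_{P+j+1}-M_{P+j}$. Your $\pi$--$\lambda$ remark correctly handles the one measure-theoretic point: passing from independence of the $\sigma$-algebras to equality of the conditional and product laws.
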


\supplementproof{supp.G.3}{Section G.3}

\subsection{Second-moment and comparison estimates}

In the next lemma, the terminal smoothed count \(S^T\) dominates a count
\(R\) with controlled conditional moments. The second-moment estimate
gives a lower bound for the probability that \(R\) is large, hence a
deficit in the Laplace transform of \(S^T\). Comparison transfers this
deficit to \(S^X\). Since \(S^X\) vanishes on \(E^c\), it yields a
lower bound for \(\Prob(E\mid\mathcal G)\).

\begin{lem}[Conditional second-moment bound]
\label{lem:conditional-occurrence-closure}
Let \(\mathcal G\) be a sigma-algebra, \(E\) an event, and
\(S^X,S^T,R\) nonnegative random variables.  Fix \(\tau>0\) and a deterministic constant \(0<C<\infty\).
Let \(\beta\ge0\) be \(\mathcal G\)-measurable. Suppose, almost surely,
\[
 S^T\geq R,
 \qquad
 0<m:=\E[R\mid\mathcal G]<\infty,
 \qquad
 \E[R^2\mid\mathcal G]\leq C m^2,
\]
and
\[
 \left|\E[\ee^{-\tau S^X}\mid\mathcal G]
 -\E[\ee^{-\tau S^T}\mid\mathcal G]\right|\leq\beta.
\]
If \(S^X=0\) on \(E^c\), then
\[
 \Prob(E\mid\mathcal G)
 \geq
 \frac1{4C}\left(1-\ee^{-\tau m/2}\right)-\beta.
\]
For every $0<a<1$, one also has
\[
 \Prob(E\mid\mathcal G)
 \ge 1-\frac{\Var(R\mid\mathcal G)}{(1-a)^2m^2}
         -\ee^{-\tau a m}-\beta.
\]
In particular, if $m\to\infty$, $\Var(R\mid\mathcal G)=o(m^2)$,
and $\beta=o(1)$ uniformly on the prescribed good histories, then
$\Prob(E\mid\mathcal G)\ge1-o(1)$ there.
\end{lem}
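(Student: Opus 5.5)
We argue conditionally on \(\mathcal G\) throughout, almost surely. Since \(S^X\ge0\) and \(S^X=0\) on \(E^c\), we have \(\ee^{-\tau S^X}=1\) on \(E^c\) and \(\ee^{-\tau S^X}\le1\) everywhere. Hence
\[
 \E[\ee^{-\tau S^X}\mid\mathcal G]\ge \Prob(E^c\mid\mathcal G)=1-\Prob(E\mid\mathcal G),
\]
which gives
\[
 \Prob(E\mid\mathcal G)\ge 1-\E[\ee^{-\tau S^X}\mid\mathcal G]
 \ge 1-\E[\ee^{-\tau S^T}\mid\mathcal G]-\beta .
\]
Using \(S^T\ge R\), it remains to bound \(1-\E[\ee^{-\tau R}\mid\mathcal G]\) from below.

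\textbf{Second inequality.} Fix \(0<a<1\) and split on the \(\mathcal G\)-conditional event \(\{R>am\}\). This gives
\[
 \E[\ee^{-\tau R}\mid\mathcal G]\le \Prob(R\le am\mid\mathcal G)+\ee^{-\tau a m}.
\]
The event \(R\le am\) implies \(m-R\ge(1-a)m\). The conditional Chebyshev inequality then bounds its probability by \(\Var(R\mid\mathcal G)/((1-a)^2m^2)\). Note that \(\Var(R\mid\mathcal G)\) is finite, since \(\E[R^2\mid\mathcal G]\le Cm^2<\infty\). Substituting this bound yields the second displayed inequality.

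\textbf{First inequality.} We use Paley--Zygmund conditionally. By Cauchy--Schwarz,
\[
 \E[R\ind_{R>m/2}\mid\mathcal G]\ge m-m/2=m/2
\]
and
\[
 \E[R\ind_{R>m/2}\mid\mathcal G]^2\le \E[R^2\mid\mathcal G]\,\Prob(R>m/2\mid\mathcal G).
\]
Together these give \(\Prob(R>m/2\mid\mathcal G)\ge m^2/(4Cm^2)=1/(4C)\). On \(\{R>m/2\}\) we have \(1-\ee^{-\tau R}\ge 1-\ee^{-\tau m/2}\), and \(1-\ee^{-\tau R}\ge0\) elsewhere. Therefore
\[
 1-\E[\ee^{-\tau R}\mid\mathcal G]\ge \frac{1}{4C}\bigl(1-\ee^{-\tau m/2}\bigr).
\]
Combining this with the first reduction gives the first claim.

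\textbf{Final statement.} Take, say, \(a=1/2\) in the second inequality. With \(m\to\infty\), \(\Var(R\mid\mathcal G)=o(m^2)\) and \(\beta=o(1)\) uniformly on the good histories, every error term is \(o(1)\). Hence \(\Prob(E\mid\mathcal G)\ge1-o(1)\) there.

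The only point needing care is measurability and the almost-sure nature of the conditional versions of Chebyshev, Cauchy--Schwarz and Paley--Zygmund. Here one uses regular conditional distributions, or the standard conditional forms of these inequalities, together with \(\mathcal G\)-measurability of \(m\) and \(\beta\). Otherwise the argument is elementary.
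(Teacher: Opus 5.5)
Your proof is correct and follows the paper's argument essentially step by step: the reduction $\Prob(E^c\mid\mathcal G)\le\E[\ee^{-\tau S^X}\mid\mathcal G]$ together with $S^T\ge R$, the conditional Paley--Zygmund bound $\Prob(R\ge m/2\mid\mathcal G)\ge 1/(4C)$ for the first estimate, and the split at $am$ followed by conditional Chebyshev for the second. One small labelling slip: the bound $\E[R\ind_{\{R>m/2\}}\mid\mathcal G]\ge m/2$ comes from $\E[R\ind_{\{R\le m/2\}}\mid\mathcal G]\le m/2$, not from Cauchy--Schwarz; Cauchy--Schwarz is used only in the following display.
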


\begin{proof}
Put \(A=\{R\geq m/2\}\). Since \(0<m<\infty\) almost surely,
\[
 \frac m2\leq\E[R\ind_A\mid\mathcal G]
 \leq\E[R^2\mid\mathcal G]^{1/2}\Prob(A\mid\mathcal G)^{1/2}.
\]
Thus \(\Prob(A\mid\mathcal G)\geq1/(4C)\). Since \(S^T\geq R\),
\[
 \E[\ee^{-\tau S^T}\mid\mathcal G]
 \leq1-\frac1{4C}\left(1-\ee^{-\tau m/2}\right).
\]
On the other hand, \(S^X=0\) on \(E^c\), so
\(\Prob(E^c\mid\mathcal G)\leq
\E[\ee^{-\tau S^X}\mid\mathcal G]\).  Combining the two estimates proves
the first bound. For the second, conditional Chebyshev gives
\[
 \E[\ee^{-\tau R}\mid\mathcal G]
 \le\Prob(R<am\mid\mathcal G)+\ee^{-\tau a m}
 \le\frac{\Var(R\mid\mathcal G)}{(1-a)^2m^2}+\ee^{-\tau a m}.
\]
Use $S^T\ge R$ and
$\Prob(E^c\mid\mathcal G)\le\E[\ee^{-\tau S^X}\mid\mathcal G]$.
\end{proof}

\paragraph{Retaining a concentrating count.}
Suppose $0\le\widetilde R\le R$, $m=\E[R\mid\mathcal G]>0$, and
\[
 \E[R^2\mid\mathcal G]\le(1+u)m^2+m,\qquad
 \E[R-\widetilde R\mid\mathcal G]\le q m,\qquad 0\le q<1.
\]
Then $\widetilde m=\E[\widetilde R\mid\mathcal G]\ge(1-q)m$ and
\[
 \frac{\Var(\widetilde R\mid\mathcal G)}{\widetilde m^2}
 \le\frac{1+u+m^{-1}}{(1-q)^2}-1.
\]
This follows from $\widetilde R^2\le R^2$ and the lower bound for
$\widetilde m$. Thus removing events with $q=o(1)$ preserves vanishing
relative variance when $u=o(1)$ and $m\to\infty$. For the Gaussian counts
below, $m,u,q$ can all be chosen deterministic: the complete Gaussian
array is independent of the conditioning sigma-algebra.

The following proposition iterates a differential inequality along a
fixed finite sequence of shifted cutoffs. Each step gains a factor
\(N^{-\rho}\), while the comparison errors accumulate over a logarithmic
time interval. For fixed \(J\) with \(J\rho>1/3\), the stated strict
exponent bounds give an additive error \(N^{-\theta}\) with
\(\theta>1/3\). The parameter \(q\) labels the threshold used in the
applications.

\begin{prop}[Iterated differential inequality]
\label{prop:shifted-gronwall}
Let \(q\in\R\), and let \(N\to\infty\) through the positive integers. For each sufficiently large \(N\), let
\((\Omega_N,\mathcal F_N,\Prob_N)\) be a probability space, let
\(H_N^X,H_N^G\in\mathcal F_N\), let \(T_N\geq0\), and let \(J\) be a fixed positive integer independent of \(N\). For \(j=0,\ldots,J\), let
\[
 f_{N,j}\colon[0,T_N]\to[0,1]
\]
be absolutely continuous, and let \(g_{N,j}\in[0,1]\). Let
\(C_T,C_d,\rho\) be positive finite constants, and let
\[
 \sigma>\frac13,\qquad c_G>\frac13,\qquad D>\frac13,
 \qquad J\rho>\frac13.
\]
Assume, for all sufficiently large \(N\), that:

\begin{enumerate}
\item \(T_N\leq C_T\log N\);

\item \(\Prob_N(H_N^X)\leq f_{N,0}(0)+N^{-D}\);

\item
\[
 \abs{f_{N,j}(T_N)-g_{N,j}}\leq N^{-D},
 \qquad 0\leq j\leq J;
\]

\item for \(0\leq j\leq J-1\) and almost every \(t\in[0,T_N]\),
\[
 \abs{f_{N,j}'(t)}
 \leq C_dN^{-\rho}f_{N,j+1}(t)+C_dN^{-\sigma};
\]

\item
\[
 g_{N,0}\leq\Prob_N(H_N^G)+N^{-c_G},
 \qquad
 g_{N,j}\leq N^{-c_G}\quad(1\leq j\leq J).
\]
\end{enumerate}
Then there are \(\theta>1/3\) and \(N_0\) such that
\[
 \Prob_N(H_N^X)
 \leq\Prob_N(H_N^G)+N^{-\theta}
 \qquad(N\geq N_0).
\]
\end{prop}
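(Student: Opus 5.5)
The argument is a downward induction on the index $j$, integrating the differential inequality backward from the terminal time $T_N$ to $0$. The bounds at $t=T_N$ come from hypotheses (3) and (5). The top index $J$ is used only through its terminal value and the trivial bound $f_{N,J}\le 1$. The goal is to control $f_{N,0}(0)$ by $g_{N,0}$ plus a polynomially small error, and then to close with hypotheses (2) and (5).

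\textbf{Step 1: uniform a priori bound.} Since every $f_{N,j}$ takes values in $[0,1]$, hypothesis (4) gives, for $0\le j\le J-1$ and all $t\in[0,T_N]$,
\[
 \abs{f_{N,j}(t)-f_{N,j}(T_N)}\le T_N\bigl(C_dN^{-\rho}+C_dN^{-\sigma}\bigr)\le C(\log N)\bigl(N^{-\rho}+N^{-\sigma}\bigr).
\]
This bound is crude. Its role is only to start the induction.

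\textbf{Step 2: induction.} Define $\varepsilon_j:=\sup_{t\in[0,T_N]}f_{N,j}(t)$ for $1\le j\le J$. The claim is
\[
 \varepsilon_j\le C(\log N)^{J}\bigl(N^{-c_G}+N^{-D}+N^{-\sigma}+N^{-(J-j)\rho}\bigr),
\]
with the convention $\varepsilon_J\le 1$ (the case $j=J$ is trivial). For $j<J$, integrate hypothesis (4) from $t$ to $T_N$ and use hypotheses (3) and (5):
\[
 f_{N,j}(t)\le g_{N,j}+N^{-D}+C_dT_N\bigl(N^{-\rho}\varepsilon_{j+1}+N^{-\sigma}\bigr)\le N^{-c_G}+N^{-D}+C\log N\,\bigl(N^{-\rho}\varepsilon_{j+1}+N^{-\sigma}\bigr).
\]
Substituting the bound for $\varepsilon_{j+1}$ shows that each step gains a factor $N^{-\rho}$ on the inherited term, at the cost of one logarithm. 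The additive terms $N^{-c_G},N^{-D},N^{-\sigma}$ are carried along with logarithmic factors. Since $J$ is fixed, the accumulated logarithms are at most $(\log N)^{J}$.

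\textbf{Step 3: the case $j=0$.} Apply the same integration at $t=0$, now with the two-sided bound $\abs{f_{N,0}(0)-g_{N,0}}\le N^{-D}+C\log N\,(N^{-\rho}\varepsilon_1+N^{-\sigma})$. The bound on $\varepsilon_1$ gives
\[
 N^{-\rho}\varepsilon_1\lesssim(\log N)^J\bigl(N^{-\rho-c_G}+N^{-\rho-D}+N^{-\rho-\sigma}+N^{-J\rho}\bigr),
\]
so $f_{N,0}(0)\le g_{N,0}+C(\log N)^{J+1}N^{-\min(D,\sigma,c_G,J\rho)}$. Hypothesis (2) then yields
\[
 \Prob_N(H_N^X)\le f_{N,0}(0)+N^{-D}\le\Prob_N(H_N^G)+N^{-c_G}+C(\log N)^{J+1}N^{-\kappa},
\]
where $\kappa=\min(D,\sigma,c_G,J\rho)>1/3$. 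Any $\theta\in(1/3,\kappa)$ absorbs the logarithms for $N\ge N_0$, which gives the conclusion.

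\textbf{Main obstacle.} The only delicate point is bookkeeping: the logarithmic losses from integrating over $[0,T_N]$ must stay polynomially negligible. This works because $J$ is fixed and every exponent in the minimum is strictly greater than $1/3$. No genuine difficulty is expected, but one must use the $f_{N,j}\le 1$ bound only at the top level $j=J$ and never let it re-enter at lower levels.
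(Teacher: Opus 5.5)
Your proof is correct and follows the route the paper describes. You integrate the differential inequality backward from $T_N$ through the fixed chain of shifted levels, gaining $N^{-\rho}$ at the cost of one $\log N$ per level and starting from the trivial bound $f_{N,J}\le1$, so that $J\rho>1/3$ together with $\sigma,c_G,D>1/3$ yields some $\theta>1/3$. Two cosmetic points: Step 1 is not needed, and for the inductive step to close literally the claim should carry $(\log N)^{J-j}$ with a $j$-dependent constant rather than a fixed $(\log N)^{J}$.
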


\supplementproof{supp.G.4}{Section G.4}

For the common-filtration refinement in
Appendix~\ref{app:recurrence-refinements}, we use the following conditional
form of recurrence.

\begin{lem}[Conditional Borel--Cantelli lemma]\label{lem:conditional-recurrence}
Let $(\mathcal A_l)_{l\ge0}$ be a filtration, let
$E_l\in\mathcal A_l$ and $H_l\in\mathcal A_{l-1}$, and let
$p_l\in[0,1]$ be deterministic. Suppose
\[
 \Prob(E_l\mid\mathcal A_{l-1})\ge p_l\quad\hbox{on }H_l,
 \qquad \sum_l\Prob(H_l^c)<\infty,\qquad\sum_l p_l=\infty.
\]
Then $E_l$ occurs infinitely often almost surely.
\end{lem}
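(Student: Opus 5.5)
The plan is to reduce to the second Borel--Cantelli lemma via Lévy's extension. Define the martingale differences
\[
 D_l=\ind_{E_l}-\Prob(E_l\mid\mathcal A_{l-1}),
\]
or rather work directly with Lévy's conditional Borel--Cantelli theorem: for events $E_l\in\mathcal A_l$,
\[
 \set{E_l\ \text{i.o.}}=\set{\sum_l\Prob(E_l\mid\mathcal A_{l-1})=\infty}
\]
up to a null set. I would either cite it or prove the needed inclusion directly, as follows.

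\emph{Step 1 (from the hypotheses to divergence of conditional probabilities).} Since $\sum_l\Prob(H_l^c)<\infty$, the ordinary first Borel--Cantelli lemma shows that almost surely $H_l$ holds for all $l\ge l_0(\omega)$. On that event,
\[
 \sum_l\Prob(E_l\mid\mathcal A_{l-1})\ge\sum_{l\ge l_0}p_l=\infty,
\]
because removing finitely many terms from a divergent series of terms in $[0,1]$ preserves divergence. Hence the conditional sum diverges almost surely.

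\emph{Step 2 (the conditional Borel--Cantelli inclusion).} It remains to show that $\sum_l\Prob(E_l\mid\mathcal A_{l-1})=\infty$ implies $E_l$ i.o. almost surely. The self-contained route is the following. For fixed $n$, consider
\[
 \Prob\Big(\bigcap_{l=n}^{k}E_l^c\ \Big|\ \mathcal A_{n-1}\Big).
\]
Iterated conditioning, using $E_l\in\mathcal A_l$ to peel off one factor at a time, gives
\[
 \E\Big[\prod_{l=n}^k\ind_{E_l^c}\Big]
 =\E\Big[\prod_{l=n}^{k-1}\ind_{E_l^c}\bigl(1-\Prob(E_k\mid\mathcal A_{k-1})\bigr)\Big].
\]
The conditional probabilities are random, however, so the product does not factor cleanly. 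To handle this I would use the standard martingale device instead. Put
\[
 M_k=\sum_{l\le k}\bigl(\ind_{E_l}-\Prob(E_l\mid\mathcal A_{l-1})\bigr).
\]
This is a martingale with increments bounded by $1$. For such martingales, almost surely either $M_k$ converges, or $\limsup_k M_k=+\infty$ and $\liminf_k M_k=-\infty$; this follows by stopping at the first time $M$ exceeds a level $\pm c$ and applying martingale convergence to the stopped process, which is bounded on one side. In either case, on the event where $\sum_l\Prob(E_l\mid\mathcal A_{l-1})=\infty$:
\begin{itemize}
\item if $M_k$ converges, then $\sum_{l\le k}\ind_{E_l}=M_k+\sum_{l\le k}\Prob(E_l\mid\mathcal A_{l-1})\to\infty$;
\item if $\limsup_k M_k=+\infty$, then $\sum_{l\le k}\ind_{E_l}\ge M_k$ is unbounded.
\end{itemize}
Either way, infinitely many $E_l$ occur.

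The only potentially delicate point is the dichotomy for martingales with bounded increments. It is routine: for $c>0$, let $\tau_c=\inf\set{k: M_k<-c}$. The stopped martingale $M_{k\wedge\tau_c}$ is bounded below by $-c-1$, so it converges almost surely. On $\set{\inf_k M_k>-c}$ we have $\tau_c=\infty$, so $M$ itself converges there. Letting $c\to\infty$ along the integers shows that $M$ converges almost surely on $\set{\inf_k M_k>-\infty}$. Combining this with the symmetric argument gives the dichotomy. Steps 1 and 2 together prove the lemma.
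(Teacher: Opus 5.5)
Your proof is correct, but it takes a different route from the paper. You first use $\sum_l\Prob(H_l^c)<\infty$ to show that $\sum_l\Prob(E_l\mid\mathcal A_{l-1})$ diverges almost surely. You then prove L\'evy's conditional Borel--Cantelli inclusion via the bounded-increment martingale dichotomy. Both steps are sound: the stopped martingale is bounded below by $-c-1$, and your two cases cover everything.

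The paper instead completes the product argument you abandoned. It replaces $E_l$ by $A_l=E_l\cup H_l^c\in\mathcal A_l$. Because $H_l\in\mathcal A_{l-1}$, one has $\Prob(A_l\mid\mathcal A_{l-1})=1$ on $H_l^c$ and $\Prob(A_l\mid\mathcal A_{l-1})\ge p_l$ on $H_l$. So the lower bound $p_l$ holds almost surely on the whole space, with a deterministic constant. Peeling off one factor at a time then gives
\[
\Prob\Bigl(\bigcap_{l=n}^m A_l^c\Bigr)\le\prod_{l=n}^m(1-p_l)\le\exp\Bigl(-\sum_{l=n}^m p_l\Bigr),
\]
so $A_l$ occurs infinitely often. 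The ordinary Borel--Cantelli lemma for $H_l^c$ converts this into $E_l$ infinitely often. The obstruction you identified, that the conditional probabilities are random, is exactly what this augmentation removes.

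The two approaches trade generality for elementarity. The paper's argument uses only the tower property and $1-x\le e^{-x}$, and it relies on $p_l$ being deterministic. Yours invokes the martingale convergence theorem. In return, it proves the stronger statement that $E_l$ occurs infinitely often almost surely on $\{\sum_l\Prob(E_l\mid\mathcal A_{l-1})=\infty\}$, with no deterministic lower bound needed.
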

\begin{proof}
Put $A_l=E_l\cup H_l^c$. Since $H_l\in\mathcal A_{l-1}$,
$\Prob(A_l\mid\mathcal A_{l-1})\ge p_l$ almost surely. Successive
conditioning gives, for $m\ge n$,
\[
 \Prob\left(\bigcap_{l=n}^m A_l^c\right)
 \le\prod_{l=n}^m(1-p_l)
 \le\exp\left(-\sum_{l=n}^m p_l\right).
\]
Letting $m\to\infty$ and then taking the countable union over $n$ shows
that $A_l$ occurs infinitely often. Borel--Cantelli implies that only
finitely many $H_l^c$ occur, and proves the assertion.
\end{proof}

Appendix~\ref{app:recurrence-refinements} constructs a common filtration
for several threshold sequences.

\subsection{Monotonicity of cutoff functions}

\begin{lem}[Monotonicity in the threshold]
\label{lem:downward-threshold}\label{lem:right-transition-support}
Let $a_1\le a_0\le B$ and let $g_\omega\ge0$ be integrable on $[a_1,B]$.
Put
\[
 X_0(\omega)=\int_{a_0}^B g_\omega(y)\,\dd y,\qquad
 X_1(\omega)=\int_{a_1}^B g_\omega(y)\,\dd y.
\]
For a nonempty set $S\subset\R$ with finite $\inf S$, and
$\psi\colon\R\to[0,1]$ equal to one on $[\inf S,\infty)$,
\[
 \ind_{\{X_0(\omega)\in S\}}\le\psi(X_1(\omega)).
\]
In particular, if $\{x:\phi(x)\ne0\}\subset[s_0,\infty)$ and
$\psi=1$ on $[s_0,\infty)$, then
$\ind_{\{\phi(X_0(\omega))\ne0\}}\le\psi(X_1(\omega))$.
\end{lem}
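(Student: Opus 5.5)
The argument is pointwise in $\omega$ and uses only that the integrand is nonnegative. The plan is first to show $X_1(\omega)\ge X_0(\omega)$, and then to check that a value of $X_0$ lying in $S$ pushes $X_1$ into the region where $\psi$ equals one.

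\emph{Step 1 (monotonicity).} Since $a_1\le a_0\le B$ and $g_\omega\ge0$ is integrable on $[a_1,B]$, additivity of the integral gives
\[
 X_1(\omega)=\int_{a_1}^{a_0}g_\omega(y)\,\dd y+X_0(\omega)\ge X_0(\omega).
\]
Both integrals are finite, so the difference is well defined.

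\emph{Step 2 (the indicator bound).} Fix $\omega$.
\begin{itemize}
\item If $X_0(\omega)\notin S$, the left side is $0$, and $\psi\ge0$ finishes this case.
\item If $X_0(\omega)\in S$, then $X_0(\omega)\ge\inf S$, which is finite because $S$ is nonempty with finite infimum. Step 1 then gives $X_1(\omega)\ge\inf S$. Since $\psi=1$ on $[\inf S,\infty)$, we get $\psi(X_1(\omega))=1$, which equals the left side.
\end{itemize}

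\emph{Step 3 (the ``in particular'' clause).} Take $S=\{x:\phi(x)\ne0\}\subset[s_0,\infty)$. If $S$ is empty, the indicator vanishes identically and the claim is trivial. Otherwise $\inf S\ge s_0$ is finite, and $\psi=1$ on $[s_0,\infty)\supset[\inf S,\infty)$, so Step 2 applies directly.

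No step is a real obstacle. The only points needing care are the finiteness of $\inf S$ and the empty-$S$ case in the corollary.
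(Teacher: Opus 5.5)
Your proof is correct and follows the paper's argument: nonnegativity gives $X_1=X_0+\int_{a_1}^{a_0}g_\omega\ge X_0$, so on $\{X_0\in S\}$ you get $X_1\ge\inf S$ and hence $\psi(X_1)=1$. The corollary then follows by taking $S=\{\phi\ne0\}$ and handling the empty case separately, and your remark that $\inf S\ge s_0$ gives $[\inf S,\infty)\subset[s_0,\infty)$ makes explicit a step the paper leaves implicit.
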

\begin{proof}
Nonnegativity gives $X_1=X_0+\int_{a_1}^{a_0}g_\omega\ge X_0$.
On the indicated event, $X_1\ge\inf S$ and $\psi(X_1)=1$.
The last assertion follows with $S=\{x:\phi(x)\ne0\}$; if this set
is empty its indicator is zero.
\end{proof}

\section{Smooth spectral statistics and resolvent identities}\label{sec:detectors}

We approximate eigenvalue events by Poisson-smoothed counts, then apply
weighted cutoff functions and differentiate the resulting spectral statistics.
The weighted construction is stated first, followed by its application to
matrix eigenvalues and the necessary resolvent identities.
We use $\mathrm{c}$ as a subscript or superscript for cutoff parameters
and $\Phi$ for conditional Laplace transforms.

On a polynomial-size grid, we control the sum of cutoff derivatives by
assigning weight \(R^{-r}\) to level \(r\). When a derivative of order
one, two, or three is nonzero at \(r<J\), the next cutoff equals one.
The identity \(R^{-r}=R R^{-(r+1)}\) therefore bounds the differentiated
term by the next contribution to the weighted count. The remaining
terminal term carries weight \(R^{-J}\). At the comparison endpoint,
a retained eigenvalue event activates level \(r=0\), contributing one
to the same count.

\begin{lem}[Weighted cutoff functions]\label{lem:ladder-absorption}
Let \(I\) be a finite set of cardinality \(L\), let \(J\) be a positive integer, let \(R>1\), and let \(\rho\in C^3(\R;[0,1])\) satisfy
\[
 \rho(y)=1\quad\text{for }y\leq -1,
 \qquad
 \rho(y)=0\quad\text{for }y\geq 0.
\]
Put \(\lambda_r=R^{-r}\) for \(0\leq r\leq J\). For each \(i\in I\), let \(y_i\) be three times differentiable near \(h=0\), and define
\begin{equation}
 A_{i,r}(h)=\lambda_r\rho\bigl(y_i(h)+J-r\bigr),
 \qquad
 S(h)=\sum_{i\in I}\sum_{r=0}^J A_{i,r}(h).
 \label{eq:ladder-S}
\end{equation}
Then \(0\leq A_{i,r}\leq 1\). If \(0\leq r<J\), \(1\leq k\leq3\), and
\[
 \rho^{(k)}\bigl(y_i(0)+J-r\bigr)\neq0,
\]
then
\begin{equation}
 \rho\bigl(y_i(0)+J-(r+1)\bigr)=1.
 \label{eq:ladder-absorption}
\end{equation}
Consequently, writing \(C_k=\norm{\rho^{(k)}}_\infty\), if
\[
 \abs{y_i'(0)}\leq B_1,
 \qquad
 \abs{y_i''(0)}\leq B_2
 \qquad (i\in I),
\]
then
\begin{equation}
 \abs{S'(0)}
 \leq RC_1B_1S(0)+L\lambda_JC_1B_1,
 \label{eq:ladder-first}
\end{equation}
\begin{equation}
 \abs{S''(0)}
 \leq R(C_2B_1^2+C_1B_2)S(0)
   +L\lambda_J(C_2B_1^2+C_1B_2).
 \label{eq:ladder-second}
\end{equation}
After removing the third-derivative contribution
\[
 C_{\mathrm{cub}}
 =
 \sum_{i\in I}\sum_{r=0}^J
 \lambda_r\rho'\bigl(y_i(0)+J-r\bigr)y_i'''(0),
\]
the remaining third derivative \(B_{\mathrm{nc}}=S'''(0)-C_{\mathrm{cub}}\) satisfies
\begin{equation}
 \begin{split}
 \abs{B_{\mathrm{nc}}}
 &\leq R(C_3B_1^3+3C_2B_1B_2)S(0)\\
 &\quad
 +L\lambda_J(C_3B_1^3+3C_2B_1B_2).
 \end{split}
 \label{eq:ladder-third}
\end{equation}
Thus every lower-derivative term is controlled by the full weighted count, with the only unabsorbed term carrying the final weight \(R^{-J}\).

There is also an expectation-level multiplier form that retains a joint Laplace factor. Let \(\mathcal G\) be a sigma-algebra, let \(\Omega_0\) be an event, and let \(Q_{i,r}\) be complex random variables such that
\[
 \abs{Q_{i,r}}\leq K_{\mathrm{loc}}\quad\text{on }\Omega_0,
 \qquad
 \abs{Q_{i,r}}\leq K_{\mathrm{abs}}\quad\text{almost surely}.
\]
For \(\tau>0\), put
\[
 V_\tau=\exp(-\tau S(0)),\qquad
 \Phi(\tau)=\E[V_\tau\mid\mathcal G],
\]
and
\[
 D(\tau)=-\tau\partial_\tau\Phi(\tau)
 =\E[\tau S(0)V_\tau\mid\mathcal G].
\]
For \(k\in\{1,2,3\}\), define
\[
 T_k
 =
 \sum_{i\in I}\sum_{r=0}^J
 \lambda_r\rho^{(k)}\bigl(y_i(0)+J-r\bigr)Q_{i,r}.
\]
For every fixed \(\tau_*>0\) and every \(0<\tau\leq\tau_*\),
\begin{equation}
 \begin{split}
 \tau\abs{\E[V_\tau T_k\mid\mathcal G]}
 &\leq RC_kK_{\mathrm{loc}}D(\tau)\\
 &\quad+C_k\tau_*L
 \left[
 \lambda_JK_{\mathrm{loc}}
 +\frac{R}{R-1}K_{\mathrm{abs}}
   \Prob(\Omega_0^c\mid\mathcal G)
 \right].
 \end{split}
 \label{eq:ladder-expectation}
\end{equation}
The same assertion holds when the variables and bounds depend measurably on an interpolation time.

Finally, suppose that \(y_i(0)\) is a normalized target spectral coordinate for which \(y_i(0)<0\) implies the desired target event at \(i\), and suppose that at terminal time an eigenvalue event implies \(y_i(T)\leq-J-1\). Then \(S(0)>0\) implies the union of the target events, while
\[
 A_{i,0}(T)=1
\]
on the eigenvalue event. Hence the cutoff sum retains unit contribution at the comparison endpoint at level \(r=0\), although its final derivative layer has weight \(R^{-J}\).
\end{lem}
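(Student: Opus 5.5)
The proof is elementary and rests on the support structure of \(\rho\). Since \(0\le\rho\le1\) and \(\lambda_r=R^{-r}\le1\), we get \(0\le A_{i,r}\le1\) immediately. For the absorption property \eqref{eq:ladder-absorption}: \(\rho\) is constant on \((-\infty,-1]\) and on \([0,\infty)\), so every \(\rho^{(k)}\) with \(k\ge1\) vanishes off \((-1,0)\). Hence \(\rho^{(k)}(y_i(0)+J-r)\neq0\) forces \(y_i(0)+J-r<0\), so \(y_i(0)+J-(r+1)<-1\), and therefore \(\rho=1\) there.

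\textbf{Pathwise derivative bounds.} Differentiate \(A_{i,r}\) by the chain rule:
\begin{align*}
A_{i,r}'&=\lambda_r\rho'\,y_i',\\
A_{i,r}''&=\lambda_r(\rho''y_i'^2+\rho'y_i''),\\
A_{i,r}'''&=\lambda_r(\rho'''y_i'^3+3\rho''y_i'y_i''+\rho'y_i''').
\end{align*}
The last summand of the third derivative is exactly the \(C_{\mathrm{cub}}\) contribution, and it is removed. Every remaining term with \(r<J\) has some \(\rho^{(k)}\neq0\) at level \(r\). In that case we bound \(\lambda_r=R\lambda_{r+1}=R\,A_{i,r+1}(0)\) and replace the derivative factors by \(C_kB_1^{a}B_2^{b}\). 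Terms with \(r=J\) are bounded by \(\lambda_J\) times the same constant. Summing over \(r<J\) gives at most \(R\sum_{i,r}A_{i,r+1}(0)\le RS(0)\) times the constant, because each \(A_{i,r+1}\) appears once per \(i\) and \(A\ge0\). Summing the \(r=J\) terms over \(i\) gives \(L\lambda_J\) times the constant. This proves \eqref{eq:ladder-first}, \eqref{eq:ladder-second} and \eqref{eq:ladder-third}.

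\textbf{Expectation form.} Split \(T_k\) into the terms with \(r<J\) and those with \(r=J\), and also split on \(\Omega_0\) versus \(\Omega_0^c\).

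On \(\Omega_0\), the \(r<J\) terms are bounded as before by \(C_kK_{\mathrm{loc}}RS(0)\). Multiplying by \(\tau V_\tau\) and taking conditional expectation gives \(RC_kK_{\mathrm{loc}}D(\tau)\).

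The \(r=J\) terms on \(\Omega_0\) contribute at most \(\tau_*C_kL\lambda_JK_{\mathrm{loc}}\), using \(V_\tau\le1\).

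On \(\Omega_0^c\) there is no absorption bound. Instead we use \(\abs{Q}\le K_{\mathrm{abs}}\) and \(\sum_r\lambda_r\le R/(R-1)\), which gives \(\tau_*C_kLK_{\mathrm{abs}}\tfrac{R}{R-1}\Prob(\Omega_0^c\mid\mathcal G)\).

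The main care point is here: on \(\Omega_0^c\) we must not use \(S(0)\) at all, but bound the whole sum crudely. Measurability in an interpolation time is immediate because all bounds are pointwise.

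\textbf{Endpoint statement.} If \(S(0)>0\), then some \(A_{i,r}(0)>0\), so \(y_i(0)+J-r<0\) and hence \(y_i(0)<r-J\le0\), which is the target event at \(i\). On the terminal eigenvalue event, \(y_i(T)+J\le-1\), so \(\rho=1\) and \(A_{i,0}(T)=\lambda_0=1\).
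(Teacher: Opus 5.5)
Your proof is correct and is essentially the paper's argument. The derivatives of $\rho$ vanish off $(-1,0)$, so a nonzero derivative at a level $r<J$ forces the next cutoff to equal one. The identity $\lambda_r=R\lambda_{r+1}$ then bounds each differentiated term by $R$ times the next-level contribution to $S(0)$, leaving only the terminal $L\lambda_J$ term. The expectation form follows from the same pathwise bound on $\Omega_0$ together with the crude geometric bound $\sum_r\lambda_r\le R/(R-1)$ on $\Omega_0^c$. The identity $D(\tau)=-\tau\partial_\tau\Phi(\tau)$, which you did not check, is immediate because $0\le S(0)\le LR/(R-1)$ is bounded.
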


\supplementproof{supp.G.8}{Section G.8}

The statistic \(Z_i\) combines the Poisson-smoothed count in
\([a_i,B_i]\), at resolution \(\eta_i\), with a smooth upper-tail term
\(g_i\) that records eigenvalues beyond \(B_i+\eta_i\).
A squared operator norm above the threshold forces a positive count;
a sufficiently small count implies that the norm is below the threshold.
After centering and rescaling, the weighted cutoff sum has the target
support and terminal contribution needed for comparison. A logarithmic
truncation order makes \(\abs I R^{-J}\) polynomially small while
\(\delta^{-1}=O(\log N)\).

\begin{prop}[Smoothed eigenvalue counts]\label{prop:poisson-detector}
Let \(I\) be a finite index set. For every \(i\in I\), let \(m_i,n_i\) be positive integers, let \(t_i,a_i,B_i\in\R\), and let \(\eta_i>0\) satisfy
\begin{equation}
 a_i<t_i\leq B_i,
 \qquad
 t_i-a_i\geq4\eta_i.
 \label{eq:poisson-window}
\end{equation}
Choose \(g_i\in C^\infty(\R;[0,1])\) such that
\[
 g_i(x)=0\quad\text{for }x\leq B_i,
 \qquad
 g_i(x)=1\quad\text{for }x\geq B_i+\eta_i.
\]
For an \(m_i\times n_i\) complex matrix \(W\), let
\(\lambda_{i,s}(W)\), \(1\leq s\leq\min(m_i,n_i)\), denote its squared singular values, including zero values with multiplicity, and define
\[
 Z_i(W)
 =
 \sum_s\bigl[P_i(\lambda_{i,s}(W))+g_i(\lambda_{i,s}(W))\bigr],
\]
where
\begin{equation}
 P_i(x)
 =
 \frac{1}{\pi}\int_{a_i}^{B_i}
 \frac{\eta_i}{(x-u)^2+\eta_i^2}\dd u.
 \label{eq:poisson-detector}
\end{equation}
Then
\begin{equation}
 \norm{W}_{\op}^2\geq t_i
 \quad\Longrightarrow\quad
 Z_i(W)\geq\frac18.
 \label{eq:poisson-detects}
\end{equation}
Consequently,
\[
 Z_i(W)<\frac1{16}
 \quad\Longrightarrow\quad
 \norm{W}_{\op}^2<t_i.
\]
Each \(Z_i\) is a \(C^\infty\) function of the real and imaginary parts of the entries of \(W\), including at repeated singular values.

Fix \(R>1\), a function \(\rho\in C^3(\R;[0,1])\) satisfying
\[
 \rho(y)=1\quad\text{for }y\leq-1,
 \qquad
 \rho(y)=0\quad\text{for }y\geq0,
\]
and a positive integer \(J\). Put
\[
 a_*=\frac1{16},
 \qquad
 \delta=\frac{a_*}{J+2},
 \qquad
 y_i(W)=\frac{Z_i(W)-a_*}{\delta},
\]
\[
 A_{i,r}(W)
 =
 R^{-r}\rho\bigl(y_i(W)+J-r\bigr),
 \qquad 0\leq r\leq J,
\]
and
\begin{equation}
 S(W)
 =
 \sum_{i\in I}\sum_{r=0}^J A_{i,r}(W).
 \label{eq:poisson-ladder}
\end{equation}
For arbitrary target matrices \(W_i^X\),
\begin{equation}
 S\bigl((W_i^X)_{i\in I}\bigr)>0
 \quad\Longrightarrow\quad
 \norm{W_i^X}_{\op}^2<t_i
 \quad\text{for at least one }i.
 \label{eq:poisson-target}
\end{equation}
For arbitrary terminal matrices \(W_i^T\), if
\[
 Z_i(W_i^T)\leq\delta,
\]
then
\begin{equation}
 A_{i,0}(W_i^T)=1.
 \label{eq:poisson-terminal}
\end{equation}

The cutoff sum also has the following derivative support estimates. Suppose that \(W_i(h)\) are scalar \(C^3\) perturbations near \(h=0\), and write
\[
 y_i(h)=y_i(W_i(h)).
\]
Put
\[
 B_1=\max_{i\in I}\abs{y_i'(0)},
 \qquad
 B_2=\max_{i\in I}\abs{y_i''(0)},
 \qquad
 C_k=\norm{\rho^{(k)}}_\infty,
\]
and
\[
 C_{\mathrm{cub}}
 =
 \sum_{i\in I}\sum_{r=0}^J
 R^{-r}\rho'\bigl(y_i(0)+J-r\bigr)y_i'''(0).
\]
Then
\begin{equation}
 \abs{S'(0)}
 \leq RC_1B_1S(0)+\abs{I}R^{-J}C_1B_1,
 \label{eq:poisson-first}
\end{equation}
\begin{equation}
 \begin{split}
 \abs{S''(0)}
 &\leq R(C_2B_1^2+C_1B_2)S(0)\\
 &\quad+\abs{I}R^{-J}(C_2B_1^2+C_1B_2),
 \end{split}
 \label{eq:poisson-second}
\end{equation}
and
\begin{equation}
 \begin{split}
 \abs{S'''(0)-C_{\mathrm{cub}}}
 &\leq R(C_3B_1^3+3C_2B_1B_2)S(0)\\
 &\quad+\abs{I}R^{-J}(C_3B_1^3+3C_2B_1B_2).
 \end{split}
 \label{eq:poisson-third}
\end{equation}

Finally, suppose that the construction depends on an integer \(N\) tending to infinity, that
\[
 \abs{I}\leq N^\theta
\]
for a fixed \(\theta\), and fix \(A>0\). If
\[
 J=\left\lceil\zeta\log N\right\rceil,
 \qquad
 \zeta>\frac{\theta+A}{\log R},
\]
then, for all sufficiently large \(N\),
\[
 \abs{I}R^{-J}\leq N^{-A},
 \qquad
 \delta^{-1}=O(\log N).
\]
If, for a fixed \(\kappa>0\), a terminal event gives
\[
 Z_i(W_i^T)\leq2N^{-\kappa},
\]
then that event implies \eqref{eq:poisson-terminal} for all sufficiently large \(N\).
\end{prop}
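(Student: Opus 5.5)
The plan is to check the assertions in order: detection, smoothness, target support, terminal contribution, derivative estimates, asymptotics. All but detection and smoothness are bookkeeping on top of Lemma~\ref{lem:ladder-absorption}.

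\emph{Detection.} Let $\lambda=\norm{W}_{\op}^2$, which is the largest squared singular value. Since every summand of $Z_i$ is nonnegative, it suffices to show $P_i(\lambda)+g_i(\lambda)\ge 1/8$ whenever $\lambda\ge t_i$. Integrating the Poisson kernel gives
\[
 P_i(x)=\frac1\pi\Bigl[\arctan\frac{x-a_i}{\eta_i}-\arctan\frac{x-B_i}{\eta_i}\Bigr].
\]
We split into three cases.
\begin{itemize}
\item If $\lambda\ge B_i+\eta_i$, then $g_i(\lambda)=1$.
\item If $t_i\le\lambda\le B_i$, then \eqref{eq:poisson-window} gives $(\lambda-a_i)/\eta_i\ge4$, while $\lambda-B_i\le0$. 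Hence $P_i(\lambda)\ge\pi^{-1}\arctan4>1/8$.
\item If $B_i<\lambda<B_i+\eta_i$, the first arctangent is still at least $\arctan 4$ and the second is at most $\pi/4$. Hence $P_i(\lambda)\ge\pi^{-1}(\arctan4-\pi/4)\approx0.17>1/8$.
\end{itemize}
The contrapositive gives the implication $Z_i<1/16\Rightarrow\norm{W}_{\op}^2<t_i$.

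\emph{Smoothness.} Write $Z_i(W)=\Tr F_i(W^*W)$ or $\Tr F_i(WW^*)$, choosing the side of size $\min(m_i,n_i)$, with $F_i=P_i+g_i$. This trace counts the zero squared singular values with multiplicity, as required. The matrix $W^*W$ is a polynomial in the real and imaginary parts of the entries of $W$. For $F_i\in C^\infty$, the map $H\mapsto\Tr F_i(H)$ is $C^\infty$ on Hermitian matrices, including at repeated eigenvalues, for example via the Helffer--Sj\"ostrand formula, or because the Poisson part is analytic. Composing the two gives the claimed smoothness of $Z_i$.

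\emph{Target support and terminal contribution.} If $S>0$, some $A_{i,r}>0$, so $y_i+J-r<0$. This forces $y_i<r-J\le0$, i.e.\ $Z_i<a_*=1/16$, and detection gives \eqref{eq:poisson-target}. If $Z_i(W_i^T)\le\delta$, then $y_i\le1-a_*/\delta=1-(J+2)=-J-1$. Hence $\rho(y_i+J)=1$, which is \eqref{eq:poisson-terminal}.

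\emph{Derivative estimates.} The bounds \eqref{eq:poisson-first}--\eqref{eq:poisson-third} are exactly \eqref{eq:ladder-first}--\eqref{eq:ladder-third} of Lemma~\ref{lem:ladder-absorption}, applied with $L=\abs I$, $\lambda_r=R^{-r}$ and $y_i(h)=y_i(W_i(h))$. The functions $y_i(h)$ are $C^3$ because $Z_i$ is smooth and $W_i$ is $C^3$.

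\emph{Asymptotics.} With $J=\lceil\zeta\log N\rceil$ we get
\[
 \abs IR^{-J}\le N^{\theta}N^{-\zeta\log R}\le N^{-A},
\]
since $\zeta\log R>\theta+A$. Also $\delta^{-1}=16(J+2)=O(\log N)$, so $\delta\gtrsim(\log N)^{-1}$. Since $2N^{-\kappa}=o\bigl((\log N)^{-1}\bigr)$, eventually $2N^{-\kappa}\le\delta$, and the terminal statement follows.

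The only genuinely delicate points are the constant check in the middle case of detection, where $\lambda$ lies just above $B_i$ and $\arctan 4-\pi/4$ must still beat $\pi/8$, and justifying smoothness of $Z_i$ at repeated or zero singular values. I would handle the latter by the trace-functional-calculus argument above rather than through individual eigenvalues.
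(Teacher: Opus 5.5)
Your proposal is correct and follows essentially the paper's route. Detection comes from the closed arctangent form of the Poisson integral with a case split at $B_i$ and $B_i+\eta_i$; your bound $\pi^{-1}(\arctan4-\pi/4)>1/8$ in the band just above $B_i$ is valid, and the proof of Proposition~\ref{prop:full-grid-comparison} uses the slightly sharper $\pi^{-1}(\arctan5-\arctan1)$ for the same band. Smoothness comes from trace functional calculus on the smaller Gram matrix (for Helffer--Sj\"ostrand, first modify $g_i$ outside a neighbourhood of the locally bounded spectrum so it has compact support), and the target, terminal, derivative and asymptotic claims reduce directly to Lemma~\ref{lem:ladder-absorption} with $L=\abs I$ and $\lambda_r=R^{-r}$.
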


\supplementproof{supp.G.9}{Section G.9}

Lemma~\ref{lem:all-order-ladder} controls the mixed derivatives of the
Laplace-weighted cutoff coefficient by an \(\ell^1\) tensor bound.
The smoothed-statistic rescaling contributes \(\delta^{-(m+1)}\) after
\(m\) derivatives. For two disjoint families, the mixed tensor is bounded
by the product of the two one-family estimates. These bounds sum the
derivative allocations arising in the cumulant expansion.

\begin{lem}[Derivative estimates]\label{lem:all-order-ladder}
Let \(I\) be a finite nonempty set, let \(J\) be a positive integer, let \(R_{\mathrm{c}}>1\), and let \(\rho\in C^\infty(\R;[0,1])\) satisfy
\[
 \rho(u)=1\quad\text{for }u\leq-1,
 \qquad
 \rho(u)=0\quad\text{for }u\geq0.
\]
Put \(\lambda_r=R_{\mathrm{c}}^{-r}\) for \(0\leq r\leq J\). For
\(y=(y_i)_{i\in I}\in\R^I\), define
\[
 A_{i,r}(y)=\lambda_r\rho(y_i+J-r),
 \qquad
 S(y)=\sum_{i\in I}\sum_{r=0}^J A_{i,r}(y).
\]
Assume that
\[
 \epsilon_{\mathrm{fin}}=\abs I\lambda_J\leq1.
\]
Fix a finite \(\tau_*>0\). For \(0<\tau\leq\tau_*\), define
\[
 G_{i,r,\tau}(y)
 =
 \tau\exp(-\tau S(y))
 \lambda_r\rho'(y_i+J-r).
\]
For every nonnegative integer \(m\), there is a finite constant
\(C_{m,\tau_*}\), depending only on
\[
 m,\quad \tau_*,\quad R_{\mathrm{c}},
 \quad\text{and}\quad
 \max_{1\leq k\leq m+1}\norm{\rho^{(k)}}_\infty,
\]
such that
\begin{equation}
 \sum_{i_0\in I}\sum_{r_0=0}^J
 \sum_{i_1,\ldots,i_m\in I}
 \left|
 \partial_{y_{i_1}}\cdots\partial_{y_{i_m}}
 G_{i_0,r_0,\tau}(y)
 \right|
 \leq C_{m,\tau_*}
 \label{eq:all-order}
\end{equation}
for every \(y\in\R^I\) and \(0<\tau\leq\tau_*\). When \(m=0\), the last index sum and the derivatives are omitted.

Let \(\delta>0\) and \(a_i\in\R\) for \(i\in I\). Put
\[
 y_i=\frac{q_i-a_i}{\delta},
 \qquad
 \widetilde G_{i,r,\tau}(q)
 =
 \delta^{-1}G_{i,r,\tau}(y(q)).
\]
Then the left side of \eqref{eq:all-order}, with \(\widetilde G\) and \(q\)-derivatives in place of \(G\) and \(y\)-derivatives, is at most
\begin{equation}
 C_{m,\tau_*}\delta^{-(m+1)}.
 \label{eq:all-order-scaled}
\end{equation}
If two disjoint copies of this construction are used, possibly with different finite index sets, cutoff sum depths, smoothed statistic scales, and Laplace parameters in \((0,\tau_*]\), then the \(\ell^1\)-sum of any mixed derivative tensor of
\[
 \widetilde G_{i,r,\tau}(q)
 \widetilde G'_{j,s,\tau'}(q')
\]
is bounded by the product of the corresponding one-copy constants and spectral-scale powers.
\end{lem}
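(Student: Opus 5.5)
The plan is to prove \eqref{eq:all-order} by writing $G_{i_0,r_0,\tau}$ as a product of a Laplace factor and a single cutoff factor, expanding the mixed derivative by Leibniz and Fa\`a di Bruno, and summing the resulting index sums through the ladder absorption of Lemma~\ref{lem:ladder-absorption}. Since $\partial_{y_j}A_{i,r}=0$ unless $j=i$, every derivative of $S$ is diagonal:
\[
\partial_{y_j}^kS=\sum_{r=0}^J\lambda_r\rho^{(k)}(y_j+J-r)=:s_k(y_j).
\]
The first step is to bound the per-site quantities $\sigma_k(j):=\sum_r\lambda_r\abs{\rho^{(k)}(y_j+J-r)}$ for $k\ge1$. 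By \eqref{eq:ladder-absorption} (valid for every $k\ge1$ once $\rho\in C^\infty$), if $\rho^{(k)}(y_j+J-r)\ne0$ with $r<J$ then $\rho(y_j+J-r-1)=1$. Hence
\[
\lambda_r\abs{\rho^{(k)}}\le C_kR_{\mathrm c}\lambda_{r+1}\rho(y_j+J-r-1),
\]
and so
\[
\sigma_k(j)\le C_kR_{\mathrm c}\,S_j+C_k\lambda_J,\qquad S_j:=\sum_rA_{j,r}.
\]
The support of $\rho^{(k)}$ lies in $(-1,0)$, so only one or two values of $r$ contribute for a given $y_j$. This also gives $\sigma_k(j)\le 2C_k$ uniformly.

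Next comes the expansion. The derivative $\partial_{y_{i_1}}\cdots\partial_{y_{i_m}}G_{i_0,r_0,\tau}$ splits into a sum over partitions of $\{1,\dots,m\}$ into one block $B_0$ hitting the factor $\lambda_{r_0}\rho'(y_{i_0}+J-r_0)$ and blocks $B_1,\dots,B_p$ hitting $e^{-\tau S}$. For $B_0$ nonempty, all indices in $B_0$ must equal $i_0$. Each block $B_\ell$ with $\ell\ge1$ contributes $-\tau s_{\abs{B_\ell}}(j_\ell)$ with all its indices equal to a common $j_\ell$. The number of partitions depends only on $m$, so it suffices to bound each pattern. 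After summing over indices, such a pattern is at most
\[
\tau^{1+p}e^{-\tau S}\Bigl(\sum_{r_0}\lambda_{r_0}\abs{\rho^{(1+\abs{B_0})}(y_{i_0}+J-r_0)}\Bigr)\prod_{\ell=1}^p\sum_{j}\sigma_{\abs{B_\ell}}(j),
\]
with the first factor also summed over $i_0$. Coincidences among the $j_\ell$ only help, since each $\sigma_k(j)\le 2C_k$ lets us sum over distinct indices.

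The key estimate is $\sum_j\sigma_k(j)\le C_kR_{\mathrm c}S+C_k\epsilon_{\mathrm{fin}}\le C_k(R_{\mathrm c}S+1)$, and the same bound holds for the $i_0$-sum. The pattern is therefore at most $C\,\tau^{1+p}(R_{\mathrm c}S+1)^{p+1}e^{-\tau S}$. Bounding $\tau^{q}S^{q}e^{-\tau S}\le (q/e)^q$ and $\tau^{1+p}\le\tau_*^{1+p}$ gives a constant depending only on $m,\tau_*,R_{\mathrm c}$ and $\max_{k\le m+1}\norm{\rho^{(k)}}_\infty$. This is where $\epsilon_{\mathrm{fin}}\le1$ and the Laplace factor absorb the otherwise unbounded count $S$. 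I expect this step to be the main obstacle: the bookkeeping must show that every index sum produces at most one factor of $S$, each paired with a factor of $\tau$ from the exponential, or is otherwise absorbed by $\epsilon_{\mathrm{fin}}$. The case $m=0$ is the special case $p=0$, $B_0=\emptyset$.

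For the scaled statement, the chain rule with $\partial_{q_i}=\delta^{-1}\partial_{y_i}$ (diagonal and linear) gives $\partial_{q}^{m}\widetilde G=\delta^{-1-m}\partial_y^mG$, which yields \eqref{eq:all-order-scaled}. For two disjoint copies, the variables $q$ and $q'$ are distinct, so any mixed derivative of the product splits as $(\partial^{\alpha}\widetilde G)(\partial^{\beta}\widetilde G')$, where $\alpha$ collects the $q$-derivatives and $\beta$ the $q'$-derivatives. The $\ell^1$ sum over all indices then factorizes into the product of the two one-copy bounds.
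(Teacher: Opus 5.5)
Your proposal is correct and follows essentially the same route as the paper: the ladder absorption $\lambda_r\abs{\rho^{(k)}(y_j+J-r)}\le C_kR_{\mathrm c}A_{j,r+1}$ for $r<J$ bounds each level-and-index sum by $C_k(R_{\mathrm c}S+\epsilon_{\mathrm{fin}})$. The diagonal structure of $S$ reduces the Fa\`a di Bruno expansion of $e^{-\tau S}$ to free index sums, each paired with one factor $\tau$. Then $\sup_{x\ge0}(R_{\mathrm c}x+\tau_*)^{p+1}e^{-x}<\infty$ closes the bound, and the $\delta^{-(m+1)}$ rescaling and the two-copy factorization are immediate. (Your remark about coincidences among the $j_\ell$ is unnecessary, since the free sums over $j_1,\ldots,j_p$ already include coincident indices.)
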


\supplementproof{supp.G.10}{Section G.10}

The next lemma gives the endpoint formula for every fixed derivative order,
its two coordinate-axis specializations, and the bounds used in the comparison.
Each complex-entry direction varies both Hermitian-transpose positions.

\begin{lem}[Resolvent derivatives]
\label{lem:covariance-direction-derivatives}
\label{lem:covariance-coordinate-derivatives}
Let $P_{\rm col}$ project onto the first $n_{\rm col}$ coordinates and
let $H_0$ be a fixed complex matrix of dimension $n_{\rm col}+n_{\rm row}$.
Fix a column label $b$ and a row label $\alpha$. For $w=s+\ii t$, put
\begin{equation}\label{eq:direction-matrix}
 S_w=wE_{\alpha b}+\overline wE_{b\alpha}.
\end{equation}
For $\eta>0$ and $x_-<x_+$, assume that
\begin{equation}\label{eq:direction-resolvent}
 G(\theta,z)=(H_0+\theta S_w-zP_{\rm col})^{-1}
\end{equation}
exists for real $\theta$ near zero and $z=x+\ii\eta$,
$x_-\le x\le x_+$. Define
\begin{equation}\label{eq:direction-integral}Y_w(\theta)=\frac1\pi\int_{x_-}^{x_+}
   \Im\Tr(P_{\rm col}G(\theta,x+\ii\eta))\,\dd x.
\end{equation}
Write $z_\pm=x_\pm+\ii\eta$ and
$\Delta f=f(z_+)-f(z_-)$. Then, for every integer $r\ge1$,
\begin{equation}\label{eq:direction-endpoint}
 \partial_\theta^rY_w(0)
 =\frac{(-1)^r(r-1)!}{\pi}
   \Delta\Im\Tr\bigl((S_wG(0,z))^r\bigr).
\end{equation}

At each endpoint abbreviate
$a=G_{b\alpha}$, $c=G_{\alpha b}$, $p=G_{bb}$, and $d=G_{\alpha\alpha}$.
For the real and imaginary axes, respectively,
\begin{align}
 \partial_uY&=-\pi^{-1}\Delta\Im(a+c),&
 \partial_u^2Y&=\pi^{-1}\Delta\Im(a^2+c^2+2pd),
 \label{eq:coordinate-real}\\
 \partial_vY&=-\pi^{-1}\Delta\Im\bigl(\ii(a-c)\bigr),&
 \partial_v^2Y&=\pi^{-1}\Delta\Im(-a^2-c^2+2pd).
 \label{eq:coordinate-imaginary}
\end{align}
If, at both endpoints,
\begin{equation}\label{eq:coordinate-entry-bounds}
 |a|+|c|\le B_{\rm cross},\qquad |p|+|d|\le C_{\rm diag},\qquad
 |\Im p|+|\Im d|\le B_{\rm im},
\end{equation}
then either axis satisfies
\begin{equation}\label{eq:coordinate-derivative-bounds}
 |Y'(0)|\le2B_{\rm cross}/\pi,\qquad
 |Y''(0)|\le C(B_{\rm cross}^2+C_{\rm diag}B_{\rm im}).
\end{equation}
More generally, fix a positive integer $K_{\rm der}$ and finite nonnegative constants $B_{\rm dir},C_{\rm diag}$.
If $|s|+|t|\le B_{\rm dir}$, $0\le\Psi\le1$, and
\begin{align}
 |p|+|d|&\le C_{\rm diag},\label{eq:direction-diagonal}\\
 |a|+|c|+|\Im p|+|\Im d|&\le\Psi,\label{eq:direction-small}
\end{align}
then
\begin{equation}\label{eq:direction-bound}
 |\partial_\theta^rY_w(0)|\le C_{\rm der}\Psi,
 \qquad 1\le r\le K_{\rm der},
\end{equation}
where $C_{\rm der}$ depends only on the three fixed bounds.
\end{lem}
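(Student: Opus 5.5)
The argument starts from the resolvent expansion in $\theta$. Put $G_0=G(0,z)$. Since $H_0+\theta S_w-zP_{\rm col}$ is invertible for small real $\theta$, the Neumann series gives
\[
 G(\theta,z)=\sum_{k\ge0}(-\theta)^k G_0(S_wG_0)^k .
\]
Hence
\[
 \partial_\theta^rG(0,z)=(-1)^r r!\,G_0(S_wG_0)^r .
\]
Next compute the $z$-derivative $\partial_z G=GP_{\rm col}G$. From this and cyclicity of the trace,
\[
 \partial_z\Tr\bigl((S_wG)^r\bigr)=r\,\Tr\bigl((S_wG)^{r}GP_{\rm col}\bigr)
 =r\,\Tr\bigl(P_{\rm col}G_0(S_wG_0)^r\bigr).
\]
It follows that
\[
 \partial_\theta^r\Tr(P_{\rm col}G)(0,z)=(-1)^r(r-1)!\,\partial_z\Tr\bigl((S_wG_0)^r\bigr).
\]
Both sides are holomorphic in $z$. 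The $x$-derivative along the horizontal line therefore equals $\partial_z$, and $\Im$ commutes with the real derivative. Differentiating under the integral sign in \eqref{eq:direction-integral} is justified by uniform convergence of the series on the compact segment. Integrating the exact $x$-derivative from $x_-$ to $x_+$ then gives \eqref{eq:direction-endpoint}.

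For the coordinate specializations, set $w=1$, which gives $S=E_{\alpha b}+E_{b\alpha}$, and $w=\ii$, which gives $S=\ii E_{\alpha b}-\ii E_{b\alpha}$. For $r=1$ we have $\Tr(SG)=G_{b\alpha}+G_{\alpha b}=a+c$ in the real case and $\ii(G_{b\alpha}-G_{\alpha b})=\ii(a-c)$ in the imaginary case. For $r=2$ we expand $\Tr(SGSG)=\sum S_{ij}G_{jk}S_{kl}G_{li}$ over the two nonzero entries of $S$. In the real case this gives $a^2+c^2+2pd$. In the imaginary case the factors $\ii^2=-1$ and $\ii\cdot(-\ii)=1$ give $-a^2-c^2+2pd$. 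The prefactors $-1/\pi$ and $+1/\pi$ come from \eqref{eq:direction-endpoint}, so \eqref{eq:coordinate-real} and \eqref{eq:coordinate-imaginary} follow.

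The bound \eqref{eq:coordinate-derivative-bounds} for $Y'$ is immediate, since $|\Delta\Im(\cdot)|\le$ the sum of the two endpoint moduli. For $Y''$ the point is that $\Im(2pd)=2(\Re p\,\Im d+\Im p\,\Re d)$. This is bounded by $2C_{\rm diag}B_{\rm im}$, while $|a^2+c^2|\le B_{\rm cross}^2$.

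For the general bound \eqref{eq:direction-bound}, we write $S_w$ as a rank-two matrix supported on the index pair $\{b,\alpha\}$. Then $\Tr((S_wG)^r)=\Tr(M^r)$ with
\[
 M=\begin{pmatrix}0&w\\ \overline w&0\end{pmatrix}\begin{pmatrix}p&a\\ c&d\end{pmatrix}
 =\begin{pmatrix}wc&wd\\ \overline wp&\overline wa\end{pmatrix},
\]
using the basis ordered $(b,\alpha)$. Thus $\Tr(M^r)$ is a polynomial in $w,\overline w,a,c,p,d$ whose coefficients depend only on $r$. We split each monomial by whether it contains a factor $a$ or $c$.
\begin{itemize}
\item Monomials containing $a$ or $c$ are bounded in modulus by $C\Psi$ via \eqref{eq:direction-diagonal} and \eqref{eq:direction-small}, since all other factors are bounded by $B_{\rm dir}$ or $C_{\rm diag}$.
\item The remaining monomials involve only the off-diagonal products, which give $(|w|^2pd)^{r/2}$ for even $r$ and vanish for odd $r$.
\end{itemize}
For the surviving term we write $p=\Re p+\ii\Im p$ and similarly for $d$. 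Since $|w|^{r}$ is real, expanding $(pd)^{r/2}$ shows that $\Im$ of the purely real part $(\Re p\Re d)^{r/2}$ vanishes. Every other term contains $\Im p$ or $\Im d$, so it is $O(\Psi)$. Since $\Psi\le1$, higher powers of $\Psi$ are also bounded by $\Psi$. Summing over the two endpoints gives $C_{\rm der}\Psi$.

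The main obstacle is this last step. We must check that the only term with no small factor is purely real. That follows because the mixed products are $w\overline w=|w|^2$, which is real, so only $\Re p\Re d$ survives without a small factor. The claimed $\Psi$ bound needs exactly the imaginary part of that term to vanish.
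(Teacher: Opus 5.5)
Your proof is correct and follows the paper's route. The same identities $\partial_\theta G=-GS_wG$ and $\partial_zG=GP_{\rm col}G$ give the endpoint formula, and the axis identities come from the traces of the $2\times2$ block product and its square. Your monomial split (first discard the terms containing $a$ or $c$, then observe that $(\Re p\,\Re d)^{r/2}$ is real) is an expanded form of the paper's telescoping comparison of $T^r$ with $T_0^r$, where $T_0$ has $a=c=0$ and $p,d$ replaced by their real parts.

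Only cosmetic slips need fixing, and none of them affects the argument:
\begin{itemize}
\item The intermediate trace should read $r\Tr\bigl((S_wG)^rP_{\rm col}G\bigr)$.
\item The $(b,\alpha)$ block of $S_w$ is $\bigl(\begin{smallmatrix}0&\overline w\\ w&0\end{smallmatrix}\bigr)$, so $w$ and $\overline w$ are interchanged in your $M$.
\item For even $r$ one has $\Tr(M_0^r)=2(|w|^2pd)^{r/2}$, with a factor $2$ you omitted.
\end{itemize}
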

\begin{proof}
Differentiation gives $\partial_\theta G=-GS_wG$ and
$\partial_zG=GP_{\rm col}G$. Hence
\[
 \partial_\theta^r\Tr(P_{\rm col}G)
 =(-1)^rr!\Tr\bigl(P_{\rm col}G(S_wG)^r\bigr)
 =(-1)^r(r-1)!\partial_z\Tr\bigl((S_wG)^r\bigr).
\]
Integration proves \eqref{eq:direction-endpoint}. Only the block on
$\{b,\alpha\}$ enters the trace. In the order $(b,\alpha)$ its product is
\[
 T=\begin{pmatrix}\overline w c&\overline w d\\wp&wa\end{pmatrix}.
\]
The traces of $T$ and $T^2$ give the two axis identities. The bounds in
\eqref{eq:coordinate-derivative-bounds} follow from
$|\Im(pd)|\le |p||\Im d|+|d||\Im p|$.
For the higher-order estimate replace $a,c$ by zero and $p,d$ by their real
parts, obtaining
$T_0=\left(\begin{smallmatrix}0&\overline w\Re d\\w\Re p&0\end{smallmatrix}\right)$.
Its odd-power trace is zero and its even-power trace is real. Moreover,
$\|T-T_0\|=O(\Psi)$ and $\|T\|+\|T_0\|=O(1)$, uniformly under the
stated bounds. The finite telescoping identity for $T^r-T_0^r$ gives
$|\Im\Tr T^r|\le C_r\Psi$. Apply it at both endpoints in
\eqref{eq:direction-endpoint}.
\end{proof}

For covariance matrices take
$H_0=\left(\begin{smallmatrix}0&X^*\\X&-I\end{smallmatrix}\right)$.
The assumption in \eqref{eq:direction-resolvent} concerns the linearized
inverse at the spectral parameter and permits singular $H_0$.
Thus the coordinate formulas also apply to rank-deficient and wide
matrices. Normalization of the raw entries by $n^{-1/2}$ contributes
one factor $n^{-1/2}$ per derivative.

\section{Local laws and Green function comparison}\label{sec:comparison}

We first relate eigenvalue events to smoothed counts and establish
simultaneous conditional local laws. Schur complement estimates then
control Gaussian interpolation in the two cross strips. A cumulant
expansion treats the non-Gaussian entry flow. Sections~\ref{sec:high-occurrence}
and~\ref{sec:low-occurrence} apply these estimates with their respective
histories and spectral endpoints.

\subsection{Smoothing of eigenvalue counts}

The pathwise count estimate below gives both one-sided smoothing
inequalities, with the threshold shifted in the direction appropriate
to each tail.

\begin{prop}[Approximation of eigenvalue counts]\label{prop:poisson-count-dichotomy}
Let $\gamma\ge1$ and $M_N\ge N$ eventually, with $M_N/N\to\gamma$.
Let $X_N$ have independent centered complex entries with
$\E|\sqrt N X_N(\alpha,b)|^2=1$, $\E X_N(\alpha,b)^2=0$, and
uniformly bounded fixed moments of $\sqrt N X_N(\alpha,b)$. Put
\[
 L_N=X_N^*X_N,\qquad \lambda_N=\lambda_{\max}(L_N),\qquad
 E_{+,N}=(1+\sqrt{M_N/N})^2.
\]
Fix $0<\kappa<1/72$ and $C_E<\infty$. Let $(E_N)$ be deterministic and satisfy
\[
 \abs{E_N-E_{+,N}}
 \le C_E N^{-2/3}(\log N)^{2/3}.
\]
Define
\[
 \eta_N=N^{-2/3-12\kappa},\qquad
 \ell_N=N^{-2/3-6\kappa},\qquad
 B_N=E_{+,N}+4N^{-2/3+\kappa},
\]
\[
 m_N(z)=\frac1N\Tr(L_N-zI_N)^{-1},
\]
and
\[
 Z_N^+=\frac{N}{\pi}\int_{E_N+\ell_N}^{B_N}
          \Im m_N(y+\ii\eta_N)\,\dd y,\qquad
 Z_N^-=\frac{N}{\pi}\int_{E_N-\ell_N}^{B_N}
          \Im m_N(y+\ii\eta_N)\,\dd y.
\]
For every $D>0$, there are $N_0$ and events $\Omega_N$, $N\ge N_0$, such that
\[
 \Prob(\Omega_N^c)\le N^{-D},
\]
and, on $\Omega_N$,
\begin{equation}
 Z_N^+-N^{-\kappa}
 \le \#\set{\lambda\in\spec(L_N)\mid \lambda\ge E_N}
 \le Z_N^-+N^{-\kappa}.
 \label{eq:poisson-count-dichotomy}
\end{equation}
Consequently, on $\Omega_N$,
\begin{equation}
 \lambda_N<E_N\quad\Longrightarrow\quad Z_N^+\le N^{-\kappa},
 \label{eq:poisson-count-below}
\end{equation}
and
\begin{equation}
 \lambda_N\ge E_N\quad\Longrightarrow\quad Z_N^-\ge1-N^{-\kappa}.
 \label{eq:poisson-count-above}
\end{equation}
All constants are uniform over deterministic $E_N$ satisfying the displayed window condition. The same assertions hold for matching complex white Wishart matrices.
\end{prop}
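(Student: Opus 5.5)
Since $\Im m_N(y+\ii\eta_N)=N^{-1}\sum_k \eta_N/((\lambda_k-y)^2+\eta_N^2)$, integrating in $y$ writes $Z_N^\pm=\sum_k P^\pm(\lambda_k)$. Here
\[
 P^\pm(x)=\frac1\pi\Bigl[\arctan\tfrac{B_N-x}{\eta_N}-\arctan\tfrac{E_N\pm\ell_N-x}{\eta_N}\Bigr]\in[0,1]
\]
is the Poisson-smoothed indicator of $[E_N\pm\ell_N,B_N]$. The plan is to compare $\ind_{[E_N,\infty)}(\lambda_k)$ with $P^\pm(\lambda_k)$ eigenvalue by eigenvalue, and to sum the pointwise errors on a high-probability event $\Omega_N$. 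We take $\Omega_N$ to be the intersection of three events, each holding with probability $1-N^{-D}$ by the covariance local laws \cite{BEKYY14,KY17} and edge rigidity:
\begin{enumerate}
\item[(i)] $\lambda_N\le E_{+,N}+N^{-2/3+\kappa}$;
\item[(ii)] for every interval $I$ of length at least $N^{-2/3-12\kappa}$ within distance $N^{-2/3+\kappa}$ of the edge, the eigenvalue count satisfies $\#(I)\le N^{\kappa}$;
\item[(iii)] eigenvalue rigidity $|\lambda_k-\gamma_k|\le N^{-2/3+\kappa/2}k^{-1/3}$ for all $k$.
\end{enumerate}
These are the inputs for the error sums. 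The same inputs hold for the complex white Wishart case, so the argument is identical there.

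\emph{Upper bound on the count.} Every eigenvalue $\lambda_k\ge E_N$ lies in $[E_N,B_N-3N^{-2/3+\kappa}]$ by (i). For such $x$, the distances to the two endpoints of $[E_N-\ell_N,B_N]$ are at least $\ell_N$ and at least $3N^{-2/3+\kappa}$. Hence
\[
 1-P^-(x)\le \frac{\eta_N}{\pi}\Bigl(\frac1{\ell_N}+\frac{1}{3N^{-2/3+\kappa}}\Bigr)\lesssim N^{-6\kappa}.
\]
By (i)--(ii), the number of such eigenvalues is at most $N^{2\kappa}$. Their total deficit is therefore at most $N^{-4\kappa}$, and $\#\{\lambda\ge E_N\}\le Z_N^-+N^{-4\kappa}$. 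This gives the right inequality in \eqref{eq:poisson-count-dichotomy}.

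\emph{Lower bound on the count.} We need $\sum_{\lambda_k<E_N}P^+(\lambda_k)\le N^{-\kappa}$, and we split the eigenvalues into three ranges.
\begin{enumerate}
\item[(a)] Eigenvalues in $[E_N-N^{-2/3-3\kappa},E_N)$. Here $P^+(x)\le \eta_N/(\pi\ell_N)\lesssim N^{-6\kappa}$, and by (ii) there are at most $N^{\kappa}$ of them, so they contribute $\lesssim N^{-5\kappa}$.
\item[(b)] Eigenvalues with $E_N-x\ge d$ for $d\ge N^{-2/3-3\kappa}$. Here $P^+(x)\le \eta_N/(\pi d)$. We sum dyadically in $d$, counting eigenvalues in each shell by rigidity (iii): for the square-root density, $\#\{k: E_{+,N}-\lambda_k\le d\}\lesssim N d^{3/2}+N^{\kappa}$. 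The near part gives $\sum_{d\ \text{dyadic}}\eta_N d^{-1}(Nd^{3/2}+N^\kappa)$, which is dominated by the largest $d$.
\item[(c)] The bulk, $d\sim 1$. Here the sum is $O(N\eta_N)=N^{1/3-12\kappa}$. This is too large, so a cruder Poisson bound will not do and the top arctan endpoint must be used as well. The correct bound is
\[
 P^+(x)\le \frac{\eta_N}{\pi}\Bigl(\frac1{E_N+\ell_N-x}-\frac1{B_N-x}\Bigr)=\frac{\eta_N}{\pi}\,\frac{B_N-E_N-\ell_N}{(E_N+\ell_N-x)(B_N-x)},
\]
which is $\lesssim \eta_N N^{-2/3+\kappa}d^{-2}$ when $d\gg N^{-2/3+\kappa}$. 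Summing against the $N d^{1/2}\,\dd d$ density gives a contribution $\lesssim N\eta_N N^{-2/3+\kappa}(N^{-2/3+\kappa})^{-1/2}=N^{-12\kappa+\kappa/2}$, which is fine. The remaining window $d\le N^{-2/3+\kappa}$ contains at most $N^{2\kappa}$ eigenvalues, each with $P^+\lesssim \eta_N/d\le N^{-9\kappa}$.
\end{enumerate}
Altogether this gives $Z_N^+-\#\{\lambda\ge E_N\}\le N^{-\kappa}$. The constraint $\kappa<1/72$ only keeps all the exponents in the admissible local-law range, where $\eta_N\ge N^{-1+\epsilon}$. Both consequences are immediate from \eqref{eq:poisson-count-dichotomy}: if $\lambda_N<E_N$ the count is $0$, and if $\lambda_N\ge E_N$ the count is at least $1$. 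Uniformity over $E_N$ holds because every step used only $|E_N-E_{+,N}|\le C_EN^{-2/3}(\log N)^{2/3}\ll N^{-2/3+\kappa}$.

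\emph{Main obstacle.} The main obstacle is establishing (ii)--(iii) with $N^{-D}$ probability, uniformly under the stated moment assumptions. I would cite the rectangular edge local law and rigidity of \cite{BEKYY14,KY17} rather than reprove them.
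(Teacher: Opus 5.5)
Your argument is correct and follows the same route as the paper's proof. On a single rigidity event, independent of $E_N$, you write $Z_N^\pm=\sum_kP^\pm(\lambda_k)$, bound $1-P^-$ on the few eigenvalues above $E_N$ by $\eta_N\bigl(\ell_N^{-1}+(B_N-\lambda_k)^{-1}\bigr)$, and bound $P^+$ below $E_N$ by the two-endpoint estimate $\eta_N(B_N-E_N-\ell_N)/\bigl((E_N+\ell_N-x)(B_N-x)\bigr)$, summed over dyadic shells of the square-root edge density.

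The one thing to repair is event (ii). As written it applies to $I=[E_{+,N}-N^{-2/3+\kappa},E_{+,N}]$, which contains $\asymp N^{3\kappa/2}\gg N^{\kappa}$ eigenvalues, so it fails with high probability. With ``at most'' in place of ``at least'' it would be true, but covering would not give your $N^{2\kappa}$ counts. Drop (ii) and take every count from (iii). That gives $\#\{k:\lambda_k\ge E_{+,N}-t\}\lesssim Nt^{3/2}+N^{3\kappa/4}$ for $0\le t\le c$, and hence all the bounds $N^{\kappa}$ and $N^{2\kappa}$ you actually use.
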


\supplementproof{supp.H.3}{Section H.3}

\begin{cor}[Smoothing inequalities]
\label{prop:left-smoothing}\label{prop:right-smoothing}
Use the matrix assumptions, $\eta_N$, $\ell_N$, $B_N$, and $m_N$ of
Proposition~\ref{prop:poisson-count-dichotomy}. Let
\[
 \sigma_{+,N}=\frac{\sqrt{M_N}+\sqrt N}{N}
       (M_N^{-1/2}+N^{-1/2})^{1/3},\qquad
 \ell_{1,N}=N^{3\kappa}\eta_N.
\]
The notation $l_N=\ell_N$ and $l_{1,N}=\ell_{1,N}$ is interchangeable.
Fix $q>0$, an integer $J\ge1$, and $0<a_F<b_F<1$.
For a Borel function $F:\R\to[0,1]$, the following assertions hold.

\emph{Left tail.} Put $s_N=(\log N)^{1/3}$,
$E_N=E_{+,N}-qs_N\sigma_{+,N}$, and
\[
 E_{N,j}=E_N+(2j+1)\ell_N,\qquad
 Z_{N,j}=\frac N\pi\int_{E_{N,j}}^{B_N}\Im m_N(y+\ii\eta_N)\,\dd y.
\]
If $F=1$ on $(-\infty,a_F]$ and $F=0$ on $[b_F,\infty)$, then, for
$0\le j\le J$ and all sufficiently large $N$,
\[
 \begin{aligned}
 \Prob(\lambda_N<E_N)&\le\E F(Z_{N,0})+N^{-D},\\
 \E F(Z_{N,j})&\le\Prob(\lambda_N<E_N+(2j+2)\ell_N)+N^{-D}.
 \end{aligned}
\]
\emph{Right tail.} Put $r_N=(\log N)^{2/3}$,
$E_N=E_{+,N}+qr_N\sigma_{+,N}$, and now set
$E_{N,j}=E_N-(2j+1)\ell_N$ in the same integral. If $F=0$ on
$(-\infty,a_F]$ and $F=1$ on $[b_F,\infty)$, then
\[
 \begin{aligned}
 \Prob(\lambda_N\ge E_N)&\le\E F(Z_{N,0})+N^{-D},\\
 \E F(Z_{N,j})&\le\Prob(\lambda_N\ge E_N-(2j+2)\ell_N)+N^{-D}.
 \end{aligned}
\]
Here $D>0$ is arbitrary. Both clauses also hold for the matching complex
white Wishart matrix.
\end{cor}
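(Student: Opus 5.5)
The plan is to read both clauses directly off the pathwise dichotomy in Proposition~\ref{prop:poisson-count-dichotomy}. For each $0\le j\le J$ I apply that proposition with a suitable shifted deterministic threshold $E'_N$. The lower integration limit of $Z_N^+$ or $Z_N^-$ for $E'_N$ should coincide with $E_{N,j}$. The cutoff $F$ then turns the count inequalities into indicator inequalities on $\Omega_N$. Taking expectations and adding $\Prob(\Omega_N^c)\le N^{-D}$ gives each bound.

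\emph{Admissibility of the thresholds.} By Lemma~\ref{lem:normalization-dictionary}, $\sigma_{+,N}\asymp N^{-2/3}$, and $\ell_N=N^{-2/3-6\kappa}$. Hence
\[
 qs_N\sigma_{+,N},\qquad qr_N\sigma_{+,N},\qquad (2J+2)\ell_N
\]
are all $O(N^{-2/3}(\log N)^{2/3})$. So every threshold $E_N\pm(2j+2)\ell_N$ with $0\le j\le J$ satisfies the window condition with a fixed $C_E$ depending on $q$ and $J$. The same bounds give $E_{N,j}<B_N=E_{+,N}+4N^{-2/3+\kappa}$ for large $N$, so all integrals are genuine. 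Since the constants in Proposition~\ref{prop:poisson-count-dichotomy} are uniform over admissible thresholds, I use one event $\Omega_N$ for each of the finitely many thresholds and intersect them. This intersection still has probability at least $1-(J+2)N^{-D'}$, and taking $D'>D$ absorbs the factor.

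\emph{Left tail.}
\begin{itemize}
\item First inequality: take $E'_N=E_N$. Then $Z_N^+$ has lower limit $E_N+\ell_N=E_{N,0}$, so $Z_N^+=Z_{N,0}$. By \eqref{eq:poisson-count-below}, on $\Omega_N$ the event $\lambda_N<E_N$ gives $Z_{N,0}\le N^{-\kappa}<a_F$ for large $N$, hence $F(Z_{N,0})=1$. Thus $\ind_{\{\lambda_N<E_N\}}\le F(Z_{N,0})+\ind_{\Omega_N^c}$.
\item Second inequality: take $E'_N=E_N+(2j+2)\ell_N$. Then $Z_N^-$ has lower limit $E'_N-\ell_N=E_{N,j}$, so $Z_N^-=Z_{N,j}$. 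By \eqref{eq:poisson-count-above}, on $\Omega_N$ the event $\lambda_N\ge E'_N$ gives $Z_{N,j}\ge1-N^{-\kappa}\ge b_F$, hence $F(Z_{N,j})=0$. Since $0\le F\le1$, this yields $F(Z_{N,j})\le\ind_{\{\lambda_N<E'_N\}}+\ind_{\Omega_N^c}$.
\end{itemize}

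\emph{Right tail.} This is the mirror image.
\begin{itemize}
\item First inequality: take $E'_N=E_N$. Then $Z_N^-=Z_{N,0}$, since $E_N-\ell_N=E_{N,0}$. On $\Omega_N$, $\lambda_N\ge E_N$ forces $Z_{N,0}\ge1-N^{-\kappa}\ge b_F$, so $F(Z_{N,0})=1$.
\item Second inequality: take $E'_N=E_N-(2j+2)\ell_N$. Then $Z_N^+$ has lower limit $E'_N+\ell_N=E_{N,j}$. On $\Omega_N$, $\lambda_N<E'_N$ forces $Z_{N,j}\le N^{-\kappa}\le a_F$, so $F(Z_{N,j})=0$. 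Hence $F(Z_{N,j})\le\ind_{\{\lambda_N\ge E'_N\}}+\ind_{\Omega_N^c}$.
\end{itemize}

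The matching white Wishart statement follows verbatim, because Proposition~\ref{prop:poisson-count-dichotomy} holds for it as well. The only real point to verify is the bookkeeping of the shifts. The offsets $2j+1$ and $2j+2$ are chosen exactly so that the $\pm\ell_N$ built into $Z_N^\pm$ lands on $E_{N,j}$. Beyond that, one needs $N^{-\kappa}<a_F$ and $1-N^{-\kappa}>b_F$ for large $N$, which holds because $0<a_F<b_F<1$ are fixed. There is no genuine analytic obstacle; everything rests on the uniformity of the proposition over the threshold window.
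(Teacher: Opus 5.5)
Your proposal is correct and matches the paper's proof essentially step for step. Like the paper, you apply Proposition~\ref{prop:poisson-count-dichotomy} at $E_N$ and at the shifted thresholds $E_N\pm(2j+2)\ell_N$, check that these lie in its logarithmic window, take $N$ large enough that $N^{-\kappa}<\min(a_F,1-b_F)$, read off the values $F=1$ or $F=0$ on the common good event, and take expectations.
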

\begin{proof}
Apply Proposition~\ref{prop:poisson-count-dichotomy} at $E_N$ and the
finitely many thresholds $E_N\pm(2j+2)\ell_N$, using failure exponent $D+1$.
All these thresholds remain in its logarithmic window. Increase $N$ so that
$N^{-\kappa}<\min(a_F,1-b_F)$ and the union failure probability is at
most $N^{-D}$.
On the common good event, $\lambda_N<E_N$ forces the left-tail $Z_{N,0}$
to be at most $N^{-\kappa}$, while
$\lambda_N\ge E_N+(2j+2)\ell_N$ forces the left-tail $Z_{N,j}$ to be
at least $1-N^{-\kappa}$. Thus the left-tail cutoff is one on the first
event and zero on the second. For the right-tail integral,
$\lambda_N\ge E_N$ forces $Z_{N,0}\ge1-N^{-\kappa}$, and
$\lambda_N<E_N-(2j+2)\ell_N$ forces $Z_{N,j}\le N^{-\kappa}$.
The right-tail cutoff gives the corresponding inequalities. Take
expectations. In particular, the strict and non-strict inequalities do
not require an atom-free entry distribution.
\end{proof}

\subsection{Entrywise local laws along the interpolation}

We write $z=x+\ii\eta$, with $\eta>0$. The local laws use $G_N(t,z)$
for the linearized resolvent, $m_N(t,z)$ for the empirical column
Stieltjes transform, and $\widetilde m_N(z)$ for its deterministic
approximation, as defined below.

The right and left cutoffs use edge distances of order
$N^{-2/3}(\log N)^{2/3}$ and $N^{-2/3}(\log N)^{1/3}$, respectively.
The comparison differentiates all cutoff levels along one flow and over
a polynomial-size matrix family. We therefore use one event on which
the local law holds uniformly in time, spectral position, entry index,
and both endpoint families.

\begin{thm}[Entrywise local law]\label{thm:flow-local-law}
Fix $\gamma\ge1$, $q_R,q_L>0$, $C_J<\infty$, and
$0<\kappa_0<1/288$. Let $M_N\ge N$, $\rho_N=M_N/N\to\gamma$, and let
$X_N$ be an $M_N\times N$ complex random matrix with independent entries satisfying
\[
 \E X_N(\alpha,b)=0,\qquad
 \E\abs{\sqrt N X_N(\alpha,b)}^2=1,\qquad
 \E X_N(\alpha,b)^2=0,
\]
and uniformly bounded normalized fixed moments. Let $W_N$ be independent of
$X_N$ with independent centered circular complex Gaussian entries of variance $1/N$. For
$0\le t\le T_N:=8\log N$, define
\[
 X_N(t)=\ee^{-t/2}X_N+\sqrt{1-\ee^{-t}}\,W_N.
\]
This is the tall orientation $M_N\ge N$. A wide matrix must first be conjugate-transposed and, if its entry normalization changes, rescaled.

Put
\[
 L_N(t)=X_N(t)^*X_N(t),\qquad
 \mathcal L_N(t)=X_N(t)X_N(t)^*,
\]
\[
 R_N(t,z)=(L_N(t)-zI_N)^{-1},\qquad
 \mathcal R_N(t,z)=(\mathcal L_N(t)-zI_{M_N})^{-1},
\]
\[
 H_N(t,z)=
 \begin{pmatrix}
 -zI_N&X_N(t)^*\\
 X_N(t)&-I_{M_N}
 \end{pmatrix},
 \qquad
 G_N(t,z)=H_N(t,z)^{-1}.
\]
The row-first linearization is the block permutation
\[
 \mathcal H_N(t,z)=
 \begin{pmatrix}
 -I_{M_N}&X_N(t)\\
 X_N(t)^*&-zI_N
 \end{pmatrix},
\]
and its inverse is the corresponding block permutation of $G_N$. The Schur complement gives
\begin{equation}
 G_N(t,z)=
 \begin{pmatrix}
 R_N(t,z)&X_N(t)^*\mathcal R_N(t,z)\\
 X_N(t)R_N(t,z)&z\mathcal R_N(t,z)
 \end{pmatrix}.
 \label{eq:flow-schur}
\end{equation}
Let $P_N$ project onto the first $N$ coordinates and define
\[
 m_N(t,z)=\frac1N\Tr R_N(t,z),
 \qquad
 \underline G_N(t,z)=\frac1N\Tr(P_NG_N(t,z))=m_N(t,z),
\]
\[
 \underline{\mathcal G}_N(t,z)
 =\frac1{M_N}\sum_{\alpha=N+1}^{N+M_N}G_N(t,z)_{\alpha,\alpha}
 =\frac{z}{M_N}\Tr\mathcal R_N(t,z).
\]
Let
\[
 E_{\pm,N}=(1\pm\sqrt{\rho_N})^2,
\]
\[
 \sigma_{+,N}
 =\frac{\sqrt{M_N}+\sqrt N}{N}
  \left(\frac1{\sqrt{M_N}}+\frac1{\sqrt N}\right)^{1/3}.
\]
Let $\widetilde m_N(z)$ be the solution with positive imaginary part and
$\widetilde m_N(z)=-z^{-1}+o(\abs z^{-1})$ of
\begin{equation}
 z\widetilde m_N(z)^2+(z+1-\rho_N)\widetilde m_N(z)+1=0,
 \label{eq:mp-quadratic}
\end{equation}
and put
\[
 \Pi_N(z)=\diag\left(
 \widetilde m_N(z)I_N,
 -\frac1{1+\widetilde m_N(z)}I_{M_N}
 \right).
\]
Define
\[
 \begin{aligned}
 r_N&=(\log N)^{2/3}, & s_N&=(\log N)^{1/3},\\
 \eta_N&=N^{-2/3-12\kappa_0}, &
 \ell_N&=N^{-2/3-6\kappa_0}.
 \end{aligned}
\]
\[
 B_N=E_{+,N}+4N^{-2/3+\kappa_0},
\]
\[
 A_{R,N}=E_{+,N}+q_Rr_N\sigma_{+,N},\qquad
 A_{L,N}=E_{+,N}-q_Ls_N\sigma_{+,N}.
\]
Let $0\le J_N\le C_J\log N$, and set
\[
 I_{R,N}=[A_{R,N}-(2J_N+1)\ell_N,B_N],\qquad
 I_{L,N}=[A_{L,N},B_N].
\]
All endpoints
\[
 A_{R,N}-(2j+1)\ell_N,\qquad
 A_{L,N}+(2j+1)\ell_N,\qquad 0\le j\le J_N,
\]
belong to the respective windows for all sufficiently large $N$.

Fix $12\kappa_0<\nu<1/12$ and $D>0$. There are $C<\infty$, $N_0$, and events $\Omega_N$ such that
\[
 \Prob(\Omega_N^c)\le N^{-D},
\]
and, on $\Omega_N$, simultaneously for every $0\le t\le T_N$ and every
$x\in I_{R,N}\cup I_{L,N}$, with $z=x+\ii\eta_N$,
\begin{equation}
 \max_{i,j}\abs{G_N(t,z)_{i,j}-\Pi_N(z)_{i,j}}
 \le C N^{-1/3+\nu},
 \label{eq:flow-entry-law}
\end{equation}
\begin{equation}
 \abs{\underline G_N(t,z)-\widetilde m_N(z)}
 +\abs{\underline{\mathcal G}_N(t,z)
        +(1+\widetilde m_N(z))^{-1}}
 \le C N^{-1/3+\nu}.
 \label{eq:flow-trace-law}
\end{equation}
Uniformly in column labels $b$ and row labels $\alpha$,
\begin{equation}
 \abs{G_N(t,z)_{b,\alpha}}+\abs{G_N(t,z)_{\alpha,b}}
 \le C N^{-1/3+\nu},
 \label{eq:flow-cross-law}
\end{equation}
\begin{equation}
 \abs{G_N(t,z)_{b,b}}+\abs{G_N(t,z)_{\alpha,\alpha}}\le C,
 \label{eq:flow-diagonal-law}
\end{equation}
\begin{equation}
 \abs{\Im G_N(t,z)_{b,b}}+\abs{\Im G_N(t,z)_{\alpha,\alpha}}
 \le C N^{-1/3+\nu}.
 \label{eq:flow-imaginary-law}
\end{equation}
With
\[
 m_{+,N}=-(1+\sqrt{\rho_N})^{-1},\qquad
 c_{+,N}=-\frac{1+\sqrt{\rho_N}}{\sqrt{\rho_N}},
\]
one also has
\begin{equation}
 \abs{\underline G_N(t,z)-m_{+,N}}
 +\abs{\underline{\mathcal G}_N(t,z)-c_{+,N}}
 \le C N^{-1/3+\nu},
 \label{eq:flow-edge-background}
\end{equation}
and
\begin{equation}
 \E\left[
 \sup_{\substack{0\le t\le T_N\\x\in I_{R,N}\cup I_{L,N}}}
 \Im m_N(t,x+\ii\eta_N)\right]
 \le C N^{-1/3+\nu}.
 \label{eq:flow-expected-imaginary}
\end{equation}

A deterministic angular-time mesh of spacing $N^{-4}$ and a spectral mesh of spacing $N^{-4}$ suffice. The time mesh has at most $CN^4$ points, the spectral mesh at most $CN^{10/3+\kappa_0}$ points, and their product at most $CN^{22/3+\kappa_0}$ points. Including all $O(N^2)$ coordinate pairs gives at most $CN^{28/3+\kappa_0}$ fixed-time tests. For an additional family of at most $N^A$ proportional-dimensional flows, the conclusions hold simultaneously by requesting fixed-time failure exponent $\Gamma=D+A+11$.

Finally, for $e=(\alpha,b)$, vary either the real or imaginary coordinate of
$X_N(t)_{\alpha,b}$, translated so that the current value corresponds to zero. Put
\[
 a_{R,j}=A_{R,N}-(2j+1)\ell_N,\qquad
 a_{L,j}=A_{L,N}+(2j+1)\ell_N,
\]
\[
 Y_{K,j,e}(\theta)
 =\frac1\pi\int_{a_{K,j}}^{B_N}
   \Im\Tr(P_NG_{N,e}(\theta,x+\ii\eta_N))\,\dd x,
 \qquad K\in\{R,L\}.
\]
Let $\Delta_{K,j}$ denote evaluation at $B_N+\ii\eta_N$ minus evaluation at
$a_{K,j}+\ii\eta_N$, and at either endpoint put
\[
 a=G_N(t,z)_{b,\alpha},\quad
 c=G_N(t,z)_{\alpha,b},\quad
 p=G_N(t,z)_{b,b},\quad
 d=G_N(t,z)_{\alpha,\alpha}.
\]
Then
\[
 \partial_uY_{K,j,e}(0)
 =-\frac1\pi\Delta_{K,j}\Im(a+c),
\]
\[
 \partial_u^2Y_{K,j,e}(0)
 =\frac1\pi\Delta_{K,j}\Im(a^2+c^2+2pd),
\]
\[
 \partial_vY_{K,j,e}(0)
 =-\frac1\pi\Delta_{K,j}\Im\bigl(\ii(a-c)\bigr),
\]
\begin{equation}
 \partial_v^2Y_{K,j,e}(0)
 =\frac1\pi\Delta_{K,j}\Im(-a^2-c^2+2pd).
 \label{eq:flow-coordinate-identities}
\end{equation}
All four derivatives are bounded by $CN^{-1/3+\nu}$ simultaneously in
$t,j,e$, and in both windows. If
\[
 y_{K,j,e}=\frac{Y_{K,j,e}-y_0}{\delta_N},
 \qquad \delta_N^{-1}\le C_\delta\log N,
\]
then, for $\kappa_0=1/1000$ and $\nu=1/48$, the four first and second derivatives are at most $N^{-3/10}$ for all sufficiently large $N$. No bound on an isolated third coordinate derivative is asserted.
\end{thm}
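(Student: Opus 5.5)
The plan is to reduce Theorem~\ref{thm:flow-local-law} to a fixed-time, fixed-spectral-parameter averaged and entrywise local law for covariance matrices, then obtain uniformity by a union bound over a deterministic mesh and a Lipschitz extension.

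\emph{Fixed time.} For each fixed $t\in[0,T_N]$, the matrix $X_N(t)=\ee^{-t/2}X_N+\sqrt{1-\ee^{-t}}W_N$ again has independent centered entries with variance $1/N$ and vanishing second complex moment. Its normalized fixed moments are bounded uniformly in $t$, since they are convex-type combinations of moments of $X_N$ and $W_N$. We invoke the anisotropic/entrywise covariance local law near the soft edge \cite{BEKYY14,KY17} for the linearization $G_N(t,z)$ with deterministic approximation $\Pi_N(z)$. For $z=x+\ii\eta$ with $x$ within $N^{-2/3+\kappa_0}$ of $E_{+,N}$ and $\eta=\eta_N$, this gives, with probability at least $1-N^{-\Gamma}$,
\[
 |G_{ij}-\Pi_{ij}|\prec \sqrt{\frac{\Im\widetilde m_N(z)}{N\eta}}+\frac1{N\eta}.
\]
Near the edge one has $\Im\widetilde m_N\lesssim\sqrt{\kappa+\eta}$. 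This alone is not $N^{-1/3+\nu}$ when $\eta=N^{-2/3-12\kappa_0}$ lies below the natural scale, because $1/(N\eta)=N^{-1/3+12\kappa_0}$. That is why the statement requires $\nu>12\kappa_0$, and the bound follows. To pass below the scale $N^{-2/3}$, use the monotonicity $\eta\mapsto\eta\Im G_{ii}(x+\ii\eta)$ together with the standard comparison $|G_{ij}(x+\ii\eta)|\le (\eta'/\eta)\max|G(x+\ii\eta')|$, plus the rigidity estimate that puts no eigenvalues beyond $E_{+,N}+N^{-2/3+\epsilon}$. The resulting loss is $N^{12\kappa_0+\epsilon}$, and it is absorbed by the factor $N^{\nu}$.

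\emph{Consequences.} The bounds \eqref{eq:flow-cross-law}--\eqref{eq:flow-imaginary-law} follow from \eqref{eq:flow-entry-law}, because $\Pi_N$ is diagonal, bounded, and has imaginary part $O(N^{-1/3+\kappa_0/2})$ on the windows. The windows $I_{R,N}$ and $I_{L,N}$ lie within $O(N^{-2/3+\kappa_0})$ of the edge, since $J_N\ell_N\ll N^{-2/3}$ and $r_N\sigma_{+,N}=o(N^{-2/3+\kappa_0})$; this also yields the claim that all endpoints belong to the windows. The bound \eqref{eq:flow-edge-background} follows from \eqref{eq:flow-trace-law} and $|\widetilde m_N(z)-m_{+,N}|=O(\sqrt{|z-E_{+,N}|})=O(N^{-1/3+\kappa_0/2})$. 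The expectation bound \eqref{eq:flow-expected-imaginary} splits into two parts. On $\Omega_N$ we use the high-probability bound. On $\Omega_N^c$ we use the deterministic bound $\Im m_N\le\eta_N^{-1}\le N$ together with $\Prob(\Omega_N^c)\le N^{-D}$ for $D$ large.

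\emph{Uniformity.} The resolvent entries are Lipschitz in $z$ with constant $\eta^{-2}\le N^{2}$. In the angular time $\vartheta$, where $\cos\vartheta=\ee^{-t/2}$, they are Lipschitz with constant $\eta^{-2}\|X_N\|+\eta^{-2}\|W_N\|$. This is at most $N^3$ on an event where the entries are bounded by $N$, which has probability $1-N^{-D}$ by the moment assumptions. Meshes of spacing $N^{-4}$ therefore cost only $O(N^{-1})$. Counting mesh points and coordinate pairs as stated gives at most $CN^{28/3+\kappa_0}$ tests. Taking fixed-time failure exponent $\Gamma=D+A+11$ and a union bound over the tests and the $N^A$ flows gives the simultaneous statement.

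\emph{Coordinate derivatives.} The derivative identities are Lemma~\ref{lem:covariance-coordinate-derivatives} applied with $H_0=H_N(t,z)$. The first-derivative bound follows from \eqref{eq:flow-cross-law}. The second-derivative bound follows from \eqref{eq:coordinate-derivative-bounds}, using $B_{\rm cross}^2+C_{\rm diag}B_{\rm im}=O(N^{-1/3+\nu})$. After rescaling by $\delta_N^{-1}\le C\log N$ and choosing $\nu=1/48$, we get $N^{-1/3+1/48}\log N\le N^{-3/10}$, since $1/3-1/48=15/48>3/10$.

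The main obstacle is the sub-microscopic imaginary part $\eta_N\ll N^{-2/3}$ together with the uniformity in $t$. Everything hinges on making the fixed-$t$ local law hold with constants uniform in $t$ and in entry laws, and on the $\eta$-monotonicity step. I would do that step first, via the spectral decomposition and rigidity.
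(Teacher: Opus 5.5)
Your proposal is correct and follows essentially the paper's route. You apply the fixed-parameter anisotropic covariance local law of \cite{KY17} directly at $\eta_N\ge N^{-1+\tau}$, where the $1/(N\eta_N)=N^{-1/3+12\kappa_0}$ error is absorbed because $\nu>12\kappa_0$. You then promote it to all angular times and spectral points with $N^{-4}$ meshes, resolvent Lipschitz bounds and a union bound, which the paper packages as Theorem~\ref{thm:frozen-corner-local-law} with an empty corner plus Lemma~\ref{lem:continuum-promotion}. The deterministic consequences and Lemma~\ref{lem:covariance-coordinate-derivatives} then give the remaining bounds and the derivative estimates.

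The $\eta$-monotonicity step you add is unnecessary, since the cited law already holds at height $\eta_N$. If you did use it, the size comparison $|G_{ij}(x+\ii\eta)|\le(\eta'/\eta)\max|G(x+\ii\eta')|$ would not by itself control $G-\Pi$; you would need the integrated Ward-identity transfer of \eqref{eq:height-ward}--\eqref{eq:height-transfer}.
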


\begin{proof}
Apply Theorem~\ref{thm:frozen-corner-local-law} with an empty frozen
corner and write $\cos\theta=\ee^{-t/2}$. This parametrization covers the entire time interval, including $t=0$,
inside the compact angular interval $[0,\pi/2]$. The logarithmic windows and all their
shifted endpoints lie in the edge domain of that theorem for sufficiently
large $N$, since every fixed power of $\log N$ is $o(N^{\kappa_0})$.
The entry, trace, cross, diagonal, and imaginary-diagonal assertions
follow. The discriminant in \eqref{eq:mp-quadratic} gives
\[
 |\widetilde m_N(z)-m_{+,N}|
 +\left|-(1+\widetilde m_N(z))^{-1}-c_{+,N}\right|
 \le C N^{-1/3+\kappa_0/2},
\]
which is smaller than the stated error.

For the expected supremum, request the simultaneous event with failure
exponent larger than $D+3$. On its complement use the deterministic
bound $|m_N(t,x+\ii\eta_N)|\le\eta_N^{-1}$, valid for the ordinary
column resolvent. This proves \eqref{eq:flow-expected-imaginary}.
For the mesh count without cutoffs, there are at most $CN^4$ angular
points and $CN^{10/3+\kappa_0}$ spectral points; all index pairs add
$CN^2$ tests. The interpolation estimate in the proof of
Lemma~\ref{lem:continuum-promotion} applies, and
$\Gamma=D+A+11$ absorbs this union and its fixed constants.

The four endpoint identities follow from
Lemma~\ref{lem:covariance-coordinate-derivatives} and the Schur
identity \eqref{eq:flow-schur}. Cross entries are
$O(N^{-1/3+\nu})$. Products of a row and a column diagonal have
imaginary part of that order, because their real parts are bounded and
their imaginary parts have that order. Thus each displayed first or
second derivative is $O(N^{-1/3+\nu})$. At the fixed parameters this
is $O(N^{-5/16})$; division by $\delta_N$ costs at most $C\log N$,
and $N^{-5/16}\log N=o(N^{-3/10})$.
\end{proof}

\subsection{Uniform estimates and removal of one entry}

The local-law error parameter is $\Psi_N=N^{-1/3+\nu}$, with $\nu$
in the range stated in each result. The fixed choice $\nu=1/48$ gives
$\Psi_N=N^{-5/16}$.

A fixed-parameter local law extends to continuous interpolation parameters
by a deterministic net and resolvent derivative bounds. The next lemma
makes the estimates simultaneous over a polynomial-size deterministic
family, including the cutoffs, edge points, and matrix indices. For each
fixed choice of history parameters, conditional Markov bounds give a
measurable set of histories with small conditional failure probability.

\begin{lem}[Uniformity in the parameters]\label{lem:continuum-promotion}
Fix
\[
 \begin{aligned}
 0&<\phi_{\min}<\phi_{\max}<\infty,
 &0&<\kappa<1/288,\\
 12\kappa&<\nu<1/12,
 &A_{\rm fam}&\ge0,
 \end{aligned}
\]
and put
\[
 \eta_N=N^{-2/3-12\kappa},\qquad
 \Psi_N=N^{-1/3+\nu}.
\]
For every sufficiently large $N$, let $\mathcal I_N$ be deterministic with
$\abs{\mathcal I_N}\le N^{A_{\rm fam}}$. For $r\in\mathcal I_N$, let
\[
 \phi_{\min}\le\phi_r:=\frac{M_r}{N}\le\phi_{\max}.
\]
Let $\xi_{\alpha,b}^{(r)}$ and $w_{\alpha,b}^{(r)}$ have uniformly bounded fixed moments; independence is not required. For
$0\le m\le M_r$ and $0\le n\le N$, put
\[
 \mathcal P_{r,m,n}=\set{(\alpha,b)\mid \alpha\le m,\ b\le n}.
\]
For $0\le\theta\le\pi/2$, define
\[
 X_{r,m,n}(\theta)_{\alpha,b}
 =
 \begin{cases}
 N^{-1/2}\xi_{\alpha,b}^{(r)},&(\alpha,b)\in\mathcal P_{r,m,n},\\
 N^{-1/2}\bigl(\cos\theta\,\xi_{\alpha,b}^{(r)}
 +\sin\theta\,w_{\alpha,b}^{(r)}\bigr),
 &(\alpha,b)\notin\mathcal P_{r,m,n}.
 \end{cases}
\]
Let
\[
 H_{r,m,n}(\theta,z)=
 \begin{pmatrix}
 -zI_N&X_{r,m,n}(\theta)^*\\
 X_{r,m,n}(\theta)&-I_{M_r}
 \end{pmatrix},
 \qquad G_{r,m,n}(\theta,z)=H_{r,m,n}(\theta,z)^{-1}.
\]
Put
\[
 E_{+,r}=(1+\sqrt{\phi_r})^2,\qquad
 \mathcal S_r=\set{E+\ii\eta_N\mid
 \abs{E-E_{+,r}}\le5N^{-2/3+\kappa}}.
\]
Let $m_r$ solve
\[
 z=-\frac1{m_r(z)}+\frac{\phi_r}{1+m_r(z)},\qquad \Im m_r(z)>0,
\]
and set
\[
 \Pi_r(z)=\diag\left(m_r(z)I_N,-(1+m_r(z))^{-1}I_{M_r}\right).
\]
Let $T_{\rm col}$ and $T_{\rm row}$ denote the normalized traces of the column and row blocks of $G_{r,m,n}$.

Assume the following fixed-parameter local law. For every $B>0$, there are
$K_B<\infty$ and $N_B$ such that, for every deterministic choice of all displayed parameters and indices, the probability that any of
\[
 \abs{G_{r,m,n}(\theta,z)_{u,v}-\Pi_r(z)_{u,v}}\le K_B\Psi_N,
\]
\[
 \abs{G_{r,m,n}(\theta,z)_{u,u}}\le K_B,\qquad
 \abs{\Im G_{r,m,n}(\theta,z)_{u,u}}\le K_B\Psi_N,
\]
\[
 \abs{T_{\rm col}-m_r(z)}\le K_B\Psi_N,\qquad
 \abs{T_{\rm row}+(1+m_r(z))^{-1}}\le K_B\Psi_N
\]
fails is at most $K_BN^{-B}$.

Then, for every $D>0$, there are $C_D<\infty$, $N_D$, and events
$\Omega_N(D)$ with
\[
 \Prob(\Omega_N(D)^c)\le C_DN^{-D},
\]
such that all five estimates hold simultaneously for every
$r,m,n,\theta,z\in\mathcal S_r$ and all indices.

For fixed deterministic $r,m,n$, let
\[
 \mathcal F_{r,m,n}
 =\sigma\bigl(\xi_{\alpha,b}^{(r)}:(\alpha,b)\in\mathcal P_{r,m,n}\bigr)
\]
and define
\[
 \mathcal H_N(D;r,m,n)
 =\set{
 \Prob(\Omega_N(D)^c\mid\mathcal F_{r,m,n})
 \le C_DN^{-D+5}}.
\]
Then
\[
 \Prob(\mathcal H_N(D;r,m,n)^c)\le N^{-5}.
\]
Thus the exceptional histories are summable along every dyadic subsequence. Applying the conclusion with $D+5$ gives conditional failure at most
$C_{D+5}N^{-D}$ outside an event whose complement has probability at most $N^{-5}$. The conclusion only uniformizes the assumed fixed-parameter law.
\end{lem}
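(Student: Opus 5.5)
The proof is a standard net-plus-Lipschitz argument. It then applies a conditional Markov inequality for the last assertion.

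\emph{Step 1: the net.} For each $r\in\mathcal I_N$ I fix a deterministic mesh of $\theta\in[0,\pi/2]$ with spacing $N^{-10}$. I also fix a mesh of $E$ in $\{\abs{E-E_{+,r}}\le5N^{-2/3+\kappa}\}$ with spacing $N^{-10}$. Together with all $(m,n)$ (at most $CN^2$ choices) and all index pairs $(u,v)$ (at most $CN^2$), this gives at most $N^{A_{\rm fam}+C_0}$ deterministic tests for an absolute $C_0$. I apply the fixed-parameter law with $B=D+A_{\rm fam}+C_0$ and take a union bound. This yields a net event of probability at least $1-K_BN^{-D}$.

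\emph{Step 2: extension to the continuum.} I use $\partial_\theta G=-G(\partial_\theta H)G$ and $\partial_zG=GP_{\rm col}G$. Since $\Im z=\eta_N$, the linearized resolvent satisfies a deterministic bound $\norm{G}\le C\eta_N^{-1}(1+\norm{X}_{\op})^2$. I would verify this from the Schur formula \eqref{eq:flow-schur}, using $\norm{z\mathcal R}\le\abs z/\eta_N$ and $\norm{X R}\le \norm{R}^{1/2}(1+\abs z\norm R)^{1/2}$.

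Also, $\norm{\partial_\theta H}\le N^{-1/2}(\norm{\xi}_{\rm HS}+\norm{w}_{\rm HS})$. By Markov and the moment bounds, this is at most $N^{2}$ outside probability $N^{-D}$, and similarly $\norm X_{\op}\le N$ there. Hence on this event every entry of $G$, its diagonal imaginary parts, and both normalized traces are Lipschitz in $(\theta,E)$ with constant at most $N^{8}$. The deterministic functions $m_r$ and $\Pi_r$ are Lipschitz in $E$ with constant at most $\eta_N^{-2}$.

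The mesh spacing $N^{-10}$ therefore changes each quantity by $O(N^{-2})=o(\Psi_N)$. The five estimates then hold everywhere with $K_B$ replaced by $K_B+1$. I define $\Omega_N(D)$ as the intersection of the net event and the moment event, so that $\Prob(\Omega_N(D)^c)\le C_DN^{-D}$.

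\emph{Step 3: conditional statement.} Put $Y=\Prob(\Omega_N(D)^c\mid\mathcal F_{r,m,n})$. Then $\E Y=\Prob(\Omega_N(D)^c)\le C_DN^{-D}$. Markov's inequality gives
\[
\Prob\bigl(Y>C_DN^{-D+5}\bigr)\le \frac{C_DN^{-D}}{C_DN^{-D+5}}=N^{-5},
\]
which is the claim. Summability along dyadic $N=2^k$ is then immediate, and replacing $D$ by $D+5$ gives the final sentence.

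\emph{Main obstacle.} The only delicate point is the crude deterministic resolvent bound for the linearization with $z$ off the real axis. One must check that $H(\theta,z)$ is invertible with a polynomial bound on $\norm{G}$, even when $X$ is wide or rank-deficient. I would obtain it from the Schur complement: the column block $R$ has norm at most $\eta_N^{-1}$, and the remaining blocks are controlled by $\norm X_{\op}$ and $\abs z$. Independence is never used, since only union bounds and Markov are involved.
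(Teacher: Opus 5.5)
Your proposal is correct and follows the paper's proof. Both arguments take a deterministic net in $(\theta,E)$ over the polynomial family, the cutoffs $(m,n)$ and the index pairs, apply a union bound with the fixed-parameter law, and extend to the continuum through $\partial_\theta G=-G(\partial_\theta H)G$, $\partial_zG=GP_{\rm col}G$ and a crude resolvent bound on a moment event; finally, ordinary Markov is applied to $\Prob(\Omega_N(D)^c\mid\mathcal F_{r,m,n})$. The differences are cosmetic: the paper uses the $X$-independent bound $\norm{G}_{\op}\le C\eta_N^{-1}$, which your own Schur estimates already yield, together with entry truncation at $N^{\epsilon}$ and mesh spacing $N^{-4}$, whereas you carry extra $\norm{X}_{\op}$ factors and compensate with spacing $N^{-10}$.
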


\begin{proof}
We construct the simultaneous event on a deterministic net before
conditioning on the past.
Choose a fixed $0<\epsilon<1/100$. The uniform moment bounds and Markov's
inequality give, with arbitrarily high polynomial probability,
\[
 \max_{r,\alpha,b}(|\xi^{(r)}_{\alpha,b}|+
 |w^{(r)}_{\alpha,b}|)\le N^\epsilon.
\]
There are $O(N^{A_{\rm fam}+2})$ variables in this maximum, so the
moment order can be chosen after the desired failure exponent. On this
event, uniformly in the cutoffs and angles,
\[
 \|X(\theta)\|_{\rm HS}+\|\partial_\theta X(\theta)\|_{\rm HS}
 \le C N^{1/2+\epsilon}.
\]
All real spectral parts under consideration lie in a fixed compact
subset of $(0,\infty)$. Singular-value decomposition of $X$ reduces the
linearized inverse to blocks
\[
 \frac1{u^2-z}\begin{pmatrix}1&u\\u&z\end{pmatrix},\quad u\ge0,
 \qquad -1/z,\quad -1.
\]
Splitting at a fixed large value of $u$ gives
$\|G(\theta,z)\|_{\op}\le C\eta_N^{-1}$, independently of the
matrix and its dimensions. The inverse-derivative identities therefore give
\[
 \|\partial_\theta G\|_{\op}
 \le C N^{1/2+\epsilon}\eta_N^{-2},\qquad
 \|\partial_E G\|_{\op}\le C\eta_N^{-2}.
\]
Take an angular net and, in each edge window, a real spectral net with
spacing at most $N^{-4}$, including their boundary points. After the
$O(N^2)$ cutoffs, $O(N^2)$ index pairs, and the matrix family are included,
the number of fixed-parameter tests is at most
$C N^{A_{\rm fam}+34/3+\kappa}$. The assumed estimate with, for example,
$B>D+A_{\rm fam}+20$ controls their union. The preceding derivative
bounds imply an interpolation error at most
\[
 C N^{-4}\eta_N^{-2}(1+N^{1/2+\epsilon})
 \le C N^{-13/6+24\kappa+\epsilon}
 =o(\Psi_N).
\]
This also controls both normalized partial traces, since normalized traces
of a block are bounded by its operator norm. For the deterministic
backgrounds, the quadratic Marchenko--Pastur equation gives
$|m_r'(z)|\le C\eta_N^{-1/2}$ on the edge windows, with the same
bound for $\partial_z(1+m_r)^{-1}$ because $1+m_r$ stays away from
zero. Their net-interpolation errors are likewise $o(\Psi_N)$.
This proves the simultaneous five estimates after increasing their
constant.

For each fixed $r,m,n$, set
$Q=\Prob(\Omega_N(D)^c\mid\mathcal F_{r,m,n})$. The estimate just
proved gives $\E Q\le C_DN^{-D}$. Conditional expectation is measurable
with respect to the displayed past, and ordinary Markov's inequality gives
\[
 \Prob(Q>C_DN^{-D+5})\le N^{-5}.
\]
This gives the stated history event for each fixed $r,m,n$.
Replacing $D$ by $D+5$ gives the prescribed conditional polynomial
failure exponent.
\end{proof}

We apply this extension with the northwest block $\mathcal P_0$ fixed.
Conditional on $\mathcal F_0$, the moving coordinates remain centered and
independent, and their Gaussian partners are independent of the past.
A rank-two resolvent identity also permits one moving entry to be removed.
The resulting event is simultaneous over the deterministic family and
parameters; the conditional history estimate applies to each fixed member.

\begin{thm}[Conditional local law]\label{thm:frozen-corner-local-law}
Fix
\[
 \begin{aligned}
 0&<\phi_{\min}<\phi_{\max}<\infty,
 &0&<\kappa<1/288,\\
 12\kappa&<\nu<1/12,
 &A_{\rm fam}&\ge0.
 \end{aligned}
\]
For every sufficiently large $N$, let
\[
 \phi_{\min}\le\frac MN\le\phi_{\max}.
\]
Let $\xi_{\alpha,b}$ and $w_{\alpha,b}$ be mutually independent complex random variables satisfying
\[
 \E\xi_{\alpha,b}=0,\qquad
 \E\abs{\xi_{\alpha,b}}^2=1,\qquad
 \E\xi_{\alpha,b}^2=0,
\]
with uniformly bounded fixed moments, and let every $w_{\alpha,b}$ be standard circular complex Gaussian. For $0\le m_0\le M$ and $0\le n_0\le N$, put
\[
 \mathcal P_0=\set{(\alpha,b)\mid \alpha\le m_0,\ b\le n_0},
 \qquad \mathcal E_{\rm mov}=\mathcal P_0^c.
\]
For $0\le\theta\le\pi/2$, define
\[
 X_{\rm fr}(\theta)_{\alpha,b}
 =
 \begin{cases}
 N^{-1/2}\xi_{\alpha,b},&(\alpha,b)\in\mathcal P_0,\\
 N^{-1/2}(\cos\theta\,\xi_{\alpha,b}
 +\sin\theta\,w_{\alpha,b}),&(\alpha,b)\in\mathcal E_{\rm mov}.
 \end{cases}
\]
Let
\[
 H_{\rm fr}(\theta,z)=
 \begin{pmatrix}
 -zI_N&X_{\rm fr}(\theta)^*\\
 X_{\rm fr}(\theta)&-I_M
 \end{pmatrix},
 \qquad G_{\rm fr}(\theta,z)=H_{\rm fr}(\theta,z)^{-1}.
\]
Put
\[
 \phi=\frac MN,\qquad E_+=(1+\sqrt\phi)^2,\qquad
 \eta=N^{-2/3-12\kappa},
\]
\[
 \mathcal S_{\rm edge}
 =\set{E+\ii\eta\mid\abs{E-E_+}\le5N^{-2/3+\kappa}}.
\]
Let $m_\phi$ solve
\[
 z=-\frac1{m_\phi(z)}+\frac{\phi}{1+m_\phi(z)},\qquad
 \Im m_\phi(z)>0,
\]
and set
\[
 \Pi(z)=\diag\left(m_\phi(z)I_N,
 -(1+m_\phi(z))^{-1}I_M\right).
\]
For a moving coordinate $e$, let $G_{\rm fr}^{[e]}$ denote the resolvent after setting that entry to zero.

For every $D>0$, there is an event $\Omega_{{\rm cfr},N}(D)$ satisfying
\[
 \Prob(\Omega_{{\rm cfr},N}(D)^c)\le C_DN^{-D},
\]
such that, simultaneously for all $m_0,n_0,\theta,z\in\mathcal S_{\rm edge}$, moving $e$, and indices $u,v$,
\begin{equation}
 \abs{G_{\rm fr}^{[e]}(\theta,z)_{u,v}-\Pi(z)_{u,v}}
 \le C_DN^{-1/3+\nu},
 \label{eq:frozen-deleted-entry-law}
\end{equation}
\begin{equation}
 \abs{G_{\rm fr}^{[e]}(\theta,z)_{u,u}}\le C_D,\qquad
 \abs{\Im G_{\rm fr}^{[e]}(\theta,z)_{u,u}}
 \le C_DN^{-1/3+\nu},
 \label{eq:frozen-deleted-diagonal-law}
\end{equation}
and both normalized partial traces differ from their corresponding entries of
$\Pi(z)$ by at most $C_DN^{-1/3+\nu}$.

The conclusions remain simultaneous for any deterministic family of at most
$N^{A_{\rm fam}}$ proportional dimensions, cutoffs, and edge windows. For a fixed member, let
\[
 \mathcal F_0=\sigma(\xi_{\alpha,b}:(\alpha,b)\in\mathcal P_0).
\]
There are $\mathcal F_0$-measurable history events
$\mathcal H_{{\rm cfr},N}(D)$ such that
\[
 \Prob(\mathcal H_{{\rm cfr},N}(D)^c)\le N^{-5},
\]
and, on $\mathcal H_{{\rm cfr},N}(D)$,
\[
 \Prob(\Omega_{{\rm cfr},N}(D)^c\mid\mathcal F_0)
 \le C_DN^{-D+5}.
\]
Thus the exceptional histories are summable along every dyadic subsequence.
\end{thm}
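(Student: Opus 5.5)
The plan is to reduce everything to a fixed-parameter local law for a single interpolated matrix with one entry deleted, and then invoke Lemma~\ref{lem:continuum-promotion} for uniformity and for the conditional history statement. The fixed-parameter input comes from the standard averaged and entrywise edge local laws for sample covariance matrices \cite{BEKYY14,KY17}. For fixed $m_0,n_0,\theta$, the entries of $\sqrt N X_{\rm fr}(\theta)$ are independent and centered. They have unit variance, since $\cos^2\theta+\sin^2\theta=1$, vanishing second complex moment, and uniformly bounded fixed moments. The frozen entries are the original $\xi_{\alpha,b}$; the moving ones are $\cos\theta\,\xi+\sin\theta\,w$. Thus $X_{\rm fr}(\theta)$ satisfies the hypotheses of those local laws, with constants uniform in $\theta$ and in the corner $\mathcal P_0$. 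This gives, at each fixed $z\in\mathcal S_{\rm edge}$, the entrywise bound $|G_{\rm fr}-\Pi|\le N^{-1/3+\nu}$ with probability $1-N^{-B}$. The error parameter is $N^{-1/3+\nu}$ because $\Psi(z)\approx (N\eta)^{-1}+\sqrt{\Im m/(N\eta)}$ at $\eta=N^{-2/3-12\kappa}$ within distance $5N^{-2/3+\kappa}$ of the edge; the condition $\nu>12\kappa$ leaves room for the $N^{\epsilon}$ loss. The same fixed-$z$ laws also supply the diagonal bound, the $\Im$-diagonal bound $\Im m_\phi+\Psi\lesssim N^{-1/3+\nu}$, and the two partial-trace laws.

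Next I pass to the deleted resolvent $G^{[e]}$. Setting one moving entry $x_e$ to zero is a rank-two perturbation $H_{\rm fr}=H^{[e]}+N^{-1/2}(x_eE_{\alpha b}+\bar x_eE_{b\alpha})$. The resolvent identity then gives
\[
G^{[e]}=G+G\,V\,G^{[e]}, \qquad V=N^{-1/2}(x_eE_{\alpha b}+\bar x_e E_{b\alpha}).
\]
Resolving this on the $2\times2$ block $\{b,\alpha\}$ via a Schur/Woodbury formula gives $G^{[e]}_{uv}-G_{uv}=O(N^{-1/2+\epsilon})$. On the event $|x_e|\le N^{\epsilon}$, this holds uniformly as soon as $G_{ub},G_{u\alpha},G_{bv},G_{\alpha v}$ are bounded, which the full-matrix law supplies, and the $2\times2$ correction matrix $I-VG$ is invertible with bounded inverse. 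Since $N^{-1/2+\epsilon}\ll N^{-1/3+\nu}$, all five estimates transfer to $G^{[e]}$, including the imaginary diagonal parts and the normalized traces, whose change is even smaller, $O(N^{-3/2+\epsilon})$. An alternative is to note that $G^{[e]}$ is itself the resolvent of a matrix whose entries are independent and centered but with one vanishing variance; the Woodbury route avoids re-deriving the local law for such non-identical profiles, so I would use it.

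For simultaneity, I enlarge the index set $\mathcal I_N$ of Lemma~\ref{lem:continuum-promotion} to include the deleted coordinate $e$ (at most $O(N^2)$ choices) and the given family of $N^{A_{\rm fam}}$ dimensions and windows. The corners $(m_0,n_0)$ are already parameters of that lemma, so the cutoffs $m,n$ correspond to $m_0,n_0$. Deleting an entry keeps the matrix in the framework of the lemma, since independence is not required there and moment bounds persist. The lemma's net argument ($N^{-4}$ spacing with the $\eta^{-2}$ derivative bounds) then yields $\Omega_{{\rm cfr},N}(D)$ with failure probability $C_DN^{-D}$. Its last clause, via conditional Markov, gives the $\mathcal F_0$-measurable history events with $\Prob(\mathcal H^c)\le N^{-5}$ and conditional failure at most $C_DN^{-D+5}$.

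The main obstacle I expect is the fixed-parameter input at this precise scale: the cited edge local laws must apply uniformly over $\theta$ and the mixed frozen/moving profile down to $\eta=N^{-2/3-12\kappa}$, slightly below $N^{-2/3}$. There the naive bound degrades. My first attempt is to use the optimal edge law at $\eta'=N^{-2/3+\epsilon'}$ and descend to $\eta$ by the monotonicity $\eta\Im G_{uu}(E+\ii\eta)\le \eta'\Im G_{uu}(E+\ii\eta')$ together with the standard integration of $\partial_\eta G$. This costs a factor $\eta'/\eta=N^{12\kappa+\epsilon'}$, which is absorbed because $\nu>12\kappa$. Off-diagonal entries would be handled by the same Ward-type argument.
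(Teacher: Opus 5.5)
Your proposal is correct and follows essentially the paper's route. The paper also uses a fixed-parameter anisotropic law from \cite{KY17}, applied uniformly in $\theta$ and the frozen corner, a rank-two Woodbury deletion costing $O(N^{-1/2+\epsilon})$, Lemma~\ref{lem:continuum-promotion} for simultaneity, and conditional Markov for the histories. The differences are minor. The paper promotes the undeleted resolvent first and then deletes on that single event. Your height concern does not arise, because \cite[Theorem~3.14(i)]{KY17} already applies for $\eta\ge N^{-1+\tau}$ with $0<\tau<1/3-12\kappa$; your Ward-type descent would also work. One small slip: averaging the Woodbury correction changes the normalized partial traces by $O(N^{-7/6+2\nu+\epsilon})$, coming from off-diagonal products such as $G_{ub}G_{\alpha u}$, not by $O(N^{-3/2+\epsilon})$. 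This is harmless, since any $o(N^{-1/3+\nu})$ error suffices.
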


\begin{proof}
The fixed-parameter input is the complex anisotropic covariance local law
\cite[Theorem~3.14(i) and Section~2.1]{KY17}, including the stated
complex-entry extension. Choose its fixed domain parameter
$0<\tau<1/3-12\kappa$, small enough also for the dimension-ratio
bounds. Then $\eta=N^{-2/3-12\kappa}\ge N^{-1+\tau}$, so the
specified heights belong to its spectral domain. The entry laws need not
be identical. For identity population covariance, its inverse map is
\[
 f_\phi(m)=-\frac1m+\frac{\phi}{1+m}.
\]
The right critical point and edge are
\[
 m_+=-\frac1{1+\sqrt\phi},\qquad
 f_\phi(m_+)=(1+\sqrt\phi)^2=E_+.
\]
The other edge is $E_-=(1-\sqrt\phi)^2$, and
\[
 E_+-E_-=4\sqrt\phi,\qquad
 \abs{m_++1}=\frac{\sqrt\phi}{1+\sqrt\phi}.
\]
These quantities are uniformly separated from zero on
$[\phi_{\min},\phi_{\max}]$, so the right edge is uniformly regular.

For every deterministic $m_0,n_0,\theta$, the entries of $X_{\rm fr}(\theta)$ are independent and centered. Their variance is $N^{-1}$ and their complex second moment is zero because
\[
 \cos^2\theta+\sin^2\theta=1
\]
and both summands have zero complex second moment. Uniform fixed moments follow from
\[
 \abs{a+b}^p\le2^{p-1}(\abs a^p+\abs b^p).
\]
Thus the fixed-parameter local law applies uniformly.

The quadratic equation is
\[
 zm_\phi^2+(z+1-\phi)m_\phi+1=0
\]
with discriminant $(z-E_-)(z-E_+)$. On $\mathcal S_{\rm edge}$,
\[
 \abs{m_\phi(z)-m_+}\le C\sqrt{\abs{z-E_+}},
\]
and hence
\begin{equation}
 \Im m_\phi(z)\le CN^{-1/3+\kappa/2},\qquad
 \abs{m_\phi(z)}+\abs{1+m_\phi(z)}^{-1}\le C.
 \label{eq:frozen-mp-bounds}
\end{equation}
At height $\eta=N^{-2/3-12\kappa}$, the local-law control satisfies
\[
 \sqrt{\frac{\Im m_\phi(z)}{N\eta}}+\frac1{N\eta}
 \le CN^{-1/3+12\kappa}.
\]
Choose $\epsilon>0$ with $12\kappa+\epsilon<\nu$. Stochastic domination then gives the required five fixed-parameter estimates with error
$K_BN^{-1/3+\nu}$ and arbitrary polynomial failure exponent.

The row-first comparison matrix from the cited theorem becomes $\Pi(z)$ after block permutation. The averaged column law transfers to the row law through the exact identity
\[
 T_{\rm row}
 =\frac z\phi T_{\rm col}-\frac{\phi-1}{\phi},
\]
whose deterministic counterpart is
\[
 -\frac1{1+m_\phi(z)}
 =\frac z\phi m_\phi(z)-\frac{\phi-1}{\phi}.
\]
Equation \eqref{eq:frozen-mp-bounds} also supplies the diagonal and imaginary-diagonal bounds required by Lemma~\ref{lem:continuum-promotion}. Applying that lemma yields one undeleted event simultaneous over the polynomial family, all cutoffs, all coordinates, the continuum parameter $\theta$, and the spectral domain.

To delete one coordinate, fix
\[
 0<\epsilon_{\rm del}<\min(1/4,1/6-\nu).
\]
A moment bound and a union bound give, outside arbitrary polynomially small probability,
\[
 \max_{\alpha,b}(\abs{\xi_{\alpha,b}}+\abs{w_{\alpha,b}})
 \le N^{\epsilon_{\rm del}}.
\]
Thus every moving entry satisfies
\[
 \abs{x}\le2N^{-1/2+\epsilon_{\rm del}}.
\]
With
\[
 U=[\overline x\,e_b,\;xe_\alpha],\qquad
 V=[e_\alpha,\;e_b],
\]
the rank-two Woodbury identity gives
\[
 G_{\rm fr}^{[e]}
 =G_{\rm fr}+G_{\rm fr}U(I_2-V^*G_{\rm fr}U)^{-1}V^*G_{\rm fr}.
\]
The undeleted entries are bounded by \eqref{eq:frozen-mp-bounds} and the local law, so
\[
 \norm{V^*G_{\rm fr}U}\le CN^{-1/2+\epsilon_{\rm del}}\le\frac12
\]
for all sufficiently large $N$. Therefore
\[
 \max_{u,v}\abs{G_{\rm fr}^{[e]}(u,v)-G_{\rm fr}(u,v)}
 \le CN^{-1/2+\epsilon_{\rm del}}
 =o(N^{-1/3+\nu}).
\]
This transfers the entrywise, diagonal, and imaginary-diagonal estimates.

Averaging the diagonal Woodbury estimates transfers both normalized
partial traces. The row block retains the factor $z$ in the linearization,
as in the trace identities above.

The same argument covers all insertion segments used later. For a fixed
moving coordinate, let its normalized value vary from $0$ to $x$, or more
generally let the inserted complex value have modulus at most
$2N^{-1/2+\epsilon_{\rm del}}$. Relative to the undeleted matrix the
perturbation is supported on the same two linearization coordinates and has
size at most $4N^{-1/2+\epsilon_{\rm del}}$. The two-by-two inverse in
the Woodbury formula remains uniformly invertible. Consequently the
entrywise difference is $O(N^{-1/2+\epsilon_{\rm del}})=o(\Psi_N)$,
where $\Psi_N=N^{-1/3+\nu}$, simultaneously over the entire complex
insertion disk. The same event therefore controls every insertion parameter. Finite products of Green entries
and fixed-order endpoint derivatives are therefore controlled on the same
event. For a fixed finite collection of conditional moments, choose the
raw moment orders and the bad-event exponent after the collection has
been specified; the coarse inverse bound and H\"older's inequality then
control its complement.

Finally, conditional on $\mathcal F_0$, put
\[
 Q_N=\Prob(\Omega_{{\rm cfr},N}(D)^c\mid\mathcal F_0).
\]
The annealed estimate gives $\E Q_N\le C_DN^{-D}$. Markov's inequality at
$C_DN^{-D+5}$ yields
\[
 \Prob(Q_N>C_DN^{-D+5})\le N^{-5},
\]
which proves the history assertion and dyadic summability.
\end{proof}

These conditional resolvent and derivative estimates provide the inputs
for the strip interpolation and cumulant expansion below.

Throughout the conditional comparisons, the derivative orders, family
size exponents, and required remainder powers are fixed first. We then
choose one sufficiently large failure exponent $D$ and use the resulting
good-history event $\mathcal H_N(D)$ for all finitely many estimates
in that application.

\subsection{Schur complements and Gaussian interpolation}

Order the row and column indices so that the revealed northwest block
comes first. The future matrix then consists of the $r_p\times c_p$
block $A_p$, the fresh $q\times p$ rectangle $D_p$, and the cross strips
$B_p$ and $C_p$, of sizes $r_p\times p$ and $q\times c_p$.

At the Gaussian comparison point, $B_p,C_p,D_p$ are Gaussian and
independent of the revealed block. Conditioning also on $D_p$ reduces the
strip comparison to the exceptional Schur complement associated with
$A_p$. Its uniform invertibility controls the resolvent entries and the
first two derivatives of the smoothed statistic as either strip varies.

\begin{prop}[Schur complement estimates]\label{prop:gaussian-two-strip}
Fix
\[
 \kappa_0=\frac1{1000},\qquad
 \nu_0=\frac1{48},\qquad
 \omega_0=\frac{\nu_0+12\kappa_0}{2}=\frac{197}{12000}.
\]
Let $p\to\infty$, let $q\ge p$ with $q/p\le C_0$, and put
\[
 b_p=\sqrt{\frac qp},\qquad
 \eta_p=p^{-2/3-12\kappa_0}.
\]
Let
\[
 c_p,r_p\le P_p\le(\log p)^d.
\]
Let $A_p$ be deterministic of size $r_p\times c_p$ with
\[
 \norm{A_p}_{\op}\le p^{-5/16},
\]
and let $D_p$ be deterministic of size $q\times p$. For
$z\in\mathcal Z_p=\{z_-,z_+\}$, where $z=x+\ii\eta_p$ and
\[
 \abs{x-(1+b_p)^2}\le p^{-1/4},
\]
define
\[
 H_K(z)=
 \begin{pmatrix}
 -zI_p&D_p^*\\D_p&-I_q
 \end{pmatrix},
 \qquad R_p(z)=H_K(z)^{-1},
\]
with blocks $R_{cc},R_{cr},R_{rc},R_{rr}$. Assume
\[
 \max_k\abs{R_p(z)_{k,k}}\le C_0,\qquad
 \max_k\abs{\Im R_p(z)_{k,k}}\le C_0p^{-5/16}.
\]
\begin{equation}
 \abs{\frac1p\Tr R_{cc}(z)+\frac1{1+b_p}}
 +\abs{\frac1p\Tr R_{rr}(z)+b_p(1+b_p)}
 \le C_0p^{-5/16},
 \label{eq:strip-core-traces}
\end{equation}
\begin{equation}
 \norm{R_p(z)}_{\op}\le C_0p^{2/3+12\kappa_0},\qquad
 \norm{R_p(z)}_F\le C_0p^{2/3+\omega_0},
 \label{eq:strip-core-norms}
\end{equation}
and
\begin{equation}
 \max_k\set{\norm{R_p(z)e_k}_2,\norm{e_k^*R_p(z)}_2}
 \le C_0p^{1/6+\omega_0}.
 \label{eq:strip-core-vectors}
\end{equation}
Let $B_p$ be $r_p\times p$ and $C_p$ be $q\times c_p$, with independent centered circular complex Gaussian entries of variance $p^{-1}$, and assume $B_p$ and $C_p$ are independent. For $\alpha,\beta\in[0,1]$, put
\[
 X_p(\alpha,\beta)=
 \begin{pmatrix}
 A_p&\alpha B_p\\
 \beta C_p&D_p
 \end{pmatrix},
\]
and let $G_p(\alpha,\beta,z)$ be its covariance linearization resolvent.

For every $D_0>0$, outside probability at most $p^{-D_0}$, uniformly in
$\alpha,\beta,z$ and every coordinate belonging to either Gaussian strip, if that coordinate joins column index $b$ to row index $a$, then
\begin{equation}
 \abs{G_p(z)_{b,a}}+\abs{G_p(z)_{a,b}}
 \le Cp^{-61/200},
 \label{eq:strip-cross}
\end{equation}
\[
 \abs{G_p(z)_{b,b}}+\abs{G_p(z)_{a,a}}\le C,
\]
\begin{equation}
 \abs{\Im G_p(z)_{b,b}}+\abs{\Im G_p(z)_{a,a}}
 \le Cp^{-61/200}.
 \label{eq:strip-diagonal}
\end{equation}
The event may be chosen simultaneously for a fixed polynomial number of blocks and endpoint pairs.

For one strip coordinate, vary either its real or imaginary normalized scalar value $\theta$, and define
\[
 Y_p(\theta)=\frac1\pi\int_{\Re z_-}^{\Re z_+}
 \Im\Tr(P_{\rm col}G_p(\theta,x+\ii\eta_p))\,\dd x.
\]
Then
\begin{equation}
 \abs{\partial_\theta Y_p(0)}
 +\abs{\partial_\theta^2Y_p(0)}
 \le Cp^{-61/200}.
 \label{eq:strip-poisson-derivatives}
\end{equation}
If
\[
 y_p=\frac{Y_p-y_0}{\delta_p},\qquad
 \delta_p^{-1}\le C_\delta\log p,
\]
then
\begin{equation}
 \abs{\partial_\theta y_p(0)}
 +\abs{\partial_\theta^2y_p(0)}
 \le Cp^{-3/10}.
 \label{eq:strip-normalized-derivatives}
\end{equation}
The same conclusion holds after adding a smooth spectral term whose first two coordinate derivatives vanish on the good event.

If additionally $D_p$ is a covariance core and $\norm{D_p}_{\op}\le C_0$, then, with
\[
 Q_p=(D_p^*D_p-zI_p)^{-1},
\]
\begin{equation}
 R_p(z)=
 \begin{pmatrix}
 Q_p&Q_pD_p^*\\
 D_pQ_p&-I_q+D_pQ_pD_p^*
 \end{pmatrix},
 \label{eq:strip-core-block}
\end{equation}
and \eqref{eq:strip-core-norms}--\eqref{eq:strip-core-vectors} follow from the stated height and the preceding diagonal-imaginary bound.
\end{prop}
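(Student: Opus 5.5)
The core idea is to condition on $D_p$, treat the small Schur complement attached to the $r_p\times c_p$ corner as a perturbation of the fresh resolvent $R_p$, and read off the strip-coordinate entries from an explicit block inversion.

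Order the coordinates as (revealed columns $c_p$, fresh columns $p$, revealed rows $r_p$, fresh rows $q$). The linearization of $X_p(\alpha,\beta)$ is then $H_K(z)$ bordered by a block of dimension $c_p+r_p\le 2P_p$. This border contains $-zI_{c_p}$, $-I_{r_p}$, $A_p$, and the Gaussian couplings $\alpha B_p$ and $\beta C_p$ to the fresh coordinates. By the Schur complement formula,
\[
 G_p=\begin{pmatrix} S^{-1} & -S^{-1}\Gamma^* R_p\\ -R_p\Gamma S^{-1} & R_p+R_p\Gamma S^{-1}\Gamma^*R_p\end{pmatrix},
 \qquad
 S=H_{\rm old}-\Gamma^*R_p\Gamma .
\]
Here $\Gamma$ has the Gaussian strip entries in its fresh-coordinate blocks, and $H_{\rm old}=\begin{pmatrix}-zI&A_p^*\\A_p&-I\end{pmatrix}$.

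The first step is to show that $S$ is uniformly invertible with $\norm{S^{-1}}\le C$. Conditionally on $D_p$, the entries of $\Gamma^*R_p\Gamma$ are Gaussian quadratic forms in independent columns of variance $p^{-1}$. Hanson--Wright, or Gaussian concentration, gives diagonal blocks $p^{-1}\Tr R_{cc}$ and $p^{-1}\Tr R_{rr}$ (with the $\alpha^2,\beta^2$ weights) up to errors $p^{-1}\norm{R_p}_F\,p^{\epsilon}\le p^{-1/3+\omega_0+\epsilon}$, using \eqref{eq:strip-core-norms}. Off-diagonal blocks such as $B\,R_{cr}\,C$-type mixed forms are centered, with the same error. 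The union bound runs over at most $(\log p)^{2d}$ entries, $z\in\mathcal Z_p$, and a net in $(\alpha,\beta)$ with Lipschitz interpolation. This gives, outside probability $p^{-D_0}$,
\[
 S=\begin{pmatrix}(-z-\alpha^2 p^{-1}\Tr R_{rr})I_{c_p}&0\\0&(-1-\beta^2p^{-1}\Tr R_{cc})I_{r_p}\end{pmatrix}+O(p^{-5/16})+O_{\op}\bigl(P_p\,p^{-1/3+\omega_0+\epsilon}\bigr),
\]
where the $O(p^{-5/16})$ term comes from $A_p$. By \eqref{eq:strip-core-traces}, the diagonal values are $-z+\alpha^2b_p(1+b_p)$ and $-1+\beta^2/(1+b_p)$. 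Their moduli must be bounded below uniformly in $\alpha,\beta\in[0,1]$. The second is at least $b_p/(1+b_p)$. For the first, at $\alpha=1$ it equals $-(1+b_p)$ to leading order, since $z\approx(1+b_p)^2$; for $\alpha<1$ it is more negative. With $P_p$ only polylogarithmic, the operator-norm perturbation is small, so $\norm{S^{-1}}\le C$. I expect this to be the main obstacle: the lower bound on $S$ must be uniform along the whole two-parameter interpolation, and the $\Im$ parts must be controlled, which needs $\Im$ of the traces to be $O(p^{-5/16})$.

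Next I bound the entries of $G_p$. Each strip coordinate joins an old index to a fresh index. An old diagonal entry is $(S^{-1})_{kk}$, which is $O(1)$. Its imaginary part is $O(p^{-5/16}+p^{-1/3+\omega_0+\epsilon})$, because the leading diagonal of $S$ is real up to $\Im z=\eta_p$ and the $\Im$ of the traces. A fresh diagonal entry equals $R_{kk}+(R\Gamma S^{-1}\Gamma^*R)_{kk}$. The correction is a Gaussian form in $\Gamma$ against the vectors $R e_k$. Its size is $p^{-1}\norm{Re_k}_2^2p^\epsilon\le p^{-2/3+2\omega_0+\epsilon}$ by \eqref{eq:strip-core-vectors}, which is tiny. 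Cross entries $(R_p\Gamma S^{-1})_{k,\cdot}$ are Gaussian linear forms with standard deviation $p^{-1/2}\norm{e_k^*R_p}_2\le p^{-1/3+\omega_0}$. Since $\omega_0=197/12000<1/3-61/200$, the bounds \eqref{eq:strip-cross} and \eqref{eq:strip-diagonal} follow with exponent $61/200$. Simultaneity over a polynomial number of blocks is again a union bound.

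Finally, \eqref{eq:strip-poisson-derivatives} is Lemma~\ref{lem:covariance-coordinate-derivatives}, in the form \eqref{eq:coordinate-derivative-bounds}, fed with the entry bounds just obtained. Dividing by $\delta_p$ costs at most $\log p$, which gives \eqref{eq:strip-normalized-derivatives}, and the added smooth spectral term contributes nothing on the good event. For the covariance-core addendum, \eqref{eq:strip-core-block} is the same Schur identity applied to $H_K$. The operator bound comes from $\norm{Q_p}\le\eta_p^{-1}$ and $\norm{D_p}\le C_0$. The vector and Frobenius bounds come from the Ward identity $\norm{Q_pe_k}_2^2=\eta_p^{-1}\Im(Q_p)_{kk}\le C p^{2/3+12\kappa_0-5/16}$, which gives the exponent $1/6+\omega_0$ after taking the square root; summing over $k$ gives the Frobenius bound.
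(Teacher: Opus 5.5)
Your proposal is correct and follows essentially the paper's route: Schur complement $S=H_E-URU^*$ on the exceptional block, Gaussian concentration of its entries around the reference values $s_{\rm col}=-z+\beta^2b_p(1+b_p)$ and $s_{\rm row}=-1+\alpha^2/(1+b_p)$ (uniformly separated from zero on $[0,1]^2$), linear-form bounds for the cross blocks via \eqref{eq:strip-core-vectors}, Lemma~\ref{lem:covariance-coordinate-derivatives} for the derivatives, and the Ward identity for the core addendum. One harmless slip: since $\alpha B_p$ sits in the old-row/fresh-column block and $\beta C_p$ in the fresh-row/old-column block, the weights in your display for $S$ are interchanged (the old-column entry carries $\beta^2$ and the old-row entry $\alpha^2$), which does not affect the argument because both parameters range over $[0,1]$.
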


\supplementproof{supp.H.6}{Section H.6}

We use the normalizations $N^{-1/2}$ for the common array,
$n_k^{-1/2}$ for a full matrix, and $p_k^{-1/2}$ for its southeast
submatrix. Accordingly, the raw endpoints $\widehat A_k,\widehat B_k$
and height $\widehat\eta_k$ are divided by $N$, $n_k$, or $p_k$.
The next theorem gives the corresponding exact identities for Green
blocks, smoothed counts, and raw-coordinate derivatives.

\begin{thm}[Gaussian interpolation]\label{thm:natural-tag-interpolation}
Fix
\[
 \kappa_0=\frac1{1000},\qquad \nu_0=\frac1{48},
\]
a fixed $d>0$, and $0<c_{\rm dim}<C_{\rm dim}<\infty$. Let
$\mathcal K_N$ be a polynomial-size family of physical matrices obtained from a common raw array by restriction, possible conjugate transpose, and deterministic row and column permutations. In the chosen orientation, matrix $k$ has
\[
 n_k=p_k+c_k,\qquad m_k=q_k+r_k,
\]
where
\begin{equation}
 c_{\rm dim}N\le p_k\le q_k\le C_{\rm dim}N,\qquad
 0\le c_k+r_k\le P_N,\qquad P_N\le(\log N)^d.
 \label{eq:natural-dimensions}
\end{equation}
Assume $m_k/n_k$ converges uniformly to a positive limit and $q_k/p_k$ is uniformly bounded.

Let $\mathcal R_N,\mathcal C_N$ be common row and column universes with
\[
 \abs{\mathcal R_N}+\abs{\mathcal C_N}\le C_{\rm amb}N.
\]
Let $(x_e)$ be mutually independent centered complex variables of variance one, zero complex second moment, and uniformly bounded fixed moments. Fix past sets
$\mathcal R_{\rm past}$ and $\mathcal C_{\rm past}$ satisfying
\[
 \abs{\mathcal R_{\rm past}}+\abs{\mathcal C_{\rm past}}\le P_N,
\]
and put
\[
 \mathcal E_{\rm past}
 =\mathcal R_{\rm past}\times\mathcal C_{\rm past},\qquad
 \mathcal G=\sigma(x_e:e\in\mathcal E_{\rm past}).
\]
Every matrix contains these past sets, places them first, and has no other frozen coordinate. Thus its frozen block is the $r_k\times c_k$ copy or transpose of the common raw past matrix.

Let $(w_e)$ be an independent standard circular complex Gaussian array. Frozen coordinates remain $x_e$, while moving coordinates satisfy
\begin{equation}
 \xi_e(t)=\ee^{-t/2}x_e+\sqrt{1-\ee^{-t}}\,w_e,
 \qquad 0\le t\le8\log N.
 \label{eq:natural-ou-flow}
\end{equation}
The common-normalized matrix $X_k(t)$ has entries $N^{-1/2}x_e$ on its frozen block and $N^{-1/2}\xi_e(t)$ elsewhere. Put
\begin{equation}
 a_k=\sqrt{\frac N{n_k}},\qquad
 \widetilde X_k(t)=a_kX_k(t).
 \label{eq:natural-first-scaling}
\end{equation}

Let $\widehat A_k<\widehat T_k\le\widehat B_k$, $\widehat\eta_k>0$, and
\[
 \widehat T_k-\widehat A_k\ge4\widehat\eta_k.
\]
Define
\[
 A_k=\frac{\widehat A_k}{N},\quad
 B_k=\frac{\widehat B_k}{N},\quad
 \eta_k=\frac{\widehat\eta_k}{N},
\]
\[
 \widetilde A_k=\frac{\widehat A_k}{n_k},\quad
 \widetilde B_k=\frac{\widehat B_k}{n_k},\quad
 \widetilde\eta_k=\frac{\widehat\eta_k}{n_k}.
\]
Assume
\[
 \widehat\eta_k=p_k^{1/3-12\kappa_0},
\]
\begin{equation}
 \begin{aligned}
 &\abs{\widehat A_k-(\sqrt{m_k}+\sqrt{n_k})^2}
 +\abs{\widehat B_k-(\sqrt{m_k}+\sqrt{n_k})^2}\\
 &\qquad\le C\left[
 N^{1/3}(\log N)^{1/3}+P_N+N^{1/3+\kappa_0}\right],
 \end{aligned}
 \label{eq:natural-project-geometry}
\end{equation}
and
\begin{equation}
 \abs{\frac{p_k}{n_k}-1}
 +\abs{\frac{q_k}{m_k}-1}
 \le C\frac{P_N}{N}.
 \label{eq:natural-core-ratios}
\end{equation}
Put
\[
 E_{{\rm full},k}=(1+\sqrt{m_k/n_k})^2,
\]
\[
 \overline A_k=\frac{\widehat A_k}{p_k},\quad
 \overline B_k=\frac{\widehat B_k}{p_k},\quad
 \overline\eta_k=\frac{\widehat\eta_k}{p_k},\quad
 E_{{\rm core},k}=(1+\sqrt{q_k/p_k})^2,
\]
and assume
\begin{equation}
 \max\{\abs{\widetilde A_k-E_{{\rm full},k}},
       \abs{\widetilde B_k-E_{{\rm full},k}}\}
 \le5n_k^{-2/3+\kappa_0},
 \label{eq:natural-full-domain}
\end{equation}
\begin{equation}
 \max\{\abs{\overline A_k-E_{{\rm core},k}},
       \abs{\overline B_k-E_{{\rm core},k}}\}
 \le5p_k^{-2/3+\kappa_0}.
 \label{eq:natural-core-domain}
\end{equation}

For a matrix $X$ of the specified dimensions, let
\[
 H_X(z)=
 \begin{pmatrix}
 -zI_{n_k}&X^*\\X&-I_{m_k}
 \end{pmatrix},
 \qquad G_X(z)=H_X(z)^{-1},
\]
and define
\begin{equation}
 Y_k(X)=\frac1\pi\int_{A_k}^{B_k}
 \Im\Tr G_{X,cc}(x+\ii\eta_k)\,\dd x.
 \label{eq:natural-poisson}
\end{equation}
Let $g_k\colon\R\to[0,1]$ be smooth, zero on $(-\infty,B_k]$, and one on
$[B_k+\eta_k,\infty)$. Set
\begin{equation}
 Z_k(X)=Y_k(X)+\Tr g_k(X^*X).
 \label{eq:natural-complete-detector}
\end{equation}

Then the following hold.

\begin{enumerate}
\item At $\widetilde z=a_k^2z$,
\[
 G_{k,cc}(z)=a_k^2\widetilde G_{k,cc}(\widetilde z),\quad
 G_{k,cr}(z)=a_k\widetilde G_{k,cr}(\widetilde z),
\]
\begin{equation}
 G_{k,rc}(z)=a_k\widetilde G_{k,rc}(\widetilde z),\quad
 G_{k,rr}(z)=\widetilde G_{k,rr}(\widetilde z).
 \label{eq:natural-green-scaling}
\end{equation}
With $\widetilde g_k(u)=g_k(u/a_k^2)$,
\begin{equation}
 Z_k(X_k)=\widetilde Z_k(\widetilde X_k).
 \label{eq:natural-detector-invariance}
\end{equation}
If $h$ is a common-normalized entry and $\widetilde h=a_kh$, then
\[
 \partial_h^jZ_k
 =a_k^j\partial_{\widetilde h}^j\widetilde Z_k.
\]
For a raw coordinate $Q_e$,
\begin{equation}
 \partial_{Q_e}^jZ_k
 =N^{-j/2}\partial_h^jZ_k
 =n_k^{-j/2}\partial_{\widetilde h}^j\widetilde Z_k.
 \label{eq:natural-derivative-scaling}
\end{equation}
The deterministic column background scales by $a_k^2$, the row background is unchanged, and the cross background remains zero.

\item Put
\[
 \eta_{0,k}=n_k^{-2/3-12\kappa_0}.
\]
Then
\begin{equation}
 \widetilde\eta_k
 =[1+O(P_N/N)]\eta_{0,k}.
 \label{eq:natural-height-comparison}
\end{equation}
On $\mathcal G$-measurable histories with summable complements, the one-moving-coordinate-deleted natural resolvents at
$\widetilde A_k+\ii\widetilde\eta_k$ and
$\widetilde B_k+\ii\widetilde\eta_k$ have bounded diagonals and
$O(N^{-5/16})$ cross, imaginary-diagonal, and centered-diagonal entries, uniformly in every matrix, time, and deletion label. These estimates persist along
\[
 \widetilde h=\frac{sQ_e}{\sqrt{n_k}},\qquad0\le s\le1,
\]
on $\abs{Q_e}\le N^{1/16}$. Every fixed mixed derivative of
\eqref{eq:natural-poisson} through order six is $O(N^{-5/16})$. After division by a scale $\delta_N$ with $\delta_N^{-1}\le C\log N$, these derivatives are $O(N^{-3/10})$.

\item At the exact Gaussian comparison point,
\begin{equation}
 X_k(\alpha,\beta)
 =N^{-1/2}
 \begin{pmatrix}
 A_k^{\rm raw}&\alpha B_k^{\rm raw}\\
 \beta C_k^{\rm raw}&D_k^{\rm raw}
 \end{pmatrix}.
 \label{eq:natural-gaussian-block}
\end{equation}
Conditionally on $\mathcal G$, the three moving blocks are independent standard circular Gaussian blocks and are independent of $A_k^{\rm raw}$. Put
\begin{equation}
 b_k=\sqrt{\frac{n_k}{p_k}},\qquad
 \overline X_k=b_ka_kX_k=\sqrt{\frac N{p_k}}X_k.
 \label{eq:natural-core-scaling}
\end{equation}
The fresh core and strips then have entry variance $p_k^{-1}$, while
\[
 A_k^{\rm past}=p_k^{-1/2}A_k^{\rm raw}.
\]
There are $\mathcal G$-measurable events $\mathcal G_{{\rm past},N}$ with summable complements such that
\begin{equation}
 \norm{A_k^{\rm raw}}_{\op}\le p_k^{3/16}
 \label{eq:natural-past-bound}
\end{equation}
for every $k$. Hence
\[
 \norm{A_k^{\rm past}}_{\op}\le p_k^{-5/16}.
\]
At the two core endpoints, the Gaussian core satisfies
\eqref{eq:strip-core-traces}--\eqref{eq:strip-core-vectors} and has bounded operator norm with arbitrarily high polynomial probability. Proposition~\ref{prop:gaussian-two-strip} therefore applies uniformly for
$(\alpha,\beta)\in[0,1]^2$, including the boundary.

In exceptional/core order,
\[
 G_{EE}=S^{-1},\quad G_{EK}=-S^{-1}UR,\quad
 G_{KE}=-RU^*S^{-1},
\]
\begin{equation}
 G_{KK}=R+RU^*S^{-1}UR,
 \label{eq:natural-exact-schur}
\end{equation}
where
\[
 R=H_K^{-1},\qquad S=H_E-URU^*,
\]
\begin{equation}
 H_E=
 \begin{pmatrix}
 -zI&(A_k^{\rm past})^*\\
 A_k^{\rm past}&-I
 \end{pmatrix},
\qquad
 U=
 \begin{pmatrix}
 0&\beta\overline C_k^*\\
 \alpha\overline B_k&0
 \end{pmatrix}.
 \label{eq:natural-schur-data}
\end{equation}
The reference values
\[
 s_{\rm col}
 =-z+\beta^2\sqrt{\frac{q_k}{p_k}}
       \left(1+\sqrt{\frac{q_k}{p_k}}\right),
\]
\begin{equation}
 s_{\rm row}
 =-1+\frac{\alpha^2}{1+\sqrt{q_k/p_k}}
 \label{eq:natural-reference-values}
\end{equation}
remain uniformly separated from zero on $[0,1]^2$. Every strip-coordinate first and second insertion derivative of the Poisson statistic is
$O(N^{-61/200})$. Gaussian truncation at $N^{1/16}$ and Woodbury transfer preserve this estimate along the insertion segment. After division by $\delta_N$, the derivatives are $O(N^{-3/10})$. The same bound applies to the complete smoothed statistic when a cutoff derivative is nonzero, since then the upper spectral term is locally constant.

\item Let $\mathcal I_N=\mathcal K_N$, assume
\[
 \abs{\mathcal I_N}\le N^\theta,\qquad \theta<1/3,
\]
fix $R_{\rm c}>1$ and $\tau_*<\infty$, and choose
$\rho\in C^\infty(\R;[0,1])$ equal to one on $(-\infty,-1]$ and zero on
$[0,\infty)$. Put
\[
 \lambda_r=R_{\rm c}^{-r},
\]
choose $J_N=O(\log N)$ such that
\[
 \abs{\mathcal I_N}\lambda_{J_N}\le N^{-12},
\]
and define
\begin{equation}
 \delta_N=\frac{1/16}{J_N+2},\qquad
 y_k(X)=\frac{Z_k(X)-1/16}{\delta_N}.
 \label{eq:natural-ladder-scale}
\end{equation}
Let
\[
 \mathcal E_1
 =\mathcal R_{\rm past}\times
   (\mathcal C_N\setminus\mathcal C_{\rm past}),
\]
\[
 \mathcal E_2
 =(\mathcal R_N\setminus\mathcal R_{\rm past})
   \times\mathcal C_{\rm past},
\]
after discarding labels occurring in no matrix. Then
\begin{equation}
 \abs{\mathcal E_1\cup\mathcal E_2}
 \le C_{\rm amb}P_NN.
 \label{eq:natural-strip-cardinality}
\end{equation}
Let $X_k^{[s,t]}$ multiply the coordinates in $\mathcal E_1$ by $s$ and those in $\mathcal E_2$ by $t$. Define
\begin{equation}
 S(s,t)=\sum_{k\in\mathcal I_N}\sum_{r=0}^{J_N}
 \lambda_r\rho\bigl(y_k(X_k^{[s,t]})+J_N-r\bigr),
 \label{eq:natural-global-count}
\end{equation}
and $F=\exp(-\tau S)$. Use the paths
\[
 (s,t)=(\sqrt v,1),\qquad (s,t)=(0,\sqrt v),
 \qquad0\le v\le1.
\]
For $j\in\{1,2\}$, let
\[
 \Phi_j(v,\tau)=\E[\exp(-\tau S_j(v))\mid\mathcal G],
\]
\begin{equation}
 \mathcal D_j(v,\tau)
 =\E[\tau S_j(v)\exp(-\tau S_j(v))\mid\mathcal G].
 \label{eq:natural-laplace-observables}
\end{equation}
For $e\in\mathcal E_j$, let $F_{j,e}(h,\tau)$ be the actual coordinate section of \eqref{eq:natural-global-count}, where
\[
 h_e(v)=\frac{\sqrt v}{\sqrt N}(Q_{e,1}+\ii Q_{e,2})
\]
and the real Gaussian coordinates have variance $1/2$. Then
\begin{equation}
 \partial_v\Phi_j(v,\tau)
 =\frac1{4N}\sum_{e\in\mathcal E_j}\sum_{a=1}^2
 \E[\partial_{h_a}^2F_{j,e}(h_e(v),\tau)\mid\mathcal G],
 \label{eq:natural-heat-identity}
\end{equation}
and
\begin{equation}
 \begin{aligned}
 \partial_{h_a}^2F_{j,e}
 =F_{j,e}\Biggl[
 &-\tau\sum_{k,r}\lambda_r
 \left\{\rho' y_{k,a,a}+\rho''y_{k,a}^2\right\}\\
 &+\tau^2\left(\sum_{k,r}\lambda_r\rho'y_{k,a}\right)^2
 \Biggr].
 \end{aligned}
 \label{eq:natural-heat-expansion}
\end{equation}
Every cutoff derivative is evaluated at $y_k+J_N-r$. The identities extend continuously to $v=0$. No third-order or cubic term occurs.

There are nonnegative functions $a_j,e_j$ such that, on $\mathcal G$-measurable histories with summable complements, for almost every $v\in[0,1]$ and every fixed $0<\tau\le\tau_*$,
\begin{equation}
 \abs{\partial_v\Phi_j(v,\tau)}
 \le a_j(v)\mathcal D_j(v,\tau/2)+e_j(v),
 \label{eq:natural-differential-inequality}
\end{equation}
\begin{equation}
 \int_0^1a_j(v)\,\dd v
 \le CP_N(N^{-3/10}+N^{-3/5})\le N^{-1/4},
 \label{eq:natural-a-integral}
\end{equation}
\begin{equation}
 \int_0^1e_j(v)\,\dd v
 \le C_{\tau_*}N^{-9}.
 \label{eq:natural-e-integral}
\end{equation}
The endpoint of the first path is the beginning of the second. The endpoint of the second is
\[
 \diag(N^{-1/2}A_k^{\rm raw},N^{-1/2}D_k^{\rm raw})
\]
in every matrix.
\end{enumerate}
\end{thm}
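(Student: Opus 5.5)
The proof is organized around the four items of Theorem~\ref{thm:natural-tag-interpolation}, in order. Each item rests on an earlier result together with scaling or an exact identity.

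\emph{Item (1).} This is pure bookkeeping. Put $\widetilde X=a_kX$ in $H_X(z)$ and conjugate by $\diag(a_k^{-1}I_{n_k},I_{m_k})$. This gives $H_X(z)=\diag(a_k^{-1},1)\,H_{\widetilde X}(a_k^2z)\,\diag(a_k^{-1},1)$ up to the factor on the column block. Inverting yields the four block relations \eqref{eq:natural-green-scaling}. Changing variables $x\mapsto a_k^2x$ in \eqref{eq:natural-poisson} turns $(A_k,B_k,\eta_k)$ into $(\widetilde A_k,\widetilde B_k,\widetilde\eta_k)$. The squared singular values scale by $a_k^2$, so $\Tr g_k(X^*X)=\Tr\widetilde g_k(\widetilde X^*\widetilde X)$, and this gives \eqref{eq:natural-detector-invariance}. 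The derivative rule \eqref{eq:natural-derivative-scaling} is the chain rule, using $h=N^{-1/2}Q_e$ and $\widetilde h=a_kh=n_k^{-1/2}Q_e$.

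\emph{Item (2).} The height comparison \eqref{eq:natural-height-comparison} follows from $\widehat\eta_k/n_k=p_k^{1/3-12\kappa_0}/n_k$ together with \eqref{eq:natural-core-ratios}. For the resolvent bounds, I apply Theorem~\ref{thm:frozen-corner-local-law} in the natural normalization, with frozen corner $\mathcal E_{\rm past}$ and the OU time reparametrized by $\cos\theta=\ee^{-t/2}$. Conditions \eqref{eq:natural-full-domain} and \eqref{eq:natural-height-comparison} put the endpoints inside $\mathcal S_{\rm edge}$. The slightly different height is handled by the resolvent Lipschitz bound used in Lemma~\ref{lem:continuum-promotion}. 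That theorem's history events supply the $\mathcal G$-measurable summable histories, and its Woodbury insertion-disk argument covers the segment $|Q_e|\le N^{1/16}$. For fixed mixed derivatives through order six, I use the endpoint formula \eqref{eq:direction-endpoint} and the bound \eqref{eq:direction-bound} of Lemma~\ref{lem:covariance-direction-derivatives} with $\Psi=CN^{-5/16}$. Multiple coordinates are handled by the resolvent expansion $\partial G=-GSG$, iterated. Each factor then contains either a cross entry or an imaginary diagonal, exactly as in the telescoping argument. Dividing by $\delta_N$ costs only a factor $\log N$.

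\emph{Item (3).} The Gaussian block form \eqref{eq:natural-gaussian-block} is the $t=8\log N$ endpoint, or the exact Gaussian reference. There the conditional independence of the three moving blocks from $A^{\rm raw}_k$ is immediate. To get \eqref{eq:natural-past-bound}, I use the crude bound $\|A^{\rm raw}\|_{\op}\le\|A^{\rm raw}\|_F\le P_N\max|x_e|$. Markov's inequality with high moments gives $\max|x_e|\le N^{1/100}$ outside a summable set, and $(\log N)^dN^{1/100}\le p_k^{3/16}$. The core hypotheses of Proposition~\ref{prop:gaussian-two-strip} come from the Gaussian local law (Theorem~\ref{thm:flow-local-law} with an empty corner) together with \eqref{eq:strip-core-block}. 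The Schur formulas \eqref{eq:natural-exact-schur} are the standard block inversion. For the reference values I replace $URU^*$ by its conditional Gaussian expectation. The result is $\beta^2$ times the row-trace background from \eqref{eq:strip-core-traces} in the column slot, and $\alpha^2$ times the column-trace background in the row slot. Separation from zero is then an explicit check in $q_k/p_k\ge1$; for example, $s_{\rm row}\le-1+1/2$. The derivative bounds are \eqref{eq:strip-poisson-derivatives}--\eqref{eq:strip-normalized-derivatives}, and Woodbury handles the insertion segments.

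\emph{Item (4).} This is the main obstacle. The heat identity \eqref{eq:natural-heat-identity} is Gaussian integration by parts along $h_e(v)=\sqrt v\,N^{-1/2}Q_e$. Because the variance is linear in $v$, the factor $\tfrac1{4N}$ appears, and no cubic term occurs. The expansion \eqref{eq:natural-heat-expansion} is the chain rule applied to $\exp(-\tau S)$. The cardinality bound \eqref{eq:natural-strip-cardinality} is trivial. The differential inequality is the hard step, and I would derive it as follows. On the good event, bound $|y_{k,a}|,|y_{k,a,a}|\le N^{-3/10}$. Write $\tau^2(\sum\lambda_r\rho'y)^2\le\tau^2 B^2(\sum\lambda_r|\rho'|)^2$. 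Then use the absorption bound \eqref{eq:ladder-expectation} of Lemma~\ref{lem:ladder-absorption} in multiplier form. The squared sum is controlled by $R^2S^2+\epsilon_{\rm fin}^2$, and $\tau^2S^2\ee^{-\tau S}\le C\tau S\ee^{-\tau S/2}$ produces $\mathcal D_j(v,\tau/2)$.

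Summing over the $\le CP_NN$ strip coordinates against the factor $1/N$ gives $a_j\lesssim P_N(N^{-3/10}+N^{-3/5})$. One caveat concerns the $1/\sqrt v$ that might appear: the $v$-integrals are finite, so I would keep the $\int_0^1$ formulation. The error $e_j$ collects two contributions. The first is the terminal layer $|\mathcal I_N|\lambda_{J_N}\le N^{-12}$, multiplied by $N P_N$ coordinates. The second is the bad events, controlled by the coarse resolvent bound and H\"older with failure exponent chosen large. Together these give \eqref{eq:natural-e-integral}. Finally, the endpoint description follows by setting $s=t=0$.
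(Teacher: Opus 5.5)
Your overall route matches the paper's: block conjugation for (1), the frozen-corner local law plus a height transfer for (2), a Frobenius bound on the polylogarithmic past block together with Proposition~\ref{prop:gaussian-two-strip} and block inversion for (3), and Gaussian integration by parts with support absorption for (4). Most steps check out, including the reference values $s_{\rm col},s_{\rm row}$ obtained from $\E[\overline C^*R_{rr}\overline C]$ and $\E[\overline BR_{cc}\overline B^*]$.

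\textbf{The gap.} In (4) you assert that on the good event $|y_{k,a}|,|y_{k,a,a}|\le N^{-3/10}$, and this is not justified. The coordinate is $y_k=\delta_N^{-1}(Y_k+\Tr g_k(X^*X)-1/16)$, and everything you cite (\eqref{eq:direction-bound}, Proposition~\ref{prop:gaussian-two-strip}) controls only the Poisson part $Y_k$. Nothing controls the upper term. The local-law event at precision $N^{-5/16}$ does not even exclude an eigenvalue in $[B_k,B_k+\eta_k]$. Such an eigenvalue moves a normalized trace, or a delocalized diagonal entry, by only about $(n_k\widetilde\eta_k)^{-1}\asymp N^{-1/3+12\kappa_0}\ll N^{-1/3+\nu_0}$. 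Yet at that eigenvalue $g_k'$ and $g_k''$ are of size $\eta_k^{-1}$ and $\eta_k^{-2}$. This is exactly the last clause of (3), which your proposal never proves. It is also why $\widehat T_k$ and the condition $\widehat T_k-\widehat A_k\ge4\widehat\eta_k$ appear among the hypotheses; you never use them.

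\textbf{The repair} is the support argument. In \eqref{eq:natural-heat-expansion}, every $y_{k,a}$ and $y_{k,a,a}$ is multiplied by $\rho'$ or $\rho''$ evaluated at $y_k+J_N-r$. So you need the bounds only for matrices $k$ where such a derivative is nonzero at some level. There $y_k+J_N-r\in(-1,0)$ with $r\le J_N$, hence $y_k<0$, so $Z_k<1/16$. By the detection property \eqref{eq:poisson-detects}, an eigenvalue $\ge T_k$ would contribute at least $1/8$, since $T_k-A_k\ge4\eta_k$. Therefore $\norm{X_k}_{\op}^2<T_k\le B_k$. This is an open condition and $g_k=0$ on $(-\infty,B_k]$, so $\Tr g_k(X^*X)\equiv0$ near the current configuration and all its coordinate derivatives vanish. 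Only then does the clause of Proposition~\ref{prop:gaussian-two-strip} about adding such a spectral term give $N^{-3/10}$ for the complete $y_k$. This restriction is compatible with Lemma~\ref{lem:ladder-absorption}, which uses the local bound only where $\rho^{(j)}\neq0$; put the corresponding indicator into $Q_{k,r}$. An edge-rigidity event would be an alternative, but for the strip-interpolated matrices it is an extra input you would have to supply.

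\textbf{Smaller points.}
\begin{itemize}
\item The heat identity with no cumulant terms requires exactly Gaussian strips. That is the point reached after the coupling of Proposition~\ref{prop:terminal-grid}, not $t=8\log N$, where the moving entries are $N^{-4}x_e+\sqrt{1-N^{-8}}\,w_e$.
\item Theorem~\ref{thm:flow-local-law} covers only its two specific windows. For the Gaussian core at general endpoints satisfying \eqref{eq:natural-core-domain}, use Theorem~\ref{thm:frozen-corner-local-law} with an empty corner. You also need the high-probability bound on the core norm required for \eqref{eq:strip-core-block}.
\item In the mixed-derivative estimate, it is $\Im\Tr$ of the product, telescoped against a reference product with real trace, that is $O(\Psi_N)$. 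The individual diagonal factors are $O(1)$, not small.
\item No $1/\sqrt v$ arises in the heat identity, because $\partial_v\sqrt v$ cancels the $\sqrt v$ produced by integration by parts.
\end{itemize}
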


\supplementproof{supp.C.1}{Section C.1}

With the northwest block fixed, the paths $(\sqrt v,1)$ and
$(0,\sqrt v)$ vary the strips $\mathcal E_1$ and $\mathcal E_2$.
Read in the removal direction, they join the full matrix to the
block-diagonal matrix through their common one-strip endpoint.
The Gaussian heat identity and the Schur bounds give the displayed
differential inequality, uniformly over the matrix family.

For the homogeneous Ornstein--Uhlenbeck flow, the conditional Laplace
comparison has five error contributions: $N^{-3/10}$ from the
fourth-cumulant section derivatives, $N^{-1/2+1/100}$ from unmatched
third-cumulant terms, $N^{-4/5}$ from the fifth cumulant,
$N^{-79/100}$ from the sixth-order Taylor remainder, and $N^{-9}$
from exceptional histories and truncation. Their integrals over
$[0,8\log N]$ tend to zero. We next estimate the third-cumulant terms
while keeping the same conditioning sigma-algebra.

\subsection{Cumulant expansion for unmatched terms}

In the matrix family below, the superscript $k$ is a matrix index.
For each positive integer $r$, $\operatorname{Sym}(r)$ denotes the
permutations of $\{1,\ldots,r\}$.

We expand at an unrevealed row or column index, using either displayed
position of that index in a Green factor. Each factor retains its
conjugation sign. Covariance contractions preserve odd multiplicity
of an unrevealed index. The two recursive classes distinguish the
original expansion, $h=0$, from the terms after removal of the first
relative third-cumulant factor $N^{-1/2}$, $h=1$. A second cubic
expansion then supplies the additional factor needed for the estimate.

The estimate controls a joint contraction of cutoff derivatives and
resolvent entries. For fixed initial factor and derivative budgets, repeated
expansion bounds its unmatched expectation by $O(N^{-1+\varepsilon})$.
The third-cumulant source has an exterior factor $N^{1/2}$ and is therefore
$O(N^{-1/2+\varepsilon})$. The definitions below specify how the complete
spectral tensors are retained in this expansion; Supplementary Lemma C.1
provides the finite induction.

\begin{thm}[Recursion for unmatched terms]\label{thm:four-orientation-recursion}
Fix positive constants
$c_{\rm dim},C_{\rm dim},d_{\rm past},\tau_*,A_{\rm fam}$, positive integers
$F_0,k_0,K_{\rm der}$, and
\[
 0<\kappa_0<1/288,\qquad12\kappa_0<\nu<1/12.
\]
Let $(\xi_e)_e$ and $(w_e)_e$ be mutually independent arrays,
each with independent entries. For every raw coordinate $e$, including
those in the old block, assume
\[
 \E\xi_e=0,\qquad \E|\xi_e|^2=1,\qquad \E\xi_e^2=0,
 \qquad \sup_e\E|\xi_e|^p<\infty
\]
for every fixed $p\ge1$. Let every $w_e$ be standard circular complex
Gaussian. Let $\mathcal R_{\rm old}$ and $\mathcal C_{\rm old}$ satisfy
\[
 P_N:=\abs{\mathcal R_{\rm old}}+\abs{\mathcal C_{\rm old}}
 \le(\log N)^{d_{\rm past}},
\]
put
\[
 \mathcal P_{\rm old}
 =\mathcal R_{\rm old}\times\mathcal C_{\rm old},
 \qquad
 \mathcal F_{\rm old}=\sigma(\xi_e:e\in\mathcal P_{\rm old}),
\]
and keep $\zeta_e(t)=\xi_e$ for $e\in\mathcal P_{\rm old}$.
For moving coordinates $e\notin\mathcal P_{\rm old}$, put
\begin{equation}
 \zeta_e(t)=\ee^{-t/2}\xi_e+\sqrt{1-\ee^{-t}}\,w_e,
 \qquad0\le t\le8\log N.
 \label{eq:recursion-flow}\end{equation}
Conditionally on $\mathcal F_{\rm old}$, moving coordinates are independent, centered, have unit second absolute moment, zero complex second moment, and uniformly bounded fixed moments.

There are at most $N^{A_{\rm fam}}$ matrices, endpoints, deletions,
smoothed statistics, and derivative contributions. The matrix sets,
endpoints, and cutoff parameters are fixed independently of the moving
coordinates. Matrix $k$ retains its row and column orientation, has dimensions
\[
 c_{\rm dim}N\le m_k,n_k\le C_{\rm dim}N,
\]
and
\begin{equation}\label{eq:comparison-linearization-resolvent}
 \begin{aligned}
 X^k&=n_k^{-1/2}\zeta|_{\mathcal R_k\times\mathcal C_k},\\
 H^k(z)&=
 \begin{pmatrix}
 -zI_{n_k}&(X^k)^*\\X^k&-I_{m_k}
 \end{pmatrix},
 &G^k(z)&=H^k(z)^{-1}.
 \end{aligned}
\end{equation}
At every endpoint,
\begin{equation}
 \Im z=n_k^{-2/3-12\kappa_0},\qquad
 \abs{\Re z-(1+\sqrt{m_k/n_k})^2}
 \le5n_k^{-2/3+\kappa_0}.
 \label{eq:recursion-endpoints}\end{equation}
One moving coordinate may be deleted.

For $\varsigma\in\{+1,-1\}$, put
\begin{equation}\label{eq:comparison-flagged-background}
 G^{(k,+1)}=G^k,\quad G^{(k,-1)}=\overline{G^k},\qquad
 \widetilde m^{(k,+1)}=\widetilde m^k,\quad
 \widetilde m^{(k,-1)}=\overline{\widetilde m^k}.
\end{equation}
Each Green function factor retains its own conjugation sign.

Each smoothed statistic has a deterministic affine integral part, with a
fixed number of terms and uniformly bounded coefficients, built from
projected resolvents of the form
\[
 Y=\frac1\pi\int_{x_-}^{x_+}
 \Im\Tr(P_{\rm col}G(x+\ii\eta))\,\dd x.
\]
A statistic may also contain the smooth upper-spectral term of
Proposition~\ref{prop:poisson-detector}. Its positive raw
derivatives through order $K_{\rm der}+6$ vanish on the common rigidity
event and have polynomial bounds elsewhere. These derivatives are
retained in the expansion and estimated with the exceptional-event
terms in Supplementary Section C.2. For the integral part and bounded
rank-two directions $S_1,\ldots,S_r$,
\begin{equation}
 D_{S_1}\cdots D_{S_r}Y
 =\frac{(-1)^r}{\pi r}\Delta\Im
 \sum_{\sigma\in\operatorname{Sym}(r)}
 \Tr(S_{\sigma(1)}G\cdots S_{\sigma(r)}G).
 \label{eq:recursion-mixed-endpoint}
\end{equation}
For weighted cutoff observables, use one or two sums with
\begin{equation}
 \abs{\mathcal I_N}R_{\rm c}^{-J_N}\le1,\qquad
 \delta_N^{-1}=N^{o(1)}.
 \label{eq:recursion-ladder}
\end{equation}
Every fixed-order derivative-tensor $\ell^1$ norm is then at most $N^\epsilon$ for every fixed $\epsilon>0$.

Fix the global cutoff function before expanding any coordinate. For one
weighted sum it is
\[
 V_N(Y)=\exp\left\{-\tau\sum_{i\in\mathcal I_N}
             \sum_{r=0}^{J_N}R_{\rm c}^{-r}
 \rho\left(\frac{Y_i-a_i}{\delta_N}+J_N-r\right)\right\}.
\]
For two sums we use the product of their Laplace factors, with distinct
formal spectral variables for the two copies. These variables may be
evaluated on matrices with common entries. The spectral coefficient tensors
are the derivatives of these fixed functions, resolved by their cutoff
levels as in Lemma~\ref{lem:all-order-ladder}. A fixed finite family of
scalar cutoffs with bounded derivatives is also allowed, as in
Section~\ref{sec:fixed-level}. Their exact differentiation rules and
contractions are given in Supplementary Section C.2.

A joint resolvent term is a finite linear combination of
\begin{equation}
 Q=N^{-s-\ell}\sum_{\boldsymbol v}^{*}
       \sum_{\boldsymbol i,\boldsymbol r}
 \chi_{\boldsymbol v,\boldsymbol i,\boldsymbol r}
 C_{\boldsymbol v,\boldsymbol i,\boldsymbol r}
 \mathcal M_{\boldsymbol i,\boldsymbol r}(Y)
 \prod_{u=1}^{F}Z_u.
 \label{eq:recursion-monomial}
\end{equation}
Here $\boldsymbol v=(v_1,\ldots,v_s)$ consists of free row and column
indices, each ranging over a set of size comparable with $N$. Every free
index occurs in an explicit Green factor or an opened trace, so $s\le2F$;
indices absent from all factors are averaged into $C$. The star
specifies distinct free indices of the same type, also distinct from fixed
indices of that type. The integer $\ell\ge0$ records normalized sums lost
through index identifications. Each spectral index in
$\boldsymbol i$ is a slot of $\mathcal M$, and $\boldsymbol r$ records
its distinguished cutoff levels. Matrix and endpoint assignments in the
Green factors are fixed functions of these slots or fixed external indices.
All these contractions occur inside the same conditional expectation.
The supports of differentiated cutoffs remain in their derivative tensors.

The masks $\chi$ take values in $[0,1]$ and are measurable with respect to
$\mathcal F_{\rm old}$ and the displayed discrete indices. They encode
membership, deletions, conjugation signs, and index restrictions; their
moving-coordinate derivatives vanish. The coefficients $C$ have the same
measurability, are bounded by one, and retain the coordinate indices of all
cumulants. A finite linear combination has total absolute coefficient
weight at most one. Fixed bounds are absorbed into $C_F$.

Each $Z_u$ is a Green entry with its conjugation sign, a centered diagonal, or a centered
normalized partial trace. The degree counts off-diagonal and centered
factors. A partial trace is opened as a normalized diagonal sum before
index coincidences are resolved. An identification retains its factor
$N^{-1}$ in $N^{-\ell}$. The tensors $\mathcal M$ belong to the specified
derivative families; their spectral and cutoff indices retain the complete
$\ell^1$ contraction of Lemma~\ref{lem:all-order-ladder}.

A row or column index is unrevealed when it is outside
$\mathcal R_{\rm old}$ or $\mathcal C_{\rm old}$, respectively. It is
unmatched when its multiplicity among Green-factor slots is odd, as in
\cite[Definition~4.2]{SX21}. We use terms with an unrevealed unmatched
index. The class $h=0$ consists of the unmarked terms
\eqref{eq:recursion-monomial} and their covariance descendants. An $h=1$
term is obtained by one first third-cumulant expansion of an $h=0$ term,
followed by covariance expansions. It carries the coordinate, ordered real
triple, and matrix indices of that expansion. Its normalization is written
as
\[
 n_k^{-1/2}n_h^{-1/2}n_j^{-1/2}\sum_c
 =N^{-1/2}\left(\frac{N}{n_k}\frac{N}{n_h}\frac{N}{n_j}\right)^{1/2}
       \frac1N\sum_c.
\]
Only the displayed factor $N^{-1/2}$ is removed in defining $h=1$;
the bounded cumulant and all dimension ratios stay in the coefficient.
The degree-lowering terms \eqref{eq:revision-cubic-drop}--\eqref{eq:revision-single-offdiag}
are estimated before entering this class. A second third-cumulant branch
is estimated directly. Thus the mark records exactly one such expansion.

Let $\mathcal R_{d,F,k,h}$ be the conditional essential supremum of
$|\E[Q\mid\mathcal F_{\rm old}]|$ over these terms of degree at least $d$,
with at most $F$ factors and total spectral derivative order at most $k$.
The scalar case uses the derivatives of its fixed test function in the
same definition. Conditional expectations always include $\mathcal M$ and
all its contracted indices.

For every fixed $D>0$ there are a simultaneous event $\Omega_N(D)$ and an
$\mathcal F_{\rm old}$-measurable event $\mathcal H_N(D)$ such that
\begin{equation}
 \begin{split}
 \Prob(\Omega_N(D)^c)&\le C_DN^{-D-6},\qquad
 \Prob(\mathcal H_N(D)^c)\le C_DN^{-6},\\
 \Prob(\Omega_N(D)^c\mid\mathcal F_{\rm old})&\le N^{-D}
       \quad\hbox{on }\mathcal H_N(D).
 \end{split}
 \label{eq:recursion-good-event}
\end{equation}
On $\Omega_N(D)$, all deleted, undeleted, and insertion-segment diagonals
are bounded, while off-diagonal entries, centered diagonals, imaginary
diagonals, and centered partial traces are
\begin{equation}
 O(\Psi_N),\qquad \Psi_N=N^{-1/3+\nu}.
 \label{eq:recursion-local-scale}
\end{equation}
The expansion and derivative orders are fixed first. We then choose $D$
to dominate the polynomial moment bounds of the resulting finite family,
and use this one event $\mathcal H_N(D)$ throughout the calculation.

The following assertions hold.

\begin{enumerate}
\item An unrevealed row label permits both row-side resolvent identities using only centered moving coordinates. An unrevealed column label permits both column-side identities.

\item For the selected factor in matrix $k$ with endpoint $z$ and conjugation sign $\varsigma_0$, the coefficient of the original row term is
\begin{equation}
 1+\widetilde m^{(k,\varsigma_0)}(z),
 \label{eq:recursion-row-self}\end{equation}
and the coefficient of the original column term is
\begin{equation}
 -\frac1{\widetilde m^{(k,\varsigma_0)}(z)}.
 \label{eq:recursion-column-self}\end{equation}
For a left or right displayed position of the selected index set $s_0=1$ or $s_0=-1$,
respectively. The same-degree contraction slot is
\[
 s_1=-s_0\varsigma_0\varsigma_1.
\]
A same-degree pair removes two occurrences of the unrevealed label and creates two
of the summed opposite-side label, preserving odd multiplicity.

The row same-degree coefficient is, up to the derivative sign,
\begin{equation}
 \sqrt{\frac{n_k}{n_h}}\,
 \frac1{1+\widetilde m^{(k,\varsigma_0)}(z)}
 \frac1{1+\widetilde m^{(h,\varsigma_1)}(z')},
 \label{eq:recursion-row-neutral}\end{equation}
and the column same-degree coefficient is
\begin{equation}
 \sqrt{\frac{n_k}{n_h}}\,
 \widetilde m^{(k,\varsigma_0)}(z)
 \widetilde m^{(h,\varsigma_1)}(z').
 \label{eq:recursion-column-neutral}\end{equation}
All coefficients are uniformly bounded, and the leading coefficients are
uniformly invertible. After index-collision terms are separated, every other
covariance contraction raises degree.

\item Write
\[
 q_{e,1}=\Re\zeta_e,\qquad q_{e,2}=\Im\zeta_e,
\]
so their covariance matrix is $I_2/2$. Let
$K_{e,abc}(t)$ be their ordered third cumulants. Every first third-cumulant term retains all eight ordered triples of real-coordinate derivatives. If its selected factor and two differentiated factors lie in matrices $k,h,j$, then its normalization is
\begin{equation}
 n_k^{-1/2}n_h^{-1/2}n_j^{-1/2}.
 \label{eq:recursion-cubic-normalization}\end{equation}
The fresh opposite-side label has multiplicity three. After the degree-lowering terms estimated below have been paid,
each remaining unrevealed term enters the $h=1$ class at the required
minimum degree. If it is old, its $h=0$ contribution is
\begin{equation}
 O(P_NN^{-3/2+\epsilon})=O(N^{-1}).
 \label{eq:recursion-old-cubic}\end{equation}
A second cubic in the $h=1$ class costs $N^{-1/2+\epsilon}$, and the outer first third-cumulant factor makes it $N^{-1+\epsilon}$ in the $h=0$ class. Terms with at least three raw derivatives in one expansion, fixed cumulant remainders, deletions, or free-label identifications are $O(N^{-1+\epsilon})$ in the $h=0$ class.

\item Put
\[
 F_+=4F+10,\qquad k_+=k+4F+20.
\]
If $K_{\rm der}\ge k_+$, then
\begin{equation}
 \mathcal R_{d,F,k,0}
 \le C_F\left[
 \mathcal R_{d+1,F_+,k_+,0}
 +N^{-1/2}\mathcal R_{d,F_+,k_+,1}\right]
 +C_{F,\epsilon}N^{-1+\epsilon},
 \label{eq:recursion-h0}\end{equation}
\begin{equation}
 \mathcal R_{d,F,k,1}
 \le C_F\mathcal R_{d+1,F_+,k_+,1}
 +C_{F,\epsilon}N^{-1/2+\epsilon}.
 \label{eq:recursion-h1}\end{equation}
For a smooth cutoff observable $\mathcal F_N(t,\tau)$, define
\begin{equation}
 \mathcal L_3^{\rm mov}
 =-\frac14\sum_{e\in\mathcal E_{\rm mov}}
 \E\left[
 \sum_{a,b,c=1}^2K_{e,abc}(t)
 \partial_{q_{e,a}}\partial_{q_{e,b}}\partial_{q_{e,c}}
 \mathcal F_N(t,\tau)
 \,\middle|\,\mathcal F_{\rm old}\right].
 \label{eq:recursion-third-source}\end{equation}
Each product-rule monomial contains three rank-two source insertions, so the source row and column have multiplicity three. At least one is unrevealed, and every term starts \eqref{eq:recursion-h0}.
\end{enumerate}

The endpoint and trace conventions are those of
\eqref{eq:recursion-endpoints} and \eqref{eq:comparison-linearization-resolvent}.
Corollaries~\ref{prop:high-orientation-specialization} and
\ref{prop:four-orientation-recursion} specify the changes of scale and the
structural-zero-mode corrections in the two applications.
\end{thm}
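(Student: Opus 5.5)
The plan proves the four assertions of Theorem~\ref{thm:four-orientation-recursion} in order. Each later assertion uses the earlier ones together with the local-law event of Theorem~\ref{thm:frozen-corner-local-law}, made simultaneous by Lemma~\ref{lem:continuum-promotion}.

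\textbf{Step 1: the good events and assertion (1).} Take $\Omega_N(D)$ to be the intersection of the simultaneous conditional local-law event (deleted, undeleted and insertion-segment versions) with the truncation event $\max_e\abs{\zeta_e}\le N^{\epsilon}$. Take $\mathcal H_N(D)$ to be the conditional Markov history event of Theorem~\ref{thm:frozen-corner-local-law}, applied with exponent $D+6$; this gives \eqref{eq:recursion-good-event}. For assertion (1), write the Schur-complement expansions of $G_{\alpha\alpha}$ and $G_{\alpha v}$ at an unrevealed row label $\alpha$. Every entry $X^k_{\alpha b}$ is then moving. Conditionally on $\mathcal F_{\rm old}$ these entries are independent and centered, so both row-side identities involve only centered moving coordinates. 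The column case is symmetric.

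\textbf{Step 2: assertion (2).} Expand $\E[\zeta_e\,\partial\cdots]$ by the cumulant expansion in the unrevealed index, using the Gaussian part with covariance $I_2/2$ and $\E\zeta_e^2=0$. The covariance term differentiates either the selected factor or another factor.
\begin{itemize}
\item Differentiating the selected factor produces the self term. Replacing its diagonal by its deterministic value $\Pi$, and then moving it to the left side, gives the coefficients $1+\widetilde m$ and $-1/\widetilde m$ of \eqref{eq:recursion-row-self}--\eqref{eq:recursion-column-self}.
\item Differentiating a second factor in matrix $h$ produces the same-degree pair. Its coefficient is a product of two diagonal backgrounds, with the dimension ratio $\sqrt{n_k/n_h}$ coming from the two normalizations $n_k^{-1/2}n_h^{-1/2}$. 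The conjugation sign rule $s_1=-s_0\varsigma_0\varsigma_1$ comes from which slot of $G$ or $\overline G$ is hit.
\item Differentiating the tensor $\mathcal M$ raises the derivative order. It is kept inside the same expectation, with $\ell^1$ weight bounded by Lemma~\ref{lem:all-order-ladder} and $\delta_N^{-1}=N^{o(1)}$.
\end{itemize}
Uniform invertibility of the leading coefficients follows from \eqref{eq:frozen-mp-bounds}. The centered remainders $G-\Pi$ raise the degree.

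\textbf{Step 3: assertion (3).} Count normalization factors in each third-cumulant term. Each raw derivative costs $n^{-1/2}$, and the free coordinate sum costs $N$. A term with exactly three insertions therefore carries $N^{-1/2}$ times bounded ratios, which is \eqref{eq:recursion-cubic-normalization}. Three insertions leave the fresh label with multiplicity three, so it stays unmatched. If the label is old, only $P_N$ rows are summed, which gives \eqref{eq:recursion-old-cubic}. Higher cumulants, deletions and index identifications each gain at least $N^{-1/2}$ more; they are estimated by the $O(\Psi_N)$ bounds together with $N^{\epsilon}$ tensor weights.

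\textbf{Step 4: assertion (4), the recursion.} Isolate the same-degree terms on the left and invert their coefficient. These terms preserve the unmatched structure but change the index, so I would sum them as a finite linear system, or iterate until degree rises. I expect this to be the \emph{main obstacle}: the pair coefficient must be shown to be a strict contraction, or uniformly invertible after summing all orientations of $k$ and $h$. I would first try reducing it to the Marchenko--Pastur edge identities of Lemma~\ref{lem:edge-resolvent-derivatives}, which give $\abs{\gamma g_\gamma}<1$ at the edge.

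The remaining terms split into three kinds:
\begin{itemize}
\item degree-raising covariance terms, which give $\mathcal R_{d+1}$;
\item first cubic terms, which enter the $h=1$ class with prefactor $N^{-1/2}$;
\item error terms of size $O(N^{-1+\epsilon})$.
\end{itemize}
The growth $F\to4F+10$ and $k\to k+4F+20$ bounds the factors and derivatives created in one round. In the $h=1$ class a second cubic branch is bounded directly by $N^{-1/2+\epsilon}$, which gives \eqref{eq:recursion-h1}.

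Finally, iterate. Degree is at most $F$-bounded and each off-diagonal factor costs $\Psi_N$, so after finitely many rounds the degree exceeds any prescribed level and the term is $O(N^{-1+\epsilon})$. The contribution of $\Omega_N^c$ is controlled by Hölder's inequality with the coarse bound $\eta^{-1}$ and the conditional estimate on $\mathcal H_N(D)$. Every product-rule monomial in \eqref{eq:recursion-third-source} contains three rank-two source insertions, so its source label is unmatched and the recursion \eqref{eq:recursion-h0} applies to it.
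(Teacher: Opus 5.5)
Your Step 4 has a genuine gap at exactly the point you call the main obstacle, and the repair you propose would fail. You treat the same-degree pair as passing the unmatched property to a new index, so that these terms would have to be summed as a linear system or shown to contract. They do not pass it on. By the row identity, the selected factor becomes $G_{av}=\sum_c X_{ac}G_{cv}-\delta_{av}$. A covariance derivative hitting another factor that carries $a$ in the slot $s_1$, say $G_{xa}$, gives $-G_{xc}G_{aa}$, so the pair term is $-n^{-1}\sum_c G_{cv}G_{xc}G_{aa}$. Now replace $G_{aa}$ by its background; its centered part raises the degree. The new label $c$ then has multiplicity two and is matched, while the \emph{same} label $a$ has lost two occurrences and still has odd multiplicity. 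Repeated expansion at $a$ therefore stops after at most $(\mathrm{mult}(a)-1)/2\le F$ pair steps, because at multiplicity one there is no second occurrence of $a$ to contract with. Apart from index collisions, every covariance contraction other than the self term raises the degree. This includes a hit on $\mathcal M$, which opens an off-diagonal endpoint factor $G_{ac}$. This finite unrolling is why \eqref{eq:recursion-h0}--\eqref{eq:recursion-h1} contain no same-degree term $\mathcal R_{d,\cdot,\cdot,h}$ on the right. A contraction argument cannot replace it. Take $\gamma=m_k/n_k$. The identities of Lemma~\ref{lem:edge-resolvent-derivatives} that you cite give $\abs{1+m_+}=\gamma g_\gamma(b_\gamma)=\sqrt\gamma/(1+\sqrt\gamma)<1$. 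It is the \emph{inverse} of this quantity that enters \eqref{eq:recursion-row-neutral}, so for two matrices of comparable shape the row pair coefficient has modulus about $(1+\gamma^{-1/2})^2>1$.

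A second step is also missing. In Step 3 you send every remaining first third-cumulant term into the $h=1$ class. But the bound $N^{-1/2}\mathcal R_{d,F_+,k_+,1}$ requires that the term keep degree at least $d$, and it need not. When both cubic derivatives hit a centered diagonal at the selected label, \eqref{eq:revision-cubic-drop} contains $n^{-1}G_{aa}^2G_{cc}$, which has degree zero. For example, the degree-two monomial $G_{ay}(G_{aa}-g_a)$ produces $n^{-3/2}\sum_{c\neq y}\kappa_{21}(a,c)\,\E[\mathcal M G_{cy}G_{aa}^2G_{cc}\mid\mathcal F_{\rm old}]$, which has degree one. The paper estimates this branch before any term enters the $h=1$ class. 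Contractions with a single off-diagonal factor and otherwise uncentered diagonals satisfy \eqref{eq:revision-single-offdiag}, i.e.\ they are $O(N^\epsilon\Psi_N^2+N^{-1/2+\epsilon})$. Since $\Psi_N^2=o(N^{-1/2})$ for $\nu<1/12$, restoring the cubic factor gives $O(N^{-1+\epsilon})$. For $d\ge3$ the degree drop already costs only $N^{-1/2+\epsilon}\Psi_N^{d-1}$. Without this estimate, or an equivalent one, your bookkeeping does not give \eqref{eq:recursion-h0} as stated.
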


\supplementproof{supp.C.2}{Section C.2}

\paragraph{Choice of derivative order.}
For initial budgets $F_0,k_0$, define
\[
 F_{j+1}=4F_j+10,\qquad k_{j+1}=k_j+4F_j+20,\qquad 0\le j<4.
\]
Choose $K_{\rm der}\ge k_4+6$ before the moment and good-history
exponents. In a same-degree covariance branch, the multiplicity of the
selected unrevealed index decreases by two. At multiplicity one that
branch ends. Each remaining recursive step increases the degree or
changes $h$ from zero to one. Starting at degree at least one, at most
four such steps reach degree four or a directly estimated remainder.
At degree four, Lemma~\ref{lem:all-order-ladder} and the local law give
$O(N^\epsilon\Psi_N^4)$. Since $\nu<1/12$, finite iteration yields
\[
 \mathcal R_{d,F_0,k_0,1}=O(N^{-1/2+\epsilon}),\qquad
 \mathcal R_{d,F_0,k_0,0}=O(N^{-1+\epsilon}),\qquad d\ge1.
\]
For the third derivative of the global observable, one may take
$F_0=k_0=3$. Opening its three source insertions gives
$\mathcal L_3^{\rm mov}=O(N^{-1/2+\epsilon})$.
Supplementary Lemma C.1 gives the joint identities and the induction
with every tensor contraction retained.

\paragraph{Degree-lowering cubic terms.}
When both derivatives in the first third-cumulant expansion hit a centered
diagonal, the degree can decrease. We estimate this branch of
\eqref{eq:recursion-h0} using the theorem's local-law, conditional-moment,
and endpoint-derivative hypotheses.
Suppressing the matrix label, let $a$ be an unrevealed row label, let $y$ be a column
label, and use $G_{ay}(G_{aa}-g_a)$ as the degree-two initial monomial. For a new
column label $c\ne y$, differentiation in the raw entry $\zeta_{ac}$ gives
\[
 D_\zeta G_{uv}=-n^{-1/2}G_{ua}G_{cv},\qquad
 D_{\bar\zeta}G_{uv}=-n^{-1/2}G_{uc}G_{av}.
\]
The deterministic background $g_a$ has zero coordinate derivatives. Hence
\begin{equation}\label{eq:revision-cubic-drop}
 D_\zeta D_{\bar\zeta}(G_{aa}-g_a)
 =n^{-1}\left(G_{aa}^2G_{cc}+G_{ac}G_{aa}G_{ca}\right).
\end{equation}
The first term has degree zero. In the expansion of this monomial it produces terms
of the form
\[
 \frac1{n^{3/2}}\sum_{c\ne y}\kappa_{21}(a,c)
 \E\left[\mathcal M G_{cy}G_{aa}^2G_{cc}\mid\mathcal F_{\rm old}\right],
\]
where \(\kappa_{21}(a,c)=\E[\zeta_{ac}^2\overline{\zeta_{ac}}]\)
is the mixed third cumulant of the centered raw source coordinate at the
current flow time. Its possible dependence on the coordinate is retained
inside the sum; the uniform moment bound controls its absolute value.
After removal of the outer $N^{-1/2}$, this term has degree one:
its diagonal factors are uncentered. We estimate it separately before
passing the remaining terms to the $h=1$ class.

Let $\mathcal U_{F,k}$ denote the normalized conditional supremum for
joint contractions with exactly one off-diagonal Green factor, with all
other Green factors ordinary diagonals, and with an unrevealed endpoint of
the off-diagonal factor. The tensor families, masks, normalization, and
complete conditional expectations are those of
\eqref{eq:recursion-monomial}. Expand the unique off-diagonal factor
at an unrevealed endpoint, retaining the covariance term and bounding the
third-cumulant term and remainder directly. The self contraction has the
same invertible scalar coefficient as in
\eqref{eq:recursion-row-self}--\eqref{eq:recursion-column-self}; its
centered-trace part has degree two. A covariance derivative hitting an
ordinary diagonal creates an off-diagonal factor, in addition to the
transported off-diagonal factor. A derivative hitting the derivative tensor
opens a first-order endpoint factor between the source row and column;
this is again off-diagonal. A degree-one covariance contraction would require a second off-diagonal
or centered factor, which is absent from this class. Thus the
covariance contributions are bounded by $C N^\epsilon\Psi_N^2$.
The cubic contribution may be bounded with every Green entry of order
one. Its three normalization factors and its one new-label sum give
$N^{-1/2+\epsilon}$. The next remainder gives $N^{-1+\epsilon}$.
A free-label collision loses one normalized summation, and the deleted
coordinate and bad-event terms obey the same bounds after moment
truncation. Consequently
\begin{equation}\label{eq:revision-single-offdiag}
 \mathcal U_{F,k}
 \le C_{F,k,\epsilon}
 \left(N^\epsilon\Psi_N^2+N^{-1/2+\epsilon}
                         +N^{-1+\epsilon}\right).
\end{equation}
The small losses may be chosen separately at the finitely many uses of
this estimate. Since $\nu<1/12$, we have
$\Psi_N^2=N^{-2/3+2\nu}=o(N^{-1/2})$. Restoring the first cubic factor
therefore bounds the exceptional degree-two contribution by $O(N^{-1+\epsilon})$.

Away from label collisions, the remaining derivative allocations preserve
the required degree. A derivative of an off-diagonal factor retains an off-diagonal occurrence
when the new opposite-side label is distinct from the existing labels. A single derivative of a centered
diagonal creates an off-diagonal factor. With two derivatives on the
same centered diagonal at the selected index, the pure derivatives are
\[
 D_\zeta^2G_{aa}=2n^{-1}G_{aa}G_{ca}^2,\qquad
 D_{\bar\zeta}^2G_{aa}=2n^{-1}G_{aa}G_{ac}^2;
\]
only the first mixed term in \eqref{eq:revision-cubic-drop} loses one
unit of degree. Derivatives of a centered normalized partial trace can
produce a product of ordinary diagonals only when its trace index
coincides with a source index; those terms have an additional factor
$N^{-1}$ and are bounded among the collision terms. Derivatives of the scalar tensor
leave the explicit Green factors in place.
If the original degree is $d\ge3$, the dropped term is already bounded
by
\[
 N^{-1/2+\epsilon}\Psi_N^{d-1}
 \le N^{-7/6+2\nu+\epsilon}=O(N^{-1+\epsilon'}).
\]
At degree one, an unmatched monomial contains exactly one off-diagonal
factor; a centered factor by itself has even index multiplicities. Opposite index positions, column-side expansions, and conjugation signs
follow from the same inverse-derivative identities with the indices
interchanged or the Wirtinger derivatives conjugated. Matrix normalization
ratios are uniformly bounded; they do not change these powers.
The first two derivatives and the one additional expansion use at most
$F+5$ Green factors and spectral derivative order $k+5$, within $F_+=4F+10$ and
$k_+=k+4F+20$. The remaining first-cubic terms retain their required
minimum degree and enter the stated $h=1$ class.

The two rectangular covariance products have the same nonzero spectrum.
Their traces differ by the deterministic zero-mode term $-(n-m)/z$.
We include this term, and the corresponding deterministic upper-spectral
correction, when matching the smoothed counts. Their positive-order
raw-coordinate derivatives vanish.

\paragraph{Height transfer for endpoint applications.}\label{par:height-transfer}
The specified heights in Sections~\ref{sec:high-occurrence} and
\ref{sec:low-occurrence} differ from
\eqref{eq:recursion-endpoints} by bounded factors. Fix the real part and
write $G_\eta=H(x+\ii\eta)^{-1}$ and
$f_u(\eta)=(\Im G_\eta)_{uu}$. For any fixed rectangular matrix,
\begin{equation}\label{eq:height-ward}
 \Im G_\eta=\eta G_\eta^*P_{\rm col}G_\eta
             =\eta G_\eta P_{\rm col}G_\eta^*,\qquad
 \partial_\eta G_\eta=\ii G_\eta P_{\rm col}G_\eta.
\end{equation}
Cauchy--Schwarz gives
\[
 |f'_u(\eta)|\le f_u(\eta)/\eta,\qquad
 |\partial_\eta(G_\eta)_{uv}|
 \le\eta^{-1}\sqrt{f_u(\eta)f_v(\eta)}.
\]
If the local law holds at $\eta_0$, integration of the first inequality
controls $f_u$ on $c\eta_0\le\eta\le C\eta_0$; integration of the
second then gives
\begin{equation}\label{eq:height-transfer}
 |(G_\eta-G_{\eta_0})_{uv}|=O(\Psi_N).
\end{equation}
Normalized partial traces obey the same bound. The upper-edge
Marchenko--Pastur backgrounds vary by $O(\sqrt{\eta_0})=O(\Psi_N)$.
The inverse-derivative identities are unchanged at these heights, and
$1+\widetilde m$ and $\widetilde m$ remain bounded away from zero.
This verifies the height-dependent inputs of the common recursion for
each deleted and insertion-segment matrix on its initial local-law event.

\subsection{Coupling with the Gaussian ensemble}

At time $8\log N$, the moving coordinates differ from their Gaussian
partners by an $N^{-4}$ remnant. The following single coupling controls the
whole grid, including the common fixed block and prescribed deletions.
Superscripts $T$ and $G$ identify the flow endpoint and its Gaussian
reference, respectively.

\begin{prop}[Gaussian coupling]\label{prop:terminal-grid}
Fix $0<c_{\rm dim}<C_{\rm dim}<\infty$, $\tau_*<\infty$,
$R_{\rm c}>1$, $C_{\rm c},C_{\rm edge}<\infty$, and
$0<\kappa<1/72$. Let
\begin{equation}
 \theta<\frac43-25\kappa.
 \label{eq:grid-cardinality-condition}
\end{equation}
Conditionally on a sigma-algebra $\mathcal G_N$, let $X_N$ and $W_N$ be independent rectangular complex arrays with at most $C_{\rm dim}N$ rows and columns. Assume the entries of $X_N$ are independent, centered, have variance $N^{-1}$, zero complex second moment, and uniformly bounded normalized fixed moments; let $W_N$ be circular complex Gaussian with variance $N^{-1}$. Put
\[
 X_N^T=N^{-4}X_N+\sqrt{1-N^{-8}}\,W_N.
\]
Let $\mathcal I_N$ satisfy $\abs{\mathcal I_N}\le N^\theta$. For
$i\in\mathcal I_N$, construct $A_i^T,A_i^G$ by the same coordinate restrictions, possible transpose, zero padding, bounded deterministic scaling, and addition of the same $\mathcal G_N$-measurable block $C_i$. Assume that, on a
$\mathcal G_N$-measurable event $\mathcal H_N$,
\[
 \sup_i\norm{C_i}_{\op}\le N^{1/2+\nu_N},
 \qquad \nu_N\to0.
\]
Let
\[
 L_i^T=(A_i^T)^*A_i^T,\qquad L_i^G=(A_i^G)^*A_i^G.
\]
Choose endpoints and heights satisfying
\[
 \abs{B_i-a_i}\le C_{\rm edge}N^{-2/3+\kappa},
\]
\[
 c_{\rm dim}N^{-2/3-12\kappa}
 \le\eta_i\le
 C_{\rm dim}N^{-2/3-12\kappa}.
\]
Let $g_i\colon\R\to[0,1]$ have Lipschitz constant at most
$C_{\rm edge}\eta_i^{-1}$, with $g_i=0$ allowed, and define
\[
 Z_i(L)=\frac1\pi\int_{a_i}^{B_i}
 \Im\Tr(L-y-\ii\eta_iI)^{-1}\,\dd y+\Tr g_i(L).
\]
Let $\delta_N^{-1}\le C_{\rm c}\log N$, define
\[
 y_i(L)=\alpha_i+\epsilon_i\delta_N^{-1}Z_i(L),
 \qquad\epsilon_i\in\{-1,1\},
\]
and let $\rho\colon\R\to[0,1]$ be Lipschitz. For
$J_N\le C_{\rm c}\log N$, put
\[
 S_N^T=\sum_{i\in\mathcal I_N}\sum_{r=0}^{J_N}
 R_{\rm c}^{-r}\rho(y_i(L_i^T)+J_N-r),
\]
\[
 S_N^G=\sum_{i\in\mathcal I_N}\sum_{r=0}^{J_N}
 R_{\rm c}^{-r}\rho(y_i(L_i^G)+J_N-r).
\]
There is $c_{\rm term}>0$ such that, on $\mathcal H_N$ almost surely,
\begin{equation}
 \abs{
 \E[\ee^{-\tau S_N^T}\mid\mathcal G_N]
 -\E[\ee^{-\tau S_N^G}\mid\mathcal G_N]}
 \le N^{-c_{\rm term}}
 \label{eq:grid-terminal-comparison}
\end{equation}
uniformly for $0\le\tau\le\tau_*$ and all sufficiently large $N$.

For a single statistic $Z_i$ and any deterministic scalar function
$f\colon\R\to\R$ with $\norm f_\infty+\operatorname{Lip}(f)\le C_f$,
the same coupling gives, for every $\epsilon>0$,
\[
 \left|\E[f(Z_i(L_i^T))-f(Z_i(L_i^G))\mid\mathcal G_N]\right|
 \le C_{f,\epsilon}N^{-4/3+25\kappa+\epsilon}
 \quad\hbox{on }\mathcal H_N.
\]
The estimate is uniform over a fixed finite family of such functions
and endpoints. At $\kappa=1/1000$, any fixed exponent
$D_T<157/120$ is admissible after the small losses are absorbed.

Both conclusions hold after deleting a prescribed future coordinate
from both arrays, and for averages over at most $C_{\rm dim}N^2$
deletion labels. They apply to northwest-nested families, both
rectangular orientations, square endpoints, and common revealed blocks.
\end{prop}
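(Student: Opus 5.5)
The plan is a deterministic coupling estimate, conditional on $\mathcal G_N$, followed by a Lipschitz propagation through the smoothed counts and the cutoff sum. Write $X_N^T-W_N=N^{-4}X_N+(\sqrt{1-N^{-8}}-1)W_N=:\Delta_N$. Since both matrices in each pair are built by the same linear operations (restriction, transpose, padding, bounded scaling) from $X_N^T$ and $W_N$, plus the same block $C_i$, we get $A_i^T-A_i^G=\mathcal L_i(\Delta_N)$ with $\norm{\mathcal L_i(\Delta_N)}_{\op}\le C\norm{\Delta_N}_{\op}$. By the moment hypotheses and a standard operator-norm bound (Hilbert--Schmidt suffices), $\E[\norm{\Delta_N}_{\op}^p\mid\mathcal G_N]^{1/p}\le C_pN^{-4+1/2}$ for every $p$, uniformly on $\mathcal H_N$. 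On the event $\norm{X_N}_{\op}+\norm{W_N}_{\op}\le C$, which holds outside conditional probability $N^{-D}$, we also have $\norm{A_i^{T}}_{\op}+\norm{A_i^G}_{\op}\le CN^{1/2+\nu_N}$. Therefore
\[
 \norm{L_i^T-L_i^G}_{\op}\le CN^{1/2+\nu_N}\norm{\Delta_N}_{\op}.
\]

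Next, bound the change of each $Z_i$. For the Poisson integral, use the resolvent identity $R^T-R^G=-R^T(L^T-L^G)R^G$ together with $\abs{\Tr(R^TER^G)}\le \norm E_{\op}\norm{R^T}_F\norm{R^G}_F$ and $\norm{R}_F^2\le\eta_i^{-1}\Im\Tr R$. Even using only the crude bound $\norm R_F^2\le C_{\rm dim}N\eta_i^{-2}$, integration over a window of length $\le C_{\rm edge}N^{-2/3+\kappa}$ gives a change of at most
\[
 C N^{1+1/2+\nu_N-2/3+\kappa}\eta_i^{-2}\norm{\Delta_N}_{\op}\le CN^{5/6+25\kappa+\nu_N+4/3}\norm{\Delta_N}_{\op}.
\]
This is $N^{-c}$ times a polynomially small quantity once $\norm{\Delta_N}_{\op}\sim N^{-7/2}$. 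For $\Tr g_i(L)$, use the Lipschitz bound $\abs{\Tr g(L^T)-\Tr g(L^G)}\le \operatorname{Lip}(g)\sum_s\abs{\lambda_s^T-\lambda_s^G}\le C\eta_i^{-1}N\norm{L^T-L^G}_{\op}$, by Weyl's inequality. Both contributions are then at most $N^{3+25\kappa+\nu_N}\norm{\Delta_N}_{\op}$, which is $O(N^{-1/2+25\kappa+o(1)})$ in $L^p$.

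For the count, $\rho$ is Lipschitz and $\sum_rR_{\rm c}^{-r}\le R_{\rm c}/(R_{\rm c}-1)$. Hence
\[
 \abs{S_N^T-S_N^G}\le C\abs{\mathcal I_N}\delta_N^{-1}\max_i\abs{Z_i(L_i^T)-Z_i(L_i^G)},
\]
and $\abs{e^{-\tau S^T}-e^{-\tau S^G}}\le\tau_*\abs{S^T-S^G}$, since both counts are nonnegative. Taking conditional expectations, splitting on the norm event, and bounding the complement by $2N^{-D}$ (the exponentials are in $[0,1]$) yields an error $N^{\theta}\log N\cdot N^{-1/2+25\kappa+o(1)}$. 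This falls short only because the crude estimates above are wasteful. To reach the stated threshold $\theta<4/3-25\kappa$, I would sharpen the $Z_i$ difference by using the exact smallness: the full $N^{-4}$ factor times dimension factors. The careful count is $N^{-4}\cdot N^{1/2}$ (entry-norm) $\cdot N^{1/2+\nu_N}$ (from $C_i$) $\cdot N\eta_i^{-2}\cdot N^{-2/3+\kappa}$, which gives exponent $-4+1+1/3+25\kappa+2/3$. Including $N^{\theta}$, the result is negative exactly when $\theta<4/3-25\kappa$ up to small losses. Bookkeeping these exponents correctly, rather than any structural difficulty, is the main obstacle.

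The single-statistic estimate follows the same way without the $N^\theta$ factor, giving $N^{-4/3+25\kappa+\epsilon}$; at $\kappa=1/1000$ this is $N^{-157/120+\epsilon}$. The remaining clauses need no new argument. Deleting a coordinate from both arrays, or averaging over at most $C_{\rm dim}N^2$ deletion labels, only replaces $\mathcal L_i$ by another contraction. Orientation, nesting, and common revealed blocks are all covered by the linear-map description.
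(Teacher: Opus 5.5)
Your argument is the paper's proof. You use a deterministic coupling on an overwhelming-probability event, in which $A_i^T-A_i^G$ is a contraction image of $N^{-4}X_N+(\sqrt{1-N^{-8}}-1)W_N$ and the common block $C_i$ cancels. You then apply the resolvent identity with a factor $\eta_i^{-2}$ and a dimension factor $N$, bound $\Tr g_i$ by eigenvalue variation, and propagate Lipschitz bounds through $\rho$, $\delta_N^{-1}$, the geometric weights and $s\mapsto e^{-\tau s}$. The paper phrases the trace estimates through $\|\Delta_i\|_1\le\operatorname{rank}\Delta_i\,\|\Delta_i\|_{\op}$ with $\operatorname{rank}\Delta_i\le CN$, which is insensitive to zero padding. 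It also uses only the crude event $\|X_N\|_F+\|W_N\|_F\le N^{1/2+\varepsilon}$. Your Frobenius-resolvent and Weyl bounds give the same powers when the padded dimension is $O(N)$. Your bounded-operator-norm event is unnecessary, since $C_i$ already costs $N^{1/2+\nu_N}$.

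Your exponent bookkeeping is wrong in two places, and the ``sharpening'' paragraph should simply be deleted.

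\emph{First slip.} $N^{5/6+25\kappa+\nu_N+4/3}=N^{13/6+25\kappa+\nu_N}$, not $N^{3+25\kappa+\nu_N}$. Multiplied by $\|\Delta_N\|_{\op}\lesssim N^{-7/2}$, your first estimate gives $|Z_i(L_i^T)-Z_i(L_i^G)|\lesssim N^{-4/3+25\kappa+\nu_N}$; the $\Tr g_i$ part is even $N^{-4/3+12\kappa+\nu_N}$. This is exactly the paper's bound. It already yields $|S_N^T-S_N^G|\lesssim N^{\theta-4/3+25\kappa+o(1)}\log N$, hence the threshold $\theta<4/3-25\kappa$, so the crude estimate is not wasteful.

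\emph{Second slip.} Your ``careful count'' is the same product of factors. Its exponent is $-4+1+\tfrac53+25\kappa=-\tfrac43+25\kappa$, because $N\eta_i^{-2}N^{-2/3+\kappa}=N^{5/3+25\kappa}$. What you wrote, $-4+1+\tfrac13+\tfrac23+25\kappa$, equals $-2+25\kappa$ and does not match the threshold you then state. An actual sharpening would be to use $\|\Delta_N\|_{\op}\le CN^{-4}$ on your operator-norm event, which gains a further $N^{-1/2}$; it is not needed.

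With these corrections the argument is complete. That includes the single-statistic bound $N^{-4/3+25\kappa+\epsilon}$, the exponent $157/120$ at $\kappa=1/1000$, and the deletion clauses.
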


\begin{proof}
All moment estimates in this proof are conditional on $\mathcal G_N$.
Fix a sufficiently small $\varepsilon>0$.  The normalized moment bounds,
Markov's inequality, and the fact that each maximal array has $O(N^2)$
entries give an event $\Omega_N$ such that
\[
 \Prob(\Omega_N^c\mid\mathcal G_N)\le N^{-D},\qquad
 \norm{X_N}_F+\norm{W_N}_F\le N^{1/2+\varepsilon}
 \quad\hbox{on }\Omega_N.
\]
Here $D$ is arbitrary, and the fixed moment order is chosen after $D$ and
$\varepsilon$.  Indeed, for every integer $r\ge1$, convexity gives
$\E\norm{X_N}_F^{2r}\le C_rN^r$, and the same estimate holds for $W_N$.
On this one event,
\[
 \norm{X_N^T-W_N}_F
 \le N^{-4}\norm{X_N}_F+N^{-8}\norm{W_N}_F
 \le 2N^{-7/2+\varepsilon}.
\]
Coordinate restriction, transpose, zero padding, and deletion of a prescribed
entry are contractions in Frobenius norm.  The fixed scaling changes these
bounds by a constant.  Since the same block $C_i$ is added at both endpoints,
it cancels in the difference.  On $\mathcal H_N\cap\Omega_N$, uniformly in
all matrices and all single-entry deletions, for large $N$ we therefore have
\[
 \norm{A_i^T-A_i^G}_{\op}\le CN^{-7/2+\varepsilon},\qquad
 \norm{A_i^T}_{\op}+\norm{A_i^G}_{\op}
 \le CN^{1/2+2\varepsilon}.
\]
Consequently, with $\Delta_i=L_i^T-L_i^G$,
\[
 \norm{\Delta_i}_{\op}\le CN^{-3+3\varepsilon},\qquad
 \rank\Delta_i\le CN,\qquad
 \norm{\Delta_i}_1\le CN^{-2+3\varepsilon}.
\]
The rank bound follows by writing
$\Delta_i=(A_i^T-A_i^G)^*A_i^T+(A_i^G)^*(A_i^T-A_i^G)$;
it is independent of the size of any added zero padding.

For $z=y+\ii\eta_i$, the Hermitian resolvent identity gives
\[
 \left|\Tr\bigl[(L_i^T-z)^{-1}-(L_i^G-z)^{-1}\bigr]\right|
 \le \eta_i^{-2}\norm{\Delta_i}_1
 \le CN^{-2/3+24\kappa+3\varepsilon}.
\]
For a real Lipschitz function $g$, the eigenvalue variation inequality
$\sum_j|\lambda_j(B)-\lambda_j(A)|\le\norm{B-A}_1$ yields
\[
 |\Tr g_i(L_i^T)-\Tr g_i(L_i^G)|
 \le C\eta_i^{-1}\norm{\Delta_i}_1
 \le CN^{-4/3+12\kappa+3\varepsilon}.
\]
The variation inequality follows by tracking the ordered
eigenvalues along $A+t(B-A)$: they are absolutely continuous and, almost
everywhere, their derivatives are diagonal matrix elements of $B-A$ in an
orthonormal eigenbasis (after diagonalizing within multiple eigenspaces).
Their absolute sum is at most $\Tr|B-A|$.  Integration proves the claim.
Integrating the resolvent bound over the stated interval now gives
\[
 |Z_i(L_i^T)-Z_i(L_i^G)|
 \le CN^{-4/3+25\kappa+3\varepsilon}.
\]
The Lipschitz bound for $\rho$, the bound for $\delta_N^{-1}$, and
$\sum_{r\ge0}R_{\rm c}^{-r}<\infty$ imply
\[
 |S_N^T-S_N^G|
 \le CN^{\theta-4/3+25\kappa+3\varepsilon}\log N.
\]
Both counts are nonnegative, so $s\mapsto\ee^{-\tau s}$ is uniformly
$\tau_*$-Lipschitz on their range.  Put
$g=4/3-25\kappa-\theta>0$ and choose $3\varepsilon<g/2$.
Taking conditional expectations and using the bound $1$ on
$\Omega_N^c$ proves \eqref{eq:grid-terminal-comparison}, with any
sufficiently small $c_{\rm term}>0$.
The same maximal-array event controls every prescribed deletion, so no
additional union bound is needed for deletion averages.

For the scalar assertion, the same pointwise estimate before the grid
and cutoff-level sums gives
\[
 |Z_i(L_i^T)-Z_i(L_i^G)|
 \le C_\varepsilon N^{-4/3+25\kappa+3\varepsilon}
 \quad\hbox{on }\mathcal H_N\cap\Omega_N.
\]
Multiply by $\operatorname{Lip}(f)$ on this event and bound the
complement by $2\norm f_\infty\Prob(\Omega_N^c\mid\mathcal G_N)$.
Choosing the moment order after the requested failure exponent proves
the scalar bound. Fixed endpoint shifts preserve the window and
height estimates. At $\kappa=1/1000$,
$4/3-25\kappa=157/120>1/3$.
\end{proof}

\subsection{Comparison of counts on a full-matrix grid}
\label{sec:full-grid-comparison}

We now apply the joint expansion to full northwest rectangles. Every
entry follows the same interpolation, and both old index sets are empty.
This gives a direct comparison of the two joint counts used in the main
theorem.

\begin{prop}[Comparison of joint counts]\label{prop:full-grid-comparison}
Use the entry and dimension-path assumptions of Theorem~\ref{thm:main}.
Fix $0<\theta<1/3$, $u,v>0$, and a deterministic nonempty set
$\mathcal T_N\subset[N,2N)\cap\N$ with $|\mathcal T_N|\le N^\theta$.
For $n\in\mathcal T_N$, put
\[
 \begin{gathered}
 p_n=\min(M_n,n),\qquad a_n=(\sqrt{M_n}+\sqrt n)^2,\\
 b_n=(\sqrt{M_n}+\sqrt n)(M_n^{-1/2}+n^{-1/2})^{1/3}.
 \end{gathered}
\]
\[
 t_n^+=a_n+u b_n(\log n)^{2/3},\qquad
 t_n^-=a_n-v b_n(\log n)^{1/3}.
\]
Take $\kappa=1/1000$ and define the raw spectral parameters
\[
 \widehat\eta_n=p_n^{1/3-12\kappa},\qquad
 \widehat\ell_n=p_n^{1/3-6\kappa},\qquad
 B_n=a_n+4p_n^{1/3+\kappa},
\]
\[
 A_n^+=t_n^++\widehat\ell_n,\qquad
 A_n^-=t_n^--\widehat\ell_n,\qquad
 P_n^\pm(x)=\frac1\pi\int_{A_n^\pm}^{B_n}
       \frac{\widehat\eta_n}{(x-y)^2+\widehat\eta_n^2}\,\dd y.
\]
Fix a smooth $g_0\colon\R\to[0,1]$ equal to zero on $(-\infty,0]$
and one on $[1,\infty)$, and set
$g_n(x)=g_0((x-B_n)/\widehat\eta_n)$.
For an $M_n$-by-$n$ raw matrix $U$, let $\lambda_s(U)$,
$1\le s\le p_n$, be its squared singular values, including zeros with
multiplicity, and write
\[
 Q_n^+(U)=\sum_{s=1}^{p_n}P_n^+(\lambda_s(U)),\qquad
 Z_n^-(U)=\sum_{s=1}^{p_n}
       [P_n^-(\lambda_s(U))+g_n(\lambda_s(U))].
\]
Choose $R>1$ and $\rho\in C^\infty(\R;[0,1])$ equal to one on
$(-\infty,-1]$ and zero on $[0,\infty)$. Let
\[
 J_N=\lceil\zeta\log N\rceil,\quad
 \zeta>\frac{\theta+12}{\log R},\quad
 a_*=\frac1{16},\quad \delta_N=\frac{a_*}{J_N+2},
\]
\[
 y_n^+(U)=\frac{a_*-Q_n^+(U)}{\delta_N},\qquad
 y_n^-(U)=\frac{Z_n^-(U)-a_*}{\delta_N}.
\]
For a raw array $\mathbf U$, let $U^{(n)}$ be its northwest
$M_n$-by-$n$ restriction, and define
\[
 S_N^\pm(\mathbf U)=
   \sum_{n\in\mathcal T_N}\sum_{r=0}^{J_N}
        R^{-r}\rho(y_n^\pm(U^{(n)})+J_N-r).
\]
Let $\mathbf W$ be an independent infinite array of standard circular
complex Gaussian entries. For every fixed $\tau_*<\infty$ there is $C$
such that, for all sufficiently large $N$,
\[
 \sup_{0\le\tau\le\tau_*}
 \left|\E e^{-\tau S_N^\pm(\mathbf X)}
       -\E e^{-\tau S_N^\pm(\mathbf W)}\right|
 \le C N^{-1/4}.
\]
The constant is uniform over the specified deterministic grids. It may
depend on $\theta,u,v,R,\zeta,g_0,\rho$, the dimension bounds, and the
fixed moment bounds of the entries.
\end{prop}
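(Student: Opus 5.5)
We interpolate the whole common array along the Ornstein--Uhlenbeck flow \eqref{eq:recursion-flow} with both old index sets empty, so that $\mathcal F_{\rm old}$ is trivial and every entry moves. Put $\mathbf X(t)=\ee^{-t/2}\mathbf X+\sqrt{1-\ee^{-t}}\,\mathbf W'$ for an independent Gaussian copy $\mathbf W'$, and consider $\Phi(t,\tau)=\E\,\ee^{-\tau S_N^\pm(\mathbf X(t))}$ for $0\le t\le T_N=8\log N$. The terminal step $t=T_N$ is handled by Proposition~\ref{prop:terminal-grid}. We take $\mathcal G_N$ trivial, $C_i=0$, $\epsilon_i=\pm1$ according to the sign convention in $y_n^\pm$, and $\theta<1/3<4/3-25\kappa$. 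This gives $|\Phi(T_N,\tau)-\E\,\ee^{-\tau S_N^\pm(\mathbf W)}|\le N^{-c_{\rm term}}$. We then need $c_{\rm term}\ge1/4$, which the explicit exponent $4/3-25\kappa-\theta-\epsilon>1$ supplies. Before this, we rescale every grid matrix to the common normalization $N^{-1/2}$ using Theorem~\ref{thm:natural-tag-interpolation}(1), so that $Z_n$ and its raw derivatives match \eqref{eq:natural-derivative-scaling}. For tall/wide orientation we conjugate-transpose where needed and insert the deterministic zero-mode correction $-(n-m)/z$, which has vanishing raw derivatives.

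For the flow itself, Itô/heat generator calculus gives $\partial_t\Phi=\E[\mathcal L F]$, where $F=\ee^{-\tau S}$. The second-order (Gaussian) part cancels exactly because variance and complex second moment match. We then apply the cumulant expansion in each coordinate $q_{e,a}$ up to order six. Write $\kappa_p(t)\le C\ee^{-pt/2}$ for the $p$-th cumulant. This leaves:
\begin{itemize}
\item[(i)] the third-cumulant term $\mathcal L_3^{\rm mov}$ of \eqref{eq:recursion-third-source};
\item[(ii)] the fourth-cumulant term;
\item[(iii)] the fifth-cumulant term;
\item[(iv)] a Taylor remainder of order six.
\end{itemize}

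For (ii)--(iv) we use Lemma~\ref{lem:all-order-ladder} together with \eqref{eq:all-order-scaled}. The derivatives of $F$ in $q_e$ are contractions of an $\ell^1$-summable tensor in the grid and cutoff indices, costing $\delta_N^{-(m+1)}=N^{o(1)}$, with derivatives of $Y_n$. On the local-law event of Theorem~\ref{thm:flow-local-law} these derivatives are $O(N^{-5/16})$ per coordinate by \eqref{eq:direction-bound}, uniformly over the $N^\theta$ matrices (family size $A=\theta$). The upper term $g_n$ has vanishing derivatives by rigidity on that event. With normalization $N^{-p/2}$ summed over $O(N^2)$ coordinates, we get:
\begin{itemize}
\item[(ii)] $N^{-3/10}\,\ee^{-2t}$, using that at least one first derivative carries a factor $\Psi_N$;
\item[(iii)] $N^{-1/2+o(1)}$;
\item[(iv)] $N^{-1+o(1)}$, after truncating entries at $N^{1/16}$ and using the insertion-segment Woodbury estimates of Theorem~\ref{thm:frozen-corner-local-law}.
\end{itemize}
The bad events contribute $N^{-D}$ times polynomial factors, since $|F|\le1$ and the derivative bounds are polynomial in $\eta^{-1}$.

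The main obstacle is (i). Its naive size is $N^2\cdot N^{-3/2}$ times a cubic derivative of order one, which is too large. Here we invoke Theorem~\ref{thm:four-orientation-recursion} with $F_0=k_0=3$. Each product-rule monomial from the three rank-two source insertions has an unrevealed source index of multiplicity three, so it is unmatched and starts the $h=0$ class. The iteration in the paragraph ``Choice of derivative order'' then gives $\mathcal L_3^{\rm mov}=O(N^{-1/2+\epsilon})\ee^{-3t/2}$. Throughout, the spectral coefficient tensor (the derivatives of $V_N$ with all grid and cutoff indices contracted) must stay inside the expectation. This is exactly what \eqref{eq:recursion-monomial} allows, given $|\mathcal I_N|R^{-J_N}\le N^{-12}\le1$ and $\delta_N^{-1}=O(\log N)$ from \eqref{eq:recursion-ladder}. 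We also need the endpoints $A_n^\pm,B_n$ after normalization by $p_n$ or $n$ to lie within the window \eqref{eq:recursion-endpoints}. They do, since the logarithmic shifts are $o(N^{-2/3+\kappa})$, and the heights are transferred by the height-transfer paragraph.

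Summing up, $|\partial_t\Phi|\le C(N^{-3/10}+N^{-1/2+\epsilon})\ee^{-t}+CN^{-9}$. Integrating over $[0,8\log N]$ gives $O(N^{-3/10}\log N)\le N^{-1/4}$. Combined with the terminal coupling, and noting uniformity in $\tau\le\tau_*$ and in the grids (all constants depend only on the listed data), this yields the claim.
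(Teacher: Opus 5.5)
Your proposal is correct and follows the paper's proof essentially step for step. The common elements are a single Ornstein--Uhlenbeck flow on the whole array with empty old index sets, and the fourth-cumulant, fifth-cumulant and sixth-order remainder terms bounded through Lemma~\ref{lem:all-order-ladder} together with the $\Psi_N=N^{-5/16}$ endpoint bounds. The third-cumulant term is handled through Theorem~\ref{thm:four-orientation-recursion} with $F_0=k_0=3$, and the terminal step uses the explicit pre-expectation exponent from the proof of Proposition~\ref{prop:terminal-grid}.

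Two small remarks. First, $4/3-25\kappa-\theta$ is only guaranteed to exceed $1-25\kappa$, not $1$; this is still far more than the $1/4$ you need. Second, the paper removes the positive-order derivatives of the upper term $g_n$ pathwise rather than by rigidity. Any nonzero cutoff derivative forces $Z_n^-<a_*$, hence all eigenvalues lie below $B_n$. This argument holds automatically along the insertion segments used in the Taylor remainder, which is cleaner than invoking rigidity there.
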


\begin{proof}
\noindent\emph{One array and its coordinate sections.}
Use the single interpolation
\[
 \xi_e(t)=e^{-t/2}x_e+\sqrt{1-e^{-t}}\,w_e,
 \qquad 0\le t\le T:=8\log N,
\]
on the largest rectangle, and restrict it to every $M_n$-by-$n$ matrix.
The conditioning sigma-algebra is trivial and
$\mathcal R_{\rm old}=\mathcal C_{\rm old}=\varnothing$.
There are $O(N^2)$ moving complex coordinates. For a source coordinate
$e$, set $h_e(t)=N^{-1/2}\xi_e(t)$ and let $F_e(h;t,\tau)$ be
$e^{-\tau S_N^\pm}$ with that normalized coordinate replaced by $h$.
All summands remain in this same section. A matrix not containing $e$
has zero $h$-derivatives, as expressed by its deterministic membership
mask.

\noindent\emph{Normalization and endpoints.}
For a raw rectangle $U$, its natural normalization is $n^{-1/2}U$.
Its common normalization $N^{-1/2}U$ satisfies, with
$D_n=\diag(\sqrt{n/N}\,I_n,I_{M_n})$,
\[
 H_n^{\rm com}(z)=D_nH_n^{\rm nat}(Nz/n)D_n,\qquad
 G_n^{\rm com}(z)=D_n^{-1}G_n^{\rm nat}(Nz/n)D_n^{-1}.
\]
The change of variables in each Poisson integral is exact. The ratios
$n/N$, $M_n/N$, and $p_n/N$ stay between positive constants.
In natural coordinates the heights are
\[
 \frac{\widehat\eta_n}{n}
 =\left(\frac{p_n}{n}\right)^{1/3-12\kappa}
      n^{-2/3-12\kappa},
\]
and all real endpoints lie in the window of
Theorem~\ref{thm:four-orientation-recursion} for large $N$.
Theorem~\ref{thm:frozen-corner-local-law} with an empty old block,
followed by \eqref{eq:height-ward}--\eqref{eq:height-transfer},
gives the simultaneous endpoint bounds
$\Psi_N=N^{-5/16}$, also for deleted coordinates and their insertion
segments. The inverse identities and Marchenko--Pastur coefficients
used in Supplementary Lemma C.1 hold at these transferred heights.
All matrices keep their physical row-column orientation.
The full column trace has $n-p_n$ additional zero eigenvalues; subtracting
$(n-p_n)P_n^\pm(0)$ recovers the statistic defined above. This deterministic
correction has zero positive-order coordinate derivatives.

\noindent\emph{The complete coefficient tensors.}
Fix $K_{\rm der}=2189$ and $0<\varepsilon<1/1000$ before
selecting moment orders and failure exponents.
Lemma~\ref{lem:all-order-ladder} applies because
$|\mathcal T_N|R^{-J_N}\le N^{-12}$ and $\delta_N^{-1}=O(\log N)$.
For each fixed derivative order it bounds the full sum over matrix and
cutoff indices by $C_r\delta_N^{-r}$. The chain rule contracts these
coefficients with endpoint derivatives of the individual statistics.
Lemma~\ref{lem:covariance-direction-derivatives}, polarization in the two
real entry directions, and the bounded normalization ratios bound each
such derivative by $C_r\Psi_N$ on the simultaneous local-law event.
These are pointwise bounds for the complete contractions.

For the left statistic, a nonzero coefficient with a positive-order
derivative in $Z_n^-$ requires $Z_n^-<a_*$ at that matrix. Since
$B_n-A_n^-\ge4\widehat\eta_n$ eventually, an eigenvalue in
$[B_n,B_n+\widehat\eta_n]$ contributes at least
\[
 \frac{\arctan 5-\arctan 1}{\pi}>\frac18
\]
to $P_n^-$; an eigenvalue above $B_n+\widehat\eta_n$ contributes one to
$g_n$. Thus, on the support of that coefficient, every eigenvalue is
below $B_n$ and all positive-order derivatives of the upper term vanish.
Every matrix index introduced by the chain rule carries such a cutoff
derivative. The right statistic consists entirely of Poisson integrals.
Consequently the same endpoint estimates apply to both signs.

Let $D_e^\alpha$ denote derivatives in the two real coordinates of $h$.
The chain rule, including all its spectral-index sums, now gives, for
$1\le |\alpha|\le6$,
\[
 \sup_{0\le s\le1}
 |D_e^\alpha F_e(sh_e(t);t,\tau)|
 \le C_\alpha N^\varepsilon\Psi_N
\]
on the event that also truncates the raw entries at $N^\varepsilon$.
Each chain-rule term has at least one differentiated statistic; products
of its endpoint bounds are bounded by $C\Psi_N$ since $\Psi_N\le1$.
Every matrix label is summed by the coefficient tensor. The deterministic resolvent norm bound and the fixed derivatives
of $g_0$ give polynomial moment bounds on the complement. Choose its
failure exponent after the derivative and moment orders. This yields
\[
 N^{-2}\sum_e\E\max_{1\le|\alpha|\le5}
     |D_e^\alpha F_e(h_e(t);t,\tau)|\le C N^{-3/10},
\]
\[
 N^{-2}\sum_e\E\left[(1+|\xi_e(t)|^6)
    \max_{|\alpha|=6}\sup_{0\le s\le1}
       |D_e^\alpha F_e(sh_e(t);t,\tau)|\right]
 \le C N^{1/100}.
\]
All these estimates are uniform in time and $0<\tau\le\tau_*$.

\noindent\emph{The joint cumulant estimate.}
Differentiating $\Phi(t,\tau)=\E e^{-\tau S_N^\pm(\boldsymbol\xi(t))}$
and expanding through cumulant order five gives
\[
 \partial_t\Phi=\mathcal L_3+\mathcal L_4+\mathcal L_5+\mathcal R_6.
\]
The covariance term cancels the Gaussian diffusion term. For cumulant
order $r=3,4,5$, its coefficient before the ordered real-coordinate sum
is $-[2(r-1)!]^{-1}N^{-r/2}$.
The preceding coordinate estimates give
\[
 |\mathcal L_4|\le C N^{-3/10},\qquad
 |\mathcal L_5|\le C N^{-4/5},\qquad
 |\mathcal R_6|\le C N^{-99/100}.
\]
For $\mathcal L_3$, use the actual coefficient tensors of Supplementary
Lemma C.1 with the affine signs in $y_n^\pm$. Both source indices are
unrevealed. With initial budgets $F_0=k_0=3$, that lemma gives
$\mathcal R_{1,F_0,k_0,0}\le C N^{-1+\varepsilon}$ for the joint
contraction. Restoring its $N^{1/2}$ source normalization yields
\[
 |\mathcal L_3|\le C N^{-1/2+1/100}.
\]
The matrix and cutoff indices stay inside the same expectation in this
application. The fixed budget $2189$ covers its entire finite expansion.
Integration over $[0,8\log N]$ therefore gives
\[
 |\Phi(0,\tau)-\Phi(T,\tau)|\le C N^{-1/4}.
\]

\noindent\emph{The Gaussian endpoint.}
Apply the coupling proof of Proposition~\ref{prop:terminal-grid} with
zero fixed blocks. The two signs are allowed in its affine statistic
coordinate, and the zero-mode correction is included in its deterministic
constant. Before taking expectations that proof gives
\[
 |S_N^\pm(\boldsymbol\xi(T))-S_N^\pm(\mathbf W)|
 \le C N^{\theta-4/3+25\kappa+\varepsilon}\log N
\]
outside an event of arbitrarily small polynomial probability.
Since $\theta<1/3$ and $\kappa=1/1000$, this contributes at most
$C N^{-1/4}$ to the Laplace comparison. Combining the two bounds proves
the proposition; at $\tau=0$ the difference is zero.
\end{proof}

\begin{cor}[Block occurrences]\label{cor:full-grid-occurrence}
Under the assumptions of Theorem~\ref{thm:main}, fix
$0<a<A_2$ and $0<b<B_2$. There are deterministic grids
$\mathcal T_N^+,\mathcal T_N^-\subset[N,2N)\cap\N$ such that
\[
 \begin{aligned}
 \Prob\left(\bigcup_{n\in\mathcal T_N^+}
       \{\chi_n^+\ge a(\log n)^{2/3}\}\right)&\longrightarrow1,\\
 \Prob\left(\bigcup_{n\in\mathcal T_N^-}
       \{\chi_n^+<-b(\log n)^{1/3}\}\right)&\longrightarrow1.
 \end{aligned}
\]
The limits hold as $N\to\infty$ through the positive integers.
\end{cor}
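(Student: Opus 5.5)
The plan is to combine the Gaussian input of Paper~I with the Laplace comparison of Proposition~\ref{prop:full-grid-comparison}, and then apply the second-moment argument of Lemma~\ref{lem:conditional-occurrence-closure} with trivial $\mathcal G$.

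\textbf{Gaussian input.} Fix $a<u<a'<A_2$ for the right tail and $b<v<b'<B_2$ for the left tail. Take $\mathcal T_N^\pm\subset[N,2N)$ to be the separated grids supplied by \cite[Corollary~\ref*{cor:gaussian-grid-moments}]{YangWishart}, thinned if necessary so that $|\mathcal T_N^\pm|\le N^\theta$ with $\theta<1/3$. For the Gaussian array $\mathbf W$, define the retained hit counts
\[
 R_N^+=\sum_{n\in\mathcal T_N^+}\ind\{\xi_n(\mathbf W)\ge a'(\log n)^{2/3}\},
 \qquad
 R_N^-=\sum_{n\in\mathcal T_N^-}\ind\{\xi_n(\mathbf W)<-b'(\log n)^{1/3}\}.
\]
The exponents $a'<A_2$ and $b'<B_2$ are below the critical balance, so Paper~I gives $m_N=\E R_N^\pm\to\infty$ and $\Var R_N^\pm=o(m_N^2)$. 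This holds uniformly under the bounded threshold shifts $\pm\widehat\ell_n$ and the $O(p_n^{1/3+\kappa})$ window corrections, which are $o(b_n(\log n)^{1/3})$ by Lemma~\ref{lem:normalization-dictionary}(iii). Intersecting with the rigidity events of Proposition~\ref{prop:poisson-count-dichotomy} removes probability $o(m_N)$, so by the retention remark $\Var/m^2\to0$ is preserved.

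\textbf{Domination and support.} I will check two properties.

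First, $S_N^\pm(\mathbf W)\ge R_N^\pm$ on the good event. For the right tail, an eigenvalue above $t_n^++(a'-u)b_n(\log n)^{2/3}$ lies well inside $[A_n^+,B_n]$ and far from both endpoints. Then $Q_n^+\ge1-N^{-\kappa}$, so $y_n^+\le -J_N-1$ and the level-$0$ term equals one, as in Lemma~\ref{lem:ladder-absorption}. If the eigenvalue exceeds $B_n$ instead, I will use a variant in which $B_n$ is chosen large enough that this is excluded with overwhelming probability. Alternatively, I enlarge the $B_n$ used in the counting, which is harmless, and transfer via Corollary~\ref{prop:right-smoothing}. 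For the left tail, $\lambda<t_n^--\dots$ gives $Z_n^-\le N^{-\kappa}\le\delta_N$ by \eqref{eq:poisson-count-below}, and the terminal clause of Proposition~\ref{prop:poisson-detector} yields $A_{n,0}=1$.

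Second, $S_N^\pm(\mathbf X)>0$ implies the target event on the original array. For the left tail, $S>0$ forces some $Z_n^-<a_*$, hence $\norm{X^{(n)}}^2<A_n^-+\dots<t_n^-$ by \eqref{eq:poisson-target}, and then $\chi_n^+<-v(\log n)^{1/3}\le -b(\log n)^{1/3}$. For the right tail, $S>0$ gives $Q_n^+>0$ only weakly. I therefore use the dichotomy \eqref{eq:poisson-count-dichotomy} on a high-probability event $\Omega_N$ for $\mathbf X$: $Q_n^+\ge a_*-\dots$ forces at least one eigenvalue $\ge A_n^+-\widehat\ell_n=t_n^+$, giving $\chi_n^+\ge u(\log n)^{2/3}\ge a(\log n)^{2/3}$. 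The exceptional probability $N^{-D}\cdot N^\theta$ is absorbed into $\beta$.

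\textbf{Conclusion.} Lemma~\ref{lem:conditional-occurrence-closure} is applied with $E$ equal to the block union intersected with $\Omega_N$, or with $S^X$ multiplied by $\ind_{\Omega_N}$, which changes the Laplace transform by $o(1)$. Take $\beta=CN^{-1/4}+o(1)$ from Proposition~\ref{prop:full-grid-comparison}. The second bound of that lemma with $a=1/2$ gives
\[
 \Prob(E)\ge 1-\frac{4\Var R_N}{m_N^2}-e^{-\tau m_N/2}-\beta\to1.
\]

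\textbf{Main obstacle.} The delicate step is the right-tail support transfer, namely that positivity of the smoothed Poisson count certifies a genuine eigenvalue above $t_n^+$ uniformly over the grid. This will be handled by a union bound of the rigidity dichotomy over the $N^\theta$ matrices. A second, bookkeeping point is verifying that Paper~I's separated grids meet $|\mathcal T_N|\le N^\theta$ with $\theta<1/3$ while still having $m_N\to\infty$. This works because the effective number of tests $N^{1/3}$ is only needed up to $N^{1/3-o(1)}$ when $a'<A_2$ and $b'<B_2$ strictly.
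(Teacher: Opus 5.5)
Your proposal is correct and is essentially the paper's argument: Paper~I grids of size $N^{\kappa_\pm+\beta}$ under a fixed $\theta<1/3$, Gaussian hits at a strictly larger amplitude retained on the smoothing events so that $S_N^\pm(\mathbf W)\ge\widetilde R_N^\pm$ with vanishing relative variance, the Laplace comparison of Proposition~\ref{prop:full-grid-comparison}, and support transfer through the rigidity dichotomy on the right and \eqref{eq:poisson-target} on the left. The only variation is on the left, where you buffer amplitudes $b<v<b'$ whereas the paper takes $v=b$ and counts Gaussian gaps at $t_n^--2\widehat\ell_n$; there are also a few small slips that do not affect the proof:
\begin{itemize}
\item No modified $B_n$ is needed, since rigidity in Proposition~\ref{prop:poisson-count-dichotomy} already handles eigenvalues above it. Enlarging $B_n$ would also break the edge window the comparison requires.
\item $E$ should be the target union joined with $\Omega_N^c$, not intersected with $\Omega_N$. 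Your alternative of using $S^X\ind_{\Omega_N}$ is fine.
\item $p_n^{1/3+\kappa}$ is not $o(b_n(\log n)^{1/3})$, though you never use that claim.
\end{itemize}
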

\begin{proof}
Choose $a<u<a_2<A_2$ and use the statistics of
Proposition~\ref{prop:full-grid-comparison} with $v=b$.
Set $\kappa_+=4a_2^{3/2}/3$ and $\kappa_-=b^3/12$.
Choose $\beta,\omega>0$ with
$\max(\kappa_+,\kappa_-)+\beta+\omega<1/3$.
Paper I, Corollary~\ref*{cor:gaussian-grid-moments}
\cite{YangWishart}, applied to the full Gaussian path in the physical
normalization, gives separated grids with
\[
 \begin{gathered}
 |\mathcal T_N^\pm|=\lfloor N^{\kappa_\pm+\beta}\rfloor,\qquad
 m_N^\pm=N^{\beta+o(1)},\\
 \E[(R_N^\pm)^2]\le(1+u_N)(m_N^\pm)^2+m_N^\pm,\qquad u_N\longrightarrow0.
 \end{gathered}
\]
The grids can be chosen as
\[
 \mathcal T_N^\pm=
 \{N+j\lceil N^{2/3+\omega}\rceil:
       0\le j<\lfloor N^{\kappa_\pm+\beta}\rfloor\}.
\]
Their span is $O(N^{\kappa_\pm+\beta+2/3+\omega})=o(N)$, so they
lie in the short macroscopic window of the Gaussian corollary and in
$[N,2N)$ for large $N$.
Here $R_N^+$ counts Gaussian events at the raw threshold
$a_n+a_2b_n(\log n)^{2/3}$, and $R_N^-$ counts Gaussian events at
$t_n^--2\widehat\ell_n$. The left shift in the standardized coordinate
is $-2\widehat\ell_n/b_n=O(N^{-6\kappa})$, within the bounded
shift range of that corollary. Choose
$\max(\kappa_+,\kappa_-)+\beta<\theta<1/3$ to apply
Proposition~\ref{prop:full-grid-comparison} to either grid.

Apply Proposition~\ref{prop:poisson-count-dichotomy} at the individual
full-matrix right thresholds, using the smaller covariance product and
column normalization $p_n$. Its proof is uniform over these comparable
dimensions and deterministic edge windows. For the actual matrices,
outside a union event of probability $O(N^{\theta-D})$,
\[
 \norm{X^{(n)}}_{\op}^2<t_n^+\quad\Longrightarrow\quad
 Q_n^+(X^{(n)})\le p_n^{-\kappa}.
\]
Thus $S_N^+(\mathbf X)>0$ implies the right target in the statement on
this event. For a Gaussian right hit at amplitude $a_2$, the same count
estimate uses the lower integration limit
$a_n+a_2b_n(\log n)^{2/3}-\widehat\ell_n$.
This is larger than $A_n^+$ for large $N$. Nonnegativity of the Poisson
kernel gives $Q_n^+\ge1-p_n^{-\kappa}$ on the Gaussian good event,
which activates the level-zero cutoff with value one.

For the left target, Proposition~\ref{prop:poisson-detector} applies
with $t_i=t_n^-$ and $a_i=A_n^-$, since
$t_n^--A_n^-=\widehat\ell_n\ge4\widehat\eta_n$.
It gives the pathwise implication from $S_N^-(\mathbf X)>0$ to a
left target. Apply Proposition~\ref{prop:poisson-activation} to the
full Gaussian matrices with $E_{N,n}=t_n^--2\widehat\ell_n$.
The lower integration endpoint is $E_{N,n}+\widehat\ell_n=A_n^-$.
On each retained Gaussian gap event the upper term is zero and
$Z_n^-\le p_n^{-\kappa}\le\delta_N$ for large $N$, so its
level-zero cutoff again equals one. A transpose used to evaluate an
individual covariance product leaves its singular-value statistic
unchanged; all joint observables remain functions of the original array.

Let $\widetilde R_N^\pm$ count the retained Gaussian hits and
$\widetilde m_N^\pm=\E\widetilde R_N^\pm$.
Choose $D>\theta+10$ in the smoothing events. Then
\[
 0\le \E(R_N^\pm-\widetilde R_N^\pm)\le C N^{\theta-D},\qquad
 \widetilde m_N^\pm=(1+o(1))m_N^\pm\longrightarrow\infty.
\]
The retention estimate following
Lemma~\ref{lem:conditional-occurrence-closure} gives
$\Var(\widetilde R_N^\pm)/(\widetilde m_N^\pm)^2=o(1)$.
Since $S_N^\pm(\mathbf W)\ge\widetilde R_N^\pm$, for fixed $\tau>0$,
\[
 \E e^{-\tau S_N^\pm(\mathbf W)}
 \le \frac{4\Var(\widetilde R_N^\pm)}{(\widetilde m_N^\pm)^2}
       +e^{-\tau\widetilde m_N^\pm/2}\longrightarrow0.
\]
Compare this transform with the actual one using
Proposition~\ref{prop:full-grid-comparison}. The target-support
implications give, with $E_N^\pm$ denoting the respective target unions,
\[
 \Prob((E_N^\pm)^c)
 \le \E e^{-\tau S_N^\pm(\mathbf X)}+C N^{\theta-D}
 \longrightarrow0.
\]
The final error is needed for the right smoothing event; it can also be
included in the left bound. This proves both limits.
\end{proof}

\section{Conditional right-tail events}\label{sec:high-occurrence}

\subsection*{Gaussian signal and smooth realization}

We refine the right-tail estimate by retaining a revealed northwest
corner. On good histories, the conditional probability of a hit in the
selected block tends to one. Paper~I,
Corollary~\ref*{cor:gaussian-grid-moments}, supplies the Gaussian count
with a diverging mean and vanishing relative variance.
Proposition~\ref{prop:gaussian-terminal-count} places it on the
lacunary scales used for these conditional estimates.

The amplitudes $a_0<a_1<a_2$ allow for smoothing and passage from the
southeast submatrix to the full matrix. We count Gaussian events at
$a_2$, start the Poisson window at $a_1$, and seek a full-matrix event
at $a_0$.

Retain the Gaussian events for which smoothing succeeds and embed their
count in one weighted cutoff observable of the complete unrevealed array.
The homogeneous comparison and Gaussian coupling transfer its conditional
Laplace transform. At the reference endpoint the cutoff sum dominates
the retained Gaussian count; at the actual endpoint its positivity
implies the target event, up to the stated smoothing failure.
Lemma~\ref{lem:conditional-occurrence-closure} then gives
Theorem~\ref{thm:high-occurrence}.

\begin{prop}[Gaussian right-tail counts]\label{prop:gaussian-terminal-count}
Let $\gamma>0$, and let $(M_N)_{N\geq1}$ be a nondecreasing sequence of positive integers such that
\[
 \frac{M_N}{N}\longrightarrow\gamma,
 \qquad
 K:=\sup_N(M_{N+1}-M_N)<\infty.
\]
Let $(x_{ij})_{i,j\geq1}$ be any infinite complex array. For $N\geq1$, define
\[
 X^{(N)}=(x_{ij})_{\substack{1\leq i\leq M_N\\1\leq j\leq N}},
 \qquad
 \chi_N^+
 =
 \frac{\norm{X^{(N)}}_{\op}^2-(\sqrt{M_N}+\sqrt N)^2}
 {(\sqrt{M_N}+\sqrt N)(M_N^{-1/2}+N^{-1/2})^{1/3}},
\]
and put
\[
 A_2=\left(\frac14\right)^{2/3}.
\]
Fix amplitudes
\[
 0<a_0<a_1<A_2,
 \qquad
 \kappa=\frac43a_1^{3/2},
\]
and choose $\epsilon,\eta>0$ such that
\[
 \kappa+\epsilon+2\eta<\frac13.
\]
Fix $d>0$. Then the following assertions hold.

\begin{enumerate}
\item There are integers
\[
 2\leq k_1<k_2<\cdots,
 \qquad
 k_l-k_{l-1}\geq2,
 \qquad
 k_0=0,
\]
such that, on putting
\[
 N_l=2^{k_l},
 \qquad
 P_l=2^{k_{l-1}+1},
 \qquad
 J_l=N_l-P_l,
\]
we have
\[
 P_l\leq(\log N_l)^d.
\]

\item Define the shifted dimension path
\[
 \widehat M_{l,j}=M_{P_l+j}-M_{P_l},
 \qquad j\geq1.
\]
On an extension of the probability space, let
\[
 (\zeta_{l,ih})_{i,h\geq1},
 \qquad l\geq1,
\]
be independent arrays of independent centered circular complex Gaussian variables of second absolute moment one, independent of the original array. Let $Z_l^{(j)}$ be the $\widehat M_{l,j}$-by-$j$ northwest rectangle of the $l$th Gaussian array, and let
\[
 Y_l^{(j)}
 =
 (x_{M_{P_l}+i,P_l+h})_{\substack{1\leq i\leq\widehat M_{l,j}\\1\leq h\leq j}}
\]
be the corresponding actual independent-submatrix matrix.

For a $u$-by-$v$ matrix $W$, put
\[
 p=\min(u,v),
 \qquad
 q=\max(u,v),
\]
\[
 \mu_{p,q}
 =
 \left(\sqrt{p+\frac12}+\sqrt{q+\frac12}\right)^2,
\]
\[
 \sigma_{p,q}
 =
 \left(\sqrt{p+\frac12}+\sqrt{q+\frac12}\right)
 \left(
 \left(p+\frac12\right)^{-1/2}
 +
 \left(q+\frac12\right)^{-1/2}
 \right)^{1/3},
\]
and
\[
 \overline\chi(W)
 =
 \frac{\norm W_{\op}^2-\mu_{p,q}}{\sigma_{p,q}}.
\]
There are finite grids
\[
 T_l\subset\{n:N_l\leq n<2N_l\}
\]
such that
\[
 \abs{T_l}\leq2N_l^{\kappa+2\eta}.
\]

\item Writing $j_{l,n}=n-P_l$ and
\[
 S_l^G
 =
 \sum_{n\in T_l}
 \ind_{\{
 \overline\chi(Z_l^{(j_{l,n})})
 \geq a_1(\log j_{l,n})^{2/3}
 \}},
\]
there is a deterministic $u_l\ge0$, with $u_l\to0$, such that
\[
 m_l:=\E S_l^G\geq N_l^\eta,
 \qquad
 \E[(S_l^G)^2]\leq(1+u_l)m_l^2+m_l.
\]

\item If
\[
 \mathcal F_l^{\mathrm{past}}
 =
 \sigma(x_{ij}:1\leq i\leq M_{P_l},\,1\leq j\leq P_l),
\]
then the auxiliary Gaussian arrays are independent of
$\mathcal F_l^{\mathrm{past}}$. Consequently,
\[
 \E[S_l^G\mid\mathcal F_l^{\mathrm{past}}]
 =
 m_l\geq N_l^\eta,
 \qquad
 \E[(S_l^G)^2\mid\mathcal F_l^{\mathrm{past}}]
 \leq(1+u_l)m_l^2+m_l
\]
almost surely.

\item For all sufficiently large $l$,
\[
 \bigcup_{n\in T_l}
 \left\{
 \overline\chi(Y_l^{(j_{l,n})})
 \geq a_1(\log j_{l,n})^{2/3}
 \right\}
 \subset
 \bigcup_{n\in T_l}
 \left\{
 \chi_n^+\geq a_0(\log n)^{2/3}
 \right\}.
\]
\end{enumerate}

Thus the completely fresh Gaussian corner supplies a terminal conditional first- and second-moment package with deterministic good history, while positivity of the analogous actual independent-submatrix count implies the required full-matrix high-block event.
\end{prop}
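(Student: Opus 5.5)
The proof proceeds in the order of the five assertions, with the main input being Paper~I, Corollary~\ref*{cor:gaussian-grid-moments}, applied to the shifted Gaussian path.

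\textbf{Scales.} For (1), define $k_l$ recursively. Given $k_{l-1}$, choose $k_l\ge k_{l-1}+2$ so large that $2^{k_{l-1}+1}\le(k_l\log2)^d$. This is possible because the right side diverges in $k_l$. Then $P_l\le(\log N_l)^d$ holds automatically, and $J_l=N_l-P_l\sim N_l$.

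\textbf{Gaussian moments.} For (2)--(3), Lemma~\ref{lem:fresh-corner}(c) shows that the shifted path $\widehat M_{l,j}$ is nondecreasing, has increments at most $K$, and satisfies $\widehat M_{l,j}/j\to\gamma$. This convergence is uniform in $l$ for $j\ge N_l/2$, since $M_{P_l}=O(P_l)=o(N_l)$. Consequently, the Gaussian path $(Z_l^{(j)})_j$ meets the hypotheses of Paper~I's corollary, uniformly in $l$. The statistic $\overline\chi$ uses the $+\tfrac12$-shifted center and scale. These differ from the center and scale of Lemma~\ref{lem:normalization-dictionary} by $O(1)$ and by a factor $1+O(N^{-1})$, respectively. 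In standardized units this is a bounded shift of order $O(N^{-1/3})$, which lies within the corollary's uniform shift range.

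Take $T_l=\{P_l+N_l+i\lceil N_l^{2/3+\epsilon}\rceil:0\le i<\lfloor N_l^{\kappa+2\eta}\rfloor\}$, or the intersection of such a set with $[N_l,2N_l)$. The corollary then gives
\[
 m_l=|T_l|\,N_l^{-\kappa+o(1)}\ge N_l^{\eta},
\]
together with the second-moment bound $\E[(S_l^G)^2]\le(1+u_l)m_l^2+m_l$. The exponent $\kappa=\tfrac43a_1^{3/2}$ is exactly the right-tail exponent at amplitude $a_1$. Two points need care here. First, $\log j_{l,n}=\log n+o(1)$ uniformly on $T_l$. Second, the separation $N^{2/3+\epsilon}$ is what makes the decorrelation available, and the condition $\kappa+\epsilon+2\eta<1/3$ keeps the span $o(N_l)$, inside the short macroscopic window of the corollary.

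\textbf{Conditioning.} Assertion (4) holds because the arrays $\zeta_l$ are independent of the original array. Hence the conditional moments equal the deterministic unconditional ones.

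\textbf{Inclusion.} For (5), apply Proposition~\ref{prop:coordinate-submatrix-transfer} with $X_N=X^{(n)}$ and $Y_N=Y_l^{(j_{l,n})}$. Lemma~\ref{lem:fresh-corner}(a) identifies $Y_l^{(j_{l,n})}$ as the submatrix of $X^{(n)}$ obtained by deleting $r=M_{P_l}=O(P_l)$ rows and $s=P_l$ columns. Since $P_l=O((\log N_l)^d)=o(N_l^{1/3}(\log N_l)^{2/3})$, the proposition applies.

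The shift from the $+\tfrac12$ normalization to $a(\cdot,\cdot),b(\cdot,\cdot)$ must be handled first. The difference in centers is $O(1)$, so in standardized units the $Y$-event at amplitude $a_1$ implies the event at any $a_+\in(a_0,a_1)$ for large $l$. Then \eqref{eq:submatrix-high-event}, applied with $a_-=a_0$, yields $\chi_n^+\ge a_0(\log n)^{2/3}$. We also use $\log(n-P_l)\sim\log n$, and the identification of $\chi_{X}$ with $\chi_n^+$ given by Lemma~\ref{lem:normalization-dictionary}. The estimates are uniform over $n\in T_l$, because every constant in Proposition~\ref{prop:coordinate-submatrix-transfer} depends only on the comparability bounds.

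\textbf{Main obstacle.} The delicate step is verifying that Paper~I's corollary applies uniformly to the family of shifted paths indexed by $l$, with the $+\tfrac12$ normalization and the orientation $\min/\max$. Concretely, one needs uniform first-moment asymptotics $\Prob(\overline\chi\ge a_1(\log j)^{2/3})=N^{-\kappa+o(1)}$ and the pair-decorrelation estimate on the chosen grid. I would address this by recasting the shifted path as a legitimate dimension path, with aspect ratio converging uniformly, and absorbing all normalization differences into the bounded threshold shifts that the corollary permits.
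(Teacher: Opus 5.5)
Your handling of assertions (1), (2), (4) and (5) is sound. In (5), the $+\tfrac12$ recentering costs only $O(N_l^{-1/3})$ in standardized units. After that, Proposition~\ref{prop:coordinate-submatrix-transfer} with $r=M_{P_l}$, $s=P_l$, together with Lemma~\ref{lem:fresh-corner}(a), gives the inclusion. The implication is deterministic, so indexing that proposition by $n$ itself gives it for all large $l$.

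The gap is in (3). Your scales are fixed by the polylog condition alone. To get $m_l\ge N_l^\eta$ and $u_l\to0$, you therefore need Paper~I's corollary to hold uniformly over the family of paths $j\mapsto\widehat M_{l,j}$, each evaluated at its own scale $N_l$. The corollary is an asymptotic statement for one dimension path as the scale tends to infinity. Verifying that each shifted path meets its hypotheses, even with $\widehat M_{l,j}/j\to\gamma$ uniformly in $l$, does not make the $o(1)$ terms in its conclusions uniform over the family. You name this as the main obstacle, but ``recasting the shifted path as a legitimate dimension path'' does not supply that uniformity.

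The missing idea is that you need no uniformity, because you choose the scales. The cutoff $P_l=2^{k_{l-1}+1}$, and with it the whole path $j\mapsto\widehat M_{l,j}$, is determined by $k_{l-1}$ before $k_l$ is chosen. By Lemma~\ref{lem:fresh-corner}(c), this fixed path is nondecreasing, has increments at most $K$, and has ratio tending to $\gamma$. So the corollary applies to it as the scale tends to infinity.

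Now choose $k_l\ge k_{l-1}+2$ large enough for two things. First, $P_l\le(\log N_l)^d$. Second, on your grid (which lies in $j\in[J_l,2J_l)$ and has spacing at least $J_l^{2/3+\epsilon}$), the corollary yields $m_l\ge N_l^\eta$ and $\E[(S_l^G)^2]\le(1+1/l)m_l^2+m_l$. Then $u_l\le1/l$ is deterministic, the bounds hold for every $l$ rather than only eventually, and the rest of your argument goes through unchanged.

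This prefix-by-prefix recursive choice of scales is how the paper handles the $l$-dependence. Compare Proposition~\ref{prop:corrected-gaussian-moments}, where the $k_l$ are explicitly allowed to depend on all parameter choices.
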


\supplementproof{supp.J.1}{Section J.1}

Independence of the auxiliary Gaussian array from
$\mathcal F_l^{\mathrm{past}}$ makes its conditional moments equal to
the deterministic grid moments. The buffer between $a_1$ and $a_0$
then transfers a right-tail event in the actual southeast submatrix
to the full matrix. We next replace the Gaussian count by a smooth
observable with the same terminal contribution.

\begin{prop}[Smooth cutoff functions for the right tail]\label{prop:high-poisson-ladder}
Let $(x_{ij})_{i,j\geq1}$ be mutually independent complex random variables satisfying
\[
 \E x_{ij}=0,
 \qquad
 \E\abs{x_{ij}}^2=1,
 \qquad
 \E x_{ij}^2=0,
\]
and, for every fixed $s\geq1$,
\[
 \sup_{i,j}\E\abs{x_{ij}}^s<\infty.
\]
Let $\gamma\in(0,\infty)$, and let $(M_N)_{N\geq1}$ be nondecreasing with
\[
 \frac{M_N}{N}\longrightarrow\gamma,
 \qquad
 \sup_N(M_{N+1}-M_N)<\infty.
\]
Define $X^{(N)}$, $\chi_N^+$, and $A_2$ as in Proposition~\ref{prop:gaussian-terminal-count}. Fix
\[
 0<a_0<a_1<a_2<A_2,
 \qquad
 \kappa_2=\frac43a_2^{3/2},
\]
and choose $\epsilon,\eta>0$ such that
\[
 \kappa_2+\epsilon+2\eta<\frac13.
\]
Fix $d>0$. Then the following assertions hold.

\begin{enumerate}
\item There are integers
\[
 2\leq k_1<k_2<\cdots,
 \qquad
 k_l-k_{l-1}\geq2,
 \qquad
 k_0=0,
\]
such that, with
\[
 N_l=2^{k_l},
 \qquad
 P_l=2^{k_{l-1}+1},
 \qquad
 J_l=N_l-P_l,
\]
we have $P_l\leq(\log N_l)^d$. Put
\[
 \widehat M_{l,j}=M_{P_l+j}-M_{P_l}.
\]
On an extension of the probability space, let
$(\zeta_{l,ih})_{i,h\geq1}$, $l\geq1$, be independent arrays of independent centered circular complex Gaussian variables of second absolute moment one, independent of the original array. Define
\[
 Y_l^{(j)}
 =
 (x_{M_{P_l}+i,P_l+h})_{\substack{1\leq i\leq\widehat M_{l,j}\\1\leq h\leq j}},
\qquad
 Z_l^{(j)}
 =
 (\zeta_{l,ih})_{\substack{1\leq i\leq\widehat M_{l,j}\\1\leq h\leq j}},
\]
and
\[
 \mathcal F_l^{\mathrm{past}}
 =
 \sigma(x_{ij}:1\leq i\leq M_{P_l},\,1\leq j\leq P_l).
\]
There are finite grids
\[
 T_l\subset\{n:N_l\leq n<2N_l\}
\]
such that, with $j_{l,n}=n-P_l$,
\[
 \abs{T_l}\leq2N_l^{\kappa_2+2\eta}.
\]

\item For a $u$-by-$v$ matrix $W$, put
\[
 p=\min(u,v),\qquad q=\max(u,v),
\]
and define $\mu_{p,q}$, $\sigma_{p,q}$, and $\overline\chi(W)$ as in Proposition~\ref{prop:gaussian-terminal-count}. Set
\[
 H_{l,n}^G
 =
 \left\{
 \overline\chi(Z_l^{(j_{l,n})})
 \geq a_2(\log j_{l,n})^{2/3}
 \right\},
\qquad
 R_l^G=\sum_{n\in T_l}\ind_{H_{l,n}^G}.
\]
The deterministic number
\[
 m_l=\E R_l^G
\]
satisfies, for deterministic $u_l\ge0$ with $u_l\to0$, almost surely,
\[
 \E[R_l^G\mid\mathcal F_l^{\mathrm{past}}]
 =
 m_l\geq N_l^\eta,
 \qquad
 \E[(R_l^G)^2\mid\mathcal F_l^{\mathrm{past}}]
 \leq(1+u_l)m_l^2+m_l.
\]

\item For all sufficiently large $l$,
\[
 \bigcup_{n\in T_l}
 \left\{
 \overline\chi(Y_l^{(j_{l,n})})
 \geq a_1(\log j_{l,n})^{2/3}
 \right\}
 \subset
 U_l(a_0),
\]
where
\[
 U_l(a_0)
 =
 \bigcup_{n\in T_l}
 \left\{
 \chi_n^+\geq a_0(\log n)^{2/3}
 \right\}.
\]

\item For $n\in T_l$, set
\[
 u_{l,n}=\widehat M_{l,j_{l,n}},
 \qquad
 v_{l,n}=j_{l,n},
\]
\[
 p_{l,n}=\min(u_{l,n},v_{l,n}),
 \qquad
 q_{l,n}=\max(u_{l,n},v_{l,n}),
\]
\[
 \mu_{l,n}=\mu_{p_{l,n},q_{l,n}},
 \qquad
 \sigma_{l,n}=\sigma_{p_{l,n},q_{l,n}},
\]
and
\[
 t_{r,l,n}
 =
 \mu_{l,n}
 +
 a_r(\log j_{l,n})^{2/3}\sigma_{l,n},
 \qquad r=1,2.
\]

\item Fix $0<\kappa_0<1/72$ and put
\[
 \widehat\eta_{l,n}
 =
 p_{l,n}^{1/3-12\kappa_0},
 \qquad
 \widehat\ell_{l,n}
 =
 p_{l,n}^{1/3-6\kappa_0},
\]
\[
 \widehat B_{l,n}
 =
 (\sqrt{p_{l,n}}+\sqrt{q_{l,n}})^2
 +
 4p_{l,n}^{1/3+\kappa_0}.
\]

\item If $W$ is a $u_{l,n}$-by-$v_{l,n}$ matrix and
$\lambda_1(W),\ldots,\lambda_{p_{l,n}}(W)$ are all its squared singular values, including zeros with multiplicity, define
\[
 Q_{l,n}(W)
 =
 \sum_{s=1}^{p_{l,n}}
 \frac1\pi
 \int_{t_{1,l,n}+\widehat\ell_{l,n}}^{\widehat B_{l,n}}
 \frac{\widehat\eta_{l,n}}
 {(\lambda_s(W)-x)^2+\widehat\eta_{l,n}^2}
 \,\dd x.
\]

\item Fix $R_{\mathrm{c}}>1$, a function
\[
 \rho_{\mathrm{c}}\in C^\infty(\R;[0,1])
\]
such that
\[
 \rho_{\mathrm{c}}(y)=1\quad\text{for }y\leq-1,
 \qquad
 \rho_{\mathrm{c}}(y)=0\quad\text{for }y\geq0,
\]
and $A_{\mathrm{fin}}>0$. Put
\[
 \theta_h=\kappa_2+2\eta+\frac{\epsilon}{2},
\]
choose
\[
 \zeta>\frac{\theta_h+A_{\mathrm{fin}}}{\log R_{\mathrm{c}}},
\]
and define
\[
 J_{\mathrm{c},l}
 =
 \left\lceil\zeta\log N_l\right\rceil,
 \qquad
 a_*=\frac1{16},
 \qquad
 \delta_l=\frac{a_*}{J_{\mathrm{c},l}+2},
\]
\[
 y_{l,n}(W)
 =
 \frac{a_*-Q_{l,n}(W)}{\delta_l},
\]
and
\[
 A_{l,n,r}(W)
 =
 R_{\mathrm{c}}^{-r}
 \rho_{\mathrm{c}}
 \bigl(y_{l,n}(W)+J_{\mathrm{c},l}-r\bigr),
 \qquad
 0\leq r\leq J_{\mathrm{c},l}.
\]

\item For a family $\mathbf W=(W_n)_{n\in T_l}$ of the indicated dimensions, put
\[
 S_l(\mathbf W)
 =
 \sum_{n\in T_l}
 \sum_{r=0}^{J_{\mathrm{c},l}}
 A_{l,n,r}(W_n).
\]

\item After increasing the recursively selected scales if necessary, there are events $\Omega_l^X$, measurable with respect to the actual independent-submatrix array and independent of $\mathcal F_l^{\mathrm{past}}$, such that
\[
 \Prob((\Omega_l^X)^c\mid\mathcal F_l^{\mathrm{past}})
 \leq N_l^{-10}
\]
almost surely and, on $\Omega_l^X$,
\[
 S_l\bigl((Y_l^{(j_{l,n})})_{n\in T_l}\bigr)>0
 \quad\Longrightarrow\quad
 U_l(a_0).
\]

\item There are Gaussian smoothing events $\Omega_{l,n}^G$ such that, with
\[
 \widetilde H_{l,n}^G
 =
 \ind_{H_{l,n}^G}\ind_{\Omega_{l,n}^G},
 \qquad
 \widetilde R_l^G
 =
 \sum_{n\in T_l}\widetilde H_{l,n}^G,
\]
and
\[
 \widetilde m_l
 =
 \E[\widetilde R_l^G\mid\mathcal F_l^{\mathrm{past}}],
\]
we have
\[
 \widetilde m_l
 =
 (1+o(1))m_l
 \geq
 N_l^\eta(1-o(1)),
\]
\[
 \E[(\widetilde R_l^G)^2\mid\mathcal F_l^{\mathrm{past}}]
 \leq
 (1+o(1))\widetilde m_l^2,
\]
and, pathwise,
\[
 S_l\bigl((Z_l^{(j_{l,n})})_{n\in T_l}\bigr)
 \geq
 \widetilde R_l^G.
\]
The $o(1)$ terms are deterministic. In particular, for all sufficiently large $l$,
\[
 \widetilde m_l\geq N_l^{\eta/2},
 \qquad
 \frac{\E[(\widetilde R_l^G)^2\mid\mathcal F_l^{\mathrm{past}}]}
 {\widetilde m_l^2}
 \leq1+o(1).
\]

Every $Q_{l,n}$, $y_{l,n}$, and $A_{l,n,r}$ is $C^\infty$ in the real and imaginary entry coordinates. The complete first-coordinate, second-coordinate, and noncubic third-coordinate derivative sums of $S_l$ obey the support-absorption estimates of Lemma~\ref{lem:ladder-absorption}. Every nonfinal sum over the grid and cutoff levels is therefore absorbed by $S_l$, while
\[
 \abs{T_l}R_{\mathrm{c}}^{-J_{\mathrm{c},l}}
 \leq N_l^{-A_{\mathrm{fin}}},
 \qquad
 \delta_l^{-1}=O(\log N_l).
\]
The third-cumulant contribution is treated by Theorem~\ref{thm:four-orientation-recursion}.
\end{enumerate}
\end{prop}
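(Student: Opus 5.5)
The proof is an assembly of earlier results; no new comparison estimate is needed. Items (1)--(3) come from Proposition~\ref{prop:gaussian-terminal-count}, applied with the pair of amplitudes $(a_1,a_2)$ in place of $(a_0,a_1)$ and with $\kappa=\kappa_2$. This gives the lacunary scales $N_l=2^{k_l}$ with $P_l\le(\log N_l)^d$ and the grids $T_l$ with $|T_l|\le 2N_l^{\kappa_2+2\eta}$. It also gives the deterministic moments $m_l\ge N_l^\eta$ and $\E[(R_l^G)^2\mid\mathcal F_l^{\rm past}]\le(1+u_l)m_l^2+m_l$, where the conditional statement uses independence of the auxiliary arrays from $\mathcal F_l^{\rm past}$. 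For the inclusion in item (3), apply part (5) of that proposition to the pair $(a_0,a_1)$. Since its grids were chosen for the larger threshold $a_2$, I would check that its proof only uses $P_l=o(N_l^{1/3}(\log N_l)^{2/3})$ together with Proposition~\ref{prop:coordinate-submatrix-transfer}. Taking the same $k_l$, made large enough to serve both applications, the inclusion holds on the common grid. The $\tfrac12$-shifted normalization $\mu_{p,q},\sigma_{p,q}$ differs from $a(u,v),b(u,v)$ by $O(1)$ in the center and by a factor $1+O(N^{-1})$ in the scale, and both are absorbed by the gap $a_1-a_0$.

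Items (4)--(8) are definitions. The smoothness claims and the derivative-absorption claims follow from Proposition~\ref{prop:poisson-detector} and Lemma~\ref{lem:ladder-absorption}, applied with $y_{l,n}=(a_*-Q_{l,n})/\delta_l$; the affine sign flip does not change the arguments. The bound $|T_l|R_{\rm c}^{-J_{{\rm c},l}}\le N_l^{-A_{\rm fin}}$ follows from the choice of $\zeta$, because $|T_l|\le N_l^{\theta_h}$ for large $l$.

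For item (9), take $\Omega_l^X$ to be the intersection over $n\in T_l$ of the good events of Proposition~\ref{prop:poisson-count-dichotomy}. Each event is applied to $Y_l^{(j)}$, after conjugate transposition if the matrix is wide, at threshold $E=t_{1,l,n}$ with failure exponent $D$ large. These events are measurable with respect to the southeast array, which by Lemma~\ref{lem:fresh-corner}(b) is independent of $\mathcal F_l^{\rm past}$, so the conditional failure probability equals the unconditional one, at most $|T_l|N_l^{-D}\le N_l^{-10}$. On $\Omega_l^X$, $\|Y\|^2<t_{1,l,n}$ gives $Q_{l,n}\le p^{-\kappa_0}<a_*-\delta_l$ for large $l$, so every cutoff vanishes. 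Hence $S_l>0$ forces some $\overline\chi(Y)\ge a_1(\log j)^{2/3}$, and item (3) then yields $U_l(a_0)$. Here the window condition $|t_{1,l,n}-E_+|=O(N^{-2/3}(\log N)^{2/3})$ holds after rescaling by $p_{l,n}$.

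For item (10), let $\Omega_{l,n}^G$ be the corresponding good events for the Gaussian matrices. On $H_{l,n}^G\cap\Omega_{l,n}^G$, count monotonicity with the lower endpoint $t_{2,l,n}-\widehat\ell\ge t_{1,l,n}+\widehat\ell$ (Lemma~\ref{lem:downward-threshold}) gives $Q_{l,n}\ge1-p^{-\kappa_0}$. Then $y_{l,n}\le -J-1$, so $A_{l,n,0}=1$, and $S_l\ge\widetilde R_l^G$ follows. Finally, $\E[R-\widetilde R]\le|T_l|N_l^{-D}=o(m_l)$, and the retention estimate after Lemma~\ref{lem:conditional-occurrence-closure} gives $\widetilde m_l=(1+o(1))m_l$ together with the second-moment bound. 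All of these quantities are deterministic, because the Gaussian arrays are independent of the past.

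The main obstacle is consistency of scales. The lacunary scales and grids built for $a_2$ must serve simultaneously for the $(a_1\to a_0)$ transfer, for the smoothing window conditions in the shifted normalization, and for the requirement that $|T_l|$ fits both $\theta_h$ and the $N_l^{-10}$ union bound. I would handle this by taking the sequence $k_l$ to grow fast enough that all of these finitely many eventual conditions hold at once.
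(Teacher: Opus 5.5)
Your proposal is correct and follows essentially the same route as the paper. The scales, grids and deterministic Gaussian moments come from Proposition~\ref{prop:gaussian-terminal-count} at amplitude $a_2$, and independence from $\mathcal F_l^{\mathrm{past}}$ comes from Lemma~\ref{lem:fresh-corner}. The grid-independent $a_1\to a_0$ inclusion follows from Proposition~\ref{prop:coordinate-submatrix-transfer} together with the $O(1)$ center and $1+O(N^{-1})$ scale comparison. The actual and Gaussian smoothing events come from Proposition~\ref{prop:poisson-count-dichotomy} at $t_{1,l,n}$ and $t_{2,l,n}$, combined with Lemma~\ref{lem:downward-threshold}. The retained count is handled by the retention estimate following Lemma~\ref{lem:conditional-occurrence-closure}.
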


\supplementproof{supp.J.2}{Section J.2}

A retained Gaussian event at amplitude $a_2$ activates the level-$0$
cutoff with unit weight. At the actual endpoint, positivity of any level
implies the $a_0$ target on the smoothing event. The derivative estimates
sum the intermediate levels through the weighted count, leaving the
terminal error $\abs{T_l}R_{\mathrm{c}}^{-J_{\mathrm{c},l}}$.

\subsection*{One global observable and the homogeneous flow}

Lemma~\ref{lem:global-high-ladder} realizes all matrix summands as one
observable of the $N^{-1/2}$-normalized array. Differentiation in an entry
acts on precisely the matrices containing it, while each coordinate section
retains the full count. The factor $\sqrt{N/p_{l,n}}$ converts to the
natural normalization used by the local law, with exact transformations
of the spectral endpoints, height, insertion directions, and Green blocks.

\begin{lem}[Change of normalization]\label{lem:global-high-ladder}
Adopt the objects of Proposition~\ref{prop:high-poisson-ladder} with
\[
 \kappa_0=\frac1{1000},
 \qquad
 A_{\mathrm{fin}}=12.
\]
Fix a scale $l$ and write
\[
 N=N_l,
 \qquad
 J=J_{\mathrm{c},l}.
\]
Let
\[
 j_{\max}=\max_{n\in T_l}j_{l,n},
 \qquad
 u_{\max}=M_{P_l+j_{\max}}-M_{P_l},
\]
and
\[
 \mathcal E_N
 =
 \{1,\ldots,u_{\max}\}\times\{1,\ldots,j_{\max}\}.
\]
Conditionally on $\mathcal F_l^{\mathrm{past}}$, put
\[
 \xi_e(t)
 =
 \ee^{-t/2}\xi_e(0)
 +
 \sqrt{1-\ee^{-t}}\,g_e,
 \qquad
 0\leq t\leq8\log N,
\]
where $(\xi_e(0))_{e\in\mathcal E_N}$ is the unnormalized actual southeast unrevealed array and $(g_e)_{e\in\mathcal E_N}$ is its independent circular complex Gaussian counterpart. Define
\[
 X_N(t)=N^{-1/2}(\xi_e(t))_{e\in\mathcal E_N}.
\]
For $n\in T_l$, let $X_n(t)$ be the northwest $u_{l,n}$-by-$v_{l,n}$ restriction of $X_N(t)$, transposed and conjugated when necessary so that it is a $q_{l,n}$-by-$p_{l,n}$ matrix. Put
\[
 A_n=\frac{t_{1,l,n}+\widehat\ell_{l,n}}{N},
 \qquad
 B_n=\frac{\widehat B_{l,n}}{N},
 \qquad
 \eta_n=\frac{\widehat\eta_{l,n}}{N}.
\]
For a complete array $X$, define
\[
 Z_n(X)
 =
 \frac1\pi
 \int_{A_n}^{B_n}
 \Im\Tr
 \left(
 X_n^*X_n-x-\ii\eta_n
 \right)^{-1}
 \,\dd x,
\]
\[
 y_n(X)=\frac{1/16-Z_n(X)}{\delta_l},
\]
and
\begin{equation}
 S_N(X)
 =
 \sum_{n\in T_l}
 \sum_{r=0}^{J}
 R_{\mathrm{c}}^{-r}
 \rho_{\mathrm{c}}\bigl(y_n(X)+J-r\bigr).
 \label{eq:global-high-observable}
\end{equation}
Then $S_N(X)\geq0$, and
\begin{equation}
 S_N(t):=S_N(X_N(t))
 \label{eq:global-high-process}
\end{equation}
is exactly the common-$N^{-1/2}$ normalized version of the single global cutoff sum observable in Proposition~\ref{prop:high-poisson-ladder}.

For $e\in\mathcal E_N$ and $h=(h_1,h_2)\in\R^2$, let $X_N^{e,h}(t)$ be the complete array obtained from $X_N(t)$ by replacing entry $e$ with $h_1+\ii h_2$. Define
\[
 S_{N,e}(h;t)=S_N(X_N^{e,h}(t)),
\]
\begin{equation}
 F_{N,e}(h;t,\tau)
 =
 \exp[-\tau S_{N,e}(h;t)].
 \label{eq:global-high-section}
\end{equation}
If
\[
 h_e(t)
 =
 N^{-1/2}
 \bigl(\Re\xi_e(t),\Im\xi_e(t)\bigr),
\]
then, simultaneously for every $e\in\mathcal E_N$,
\begin{equation}
 S_{N,e}(h_e(t);t)=S_N(t),
 \qquad
 F_{N,e}(h_e(t);t,\tau)=\exp[-\tau S_N(t)].
 \label{eq:global-high-restoration}
\end{equation}
Thus every $F_{N,e}$ is the coordinate section of the single global observable $\exp[-\tau S_N]$. If a matrix $n$ does not contain $e$, its summand in \eqref{eq:global-high-observable} is constant as a function of $h$. If it contains $e$, all its coordinate derivatives are the corresponding derivatives of the global section. Hence derivative-active matrices are selected by an exact membership mask, while every generator term retains the complete global count.

For each matrix, put
\begin{equation}
 a_n=\sqrt{\frac{N}{p_{l,n}}},
 \qquad
 \widetilde X_n=a_nX_n,
 \qquad
 \widetilde z=a_n^2z.
 \label{eq:high-tag-rescaling}
\end{equation}
Then $\widetilde X_n$ has entry variance $p_{l,n}^{-1}$. With column-first covariance linearizations
\[
 H_n(z)
 =
 \begin{bmatrix}
 -zI&X_n^*\\
 X_n&-I
 \end{bmatrix},
\qquad
 \widetilde H_n(a_n^2z)
 =
 \begin{bmatrix}
 -a_n^2zI&\widetilde X_n^*\\
 \widetilde X_n&-I
 \end{bmatrix},
\]
and inverses $G_n,\widetilde G_n$, the four blocks satisfy
\[
 G_{n,cc}(z)
 =
 a_n^2\widetilde G_{n,cc}(a_n^2z),
 \qquad
 G_{n,cr}(z)
 =
 a_n\widetilde G_{n,cr}(a_n^2z),
\]
\begin{equation}
 G_{n,rc}(z)
 =
 a_n\widetilde G_{n,rc}(a_n^2z),
 \qquad
 G_{n,rr}(z)
 =
 \widetilde G_{n,rr}(a_n^2z).
 \label{eq:high-four-block-rescaling}
\end{equation}
The global insertion $h_1+\ii h_2$ enters $\widetilde X_n$ at speed $a_n$, with the imaginary sign reversed when the matrix was conjugate-transposed, and
\begin{equation}
 \begin{aligned}
 &\int_{A_n}^{B_n}
 \Im\Tr G_{n,cc}(x+\ii\eta_n)\,\dd x\\
 &\qquad=
 \int_{a_n^2A_n}^{a_n^2B_n}
 \Im\Tr
 \widetilde G_{n,cc}
 (\widetilde x+\ii a_n^2\eta_n)
 \,\dd\widetilde x.
 \end{aligned}
 \label{eq:high-integrated-rescaling}
\end{equation}
Moreover,
\[
 a_n^2A_n
 =
 \frac{t_{1,l,n}+\widehat\ell_{l,n}}{p_{l,n}},
 \qquad
 a_n^2B_n
 =
 \frac{\widehat B_{l,n}}{p_{l,n}},
\]
\begin{equation}
 a_n^2\eta_n
 =
 p_{l,n}^{-2/3-12\kappa_0}.
 \label{eq:high-natural-parameters}
\end{equation}
All $a_n$ and $a_n^{-1}$ are uniformly bounded. Thus the exact factor $\sqrt{N/p_{l,n}}$ is carried through the endpoints, height, insertion directions, integrated smoothed statistic, and all four Green blocks without changing the global observable \eqref{eq:global-high-observable}.
\end{lem}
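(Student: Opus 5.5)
The argument is bookkeeping with no analytic estimate; I verify each assertion of Lemma~\ref{lem:global-high-ladder} directly from the definitions, in four steps.

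\emph{Identification of the observable.} For $X=X_N(t)$ the matrix $X_n$ is the $N^{-1/2}$-normalized northwest restriction of the southeast array, oriented to size $q_{l,n}\times p_{l,n}$. Hence the eigenvalues of $X_n^*X_n$ are $N^{-1}\lambda_s(Y_l^{(j_{l,n})})$ at $t=0$, and the analogous Gaussian-flow values for $t>0$. There are exactly $p_{l,n}$ of them, counting zeros with multiplicity. By the spectral theorem,
\[
 \frac1\pi\int_{A_n}^{B_n}\Im\Tr(X_n^*X_n-x-\ii\eta_n)^{-1}\dd x
 =\sum_{s}\frac1\pi\int_{A_n}^{B_n}\frac{\eta_n}{(\lambda_s/N-x)^2+\eta_n^2}\dd x .
\]
The substitution $x=\widehat x/N$, $\eta_n=\widehat\eta_{l,n}/N$ turns this into the integrand of $Q_{l,n}$ with the raw endpoints $t_{1,l,n}+\widehat\ell_{l,n}$ and $\widehat B_{l,n}$. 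So $Z_n(X)=Q_{l,n}$ of the raw matrix, and the two $y$'s and the two $S$'s agree. Nonnegativity of $S_N$ holds because $\rho_{\rm c}\ge0$.

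\emph{Restoration and membership.} Replacing entry $e$ by its current value leaves the array unchanged, which gives \eqref{eq:global-high-restoration} for every $e$ simultaneously. If matrix $n$ does not contain $e$, then $X_n$ does not depend on $h$, so its summand is constant in $h$. If it does contain $e$, the chain rule shows that its derivatives are those of the global section.

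\emph{Rescaling.} Put $D=\diag(a_nI,I)$ in the column/row order. Then
\[
 D H_n(z) D=\begin{bmatrix}-a_n^2zI&a_nX_n^*\\a_nX_n&-I\end{bmatrix}=\widetilde H_n(a_n^2z),
\]
so $G_n(z)=D\widetilde G_n(a_n^2z)D$. This gives the four block relations \eqref{eq:high-four-block-rescaling}. Substituting $\widetilde x=a_n^2x$, with $\dd x=a_n^{-2}\dd\widetilde x$, cancels the factor $a_n^2$ on the $cc$ block and yields \eqref{eq:high-integrated-rescaling}. Using $a_n^2=N/p_{l,n}$, the parameters become $(t_{1,l,n}+\widehat\ell_{l,n})/p_{l,n}$ and $\widehat B_{l,n}/p_{l,n}$, and
\[
 a_n^2\eta_n=\widehat\eta_{l,n}/p_{l,n}=p_{l,n}^{-2/3-12\kappa_0}.
\]
The insertion $h_1+\ii h_2$ enters $\widetilde X_n$ multiplied by $a_n$. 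It is conjugated, and so its imaginary sign flips, when the orientation required a conjugate transpose. The entry variance of $\widetilde X_n$ is $a_n^2/N=p_{l,n}^{-1}$.

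\emph{Uniform bounds, the only point needing input.} I need $a_n,a_n^{-1}=O(1)$. Since $n\in[N_l,2N_l)$ and $P_l\le(\log N_l)^d$, we have $j_{l,n}\asymp N_l$. By Lemma~\ref{lem:fresh-corner}(c), $\widehat M_{l,j}/j\to\gamma$, so $p_{l,n}\asymp N$ uniformly over $n\in T_l$.
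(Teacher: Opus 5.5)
Your proposal is correct and follows essentially the paper's own argument: the spectral theorem together with the substitution $x=\widehat x/N$ identifies $Z_n$ with $Q_{l,n}$. The diagonal conjugation $D=\diag(a_nI,I)$ of the linearization, followed by the substitution $\widetilde x=a_n^2x$, then gives the block and integral identities.

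The one imprecision is in your uniform bound. Lemma~\ref{lem:fresh-corner}(c) is stated for a fixed $P$, whereas $P_l$ grows with $l$. Argue directly instead: $M_{P_l}=O(P_l)=o(N_l)$ and $M_{P_l+j}/(P_l+j)\to\gamma$, as in Proposition~\ref{prop:corrected-gaussian-moments}. This gives $p_{l,n}\asymp N_l$ uniformly in $l$ and $n\in T_l$.
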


\supplementproof{supp.J.3}{Section J.3}

We apply the common recursion in the physical matrix orientation,
retaining the external conditioning sigma-algebra and matching the spectral
height and deterministic zero-mode contribution.

\begin{cor}[Right-tail specialization]
\label{prop:high-orientation-specialization}
Use Proposition~\ref{prop:high-poisson-ladder} and
Lemma~\ref{lem:global-high-ladder}, with
$\kappa_0=1/1000$, $\nu_0=1/48$, and $A_{\rm fin}=12$.
Write $N=N_l$, $\mathcal G=\mathcal F_l^{\rm past}$, and retain each
physical $u_n$-by-$v_n$ fresh matrix $W_n(t)$ without transposition. Put
$p_n=\min(u_n,v_n)$ and
\[
 \begin{aligned}
 A_n&=v_n^{-1/2}W_n,&
 a_n&=(t_{1,l,n}+\widehat\ell_{l,n})/v_n,\\
 b_n&=\widehat B_{l,n}/v_n,&
 \eta_n&=p_n^{1/3-12\kappa_0}/v_n.
 \end{aligned}
\]
Use the linearization of Theorem~\ref{thm:four-orientation-recursion}
with $(m_k,n_k)=(u_n,v_n)$, and define
\begin{equation}\label{eq:high-physical-detector}
 \begin{aligned}
 Y_n&=\frac1\pi\int_{a_n}^{b_n}\Im\Tr(P_{{\rm c},n}G_n(x+\ii\eta_n))\,\dd x,\\
 Z_{0,n}&=\frac{(v_n-u_n)_+}{\pi}\int_{a_n}^{b_n}
             \frac{\eta_n}{x^2+\eta_n^2}\,\dd x.
 \end{aligned}
\end{equation}
Then the exact smoothed statistic identity is
\begin{equation}\label{eq:high-zero-mode-correction}
 Q_{l,n}(W_n)=Y_n-Z_{0,n}.
\end{equation}
Let $m_n$ be the column background, determined by
\begin{equation}\label{eq:high-mp-equation}
 z=-m_n(z)^{-1}+\frac{u_n/v_n}{1+m_n(z)},\qquad \Im m_n(z)>0.
\end{equation}
At the specified endpoints and uniformly over the stated matrices, times,
deletions, and insertion segments, the conditional local-law estimates of
Theorem~\ref{thm:four-orientation-recursion} hold with
$\Psi_N=N^{-5/16}$. In particular,
\begin{equation}\label{eq:high-physical-local-law}
 \left|v_n^{-1}\Tr(P_{{\rm c},n}G_n)-m_n\right|
 +\left|u_n^{-1}\Tr(P_{{\rm r},n}G_n)+(1+m_n)^{-1}\right|
 =O(\Psi_N).
\end{equation}
On good $\mathcal G$-histories with summable complements, the same
normalized classes, coefficients, recurrences
\eqref{eq:recursion-h0}--\eqref{eq:recursion-h1}, and cubic source
\eqref{eq:recursion-third-source} apply with
$\mathcal R_{\rm old}=\mathcal C_{\rm old}=\varnothing$ and conditioning
on $\mathcal G$. Both source labels are unrevealed. The cutoff sum bounds are
$|T_l|R_{\rm c}^{-J_{{\rm c},l}}\le N^{-12}$ and
$\delta_l^{-1}=O(\log N)$.
\end{cor}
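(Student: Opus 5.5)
The plan is to verify the three ingredients of the corollary in turn: the exact zero-mode identity, the local law at the physical heights, and the transfer of the recursion hypotheses of Theorem~\ref{thm:four-orientation-recursion} with empty old sets and conditioning on $\mathcal G$.

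\emph{Step 1: the zero-mode identity.}
Use the Schur block formula \eqref{eq:flow-schur}, in the physical $(u_n,v_n)$ orientation. The column block of $G_n(z)$ is $(A_n^*A_n-z)^{-1}$, a $v_n\times v_n$ resolvent. Its spectrum consists of the $p_n$ squared singular values of $A_n$ together with $(v_n-u_n)_+$ extra zeros. The squared singular values of $A_n=v_n^{-1/2}W_n$ are $\lambda_s(W_n)/v_n$.

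Substitute $x=\widehat x/v_n$ in the Poisson integral. Together with the definitions of $a_n,b_n,\eta_n$, this turns each summand of $Y_n$ into the kernel of $Q_{l,n}$ evaluated at $\lambda_s(W_n)$. The definition of $Q_{l,n}$ already counts zero singular values among its $p_n$ terms. The extra zeros each contribute $\pi^{-1}\int_{a_n}^{b_n}\eta_n/(x^2+\eta_n^2)\,\dd x$, and this is exactly $Z_{0,n}$. This proves \eqref{eq:high-zero-mode-correction}. Since $Z_{0,n}$ is deterministic, it has zero coordinate derivatives, as the paragraph on zero modes after Theorem~\ref{thm:four-orientation-recursion} requires.

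\emph{Step 2: the local law at the physical heights.}
Apply Theorem~\ref{thm:frozen-corner-local-law} with an empty frozen corner to each fresh matrix in its natural normalization. Lemma~\ref{lem:fresh-corner} gives conditional independence from $\mathcal G$ and the aspect-ratio control. Take $\kappa=\kappa_0$, $\nu=\nu_0$, so $\Psi_N=N^{-5/16}$ after the bounded ratio $p_n/N$.

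The endpoints $a_n,b_n$ differ from $(1+\sqrt{u_n/v_n})^2$ by $O(N^{-2/3}(\log N)^{2/3}+N^{-2/3+\kappa_0})$, because $\mu_{p,q}$ uses half-integer shifts. They therefore lie in the window \eqref{eq:recursion-endpoints}. The height $\eta_n=p_n^{1/3-12\kappa_0}/v_n$ is a bounded multiple of $v_n^{-2/3-12\kappa_0}$. The height transfer \eqref{eq:height-ward}--\eqref{eq:height-transfer} then moves the local law to this height with the same $\Psi_N$.

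The family is polynomial ($|T_l|\le N^{1/3}$, with $O(N^2)$ deletions). Lemma~\ref{lem:continuum-promotion} makes the estimates simultaneous, and the Woodbury step of Theorem~\ref{thm:frozen-corner-local-law} extends them to deletions and insertion segments. The conditional Markov step produces $\mathcal G$-histories with failure $N^{-5}$ per scale, which is summable along the lacunary $N_l$. The trace identity between $T_{\rm row}$ and $T_{\rm col}$ gives \eqref{eq:high-physical-local-law} with the background \eqref{eq:high-mp-equation}. This background is the $m_\phi$ of the local law with $\phi=u_n/v_n$.

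\emph{Step 3: transfer of the recursion.}
Theorem~\ref{thm:four-orientation-recursion} is stated for physical orientations and general conditioning, so nothing is reproved. We check its hypotheses: empty old sets, so both source labels are unrevealed; dimension bounds $c N\le u_n,v_n\le CN$; and the cutoff conditions. The smoothed statistic $y_{l,n}=(a_*-Y_n+Z_{0,n})/\delta_l$ is affine in $Y_n$ with a deterministic shift. Its integral part has no upper-spectral term, so the hypotheses on that term are void. The choice $\zeta>(\theta_h+12)/\log R_{\rm c}$ gives $|T_l|R_{\rm c}^{-J}\le N^{-12}$ and $\delta_l^{-1}=O(\log N)$, as in Proposition~\ref{prop:poisson-detector}.

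The main obstacle is Step 2's endpoint bookkeeping. We must check that the half-integer centering $\mu_{p,q}$, together with the factor $v_n$ versus $p_n$ when $u_n<v_n$, stays within the $5n^{-2/3+\kappa_0}$ window uniformly over $T_l$. We must also check that the row-versus-column normalization mismatch is absorbed in bounded coefficient ratios. I would first verify $|\mu_{p,q}-(\sqrt u+\sqrt v)^2|=O(1)$, and then use $N^{1/3}(\log N)^{2/3}=o(N^{1/3+\kappa_0})$.
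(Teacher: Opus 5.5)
Your proposal is correct and follows essentially the paper's argument. The zero-mode identity comes from singular-value decomposition with the change of variable $\widehat x=v_nx$. The bounded height ratio $(p_n/v_n)^{1/3-12\kappa_0}$, combined with height transfer, gives the local law at the physical endpoints. Independence of the unrevealed array from $\mathcal G$, with empty old sets, gives the recursion and the cubic source.

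One small correction: the recursion theorem fixes $\Im z=n_k^{-2/3-12\kappa_0}$ exactly, so it is not literally ``nothing is reproved.'' The paper instead observes that the theorem's proof carries over at the transferred heights, because the inverse-derivative identities and bounded coefficients are unchanged. Your height-transfer step already supplies what that requires.
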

\begin{proof}
Singular-value decomposition and the change of variable $\widehat x=v_nx$
give \eqref{eq:high-zero-mode-correction}; $Z_{0,n}$ is deterministic,
so its positive coordinate derivatives vanish. The full traces in
\eqref{eq:high-physical-local-law} keep their structural zeros.
The ratio of $\eta_n$ to $v_n^{-2/3-12\kappa_0}$ is
$(p_n/v_n)^{1/3-12\kappa_0}$, bounded above and below by positive constants.
The height-transfer argument following
Theorem~\ref{thm:four-orientation-recursion} therefore supplies the local
bounds at these endpoints. The inverse-derivative identities and bounded
leading coefficients are unchanged, so the proof of that theorem gives the
stated recurrences at the specified heights. The complete unrevealed array is
independent of $\mathcal G$, and all expectations are conditioned on
this same $\mathcal G$ while the old index sets are empty. All moving sources have
two unrevealed labels. The normalization speeds are those of
Lemma~\ref{lem:global-high-ladder}, and the last two bounds come from
Proposition~\ref{prop:high-poisson-ladder}.
\end{proof}

Theorem E.2 of the technical supplement gives the homogeneous comparison;
Section E.2 verifies its derivative and truncation hypotheses for the
right-tail family.

\begin{prop}[Right-tail comparison]
\label{prop:high-homogeneous-bound}
Use the same cutoff, complete unrevealed array, physical orientations, and
conditional history $\mathcal G$ as in
Corollary~\ref{prop:high-orientation-specialization}. Evolve all fresh
raw coordinates by \eqref{eq:recursion-flow}, divide the array by
$\sqrt N$, and use the unchanged global count $S^{\rm high}(t)$ of
\eqref{eq:global-high-observable}. For every fixed $\tau_*<\infty$, on
$\mathcal G$-histories with summable complements,
\[
 \Phi^{\rm high}(t,\tau)=\E[\ee^{-\tau S^{\rm high}(t)}\mid\mathcal G]
 \quad\hbox{satisfies}\quad
 |\partial_t\Phi^{\rm high}(t,\tau)|\le e_{{\rm high},N}(t)
\]
for almost every $0\le t\le8\log N$ and every $0<\tau\le\tau_*$.
A common error envelope for the high and low applications is
\begin{equation}\label{eq:high-derivative-envelope}
 \begin{aligned}
 \mathfrak e_N&=N^{-3/10}+N^{-1/2+1/100}+N^{-4/5}
                   +N^{-79/100}+N^{-9},\\
 e_{{\rm high},N}(t)&\le C_{\tau_*}\mathfrak e_N.
 \end{aligned}
\end{equation}
In particular,
\begin{equation}\label{eq:high-integrated-envelope}\int_0^{8\log N}e_{{\rm high},N}(t)\,\dd t
 \le C_{\tau_*}N^{-1/4}.
\end{equation}

\end{prop}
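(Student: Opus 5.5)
The plan follows the proof of Proposition~\ref{prop:full-grid-comparison}, now conditionally on $\mathcal G=\mathcal F_l^{\rm past}$. First I would differentiate $\Phi^{\rm high}(t,\tau)$ in $t$ along the Ornstein--Uhlenbeck flow \eqref{eq:recursion-flow}. Using the coordinate sections $F_{N,e}$ of Lemma~\ref{lem:global-high-ladder} and a cumulant expansion in the two real coordinates of each fresh entry, this gives
\[
 \partial_t\Phi^{\rm high}=\mathcal L_3+\mathcal L_4+\mathcal L_5+\mathcal R_6 .
\]
The second-order term cancels exactly against the Gaussian diffusion, because conditionally on $\mathcal G$ the fresh coordinates are independent, centered, of unit variance and with zero complex second moment. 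By \eqref{eq:global-high-restoration}, each section restores the full global count, so every generator term is an expectation of a derivative of the single observable $\ee^{-\tau S^{\rm high}}$.

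Next I would bound the fourth-, fifth- and sixth-order terms pointwise. On the simultaneous local-law event of Corollary~\ref{prop:high-orientation-specialization}, which holds on good histories with conditional failure at most $N^{-D}$, each raw derivative of order $1\le|\alpha|\le6$ is controlled as follows. Lemma~\ref{lem:all-order-ladder} bounds the full $\ell^1$ sum over grid and cutoff indices by $C\delta_l^{-|\alpha|}=N^{o(1)}$. Lemma~\ref{lem:covariance-direction-derivatives}, applied with the rescaling \eqref{eq:high-four-block-rescaling}, bounds each endpoint derivative by $C\Psi_N=CN^{-5/16}$. The zero-mode term $Z_{0,n}$ in \eqref{eq:high-zero-mode-correction} is deterministic, so it contributes nothing. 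The normalization is $N^{-r/2}$ per cumulant of order $r$, summed over $O(N^2)$ entries. Together with entry truncation at $N^{1/100}$ and polynomial moment bounds off the good event, this gives $|\mathcal L_4|\le CN^{-3/10}$, $|\mathcal L_5|\le CN^{-4/5}$ and $|\mathcal R_6|\le CN^{-79/100}$. The bad histories and the truncation contribute $N^{-9}$, once the failure exponent $D$ is chosen after the derivative orders.

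The main obstacle is the third-cumulant term $\mathcal L_3$. A naive bound only gives $N^{-3/2}\cdot N^2\cdot\Psi_N$, which is not even small. Here I would invoke Theorem~\ref{thm:four-orientation-recursion} through Corollary~\ref{prop:high-orientation-specialization}. With empty old sets, both source labels are unrevealed, so every product-rule monomial of \eqref{eq:recursion-third-source} has an unrevealed unmatched index and starts the $h=0$ recursion. The spectral tensors must stay contracted inside the same conditional expectation, so that the $\ell^1$ bound of Lemma~\ref{lem:all-order-ladder} applies to the whole tensor and not to each matrix separately.

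To run the recursion I would take $F_0=k_0=3$ and iterate \eqref{eq:recursion-h0}--\eqref{eq:recursion-h1} at most four times, with $K_{\rm der}\ge k_4+6$. This gives $\mathcal R_{1,3,3,0}=O(N^{-1+\varepsilon})$. Restoring the $N^{1/2}$ source normalization then yields $|\mathcal L_3|\le CN^{-1/2+1/100}$.

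Summing the five contributions gives the envelope $\mathfrak e_N$ in \eqref{eq:high-derivative-envelope}, uniformly in $t$ and in $0<\tau\le\tau_*$. Integrating over $[0,8\log N]$ costs one factor $\log N$, and $N^{-3/10}\log N\le N^{-1/4}$, which gives \eqref{eq:high-integrated-envelope}.
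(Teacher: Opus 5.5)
Your proposal is correct and follows essentially the paper's route. The Ornstein--Uhlenbeck generator is expanded through cumulant order five, with the covariance term cancelling against the Gaussian diffusion. The fourth- and fifth-cumulant terms and the sixth-order remainder are bounded pointwise, combining the $\ell^1$ tensor bound of Lemma~\ref{lem:all-order-ladder} with $O(\Psi_N)$ endpoint derivatives on the simultaneous local-law event; for the remainder these bounds are needed along the insertion segments $sh_e$, which the same event covers. The third-cumulant term goes through the unmatched-index recursion of Theorem~\ref{thm:four-orientation-recursion} with $F_0=k_0=3$, as in the proof of Proposition~\ref{prop:full-grid-comparison}, and integrating the envelope over $[0,8\log N]$ gives \eqref{eq:high-integrated-envelope}.
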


\supplementproof{supp.E.2}{Section E.2}

\subsection*{Terminal signal and second-moment argument}

Integrating the homogeneous estimate to time $8\log N$ and applying
Proposition~\ref{prop:terminal-grid} gives an additive conditional
Laplace-transform comparison with the Gaussian count. This count dominates
the retained Gaussian events, whose first two moments are controlled.
The initial cutoff sum vanishes outside the union of the right-tail
target and smoothing failure. Lemma~\ref{lem:conditional-occurrence-closure}
therefore yields the conditional probability bound below.

\begin{thm}[Conditional right-tail lower bound]\label{thm:high-occurrence}
Let the independent complex-entry covariance array, dimension path $(M_N)$, aspect ratio $\gamma$, normalized variables $\chi_N^+$, amplitudes
\[
 0<a_0<a_1<a_2<\left(\frac14\right)^{2/3},
\]
parameters
\[
 \kappa_2=\frac43a_2^{3/2},
 \qquad
 \epsilon,\eta,d>0,
 \qquad
 \kappa_2+\epsilon+2\eta<\frac13,
\]
lacunary scales $N_l$, cutoffs $P_l$, sigma-algebras $\mathcal F_l^{\mathrm{past}}$, grids $T_l$, target events
\[
 U_l(a_0)
 =
 \bigcup_{n\in T_l}
 \left\{
 \chi_n^+\geq a_0(\log n)^{2/3}
 \right\},
\]
actual smoothing events $\Omega_l^X$, Gaussian cutoff sum $S_l^G$, and retained Gaussian count $\widetilde R_l^G$ be those of Proposition~\ref{prop:high-poisson-ladder}. Use the same $C^\infty$ cutoff and global weighted cutoff sum at the actual endpoint, throughout the homogeneous flow, and at the Gaussian terminal stage.

Then there are $\mathcal F_l^{\mathrm{past}}$-measurable events
\[
 \mathcal H_l^{+}
\]
and deterministic numbers
\[
 \epsilon_l^{+}\longrightarrow0
\]
such that
\[
 \sum_l
 \Prob\bigl((\mathcal H_l^{+})^c\bigr)
 <\infty
\]
and, on $\mathcal H_l^{+}$ almost surely, for every sufficiently large $l$,
\begin{equation}
 \Prob(U_l(a_0)\mid\mathcal F_l^{\mathrm{past}})
 \geq
 1-\epsilon_l^{+}.
 \label{eq:high-occurrence-lower-bound}
\end{equation}
\end{thm}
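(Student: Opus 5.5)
The plan is to apply the conditional second-moment Lemma~\ref{lem:conditional-occurrence-closure} with $\mathcal G=\mathcal F_l^{\mathrm{past}}$ and $E=U_l(a_0)\cup(\Omega_l^X)^c$. The three random variables are the following. First, $S^X=S_l((Y_l^{(j_{l,n})})_{n\in T_l})$ is the actual cutoff sum. Second, $S^T=S_l((Z_l^{(j_{l,n})})_{n\in T_l})$ is the Gaussian cutoff sum. Third, $R=\widetilde R_l^G$ is the retained Gaussian count.

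\textbf{Structural hypotheses.} By item (9) of Proposition~\ref{prop:high-poisson-ladder}, on $\Omega_l^X$ positivity of $S^X$ forces $U_l(a_0)$, so $S^X=0$ on $E^c$. By item (10), $S^T\ge R$. The same item gives the conditional moments: $m=\widetilde m_l\ge N_l^{\eta/2}\to\infty$ and $\Var(R\mid\mathcal G)/m^2=o(1)$, both with deterministic $o(1)$.

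The second form of the lemma, with a fixed $a$ (say $a=1/2$) and fixed $\tau>0$, then gives
\[
 \Prob(U_l(a_0)\mid\mathcal G)\ge1-o(1)-\ee^{-\tau m/2}-\beta_l-\Prob((\Omega_l^X)^c\mid\mathcal G).
\]
The last term is at most $N_l^{-10}$. It remains to produce a deterministic $\beta_l\to0$, valid on good histories $\mathcal H_l^+$ with summable complements.

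\textbf{Laplace comparison.} I split the comparison through the Ornstein--Uhlenbeck flow of Lemma~\ref{lem:global-high-ladder}. At $t=0$ the global observable is $S^X$, by the restoration identity \eqref{eq:global-high-restoration} and the normalization invariance. Proposition~\ref{prop:high-homogeneous-bound} bounds $|\partial_t\Phi^{\rm high}|$ on the history events of Corollary~\ref{prop:high-orientation-specialization}. Integrating via \eqref{eq:high-integrated-envelope} gives
\[
 |\Phi^{\rm high}(0,\tau)-\Phi^{\rm high}(8\log N_l,\tau)|\le C_{\tau_*}N_l^{-1/4}.
\]
At the terminal time I apply Proposition~\ref{prop:terminal-grid}, with an empty common block ($C_i=0$, so $\mathcal H_N$ is the whole space). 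Its hypothesis holds because $|T_l|\le2N_l^{\kappa_2+2\eta}$ and $\kappa_2+2\eta<1/3<4/3-25\kappa_0$. The zero-mode correction $Z_{0,n}$ is deterministic and fits the affine form $y_i=\alpha_i+\epsilon_i\delta^{-1}Z_i$ with $\epsilon_i=-1$. Equality in law of the terminal Gaussian array with $(\zeta_{l,ih})$ then identifies the reference expectation with $\E[\ee^{-\tau S^T}\mid\mathcal G]$. This is legitimate because both arrays are independent of $\mathcal G$, and the coupling only uses the moving array. Hence $\beta_l=C N_l^{-1/4}+N_l^{-c_{\rm term}}$.

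\textbf{History events and the main obstacle.} I take $\mathcal H_l^+$ to be the intersection of the good-history events from Corollary~\ref{prop:high-orientation-specialization} and Proposition~\ref{prop:high-homogeneous-bound}. Each has complement probability $O(N_l^{-5})$ or $O(N_l^{-6})$, and these are summable since $N_l=2^{k_l}$ grows geometrically. Because the southeast array is actually independent of $\mathcal F_l^{\mathrm{past}}$ (Lemma~\ref{lem:fresh-corner}), many of these events can even be taken as the whole space. Collecting terms, $\epsilon_l^+=o(1)+\ee^{-\tau N_l^{\eta/2}/2}+\beta_l+N_l^{-10}$ is deterministic and tends to zero.

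The delicate point I expect is bookkeeping rather than new estimation. One must check that the observable at $t=0$ in Proposition~\ref{prop:high-homogeneous-bound} is exactly $S^X$, with the physical orientation and zero-mode shift \eqref{eq:high-zero-mode-correction}. One must also check that the terminal observable matches $S^T$ under the common-$N^{-1/2}$ rescaling of \eqref{eq:high-tag-rescaling}. Finally, the conditional comparison bound must hold almost surely on $\mathcal H_l^+$ rather than only in expectation.
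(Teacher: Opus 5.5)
Your proposal is correct and follows essentially the same route as the paper: integrate Proposition~\ref{prop:high-homogeneous-bound} over $[0,8\log N_l]$, apply Proposition~\ref{prop:terminal-grid} with zero common block and affine sign $-1$, and then use the variance form of Lemma~\ref{lem:conditional-occurrence-closure} with $a=1/2$ on $U_l(a_0)\cup(\Omega_l^X)^c$, subtracting the $N_l^{-10}$ smoothing failure. The paper likewise takes $\mathcal H_l^+$ to be the good-history event of Proposition~\ref{prop:high-homogeneous-bound}. Its $\epsilon_l^+$ collects the same terms: four times the relative variance, $e^{-\tau N_l^{\eta/2}/2}$, $CN_l^{-1/4}+N_l^{-c_{\mathrm{term}}}$, and $N_l^{-10}$.
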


\begin{proof}
Fix a sufficiently large selected scale and abbreviate
\[
 N=N_l,
 \qquad
 \mathcal G=\mathcal F_l^{\mathrm{past}},
 \qquad
 T=8\log N.
\]
Choose
\[
 \kappa_0=\frac1{1000},
 \qquad
 A_{\mathrm{fin}}=12
\]
in Proposition~\ref{prop:high-poisson-ladder}. Reindex the complete southeast unrevealed array as in Lemma~\ref{lem:global-high-ladder}, while retaining each matrix in its physical row-column orientation. Let
\[
 \xi_e(t)
 =
 \ee^{-t/2}\xi_e(0)
 +
 \sqrt{1-\ee^{-t}}\,g_e,
 \qquad
 0\leq t\leq T,
\]
be the homogeneous flow of every unnormalized fresh coordinate, and divide the array by $\sqrt N$. Let $S(t)$ be the global weighted cutoff observable and put
\[
 \Phi(t,\tau)
 =
 \E[\exp(-\tau S(t))\mid\mathcal G].
\]

For a fixed window $0<\tau<1$, Proposition~\ref{prop:high-homogeneous-bound} gives $\mathcal G$-measurable events $\mathcal H_l$ whose complements are summable and, on $\mathcal H_l$,
\[
 \abs{\partial_t\Phi(t,\tau)}
 \leq e_l(t)
\]
for almost every $0\leq t\leq T$, where
\begin{equation}
 \int_0^Te_l(t)\,\dd t
 \leq
 CN^{-1/4}.
 \label{eq:high-transport-error}
\end{equation}
The finite-dimensional cutoff sum is a bounded smooth function of the entry coordinates because the resolvents have positive imaginary part and the cutoff is $C^\infty$. The Ornstein--Uhlenbeck semigroup therefore gives a continuously differentiable version of $\Phi$. Integrating \eqref{eq:high-transport-error} gives, uniformly for $0<\tau<1$,
\begin{equation}
 \abs{\Phi(0,\tau)-\Phi(T,\tau)}
 \leq
 CN^{-1/4}.
 \label{eq:high-homogeneous-laplace-comparison}
\end{equation}

We next apply the terminal coupling to the same observable. Write the normalized initial unrevealed array as $X_N$ and the normalized circular Gaussian array as $W_N$. Since $T=8\log N$,
\[
 \ee^{-T/2}=N^{-4},
 \qquad
 1-\ee^{-T}=1-N^{-8}.
\]
Consequently, the terminal array is exactly
\begin{equation}
 X_N^T
 =
 N^{-4}X_N
 +
 \sqrt{1-N^{-8}}\,W_N.
 \label{eq:high-terminal-coupling}
\end{equation}

We verify the hypotheses of Proposition~\ref{prop:terminal-grid}. From
\[
 P_l\leq(\log N)^d,
 \qquad
 n\in[N,2N),
 \qquad
 \frac{M_n}{n}\longrightarrow\gamma,
\]
the two physical dimensions of every fresh matrix lie between fixed positive multiples of $N$ for all sufficiently large $l$. Every matrix is a coordinate restriction, a permitted orientation change, and possible zero padding of the same complete unrevealed array. The common fixed block is zero.

Choose $\theta$ such that
\[
 \kappa_2+2\eta<\theta<\frac13.
\]
Then
\[
 \abs{T_l}
 \leq
 2N^{\kappa_2+2\eta}
 \leq
 N^\theta
\]
eventually, and
\[
 \theta<\frac43-\frac{25}{1000}.
\]

For a matrix with squared singular values $\lambda_s$, divide its endpoints and smoothing height by $N$:
\[
 a_i=\frac{t_{1,l,n}+\widehat\ell_{l,n}}{N},
 \qquad
 B_i=\frac{\widehat B_{l,n}}{N},
 \qquad
 \eta_i=\frac{\widehat\eta_{l,n}}{N}.
\]
Because both dimensions are comparable with $N$, $\eta_i$ lies between fixed positive multiples of
\[
 N^{-2/3-12/1000}.
\]
The endpoint formulas in Proposition~\ref{prop:high-poisson-ladder}, with a fixed power absorbing the logarithmic factors, give
\[
 \abs{B_i-a_i}
 \leq
 CN^{-2/3+1/1000}
\]
eventually. If $L$ is the squared singular-value matrix after division by $\sqrt N$, then the change of variables $x=Ny$ gives
\[
 \begin{aligned}
 &\frac1\pi
 \int_{a_i}^{B_i}
 \frac{\eta_i}
 {(\lambda_s/N-y)^2+\eta_i^2}
 \,\dd y\\
 &\qquad=
 \frac1\pi
 \int_{t_{1,l,n}+\widehat\ell_{l,n}}^{\widehat B_{l,n}}
 \frac{\widehat\eta_{l,n}}
 {(\lambda_s-x)^2+\widehat\eta_{l,n}^2}
 \,\dd x.
 \end{aligned}
\]
Thus the terminal smoothed statistic is exactly $Q_{l,n}$. Its affine variable is exactly the endpoint variable with coefficient
\[
 \frac{a_*}{\delta_l}
\]
and sign $-1$. Moreover,
\[
 \delta_l^{-1}=O(\log N),
 \qquad
 J_{\mathrm{c},l}=O(\log N),
\]
and the ratio $R_{\mathrm{c}}$ and cutoff $\rho_{\mathrm{c}}$ are unchanged. The terminal comparison therefore gives $c_{\mathrm{term}}>0$ such that, uniformly for $0\leq\tau\leq1$,
\begin{equation}
 \abs{\Phi(T,\tau)-\Phi_l^G(\tau)}
 \leq
 N^{-c_{\mathrm{term}}},
 \label{eq:high-terminal-laplace-comparison}
\end{equation}
where
\[
 \Phi_l^G(\tau)
 =
 \E[\exp(-\tau S_l^G)\mid\mathcal G].
\]

Set $\mathcal L_l=U_l(a_0)\cup(\Omega_l^X)^c$. The target support
in Proposition~\ref{prop:high-poisson-ladder} gives
\begin{equation}
 S(0)=0\quad\hbox{on }\mathcal L_l^c.
 \label{eq:high-target-support}
\end{equation}
The same proposition gives $S_l^G\ge\widetilde R_l^G$ and
\[
 \widetilde m_l=\E[\widetilde R_l^G\mid\mathcal G]\ge N^{\eta/2},
 \qquad
 v_l^+=\frac{\Var(\widetilde R_l^G\mid\mathcal G)}{\widetilde m_l^2}
 \longrightarrow0.
\]
The quantities $\widetilde m_l$ and $v_l^+$ are deterministic. Their
bounds retain the product-scale second moment of Paper~I,
Corollary~\ref*{cor:gaussian-grid-moments}, through the Gaussian smoothing
restriction. Fix $\tau\in(0,1)$ and put
$\beta_l^+=CN^{-1/4}+N^{-c_{\rm term}}$.
The comparisons \eqref{eq:high-homogeneous-laplace-comparison} and
\eqref{eq:high-terminal-laplace-comparison}, followed by the variance
bound in Lemma~\ref{lem:conditional-occurrence-closure} with $a=1/2$, give
\begin{equation}
 \Prob(\mathcal L_l\mid\mathcal G)
 \ge1-4v_l^+-\exp(-\tau\widetilde m_l/2)-\beta_l^+.
 \label{eq:high-augmented-event-bound}
\end{equation}
The smoothing failure has conditional probability at most $N_l^{-10}$.
On the homogeneous-comparison history $\mathcal H_l$, therefore,
\[
 \Prob(U_l(a_0)\mid\mathcal G)\ge1-\epsilon_l^+,
 \qquad
 \epsilon_l^+=4v_l^++e^{-\tau N_l^{\eta/2}/2}
                   +\beta_l^++N_l^{-10}\longrightarrow0.
\]
Take $\mathcal H_l^+=\mathcal H_l$. These histories have summable
complements, as required.
\end{proof}

For the left tail, we use a Gaussian gap count at a specified negative
threshold. The comparison retains the northwest block and removes the
two cross strips after the homogeneous flow, with the zero-mode
contribution included in the trace normalization.

\section{Conditional left-tail events}\label{sec:low-occurrence}

\subsection*{Gaussian gap signal at specified thresholds}

We retain the northwest past and prove that the conditional probability
of a left-tail hit tends to one on good histories. The full-matrix
target and the block-diagonal Gaussian reference are joined by the two
strip interpolations.
Corollary~\ref*{cor:gaussian-grid-moments} of Paper~I provides the
Gaussian left-tail count and its first two moments.

A left-tail event requires every squared singular value to lie below the
threshold, and hence a small smoothed count above it. The full-matrix
target uses $t_{l,n}^X$; the Gaussian southeast rectangle is tested at
$t^\sharp_{l,n}=t_{l,n}^X-2\widehat\ell_{l,n}$. The comparison also
retains the northwest block and treats the two cross strips. For a wide
matrix, the column trace includes a deterministic zero-mode contribution.

We remove the Gaussian events on which smoothing fails and embed the
retained count in a full-matrix cutoff sum. The homogeneous flow and
terminal coupling reach a Gaussian moving array with the past fixed.
Two Gaussian interpolations then remove the cross strips and give the
block-diagonal reference family. Its cutoff sum dominates the retained
Gaussian count, while positivity of the target cutoff sum implies the
left-tail event. Lemma~\ref{lem:conditional-occurrence-closure} completes
the probability estimate.

\begin{prop}[Approximation of Gaussian left-tail events]
\label{prop:poisson-activation}
Let \(N_l\to\infty\). For each \(l\), let \(I_l\) be a finite set satisfying
\[
 \abs{I_l}\leq N_l^\theta
\]
for a fixed \(\theta\). For \(i\in I_l\), let \(p_{l,i}\) and \(q_{l,i}\) be integers satisfying
\begin{equation}\label{eq:low-activation-dimensions}
 c_0N_l\leq p_{l,i}\leq q_{l,i}\leq C_0N_l
\end{equation}
for fixed positive constants \(c_0,C_0\), and let \(Z_{l,i}\) be a \(q_{l,i}\)-by-\(p_{l,i}\) matrix of independent centered circular complex Gaussian variables of second absolute moment one. The matrices associated with different \(i\) may be dependent.

Let \(E_{l,i}\) be deterministic thresholds satisfying
\begin{equation}\label{eq:low-activation-wall-window}
 \left|
 \frac{E_{l,i}}{p_{l,i}}
 -\left(1+\sqrt{\frac{q_{l,i}}{p_{l,i}}}\right)^2
 \right|
 \leq
 C_Ep_{l,i}^{-2/3}(\log p_{l,i})^{1/3}.
\end{equation}
Fix \(0<\kappa<1/72\), and define
\[
 \widehat\eta_{l,i}=p_{l,i}^{1/3-12\kappa},
 \qquad
 \widehat\ell_{l,i}=p_{l,i}^{1/3-6\kappa},
 \qquad
 a_{l,i}=E_{l,i}+\widehat\ell_{l,i},
\]
\begin{equation}\label{eq:low-activation-upper-cutoff}
 B_{l,i}
 =(\sqrt{p_{l,i}}+\sqrt{q_{l,i}})^2
  +4p_{l,i}^{1/3+\kappa},
\end{equation}
and
\begin{equation}\label{eq:low-activation-poisson-mass}
 Q_{l,i}
 =
 \sum_s\frac1\pi
 \int_{a_{l,i}}^{B_{l,i}}
 \frac{\widehat\eta_{l,i}}
 {(\lambda_{l,i,s}-u)^2+\widehat\eta_{l,i}^2}\,\dd u,
\end{equation}
where \(\lambda_{l,i,s}\) are the eigenvalues of \(Z_{l,i}^*Z_{l,i}\). Put
\[
 H_{l,i}
 =
 \ind_{\{\lambda_{\max}(Z_{l,i}^*Z_{l,i})\leq E_{l,i}\}},
 \qquad
 R_l=\sum_{i\in I_l}H_{l,i},
 \qquad
 m_l=\E R_l.
\]
Assume that, for fixed \(\alpha>0\) and \(C_R<\infty\),
\begin{equation}\label{eq:low-activation-moment-assumptions}
 m_l\geq N_l^\alpha,
 \qquad
 \E[R_l^2]\leq C_Rm_l^2.
\end{equation}

For every fixed \(D>0\), there are events \(\Omega_{l,i}\), measurable with respect to \(Z_{l,i}\), such that
\begin{equation}\label{eq:low-activation-good-event-probability}
 \Prob(\Omega_{l,i}^c)\leq p_{l,i}^{-D}
\end{equation}
for all sufficiently large \(l\), and, almost surely,
\begin{equation}\label{eq:low-activation-detector-bound}
 H_{l,i}\ind_{\Omega_{l,i}}=1
 \quad\Longrightarrow\quad
 Q_{l,i}\leq p_{l,i}^{-\kappa}.
\end{equation}
If \(D>\theta+2\) and
\[
 \widetilde H_{l,i}=H_{l,i}\ind_{\Omega_{l,i}},
 \qquad
 \widetilde R_l=\sum_{i\in I_l}\widetilde H_{l,i},
 \qquad
 \widetilde m_l=\E\widetilde R_l,
\]
then
\begin{equation}\label{eq:low-activation-retained-mean}
 \widetilde m_l
 =(1+o(1))m_l
 \geq N_l^\alpha(1-o(1)),
\end{equation}
and
\begin{equation}\label{eq:low-activation-retained-second-moment}
 \E[\widetilde R_l^2]
 \leq C_Rm_l^2
 =(C_R+o(1))\widetilde m_l^2.
\end{equation}
If additionally $\E[R_l^2]\le(1+u_l)m_l^2+m_l$ with deterministic
$u_l\to0$, then
\[
 \frac{\Var(\widetilde R_l)}{\widetilde m_l^2}=o(1).
\]
Indeed, $\E(R_l-\widetilde R_l)\le C N_l^{\theta-D}$, so the
retention estimate following Lemma~\ref{lem:conditional-occurrence-closure}
applies with $q_l\le C N_l^{\theta-D}/m_l\to0$.
If the entire Gaussian family is independent of a sigma-algebra
\(\mathcal G_l\), then \eqref{eq:low-activation-retained-mean}--\eqref{eq:low-activation-retained-second-moment} and the relative-variance conclusion also hold conditionally on
\(\mathcal G_l\), almost surely.
\end{prop}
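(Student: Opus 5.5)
The plan is to take $\Omega_{l,i}$ to be the good event of Proposition~\ref{prop:poisson-count-dichotomy} for the single Gaussian matrix $Z_{l,i}$, and then obtain the moment statements by elementary counting.

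\emph{Construction of $\Omega_{l,i}$.} Normalize $p_{l,i}^{-1/2}Z_{l,i}$. This is a tall complex white Wishart matrix with $N=p_{l,i}$ columns and $M=q_{l,i}$ rows, and its aspect ratio lies in $[1,C_0/c_0]$. Proposition~\ref{prop:poisson-count-dichotomy} is stated for a convergent ratio. However, its proof only uses the local law of Theorem~\ref{thm:frozen-corner-local-law} (empty frozen corner), and that law is uniform over $\phi\in[\phi_{\min},\phi_{\max}]$. So I would apply it with the deterministic threshold $E_N=E_{l,i}/p_{l,i}$. Dividing the raw quantities by $p_{l,i}$ gives $\eta_N=\widehat\eta_{l,i}/p_{l,i}$, $\ell_N=\widehat\ell_{l,i}/p_{l,i}$ and $B_N=B_{l,i}/p_{l,i}$. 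The window condition \eqref{eq:low-activation-wall-window} is exactly the hypothesis $|E_N-E_{+,N}|\le C_EN^{-2/3}(\log N)^{2/3}$, in fact with the smaller power $1/3$. The change of variables $u=p_{l,i}y$ shows that $Q_{l,i}$ equals $Z_N^+$ there. Let $\Omega_{l,i}$ be the event on which \eqref{eq:poisson-count-dichotomy} holds, taken with failure exponent $D$; it is measurable with respect to $Z_{l,i}$. By \eqref{eq:poisson-count-below}, on $\Omega_{l,i}$ the condition $\lambda_{\max}\le E_{l,i}$ (the strict/nonstrict distinction is irrelevant, since the Gaussian law is atomless, or one can use the left inequality of \eqref{eq:poisson-count-dichotomy} directly) forces $Q_{l,i}\le p_{l,i}^{-\kappa}$. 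This gives \eqref{eq:low-activation-detector-bound}. Uniformity of $N_0$ over the index $i$ follows from the uniform constants stated in that proposition, together with $p_{l,i}\ge c_0N_l\to\infty$.

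\emph{Moments.} Write $0\le R_l-\widetilde R_l=\sum_i H_{l,i}\ind_{\Omega_{l,i}^c}$, so that
\[
\E(R_l-\widetilde R_l)\le\sum_i\Prob(\Omega_{l,i}^c)\le N_l^\theta(c_0N_l)^{-D}\le CN_l^{\theta-D}.
\]
Since $m_l\ge N_l^\alpha$ and $D>\theta+2$, the ratio $q_l:=CN_l^{\theta-D}/m_l$ tends to zero, which gives \eqref{eq:low-activation-retained-mean}. Because $0\le\widetilde R_l\le R_l$, we have $\widetilde R_l^2\le R_l^2$, and this yields \eqref{eq:low-activation-retained-second-moment}. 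The relative-variance statement is the retention estimate following Lemma~\ref{lem:conditional-occurrence-closure}: it bounds the ratio by $(1+u_l+m_l^{-1})/(1-q_l)^2-1\to0$. For the conditional version, independence of the whole Gaussian family from $\mathcal G_l$ makes every conditional expectation equal to the unconditional one almost surely, so all the bounds transfer.

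\emph{Main obstacle.} I expect the hardest step to be justifying that Proposition~\ref{prop:poisson-count-dichotomy} applies uniformly along a family with varying, non-convergent aspect ratios. I would address this by rereading its proof as depending only on the uniform-in-$\phi$ local law and on the threshold window. Everything after that is bookkeeping.
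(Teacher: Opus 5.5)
Your proposal is correct and follows essentially the paper's route. You rescale by $p_{l,i}$, take $\Omega_{l,i}$ to be the good event of Proposition~\ref{prop:poisson-count-dichotomy} for the white Wishart matrix (its constants are uniform over the compact range of aspect ratios), identify $Q_{l,i}$ with $Z_N^+$ at $E_N=E_{l,i}/p_{l,i}$, and finish with the retention estimate.

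One small correction: your fallback of using the left inequality in \eqref{eq:poisson-count-dichotomy} directly does not handle the boundary case $\lambda_{\max}=E_{l,i}$, where the count $\#\{s:\lambda_{l,i,s}\ge E_{l,i}\}$ is at least one and so gives only an $O(1)$ bound on $Q_{l,i}$. Rely instead on the atomlessness of the Gaussian eigenvalue law, which the ``almost surely'' in \eqref{eq:low-activation-detector-bound} permits.
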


\supplementproof{supp.J.4}{Section J.4}

On a Gaussian gap event, the Poisson mass above the threshold is small
whenever smoothing succeeds. Proposition~\ref{prop:poisson-activation}
shows that discarding the smoothing failures preserves the first- and
second-moment scales of the count, with the dependence between grid
matrices retained.

The next proposition applies the Gaussian grid-moment estimate to the
shifted dimension path. The submatrix threshold is the full-matrix
threshold lowered by $2\widehat\ell_{l,j}$, so the deterministic shifts
match the full-matrix edge.

\begin{prop}[Gaussian left-tail counts]
\label{prop:corrected-gaussian-moments}
Let \(\gamma>0\). Let \((M_N)_{N\geq1}\) be a nondecreasing sequence of positive integers such that
\[
 \frac{M_N}{N}\longrightarrow\gamma,
 \qquad
 0\leq M_{N+1}-M_N\leq K
\]
for a fixed finite \(K\). Fix \(d>0\), and put \(k_0=0\).
For each fixed choice of the amplitudes, grid exponents, and smoothing
parameter satisfying the inequalities below, there are increasing integers
\(k_l\), allowed to depend on all these choices, with
\[
 k_l-k_{l-1}\geq2,
\]
such that, on putting
\[
 N_l=2^{k_l},
 \qquad
 P_l=2^{k_{l-1}+1},
 \qquad
 J_l=N_l-P_l,
\]
we have
\[
 P_l\leq(\log N_l)^d.
\]

For each \(l\), let \(Z_l\) be an independent infinite array of independent centered circular complex Gaussian variables of second absolute moment one. Assume that all \(Z_l\) are independent of the sigma-algebra
\[
 \mathcal F_l^{\mathrm{past}}
 =
 \sigma\set{x_{i,j}:1\leq i\leq M_{P_l},\ 1\leq j\leq P_l}
\]
generated by the original northwest array through row \(M_{P_l}\) and column \(P_l\). For \(j\geq1\), put
\[
 \widehat M_l(j)=M_{P_l+j}-M_{P_l},
\]
and let \(Z_l^{(j)}\) be the \(\widehat M_l(j)\)-by-\(j\) northwest rectangle of \(Z_l\).

For positive \(u,v\), define
\[
 a(u,v)=(\sqrt u+\sqrt v)^2,
\qquad
 b(u,v)
 =(\sqrt u+\sqrt v)(u^{-1/2}+v^{-1/2})^{1/3},
\]
and, for indices \(j\) such that \(\widehat M_l(j)>0\), put
\[
 \widehat\chi_{l,j}
 =
 \frac{\norm{Z_l^{(j)}}_{\op}^2-a(\widehat M_l(j),j)}
 {b(\widehat M_l(j),j)}.
\]

The scales may be chosen large enough that
\(\widehat M_l(j)\asymp j\) throughout \([J_l,2J_l)\), so every
standardization used on either grid has positive dimensions.
Indeed, \(P_l=o(N_l)\), \(M_{P_l}=O(P_l)\), and
\(M_n/n\to\gamma>0\) uniformly once \(n\) exceeds a fixed cutoff.
The soft-edge variables are defined on these positive-dimensional windows.

Fix \(a>0\), put
\[
 \kappa_+=\frac43a^{3/2},
\]
and choose \(\theta_+,\epsilon_+>0\) such that
\begin{equation}\label{eq:high-terminal-exponent-budget}
 \kappa_++\theta_++\epsilon_+<\frac13.
\end{equation}
For \(j\in[J_l,2J_l)\), put
\[
 p_{l,j}=\min\{\widehat M_l(j),j\},
 \qquad
 q_{l,j}=\max\{\widehat M_l(j),j\},
\]
\[
 \mu_{l,j}
 =
 \bigl(\sqrt{p_{l,j}+1/2}+\sqrt{q_{l,j}+1/2}\bigr)^2,
\]
and
\[
 \sigma_{l,j}
 =
 \bigl(\sqrt{p_{l,j}+1/2}+\sqrt{q_{l,j}+1/2}\bigr)
 \bigl((p_{l,j}+1/2)^{-1/2}+(q_{l,j}+1/2)^{-1/2}\bigr)^{1/3}.
\]
Then there are grids \(\widehat T_l^+\subset[J_l,2J_l)\), whose distinct points are separated by at least \(J_l^{2/3+\epsilon_+}\), such that
\[
 \abs{\widehat T_l^+}
 =N_l^{\kappa_++\theta_++o(1)}.
\]
The hard high count
\[
 R_l^+
 =
 \sum_{j\in\widehat T_l^+}
 \ind_{\{\norm{Z_l^{(j)}}_{\op}^2
 \geq\mu_{l,j}+a(\log j)^{2/3}\sigma_{l,j}\}}
\]
satisfies, conditionally on \(\mathcal F_l^{\mathrm{past}}\),
\[
 m_l^+
 :=
 \E[R_l^+\mid\mathcal F_l^{\mathrm{past}}]
 =N_l^{\theta_++o(1)},
\]
\begin{equation}\label{eq:high-terminal-second-moment}
 \E[(R_l^+)^2\mid\mathcal F_l^{\mathrm{past}}]
 \leq(1+o(1))(m_l^+)^2+m_l^+,
\end{equation}
and
\begin{equation}\label{eq:high-terminal-count-concentration}
 \Prob(R_l^+\geq N_l^{\theta_+/2}
       \mid\mathcal F_l^{\mathrm{past}})
 =1-o(1).
\end{equation}
All \(o(1)\) terms in \eqref{eq:high-terminal-second-moment}--\eqref{eq:high-terminal-count-concentration} are deterministic.

Fix \(b_0>0\), put
\[
 \kappa_-=\frac{b_0^3}{12},
\]
and choose \(\theta_-,\epsilon_->0\) such that
\begin{equation}\label{eq:low-terminal-exponent-budget}
 \kappa_-+\theta_-+\epsilon_-<\frac13.
\end{equation}
Fix \(0<\kappa<1/72\). For \(j\in[J_l,2J_l)\), put
\[
 n=P_l+j,
 \qquad
 e_n=a(M_n,n),
 \qquad
 d_n=b(M_n,n),
\]
\begin{equation}\label{eq:low-terminal-strengthened-wall}
 \widehat\ell_{l,j}=p_{l,j}^{1/3-6\kappa},
 \qquad
 t^\sharp_{l,j}
 =
 e_n-b_0d_n(\log n)^{1/3}-2\widehat\ell_{l,j}.
\end{equation}
Then there are grids \(\widehat T_l^-\subset[J_l,2J_l)\), whose distinct points are separated by at least \(J_l^{2/3+\epsilon_-}\), such that
\[
 \abs{\widehat T_l^-}
 =N_l^{\kappa_-+\theta_-+o(1)}.
\]
The strengthened hard low count
\[
 R_l^-
 =
 \sum_{j\in\widehat T_l^-}
 \ind_{\{\norm{Z_l^{(j)}}_{\op}^2\leq t^\sharp_{l,j}\}}
\]
satisfies, conditionally on \(\mathcal F_l^{\mathrm{past}}\),
\[
 m_l^-
 :=
 \E[R_l^-\mid\mathcal F_l^{\mathrm{past}}]
 =N_l^{\theta_-+o(1)},
\]
\begin{equation}\label{eq:low-terminal-second-moment}
 \E[(R_l^-)^2\mid\mathcal F_l^{\mathrm{past}}]
 \leq(1+o(1))(m_l^-)^2+m_l^-,
\end{equation}
and
\begin{equation}\label{eq:low-terminal-count-concentration}
 \Prob(R_l^-\geq N_l^{\theta_-/2}
       \mid\mathcal F_l^{\mathrm{past}})
 =1-o(1).
\end{equation}
Again, all \(o(1)\) terms are deterministic.

Translating either grid by \(P_l\) gives a physical grid contained in \([N_l,2N_l)\), with the same cardinality and separation. In particular, taking \(\theta_+=2\eta\) gives a high terminal count at least \(N_l^\eta\) with conditional probability \(1-o(1)\), while taking \(\theta_-=\eta\) gives a low terminal count at least \(N_l^{\eta/2}\) with conditional probability \(1-o(1)\), whenever the corresponding strict inequalities \eqref{eq:high-terminal-exponent-budget} and \eqref{eq:low-terminal-exponent-budget} hold.
\end{prop}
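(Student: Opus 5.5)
The statement is a direct transfer of Paper~I, Corollary~\ref*{cor:gaussian-grid-moments}, to the shifted dimension path $\widehat M_l(j)$. The conditioning is harmless: each $Z_l$ is independent of $\mathcal F_l^{\rm past}$, so every conditional moment equals the corresponding unconditional Gaussian moment. The plan is therefore to verify the hypotheses of the Gaussian corollary for the path $j\mapsto\widehat M_l(j)$ on $[J_l,2J_l)$, read off first and second moments, and derive concentration by Chebyshev.

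\textbf{Step 1: choice of scales.} Choose the scales recursively. Given $k_{l-1}$, the quantity $P_l=2^{k_{l-1}+1}$ is fixed, so taking $k_l$ large enough gives $P_l\le(\log N_l)^d$, $k_l-k_{l-1}\ge2$, and $J_l\ge N_l/2$. Since $M_{P_l}=O(P_l)=o(N_l)$, we get
\[
 \widehat M_l(j)/j\to\gamma
\]
uniformly on $[J_l,2J_l)$, with increments still in $[0,K]$ (Lemma~\ref{lem:fresh-corner}(c)). The path $j\mapsto\widehat M_l(j)$ therefore satisfies the hypotheses of Theorem~\ref{thm:main} locally, uniformly in $l$. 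Any $o(1)$ rates required by the Gaussian corollary, which depend only on $\gamma,K$ and the window, can be absorbed by taking $k_l$ larger still.

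\textbf{Step 2: the high count.} Apply the Gaussian corollary with amplitude $a$ and grid exponent $\theta_+$. It produces grids in $[J_l,2J_l)$ of cardinality $N_l^{\kappa_++\theta_++o(1)}$ and separation $J_l^{2/3+\epsilon_+}$. The mean is $N_l^{\theta_+ +o(1)}$, which is the grid size times the tail $N^{-\kappa_++o(1)}$. The second-moment bound $(1+o(1))m^2+m$ follows from the separation. The normalization here uses $p+1/2,q+1/2$ rather than $a(u,v),b(u,v)$. I expect the corollary to be stated in this shifted normalization; if not, the difference $\mu_{l,j}-a$ is $O(1)=o(\sigma_{l,j})$, absorbed by its bounded-shift uniformity. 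Chebyshev then gives
\[
 \Prob(R_l^+<m_l^+/2)\le 4\Var/m^2\to0,
\]
and $m_l^+/2\ge N_l^{\theta_+/2}$.

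\textbf{Step 3: the low count.} The threshold $t^\sharp_{l,j}$ is expressed through the full-matrix center $e_n,d_n$, not the submatrix normalization. I will rewrite it in submatrix edge coordinates. One has
\[
 a(M_n,n)-a(\widehat M_l(j),j)=O(P_l),
\]
by \eqref{eq:submatrix-center-loss}, and $2\widehat\ell_{l,j}=O(N^{1/3-6\kappa})$. Both are $o(\sigma_{l,j})$ after dividing by $\sigma\asymp N^{1/3}$; the scale ratio is $1+O(P_l/N_l)$ by \eqref{eq:submatrix-scale-ratio}, and $\log n/\log j\to1$. Hence the standardized threshold is $-b_0(\log j)^{1/3}+o(1)$, a bounded deterministic shift, which the corollary permits uniformly. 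This yields the mean, second moment and concentration as in Step~2. The translation by $P_l$ into $[N_l,2N_l)$ preserves cardinality and separation, and the choices $\theta_+=2\eta$, $\theta_-=\eta$ give the final sentence.

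The main obstacle is Step~3's uniformity: the $o(1)$ corrections must stay within the shift tolerance of Paper~I's corollary uniformly in $j$ and $l$. This is handled by the $(\log N_l)^d$ bound on $P_l$.
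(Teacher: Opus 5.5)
Your proposal is correct and follows essentially the paper's route. Independence of \(Z_l\) from \(\mathcal F_l^{\mathrm{past}}\) makes the conditional moments deterministic, and Paper~I's separated-grid corollary is applied to the shifted path \(\widehat M_l\); the full-matrix center, the \(+1/2\) normalization, the scale ratio and the \(2\widehat\ell_{l,j}\) lowering all enter only as \(o(1)\) deterministic threshold shifts, and Chebyshev then gives the concentration. One small correction: the corollary's error terms need not depend only on \(\gamma,K\) and the window. It suffices that, once \(k_{l-1}\) and hence the path \(\widehat M_l\) are fixed, you choose \(k_l\) large enough for that fixed path, which is why the statement lets \(k_l\) depend on all the parameters.
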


\supplementproof{supp.J.5}{Section J.5}

\subsection*{Full-matrix cutoff sum and conditional flow}

Proposition~\ref{prop:full-matrix-poisson-ladder} constructs the full-matrix
cutoff sum at $t_{l,n}^X$. At the block-diagonal reference endpoint,
the strengthened Gaussian threshold leaves enough room for the retained
event to activate level $0$ even with the northwest block present.
At the actual endpoint, positivity implies the strict left-tail event.
The later strip interpolation connects these two matrix families.

\begin{prop}[Smooth cutoff functions for the left tail]
\label{prop:full-matrix-poisson-ladder}
Let \(\gamma>0\), and let \((M_N)\) be a nondecreasing sequence of positive integers such that
\[
 \frac{M_N}{N}\longrightarrow\gamma,
 \qquad
 \sup_N(M_{N+1}-M_N)<\infty.
\]
Let \((x_{i,j})\) be independent centered complex random variables satisfying
\[
 \E\abs{x_{i,j}}^2=1,
 \qquad
 \E[x_{i,j}^2]=0,
\]
with uniform bounds on every fixed moment. Put
\[
 X^{(n)}=(x_{i,j})_{1\leq i\leq M_n,\ 1\leq j\leq n},
\]
\[
 e_n=(\sqrt{M_n}+\sqrt n)^2,
\]
\[
 d_n
 =
 (\sqrt{M_n}+\sqrt n)
 (M_n^{-1/2}+n^{-1/2})^{1/3},
 \qquad
 s_n=(\log n)^{1/3},
\]
and
\[
 \chi_n^+
 =
 \frac{\norm{X^{(n)}}_{\op}^2-e_n}{d_n}.
\]

Fix \(0<b<4^{1/3}\), put
\[
 c=\frac{b^3}{12},
\]
and choose \(\eta,\epsilon,d>0\) such that
\[
 c+\eta+\epsilon<\frac13.
\]
Fix \(0<\kappa<1/72\). Apply Proposition~\ref{prop:corrected-gaussian-moments} with high-side parameters
\[
 a=10^{-3},
 \qquad
 \theta_+=\frac1{100},
 \qquad
 \epsilon_+=\frac1{100},
\]
and low-side parameters
\[
 b_0=b,
 \qquad
 \theta_-=\eta,
 \qquad
 \epsilon_-=\epsilon,
\]
together with the present \(d\) and \(\kappa\). It supplies increasing integers \(k_l\), scales
\[
 N_l=2^{k_l},
 \qquad
 P_l=2^{k_{l-1}+1},
 \qquad
 J_l=N_l-P_l,
\]
with \(P_l\leq(\log N_l)^d\), independent circular complex Gaussian arrays \(Z_l\), and low shifted grids \(\widehat T_l^-\subset[J_l,2J_l)\).

Put
\[
 T_l=\set{P_l+j:j\in\widehat T_l^-},
 \qquad
 j_{l,n}=n-P_l,
\]
\[
 u_{l,n}=M_n-M_{P_l},
 \qquad
 v_{l,n}=j_{l,n},
\]
\begin{equation}\label{eq:low-full-tag-dimensions}
 p_{l,n}=\min\{u_{l,n},v_{l,n}\},
 \qquad
 q_{l,n}=\max\{u_{l,n},v_{l,n}\}.
\end{equation}
Then \(T_l\subset[N_l,2N_l)\), its distinct points are separated by at least \(J_l^{2/3+\epsilon}\), and
\[
 \abs{T_l}=N_l^{c+\eta+o(1)}.
\]
The arrays \(Z_l\) are independent of
\[
 \mathcal F_l^{\mathrm{past}}
 =
 \sigma\set{x_{i,j}:1\leq i\leq M_{P_l},\ 1\leq j\leq P_l}.
\]

Define the exact full threshold and strengthened independent-submatrix threshold by
\[
 t_{l,n}^X=e_n-bd_n(\log n)^{1/3},
\]
\begin{equation}\label{eq:low-full-strengthened-wall}
 \widehat\ell_{l,n}=p_{l,n}^{1/3-6\kappa},
 \qquad
 t^\sharp_{l,n}
 =
 t_{l,n}^X-2\widehat\ell_{l,n},
\end{equation}
and put
\[
 H^\sharp_{l,n}
 =
 \ind_{\{\norm{Z_l^{(j_{l,n})}}_{\op}^2\leq t^\sharp_{l,n}\}},
 \qquad
 R_l^\sharp
 =
 \sum_{n\in T_l}H^\sharp_{l,n}.
\]
Conditionally on \(\mathcal F_l^{\mathrm{past}}\),
\[
 m_l^\sharp
 :=
 \E[R_l^\sharp\mid\mathcal F_l^{\mathrm{past}}]
 =
 N_l^{\eta+o(1)}
 \geq N_l^{\eta/2},
\]
and
\begin{equation}\label{eq:low-fresh-count-second-moment}
 \E[(R_l^\sharp)^2\mid\mathcal F_l^{\mathrm{past}}]
 \leq(1+o(1))(m_l^\sharp)^2,
\end{equation}
with deterministic \(o(1)\).

Put
\[
 \widehat\eta_{l,n}
 =
 p_{l,n}^{1/3-12\kappa},
 \qquad
 a_{l,n}
 =
 t_{l,n}^X-\widehat\ell_{l,n},
\]
\begin{equation}\label{eq:low-detector-upper-limit}
 B_{l,n}
 =
 (\sqrt{p_{l,n}}+\sqrt{q_{l,n}})^2
 +4p_{l,n}^{1/3+\kappa}.
\end{equation}
For all sufficiently large \(l\), uniformly on \(T_l\),
\begin{equation}\label{eq:low-detector-window-separation}
 a_{l,n}<t_{l,n}^X\leq B_{l,n},
 \qquad
 t_{l,n}^X-a_{l,n}\geq4\widehat\eta_{l,n}.
\end{equation}

Use the Poisson-plus-upper smoothed statistic of Section~\ref{sec:detectors} with the four parameters in \eqref{eq:low-detector-upper-limit} and target threshold \(t_{l,n}^X\). Fix \(R_{\mathrm{c}}>1\), \(\zeta>0\), and
\[
 \rho_{\mathrm{c}}\in C^3(\R;[0,1]),
\]
with
\[
 \rho_{\mathrm{c}}(y)=1\quad\text{for }y\leq-1,
 \qquad
 \rho_{\mathrm{c}}(y)=0\quad\text{for }y\geq0.
\]
Set
\[
 J_{\mathrm{c},l}=\lceil\zeta\log N_l\rceil,
 \qquad
 \delta_l=\frac{1/16}{J_{\mathrm{c},l}+2},
\]
\[
 y_{l,n}(W)
 =
 \frac{Z_{l,n}(W)-1/16}{\delta_l},
\]
\[
 A_{l,n,r}(W)
 =
 R_{\mathrm{c}}^{-r}
 \rho_{\mathrm{c}}
 \bigl(y_{l,n}(W)+J_{\mathrm{c},l}-r\bigr),
\]
and
\begin{equation}\label{eq:low-global-ladder-count}
 S_l((W_n))
 =
 \sum_{n\in T_l}\sum_{r=0}^{J_{\mathrm{c},l}}
 A_{l,n,r}(W_n).
\end{equation}

Let \(X_l^{\mathrm{past}}=X^{(P_l)}\), define
\begin{equation}\label{eq:low-past-norm-event}
 \mathcal G_l^{\mathrm{past}}
 =
 \set{\norm{X_l^{\mathrm{past}}}_{\op}^2\leq N_l/4},
\end{equation}
and let \(W_{l,n}^T\) be the \(M_n\)-by-\(n\) block-diagonal matrix
\[
 \diag(X_l^{\mathrm{past}},Z_l^{(j_{l,n})})
\]
after adding the necessary zero rectangular blocks and applying row and column permutations.

For every prescribed \(D>0\), there are independent-submatrix events \(\Omega_{l,n}\), independent of \(\mathcal F_l^{\mathrm{past}}\), such that
\[
 \Prob(\Omega_{l,n}^c)\leq p_{l,n}^{-D},
\]
and, on \(\mathcal G_l^{\mathrm{past}}\),
\begin{equation}\label{eq:low-retained-hit-activation}
 H^\sharp_{l,n}\ind_{\Omega_{l,n}}=1
 \quad\Longrightarrow\quad
 Z_{l,n}(W_{l,n}^T)
 \leq C_{\mathrm{act}}N_l^{-\kappa}
\end{equation}
for a deterministic constant \(C_{\mathrm{act}}\) independent of \(l,n\). Consequently, every retained hit activates
\[
 A_{l,n,0}(W_{l,n}^T)=1
\]
for all sufficiently large \(l\).

Define
\[
 \widetilde H_{l,n}
 =
 H^\sharp_{l,n}\ind_{\Omega_{l,n}},
 \qquad
 \widetilde R_l=\sum_{n\in T_l}\widetilde H_{l,n}.
\]
If \(D>c+\eta+2\), then, conditionally on \(\mathcal F_l^{\mathrm{past}}\),
\begin{equation}\label{eq:low-retained-count-mean}
 \E[\widetilde R_l\mid\mathcal F_l^{\mathrm{past}}]
 =
 (1+o(1))m_l^\sharp
 \geq N_l^{\eta/2}(1-o(1)),
\end{equation}
\begin{equation}\label{eq:low-retained-count-second-moment}
 \E[\widetilde R_l^2\mid\mathcal F_l^{\mathrm{past}}]
 \leq
 (1+o(1))
 \E[\widetilde R_l\mid\mathcal F_l^{\mathrm{past}}]^2,
\end{equation}
and, on \(\mathcal G_l^{\mathrm{past}}\),
\begin{equation}\label{eq:low-reference-ladder-dominance}
 S_l((W_{l,n}^T)_{n\in T_l})\geq\widetilde R_l.
\end{equation}

At the target endpoint, put \(W_{l,n}^X=X^{(n)}\). Then
\[
 S_l((W_{l,n}^X)_{n\in T_l})>0
\]
implies
\begin{equation}\label{eq:low-target-event}
 L_l(b)
 :=
 \bigcup_{n\in T_l}
 \set{\frac{\chi_n^+}{s_n}<-b}.
\end{equation}
Finally,
\[
 \sum_l\Prob((\mathcal G_l^{\mathrm{past}})^c)<\infty.
\]
\end{prop}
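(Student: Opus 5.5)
This proposition mostly assembles earlier results; the one new estimate is the activation bound \eqref{eq:low-retained-hit-activation}. We first fix the scales and grids by invoking Proposition~\ref{prop:corrected-gaussian-moments} with the stated high- and low-side parameters. Translating $\widehat T_l^-$ by $P_l$ then gives $T_l\subset[N_l,2N_l)$, together with the separation and cardinality $N_l^{c+\eta+o(1)}$. Since $t^\sharp_{l,n}$ in \eqref{eq:low-full-strengthened-wall} coincides with \eqref{eq:low-terminal-strengthened-wall} at $b_0=b$, the conditional first and second moments of $R_l^\sharp$ are read off from that proposition. The bound \eqref{eq:low-fresh-count-second-moment} absorbs the additive $m_l^\sharp$ because $m_l^\sharp\to\infty$. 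The window separation \eqref{eq:low-detector-window-separation} is elementary. First, $t^X_{l,n}-a_{l,n}=\widehat\ell_{l,n}=p^{1/3-6\kappa}\ge4p^{1/3-12\kappa}$. Second, $t^X_{l,n}\le e_n$. Third, $e_n-(\sqrt p+\sqrt q)^2=O(P_l)$ by Proposition~\ref{prop:coordinate-submatrix-transfer}, and $P_l\le(\log N_l)^d=o(p^{1/3+\kappa})$. The target implication \eqref{eq:low-target-event} is Proposition~\ref{prop:poisson-detector}, applied with $t_i=t^X_{l,n}$ and $a_i=a_{l,n}$. By \eqref{eq:poisson-target}, $S_l>0$ forces $\norm{X^{(n)}}_{\op}^2<t^X_{l,n}$ for some $n$, which is exactly $\chi_n^+<-bs_n$.

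For summability of $(\mathcal G_l^{\mathrm{past}})^c$, note that $X_l^{\mathrm{past}}$ has $O(P_l)$ rows and $P_l$ columns. Hence $\E\norm{X^{\mathrm{past}}_l}_F^{2}=O(P_l^2)$, and a high-moment Markov bound gives $\Prob(\norm{X_l^{\mathrm{past}}}_{\op}^2>N_l/4)\le CP_l^{2r}N_l^{-r}$. With $P_l\le(\log N_l)^d$ and $N_l\ge 2^{l}$, this is summable.

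The main step is activation. On $\mathcal G_l^{\mathrm{past}}$ the block-diagonal matrix $W^T_{l,n}$ has squared singular values equal to the union of those of $Z_l^{(j)}$ and those of $X^{\mathrm{past}}_l$, padded with zeros. The past eigenvalues are at most $N_l/4$, far below $a_{l,n}\asymp(1+\sqrt\gamma)^2N_l$. Each of them therefore contributes $O(\widehat\eta/N_l)$ to the Poisson integral and nothing to the upper term $g$. There are at most $P_l$ of them, and the zero modes behave the same way, up to $O(N_l)$ of them. The total past/zero contribution is therefore $O(N_l\cdot p^{1/3-12\kappa}/N_l)$. This crude bound is too large, so I will bound it more carefully. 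A point at distance $\ge cN_l$ contributes at most $\widehat\eta(B-a)/(\pi c^2N_l^2)\le N_l^{-4/3}$, so $O(N_l)$ such points give $O(N_l^{-1/3})\le N_l^{-\kappa}$.

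For the fresh block, apply Proposition~\ref{prop:poisson-activation} to $Z^{(j)}_l$ with $E_{l,i}=t^\sharp_{l,n}$. The window condition \eqref{eq:low-activation-wall-window} holds because the centers differ by $O(P_l)$ and the scale is $(\log)^{1/3}$. Its lower endpoint is $E+\widehat\ell=a_{l,n}$, but its upper limit is computed with the fresh dimensions $p_{l,n},q_{l,n}$, which is exactly $B_{l,n}$ here. That proposition gives events $\Omega_{l,n}$, measurable with respect to $Z_l$ and hence independent of the past, with $\Prob(\Omega_{l,n}^c)\le p^{-D}$. On these events $H^\sharp=1$ forces $Q\le p^{-\kappa}$. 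On the same events the rigidity input also ensures that no fresh eigenvalue exceeds $B_{l,n}$, so the upper term vanishes. I expect the careful bookkeeping of the zero-mode count and of the upper term under the change of dimensions to be the delicate point. Summing the two contributions gives $Z_{l,n}(W^T)\le C_{\mathrm{act}}N_l^{-\kappa}\le\delta_l$ for large $l$, since $\delta_l^{-1}=O(\log N_l)$. Then \eqref{eq:poisson-terminal} yields $A_{l,n,0}=1$.

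Finally, \eqref{eq:low-reference-ladder-dominance} follows because every retained hit contributes at least its level-zero summand and all summands are nonnegative. The retained moments \eqref{eq:low-retained-count-mean}--\eqref{eq:low-retained-count-second-moment} follow from the conditional clause of Proposition~\ref{prop:poisson-activation}, applied with $D>c+\eta+2$, together with the retention estimate after Lemma~\ref{lem:conditional-occurrence-closure}. This uses $\E(R^\sharp-\widetilde R)\le N_l^{c+\eta-D+o(1)}=o(m^\sharp_l)$ and $\widetilde R^2\le (R^\sharp)^2$.
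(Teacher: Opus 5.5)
Your proposal is correct and follows the same route as the paper's proof. The grids and Gaussian moments come from Proposition~\ref{prop:corrected-gaussian-moments}, activation from Proposition~\ref{prop:poisson-activation} on the fresh block plus a Poisson-kernel tail bound for past and zero eigenvalues on $\mathcal G_l^{\mathrm{past}}$, the retained moments from the retention estimate after Lemma~\ref{lem:conditional-occurrence-closure}, and the target implication from Proposition~\ref{prop:poisson-detector}. One small correction: the vanishing of the upper term at the fresh eigenvalues needs no rigidity input, since on the hit event $\lambda_{\max}\le t^\sharp_{l,n}<t^X_{l,n}\le B_{l,n}$ directly.
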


\supplementproof{supp.J.6}{Section J.6}

In a wide matrix, subtracting the deterministic Poisson contribution of
the structural zero eigenvalues recovers the smaller-dimension
singular-value statistic. The correction has zero positive-order
raw-coordinate derivatives.

For the left-tail family we keep the northwest product corner frozen
and check the spectral heights before applying the common recursion.

\begin{cor}[Left-tail specialization]
\label{prop:four-orientation-recursion}
Adopt the full-array, moment, family, normalized-monomial, and smoothed statistic
assumptions of Theorem~\ref{thm:four-orientation-recursion}, with the same
$0<\kappa_0<1/288$ and $12\kappa_0<\nu<1/12$.
For the endpoint families below its conclusions also hold at
\[
 \eta'_k=p_k^{1/3-12\kappa_0}/n_k,
 \qquad p_k\asymp n_k\asymp N,
\]
with the stated real-part window. The scale is
$\Psi_N=N^{-1/3+\nu}$ and all traces use the full column and row
backgrounds of that theorem. Endpoint representation of derivatives is required through
$K_{\rm der}$; natural averages are converted to normalized free-label
sums by the bounded ratios $N/n_k$ and $N/m_k$.

For the high family of Section~\ref{sec:high-occurrence}, use the reindexed
unrevealed array and empty old label sets, as in
Corollary~\ref{prop:high-orientation-specialization}. For the full-matrix
low family of Proposition~\ref{prop:full-matrix-poisson-ladder}, keep
$(m_k,n_k)=(M_n,n)$ in its physical orientation and take
\[
 \mathcal R_{\rm old}=\{1,\ldots,M_{P_l}\},\qquad
 \mathcal C_{\rm old}=\{1,\ldots,P_l\},\qquad
 \mathcal G=\mathcal F_l^{\rm past}.
\]
All masks, separate conjugation signs, coefficient normalizations, and
cubic terms are exactly those of
Theorem~\ref{thm:four-orientation-recursion}, including the
 degree-lowering estimates
\eqref{eq:revision-cubic-drop}--\eqref{eq:revision-single-offdiag}.
Every moving low source has at least one unrevealed label. For this general application, $12\kappa_0<\nu<1/12$; the homogeneous
comparison below selects $\nu=1/48$.
\end{cor}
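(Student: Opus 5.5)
The plan is to verify that the hypotheses of Theorem~\ref{thm:four-orientation-recursion} hold for the two endpoint families, checking each ingredient the recursion actually uses. The recursion itself is then quoted verbatim. Its proof uses only four things: the simultaneous conditional local law \eqref{eq:recursion-local-scale} on $\Omega_N(D)$ with good histories $\mathcal H_N(D)$; the inverse-derivative identities and the self and same-degree coefficients \eqref{eq:recursion-row-self}--\eqref{eq:recursion-column-neutral}; the $\ell^1$ tensor bounds of Lemma~\ref{lem:all-order-ladder} under \eqref{eq:recursion-ladder}; and the conditional centering of moving coordinates given $\mathcal F_{\rm old}$. I would check these in that order for the high family, where $\mathcal R_{\rm old}=\mathcal C_{\rm old}=\varnothing$ and $\mathcal G=\mathcal F_l^{\rm past}$, and then for the low family, where the old sets are the northwest past.

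First, heights. The only discrepancy with \eqref{eq:recursion-endpoints} is that $\eta'_k=p_k^{1/3-12\kappa_0}/n_k$ rather than $n_k^{-2/3-12\kappa_0}$. Their ratio is $(p_k/n_k)^{1/3-12\kappa_0}$, which is bounded above and below because $p_k\asymp n_k\asymp N$. So the height-transfer argument \eqref{eq:height-ward}--\eqref{eq:height-transfer} moves the local law, including the deleted, undeleted and insertion-segment versions from Theorem~\ref{thm:frozen-corner-local-law}, to these heights with error $O(\Psi_N)$. The Marchenko--Pastur backgrounds move by $O(\sqrt{\eta_0})=O(\Psi_N)$, and $\widetilde m$ and $1+\widetilde m$ stay bounded away from zero. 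The real parts lie in the window $5n_k^{-2/3+\kappa_0}$ because the thresholds differ from the edge by $O(N^{1/3}(\log N)^{2/3})$ in raw units. In the low family, the frozen corner contains $P_l\le(\log N)^d$ rows and columns. Theorem~\ref{thm:frozen-corner-local-law} applies with $\mathcal P_0$ equal to that corner, and its $\mathcal F_0$-measurable history events with complements of probability $N^{-5}$, after enlarging $D$, give \eqref{eq:recursion-good-event}. Summability along the lacunary scales then follows.

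Second, normalization and traces. Natural averages $n_k^{-1}\sum$ and $m_k^{-1}\sum$ become the normalized free-label sums $N^{-1}\sum$ of \eqref{eq:recursion-monomial}. The conversion factors $N/n_k$ and $N/m_k$ are uniformly bounded and measurable with respect to $\mathcal F_{\rm old}$, so they are absorbed into the coefficients $C$. Full column traces retain the structural zero-mode term $-(n-m)/z$, together with the Poisson zero-mode correction as in \eqref{eq:high-zero-mode-correction}. These are deterministic and have zero positive-order derivatives, so they enter only the affine part of the statistic. The endpoint representation \eqref{eq:recursion-mixed-endpoint} through order $K_{\rm der}$ follows from Lemma~\ref{lem:covariance-direction-derivatives}. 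For the cutoff bounds, the high family uses $|T_l|R_{\rm c}^{-J}\le N^{-12}$ and $\delta_l^{-1}=O(\log N)$ from Proposition~\ref{prop:high-poisson-ladder}. The low family needs the analogous choice of $\zeta$ in Proposition~\ref{prop:full-matrix-poisson-ladder}, which is permitted since $|T_l|=N_l^{c+\eta+o(1)}$.

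Third, the low-family unrevealed labels. A moving coordinate $e=(\alpha,b)$ lies outside $\mathcal R_{\rm old}\times\mathcal C_{\rm old}$, so either $\alpha\notin\mathcal R_{\rm old}$ or $b\notin\mathcal C_{\rm old}$. That gives at least one unrevealed label, and the cubic source \eqref{eq:recursion-third-source} starts \eqref{eq:recursion-h0} at that label. Conditional on $\mathcal F_{\rm old}$, the moving coordinates are independent of the past and keep their moments. The masks, conjugation signs and degree-lowering estimates \eqref{eq:revision-cubic-drop}--\eqref{eq:revision-single-offdiag} do not depend on which label is revealed.

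The main obstacle is that in cross-strip sources only one label is unrevealed, and the expansion must never be forced onto a revealed index. I would handle this by always selecting the unrevealed endpoint, which assertion (1) of the theorem allows. Terms in which old labels carry the odd multiplicity are bounded by \eqref{eq:recursion-old-cubic}, using $P_N N^{-3/2+\epsilon}=O(N^{-1})$.
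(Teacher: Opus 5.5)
Your proposal is correct and follows essentially the same route as the paper's proof. Both arguments rest on four steps:
\begin{itemize}
\item the bounded height ratio $(p_k/n_k)^{1/3-12\kappa_0}$ combined with the Ward-identity height transfer;
\item the simultaneous event from Theorem~\ref{thm:frozen-corner-local-law} with the northwest corner frozen;
\item the observation that a coordinate outside $\mathcal R_{\rm old}\times\mathcal C_{\rm old}$ has an unrevealed row or column;
\item subtraction of the deterministic zero-mode term before differentiating.
\end{itemize}
One small imprecision occurs in a step you do not actually need, since the statement takes the real-part window as given. The upper endpoint $B$ lies about $4n_k^{-2/3+\kappa_0}$ above the edge, so it falls in the window because of the leading coefficient $4<5$ (using $p_k\le n_k$), not because of the logarithmic threshold offset you cite.
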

\begin{proof}
The full low matrices and the frozen product corner have precisely the raw
coordinates of Theorem~\ref{thm:four-orientation-recursion}. A moving
coordinate lies outside that product, so its row or its column is unrevealed.
The simultaneous event is supplied by
Theorem~\ref{thm:frozen-corner-local-law}. The height ratio
$\eta'_k/n_k^{-2/3-12\kappa_0}=(p_k/n_k)^{1/3-12\kappa_0}$ is bounded;
the height-transfer argument gives $O(\Psi_N)$ bounds throughout that
interval for every deleted and insertion-segment matrix. Its inverse
identities, bounded coefficients, and endpoint calculus are unchanged,
which verifies the spectral-height use of the common recursion.
The derivative identity \eqref{eq:recursion-mixed-endpoint} and the
cutoff derivative tensor bound supply the specified finite-order closure.
For a wide matrix the column smoothed statistic differs from the statistic
on the smaller side, including zero singular values with multiplicity, by
the deterministic structural-zero-mode term in
\eqref{eq:high-physical-detector}. Subtract this term before differentiating.
No separate transpose for each matrix is used. A simultaneous adjoint of an entire family
instead exchanges the two old label sets and pulls all derivatives back
to the same original coordinates.
\end{proof}

For the frozen-corner family, Theorem E.2 of the technical supplement
applies with the estimates in Section E.4.

\begin{prop}[Left-tail comparison]
\label{prop:homogeneous-derivative-bound}
Use Proposition~\ref{prop:full-matrix-poisson-ladder} at
$\kappa=\kappa_0=1/1000$ and $\nu_0=1/48$, with its specified heights,
physical $M_n$-by-$n$ matrices, and
$\mathcal G=\mathcal F_l^{\rm past}$. Keep the product corner
$\{1,\ldots,M_{P_l}\}\times\{1,\ldots,P_l\}$ fixed. Use one
$C^\infty$ cutoff, equal to one on $(-\infty,-1]$ and zero on
$[0,\infty)$, in the target, flow, and strip counts, and choose the depth
so that
\begin{equation}\label{eq:low-homogeneous-final-layer}
 |T_l|R_{\rm c}^{-J_l^{\rm c}}\le N^{-12}.
\end{equation}
Evolve every nonpast raw coordinate by \eqref{eq:recursion-flow}.
Divide each full matrix and its unnormalized smoothed statistic endpoints by the
common scales $\sqrt N$ and $N$, respectively. For the resulting count
$S_N(t)$ put $\Phi_N(t,\tau)=\E[\ee^{-\tau S_N(t)}\mid\mathcal G]$.
On good histories with summable complements,
\[
 |\partial_t\Phi_N(t,\tau)|\le e_{{\rm hom},N}(t)
 \le C_{\tau_*}\mathfrak e_N,
 \qquad
 \int_0^{8\log N}e_{{\rm hom},N}(t)\,\dd t\le C_{\tau_*}N^{-1/4},
\]
with $\mathfrak e_N$ from \eqref{eq:high-derivative-envelope} and the same
time and $\tau$ ranges as Proposition~\ref{prop:high-homogeneous-bound}.

\end{prop}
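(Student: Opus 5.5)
The plan is to differentiate $\Phi_N(t,\tau)$ along the Ornstein--Uhlenbeck flow \eqref{eq:recursion-flow} of the nonpast coordinates and follow the same cumulant scheme as in the proof of Proposition~\ref{prop:full-grid-comparison}, now conditionally on $\mathcal G=\mathcal F_l^{\rm past}$ with the product corner frozen. For each moving coordinate $e$ we use the coordinate section $F_e(h;t,\tau)$ of the single global observable $\exp(-\tau S_N)$, exactly as in Lemma~\ref{lem:global-high-ladder}. Matrices not containing $e$ are handled by a deterministic membership mask. Conditionally on $\mathcal G$ the moving coordinates are independent, centered and circular, with bounded moments. The generator expansion through cumulant order five, plus a sixth-order Taylor remainder, therefore gives $\partial_t\Phi_N=\mathcal L_3^{\rm mov}+\mathcal L_4+\mathcal L_5+\mathcal R_6$, where the covariance term cancels against the Gaussian diffusion.

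First, we establish the pointwise section derivative bounds. We take the good history $\mathcal H_N(D)$ and the simultaneous event $\Omega_N(D)$ from Corollary~\ref{prop:four-orientation-recursion} and Theorem~\ref{thm:frozen-corner-local-law}, with the old label sets $\{1,\dots,M_{P_l}\}$ and $\{1,\dots,P_l\}$. After the common rescaling by $\sqrt N$ and $N$, together with the height transfer \eqref{eq:height-ward}--\eqref{eq:height-transfer}, the endpoint Green entries obey the local law with $\Psi_N=N^{-5/16}$. This holds also for deleted and insertion-segment matrices. Lemma~\ref{lem:all-order-ladder} bounds the $\ell^1$ tensor sums over all $n\in T_l$ and cutoff levels by $C_m\delta_N^{-m-1}=N^{o(1)}$, using \eqref{eq:low-homogeneous-final-layer} and $\delta_N^{-1}=O(\log N)$. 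Lemma~\ref{lem:covariance-direction-derivatives} bounds each endpoint derivative by $C\Psi_N$. The upper spectral term $g$ requires care. Whenever a cutoff derivative is active, $Z<1/16$, so every eigenvalue lies below $B_{l,n}$ and the positive-order derivatives of $g$ vanish (the argument already used in Proposition~\ref{prop:full-grid-comparison}). For wide matrices we first subtract the deterministic zero-mode term \eqref{eq:high-physical-detector}. On $\Omega_N(D)$ these steps give $|D_e^\alpha F_e|\le CN^{\varepsilon}\Psi_N$ for $1\le|\alpha|\le6$. On the complement, polynomial bounds combined with the conditional failure probability $N^{-D}$ give the $N^{-9}$ term. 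Summing over the $O(N^2)$ coordinates with the weights $N^{-r/2}$ yields $|\mathcal L_4|\lesssim N^{-3/10}$, $|\mathcal L_5|\lesssim N^{-4/5}$ and $|\mathcal R_6|\lesssim N^{-79/100}$.

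The main obstacle is $\mathcal L_3^{\rm mov}$. A naive bound yields only $N^{1/2}\Psi_N$, which is not small, so we must invoke Theorem~\ref{thm:four-orientation-recursion} through Corollary~\ref{prop:four-orientation-recursion}. The key observation is that every moving source has at least one unrevealed label, because a coordinate outside the product corner has either an unrevealed row or an unrevealed column. Each product-rule monomial of \eqref{eq:recursion-third-source} therefore has an unrevealed unmatched index and starts the $h=0$ recursion. We take initial budgets $F_0=k_0=3$ and $K_{\rm der}\ge k_4+6$, choosing $D$ afterwards. The iteration of \eqref{eq:recursion-h0}--\eqref{eq:recursion-h1} then gives $\mathcal R_{1,3,3,0}=O(N^{-1+\varepsilon})$, with the degree-lowering branches \eqref{eq:revision-cubic-drop}--\eqref{eq:revision-single-offdiag} and the old-label contributions \eqref{eq:recursion-old-cubic} already absorbed. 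Restoring the $N^{1/2}$ source normalization gives $|\mathcal L_3^{\rm mov}|\le CN^{-1/2+1/100}$. Two points must be checked to justify this step. First, the tensors stay inside the conditional expectation given $\mathcal G$. Second, the mask for the frozen corner has zero moving derivatives.

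Collecting the five contributions, we obtain $|\partial_t\Phi_N|\le e_{{\rm hom},N}(t)\le C_{\tau_*}\mathfrak e_N$ on $\mathcal H_N(D)$, uniformly in $0<\tau\le\tau_*$. This holds for almost every $t$; differentiability follows from the smoothness of the finite-dimensional observable and the OU semigroup. Integrating over $[0,8\log N]$ gives $C_{\tau_*}N^{-1/4}\log N\cdot N^{-1/20}\le C_{\tau_*}N^{-1/4}$. The good histories have complements of probability $O(N^{-5})$ or $O(N^{-6})$, which are summable along the lacunary scales $N_l$.
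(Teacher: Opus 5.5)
Your proposal is correct and follows essentially the paper's route. It runs a conditional Ornstein--Uhlenbeck cumulant expansion of the single global Laplace observable with the northwest corner frozen. The section bounds come from the frozen-corner local law, height transfer and the $\ell^1$ ladder-tensor lemma, with the upper spectral term inactive on cutoff-derivative support and the zero mode subtracted. The third-cumulant term is controlled by the unmatched-index recursion, since every moving coordinate has an unrevealed label. This reproduces the five contributions of $\mathfrak e_N$.

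One wording point: the non-Gaussian moving coordinates are only second-order circular ($\E\xi_e^2=0$), not rotation invariant. That is all your argument uses, since you correctly retain and estimate the third cumulants.
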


\supplementproof{supp.E.4}{Section E.4}

\subsection*{Two strips and conditional low occurrence}

At time $8\log N$, the homogeneous flow keeps the northwest corner fixed.
The terminal coupling replaces the moving array by its exact Gaussian
partner. We then remove the two cross strips to reach the block-diagonal
reference family.

With the northwest block and southeast core fixed, follow $(\sqrt v,1)$
from the full matrix to $(0,1)$, then $(0,\sqrt v)$ to $(0,0)$.
The normalization identities in Section~\ref{sec:comparison} match the
spectral endpoints and heights along both paths. The Gaussian heat
identity expresses each derivative through second coordinate derivatives
of the same cutoff sum, which the Schur estimates control. At $(0,0)$,
the nested family has the conditional joint law of the Gaussian
block-diagonal reference.

Adding the homogeneous, coupling, and strip errors gives the conditional
Laplace-transform comparison. The reference cutoff sum dominates the
retained Gaussian count, and the target cutoff sum vanishes outside the
left-tail event. The moment bounds and
Lemma~\ref{lem:conditional-occurrence-closure} give the required lower
bound for its conditional probability.

\begin{thm}[Conditional left-tail lower bound]
\label{thm:low-endpoint-occurrence}
Let \((x_{i,j})_{i,j\geq1}\) be mutually independent complex random variables satisfying
\[
 \E x_{i,j}=0,
 \qquad
 \E\abs{x_{i,j}}^2=1,
 \qquad
 \E[x_{i,j}^2]=0,
\]
with uniform bounds on every fixed absolute moment. Let \(\gamma>0\), let \((M_N)\) be nondecreasing, and assume
\[
 \frac{M_N}{N}\longrightarrow\gamma,
 \qquad
 \sup_N(M_{N+1}-M_N)<\infty.
\]
Put
\[
 X^{(n)}=(x_{i,j})_{1\leq i\leq M_n,\ 1\leq j\leq n},
\]
\[
 e_n=(\sqrt{M_n}+\sqrt n)^2,
\]
\[
 d_n
 =
 (\sqrt{M_n}+\sqrt n)
 (M_n^{-1/2}+n^{-1/2})^{1/3},
\]
\[
 \chi_n^+
 =
 \frac{\norm{X^{(n)}}_{\op}^2-e_n}{d_n},
 \qquad
 s_n=(\log n)^{1/3}.
\]

Fix \(0<b<4^{1/3}\), put
\[
 c=\frac{b^3}{12},
\]
and choose \(\eta,\epsilon,d>0\) such that
\[
 c+\eta+\epsilon<\frac13.
\]
Fix
\[
 \kappa_0=\frac1{1000},
 \qquad
 \nu_0=\frac1{48},
 \qquad
 R_{\mathrm{c}}>1,
\]
and
\[
 \zeta>\frac{c+\eta+12}{\log R_{\mathrm{c}}}.
\]
Choose one
\[
 \rho_{\mathrm{c}}\in C^\infty(\R;[0,1])
\]
equal to one on \((-\infty,-1]\) and zero on \([0,\infty)\).

Take the increasing dyadic scales \(N_l\), past cutoffs
\[
 P_l\leq(\log N_l)^d,
\]
sigma-algebras
\[
 \mathcal G_l
 =
 \mathcal F_l^{\mathrm{past}}
 =
 \sigma\set{x_{i,j}:1\leq i\leq M_{P_l},\ 1\leq j\leq P_l},
\]
low grids \(T_l\subset[N_l,2N_l)\), exact target thresholds, Poisson-plus-upper smoothed statistics, and Gaussian terminal data supplied by Proposition~\ref{prop:full-matrix-poisson-ladder}. Use the chosen \(\rho_{\mathrm{c}}\),
\[
 J_l^{\mathrm{c}}=\lceil\zeta\log N_l\rceil,
\]
the resulting target counts \(S_l^X\), Gaussian block-diagonal terminal counts \(S_l^T\), and retained Gaussian independent-block counts \(\widetilde R_l\).

Then
\[
 \abs{T_l}=N_l^{c+\eta+o(1)},
\]
\begin{equation}\label{eq:low-occurrence-target-support}
 S_l^X>0
 \quad\Longrightarrow\quad
 L_l(b)
 :=
 \bigcup_{n\in T_l}
 \set{\frac{\chi_n^+}{s_n}<-b}.
\end{equation}
There are \(\mathcal G_l\)-measurable events with summable complements on which
\[
 S_l^T\geq\widetilde R_l,
\]
\[
 \E[\widetilde R_l\mid\mathcal G_l]
 \geq
 N_l^{\eta/2}(1-o(1)),
\]
and
\begin{equation}\label{eq:low-occurrence-reference-second-moment}
 \E[\widetilde R_l^2\mid\mathcal G_l]
 \leq
 (1+o(1))
 \E[\widetilde R_l\mid\mathcal G_l]^2,
\end{equation}
where the \(o(1)\) terms are deterministic. In particular,
$\Var(\widetilde R_l\mid\mathcal G_l)=
o(\E[\widetilde R_l\mid\mathcal G_l]^2)$.

There are \(\mathcal G_l\)-measurable events
\[
 \mathcal H_l^{-}
\]
and deterministic numbers \(\epsilon_l\to0\) such that
\[
 \sum_l
 \Prob((\mathcal H_l^{-})^c)
 <\infty
\]
and, on \(\mathcal H_l^{-}\), almost surely, for every sufficiently large \(l\),
\begin{equation}\label{eq:low-occurrence-conditional-bound}
 \Prob(L_l(b)\mid\mathcal G_l)
 \geq
 1-\epsilon_l.
\end{equation}
\end{thm}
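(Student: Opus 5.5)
The proof follows the pattern of Theorem~\ref{thm:high-occurrence}, with one extra stage that removes the cross strips. Fix $l$ large and write $N=N_l$, $\mathcal G=\mathcal G_l$, $T=8\log N$. The structural claims come directly from Proposition~\ref{prop:full-matrix-poisson-ladder}: the cardinality of $T_l$, the target support \eqref{eq:low-occurrence-target-support}, the domination $S_l^T\ge\widetilde R_l$ on $\mathcal G_l^{\rm past}$, and the conditional first and second moments of $\widetilde R_l$. Its hypotheses require $D>c+\eta+2$ in the smoothing events; I take $D$ large. Proposition~\ref{prop:poisson-activation}, via the retention remark after Lemma~\ref{lem:conditional-occurrence-closure}, then upgrades the second-moment bound to $\Var(\widetilde R_l\mid\mathcal G)=o(\widetilde m_l^2)$. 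Because the Gaussian arrays $Z_l$ are independent of $\mathcal G$, all these $o(1)$ terms are deterministic.

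The core step is a conditional Laplace comparison
\[
 \bigl|\E[\ee^{-\tau S_l^X}\mid\mathcal G]-\E[\ee^{-\tau S_l^T}\mid\mathcal G]\bigr|\le\beta_l\to0,
\]
uniform for fixed $\tau\in(0,1)$, on a good history event. I chain three comparisons, each carried out in the common $\sqrt N$ normalization with the northwest corner frozen.
\begin{enumerate}
\item \emph{Homogeneous flow.} Proposition~\ref{prop:homogeneous-derivative-bound} moves every nonpast coordinate along the OU flow, with integrated error $CN^{-1/4}$ on histories with summable complements.
\item \emph{Terminal coupling.} Proposition~\ref{prop:terminal-grid} replaces the time-$T$ moving array by its exact Gaussian partner. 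Here the common fixed block $C_i$ is the past matrix, and its operator norm is at most $N^{1/2}$ on $\mathcal G_l^{\rm past}$, which has summable complement. Since $|T_l|\le N^{\theta}$ with $\theta<1/3<4/3-25\kappa_0$, this stage costs $N^{-c_{\rm term}}$.
\item \emph{Strip removal.} Theorem~\ref{thm:natural-tag-interpolation}(4) follows the paths $(\sqrt v,1)$ and then $(0,\sqrt v)$. Its differential inequality \eqref{eq:natural-differential-inequality} reads $|\partial_v\Phi_j|\le a_j(v)\mathcal D_j(v,\tau/2)+e_j(v)$. Since $\mathcal D_j\le\sup_s s\ee^{-s/2}$ is bounded, integration gives error $O(N^{-1/4}+N^{-9})$. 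The required history events are the $\mathcal G$-measurable ones with summable complements supplied by that theorem, together with $\|A_k^{\rm raw}\|_{\op}\le p_k^{3/16}$.
\end{enumerate}
At $(0,0)$ every matrix equals $\diag(N^{-1/2}A^{\rm raw},N^{-1/2}D^{\rm raw})$. After permutation and rescaling this is exactly $W_{l,n}^T$, up to the normalization identities, so the terminal observable is $S_l^T$.

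The main obstacle is bookkeeping at the endpoints. I must verify that one and the same cutoff observable is used throughout: target, flow, coupling, and both strip paths. In particular, the spectral endpoints $a_{l,n},B_{l,n}$ and the heights $\widehat\eta_{l,n}$ have to satisfy the windows \eqref{eq:natural-full-domain}--\eqref{eq:natural-core-domain} under the $p_{l,n}$, $n$, and $N$ normalizations. The wide-matrix zero-mode correction also enters as a deterministic constant, so it does not disturb any derivative bounds. Once this consistency is checked, the three errors simply add.

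Finally, let $\mathcal H_l^-$ be the intersection of the history events from the three stages with $\mathcal G_l^{\rm past}$; its complement is summable. On $\mathcal H_l^-$, apply the second bound of Lemma~\ref{lem:conditional-occurrence-closure} with $a=1/2$, $E=L_l(b)$, $S^X=S_l^X$ (which vanishes off $L_l(b)$), $S^T=S_l^T\ge R=\widetilde R_l$, and $\beta=\beta_l$. This gives
\[
 \Prob(L_l(b)\mid\mathcal G)\ge1-4v_l-\ee^{-\tau N^{\eta/2}(1-o(1))/2}-\beta_l=:1-\epsilon_l,
\]
where $v_l=\Var(\widetilde R_l\mid\mathcal G)/\widetilde m_l^2$. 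Each term is deterministic and tends to zero, which is \eqref{eq:low-occurrence-conditional-bound}.
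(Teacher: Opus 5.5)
Your proposal is correct and follows the paper's own proof: homogeneous Ornstein--Uhlenbeck comparison with the northwest corner frozen, terminal Gaussian coupling, the two strip paths $(\sqrt v,1)$ and $(0,\sqrt v)$ via Theorem~\ref{thm:natural-tag-interpolation}, and then Lemma~\ref{lem:conditional-occurrence-closure} with $a=1/2$. The bookkeeping you flag (common observable, endpoint windows, deterministic zero-mode shift) is the same. One small refinement: at $(0,0)$ the paper does not claim pathwise equality with $W_{l,n}^T$. It builds the flow from an independent Gaussian array $W$ and uses only that, given $\mathcal G_l$, the two block-diagonal nested families have the same conditional joint law; this is all that is needed, since only conditional Laplace transforms enter.
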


\begin{proof}
\noindent\textbf{Step 1. Matrix family and comparison.}
Fix the parameters in the statement, and write
\[
 N=N_l,
 \qquad
 P=P_l,
 \qquad
 \mathcal G=\mathcal G_l.
\]
Proposition~\ref{prop:full-matrix-poisson-ladder} gives
\[
 \abs{T_l}=N^{c+\eta+o(1)}.
\]
Since
\[
 \zeta\log R_{\mathrm{c}}>c+\eta+12,
\]
the prescribed depth \(J_l^{\mathrm{c}}=\lceil\zeta\log N\rceil\) satisfies
\begin{equation}\label{eq:low-occurrence-final-layer}
 \abs{T_l}R_{\mathrm{c}}^{-J_l^{\mathrm{c}}}\leq N^{-12}
\end{equation}
for every sufficiently large \(l\). Proposition~\ref{prop:homogeneous-derivative-bound} therefore applies to exactly \(S_l^X\), with the same physical matrices, endpoints, heights, cutoff levels, and cutoff.

We first account for structural zero eigenvalues in the smoothed count. Fix
\[
 g_0\in C^\infty(\R;[0,1])
\]
that is zero on \((-\infty,0]\) and one on \([1,\infty)\). For a matrix with normalized upper endpoint \(B_i\) and height \(\eta_i\), choose
\begin{equation}\label{eq:low-occurrence-upper-cutoff}
 g_i(x)=g_0\left(\frac{x-B_i}{\eta_i}\right).
\end{equation}
Then
\[
 \operatorname{Lip}(g_i)
 \leq
 \norm{g_0'}_\infty\eta_i^{-1}.
\]
In a fixed orientation, let \(m_i,n_i\) be the full row and column dimensions, put
\[
 h_i=(n_i-m_i)_+,
\]
and let \(P_i\) be the Poisson function of the smoothed statistic. Since the full column covariance has precisely \(h_i\) additional zero eigenvalues when \(n_i>m_i\), while \(g_i(0)=0\) for large \(N\), the resolvent-trace smoothed statistic
\begin{equation}\label{eq:low-occurrence-trace-detector}
 Z_i^{\mathrm{tr}}(L_i)
 =
 \frac1\pi
 \int_{a_i}^{B_i}
 \Im\Tr(L_i-y-\ii\eta_iI)^{-1}\,\dd y
 +
 \Tr g_i(L_i)
\end{equation}
and the min-dimension singular-value smoothed statistic satisfy
\begin{equation}\label{eq:low-occurrence-zero-mode-correction}
 Z_i^{\min}(L_i)
 =
 Z_i^{\mathrm{tr}}(L_i)-h_iP_i(0).
\end{equation}
The correction is deterministic and coordinate-independent, so all positive coordinate derivatives of the two sides agree. This permits one common fresh-smaller orientation throughout the terminal and strip comparisons, including when the limiting aspect is one.

On one product extension, choose a circular standard complex Gaussian raw array \(W\), independent of the original array and all other auxiliary randomness. Use the same array in the homogeneous flow, terminal approximation, and both strip paths. At the final block-diagonal endpoint, only equality of conditional laws is needed: given \(\mathcal G\), the nested independent-block restrictions of \(W\) and those of the Gaussian array in Proposition~\ref{prop:full-matrix-poisson-ladder} are both northwest restrictions of infinite arrays of independent centered circular complex Gaussian variables of second absolute moment one, independent of the same fixed northwest past. Thus the two nested families have the same conditional joint law.

Let \(S_{\mathrm{hom},l}(t)\) denote the homogeneous count and
\[
 \Phi_{\mathrm{hom},l}(t,\tau)
 =
 \E[
 \exp(-\tau S_{\mathrm{hom},l}(t))
 \mid\mathcal G].
\]
Then
\[
 S_{\mathrm{hom},l}(0)=S_l^X.
\]
By Proposition~\ref{prop:homogeneous-derivative-bound}, on histories with summable exceptional probabilities,
\begin{equation}\label{eq:low-occurrence-homogeneous-error}
 \left|
 \Phi_{\mathrm{hom},l}(8\log N,\tau)
 -
 \Phi_{\mathrm{hom},l}(0,\tau)
 \right|
 \leq
 C_\tau N^{-1/4}
\end{equation}
for every fixed \(\tau>0\). At time \(8\log N\), every moving normalized coordinate is
\begin{equation}\label{eq:low-occurrence-terminal-gaussianization}
 N^{-4}X_e+\sqrt{1-N^{-8}}\,W_e,
\end{equation}
while the northwest past remains fixed.

Choose a fixed \(\theta\) with
\[
 c+\eta<\theta<\frac13.
\]
Then \(\abs{T_l}\leq N^\theta\) for large \(l\), and in particular
\[
 \theta<\frac43-25\kappa_0.
\]
For \(n\in T_l\), put
\[
 u=M_n-M_P,
 \qquad
 v=n-P,
\]
\[
 p=\min\{u,v\},
 \qquad
 q=\max\{u,v\}.
\]
Orient the matrix for the natural-scale, terminal, and strip estimates so that \(p\) is the fresh column dimension and \(q\) is the fresh row dimension. The past dimensions are swapped simultaneously when conjugate transposition is used.

Bounded increments, \(M_N/N\to\gamma\), \(n\in[N,2N)\), and \(P\leq(\log N)^d\) imply
\begin{equation}\label{eq:low-occurrence-dimension-range}
 c_{\mathrm{dim}}N
 \leq p\leq q
 \leq C_{\mathrm{dim}}N.
\end{equation}
All matrices are restrictions of one raw array, and their frozen coordinates form the single product corner
\begin{equation}\label{eq:low-occurrence-frozen-corner}
 \{1,\ldots,M_P\}\times\{1,\ldots,P\}.
\end{equation}

Use the unnormalized spectral data
\begin{equation}\label{eq:low-occurrence-spectral-data}
 \widehat A_k=t_{l,n}^X-\widehat\ell_{l,n},
 \qquad
 \widehat T_k=t_{l,n}^X,
 \qquad
 \widehat B_k=B_{l,n},
 \qquad
 \widehat\eta_k=p^{1/3-12\kappa_0}.
\end{equation}
Proposition~\ref{prop:full-matrix-poisson-ladder} gives
\[
 \widehat A_k<\widehat T_k\leq\widehat B_k,
 \qquad
 \widehat T_k-\widehat A_k\geq4\widehat\eta_k.
\]
The difference between the full upper edge and independent-block upper edge is \(O(M_P+P)\). Consequently,
\begin{equation}\label{eq:low-occurrence-edge-window}
\begin{aligned}
 &\abs{\widehat A_k-(\sqrt{m_k}+\sqrt{n_k})^2}
 +
 \abs{\widehat B_k-(\sqrt{m_k}+\sqrt{n_k})^2}\\
 &\qquad
 \leq
 C\left[
 N^{1/3}(\log N)^{1/3}
 +(M_P+P)
 +N^{1/3+\kappa_0}
 \right].
\end{aligned}
\end{equation}
After division by the appropriate full dimension, all terms except the final endpoint displacement are
\[
 o(N^{-2/3+\kappa_0}),
\]
while the final displacement has leading coefficient four. Thus both normalized full endpoints lie within
\[
 5n_k^{-2/3+\kappa_0}
\]
of the full upper edge. After division by \(p\), the same argument places both endpoints within
\[
 5p^{-2/3+\kappa_0}
\]
of the independent-block upper edge.

For the full dimensions \(m_k,n_k\), put
\[
 h_k=(n_k-m_k)_+.
\]
In Proposition~\ref{prop:terminal-grid}, choose affine sign \(+1\) and shift
\begin{equation}\label{eq:low-occurrence-affine-shift}
 \alpha_k
 =
 -\delta_l^{-1}
 \left(\frac1{16}+h_kP_k(0)\right).
\end{equation}
By \eqref{eq:low-occurrence-zero-mode-correction}, its affine coordinate is exactly
\[
 \delta_l^{-1}(Z_k^{\min}-1/16),
\]
so its complete cutoff sum is the specified target cutoff sum even when the full column dimension is larger. Equation~\eqref{eq:low-occurrence-upper-cutoff} gives the required upper-cutoff Lipschitz bound. The interval width is
\[
 O(N^{-2/3+\kappa_0}),
\]
the height is comparable to
\[
 N^{-2/3-12\kappa_0},
\]
the depth satisfies \(J_l^{\mathrm{c}}=O(\log N)\), and the inverse transition width is \(O(\log N)\).

On the event \(\mathcal G_l^{\mathrm{past}}\) from Proposition~\ref{prop:full-matrix-poisson-ladder}, the fixed normalized northwest block has operator norm at most \(1/2\). Proposition~\ref{prop:terminal-grid}, applied to \eqref{eq:low-occurrence-terminal-gaussianization}, therefore gives a constant \(c_{\mathrm{term}}>0\) such that
\begin{equation}\label{eq:low-occurrence-terminal-error}
 \left|
 \Phi_{\mathrm{hom},l}(8\log N,\tau)
 -
 \Phi_{\mathrm G,l}(\tau)
 \right|
 \leq
 N^{-c_{\mathrm{term}}},
\end{equation}
where \(\Phi_{\mathrm G,l}\) is the conditional Laplace transform of the exact Gaussian all-moving count with the same matrix family, smoothed statistic, cutoff sum, and cutoff.

We next connect the all-moving point to the block-diagonal point by two exact strip paths. The first path has amplitudes
\[
 (\sqrt v,1),
\]
so \(v=1\) is the all-moving Gaussian point and \(v=0\) is the point \((0,1)\). The second path has amplitudes
\[
 (0,\sqrt v),
\]
so \(v=1\) is the same point \((0,1)\), while \(v=0\) is the block-diagonal matrix with the fixed northwest past and fresh core from \(W\), divided by \(\sqrt N\).

Define
\begin{equation}\label{eq:low-occurrence-corrected-coordinate}
 \overline Z_k
 =
 Z_k^{\mathrm{tr}}-h_kP_k(0)
 =
 Z_k^{\min},
 \qquad
 y_k=\frac{\overline Z_k-1/16}{\delta_l}.
\end{equation}
Use these coordinates in every cutoff sum summand on both strip paths.

Fix an active strip label \(e\), a real direction \(a\), and a path point \(v\). Let \(\mathcal A_{e,a,v}\) be the set of matrices containing \(e\) for which some cutoff level has nonzero \(\rho_{\mathrm{c}}'\) or \(\rho_{\mathrm{c}}''\). Because the term subtracted in \eqref{eq:low-occurrence-corrected-coordinate} is deterministic, the first and second strip-coordinate derivatives equal those of the trace smoothed statistic.

For \(k\in\mathcal A_{e,a,v}\), derivative support lies in the transition interval. Thus
\[
 y_k+J_l^{\mathrm{c}}-r<0
\]
for some \(0\leq r\leq J_l^{\mathrm{c}}\), which implies
\[
 y_k<0,
 \qquad
 \overline Z_k<\frac1{16}.
\]
The smoothed statistic support implication gives
\[
 \norm{W_k}_{\op}^2<t_k\leq B_k.
\]
Hence the upper spectral cutoff is locally constant. The strip-coordinate estimates in Theorem~\ref{thm:natural-tag-interpolation} give
\[
 \max_{k\in\mathcal A_{e,a,v}}
 \abs{\partial_{h_a}y_k}
 \leq CN^{-3/10},
\]
\begin{equation}\label{eq:low-occurrence-strip-second-derivative}
 \max_{k\in\mathcal A_{e,a,v}}
 \abs{\partial_{h_a}^2y_k}
 \leq CN^{-3/10}.
\end{equation}

Let \(S_{j,l}(v)\) be the target cutoff sum on strip \(j\), and put
\[
 \Phi_{j,l}(v,\tau)
 =
 \E[\exp(-\tau S_{j,l}(v))\mid\mathcal G],
\]
\[
 D_{j,l}(v,\tau)
 =
 \E[\tau S_{j,l}(v)\exp(-\tau S_{j,l}(v))\mid\mathcal G].
\]
For one active circular Gaussian raw coordinate, write its two real coordinates as \(Q_1,Q_2\), each of variance \(1/2\), and set
\[
 h_a(v)=\frac{\sqrt v\,Q_a}{\sqrt N}.
\]
Real Gaussian integration by parts gives the exact heat identity
\begin{equation}\label{eq:low-occurrence-heat-identity}
 \partial_v\Phi_{j,l}(v,\tau)
 =
 \frac1{4N}
 \sum_{e\in E_j}\sum_{a=1}^2
 \E\left[
 \partial_{h_a}^2
 \exp(-\tau S_{j,l,e}(h(v)))
 \,\middle|\,
 \mathcal G
 \right].
\end{equation}
This extends continuously to \(v=0\).

The chain rule gives
\[
 \partial_{h_a}^2\exp(-\tau S)
 =
 \exp(-\tau S)
 \left[
 -\tau\partial_{h_a}^2S
 +\tau^2(\partial_{h_a}S)^2
 \right].
\]
Inactive matrices contribute zero to both derivatives because all their cutoff derivatives vanish. Applying Lemma~\ref{lem:ladder-absorption}, using \eqref{eq:low-occurrence-final-layer} and \eqref{eq:low-occurrence-strip-second-derivative}, gives
\[
 \abs{\partial_{h_a}S}
 \leq
 CN^{-3/10}S+CN^{-12},
\]
\begin{equation}\label{eq:low-occurrence-ladder-second-derivative}
 \abs{\partial_{h_a}^2S}
 \leq
 C(N^{-3/10}+N^{-3/5})S+CN^{-12}.
\end{equation}

For \(x\geq0\), both \(x\ee^{-\tau x}\) and \(x^2\ee^{-\tau x}\) are bounded by a \(\tau\)-dependent constant times \(x\ee^{-\tau x/2}\). The number of strip labels is at most
\[
 C(M_P+P)N.
\]
Choosing the conditional local-law failure exponent sufficiently large pays for the polynomial resolvent bound, the coordinate truncation, and this label count. Substituting \eqref{eq:low-occurrence-ladder-second-derivative} into \eqref{eq:low-occurrence-heat-identity} therefore gives nonnegative functions \(a_{j,l}\) and \(e_{j,l}\) such that
\begin{equation}\label{eq:low-occurrence-strip-differential-bound}
 \abs{\partial_v\Phi_{j,l}(v,\tau)}
 \leq
 a_{j,l}(v)D_{j,l}(v,\tau/2)+e_{j,l}(v),
\end{equation}
with
\begin{equation}\label{eq:low-occurrence-main-coefficient-integral}
 \int_0^1a_{j,l}(v)\,\dd v
 \leq
 C(M_P+P)(N^{-3/10}+N^{-3/5})
 \leq N^{-1/4},
\end{equation}
and
\begin{equation}\label{eq:low-occurrence-error-integral}
 \int_0^1e_{j,l}(v)\,\dd v
 \leq C_\tau N^{-9}.
\end{equation}

Let \(S_l^{(T,W)}\) denote the final block-diagonal count built from \(W\), while \(S_l^T\) denotes the count built from the Gaussian array in Proposition~\ref{prop:full-matrix-poisson-ladder}. Conditionally on \(\mathcal G\), the two block-diagonal nested families have the same joint law. Hence
\begin{equation}\label{eq:low-occurrence-reference-law}
 \E[\exp(-\tau S_l^{(T,W)})\mid\mathcal G]
 =
 \E[\exp(-\tau S_l^T)\mid\mathcal G].
\end{equation}

\noindent\textbf{Step 2. Fixed-parameter Laplace telescoping.}
Fix
\[
 \tau=\frac12.
\]
For every nonnegative random variable \(S\) and every \(u>0\),
\begin{equation}\label{eq:low-occurrence-laplace-elementary-bound}
 \E[uS\exp(-uS)\mid\mathcal G]\leq\ee^{-1},
\end{equation}
because \(x\ee^{-x}\leq\ee^{-1}\) for \(x\geq0\).

Integrating \eqref{eq:low-occurrence-strip-differential-bound} and using \eqref{eq:low-occurrence-main-coefficient-integral}--\eqref{eq:low-occurrence-laplace-elementary-bound} gives
\begin{equation}\label{eq:low-occurrence-strip-error}
\begin{aligned}
 &\abs{\Phi_{1,l}(1,\tau)-\Phi_{1,l}(0,\tau)}
 +
 \abs{\Phi_{2,l}(1,\tau)-\Phi_{2,l}(0,\tau)}\\
 &\qquad
 \leq
 2\ee^{-1}N^{-1/4}
 +
 2C_\tau N^{-9}.
\end{aligned}
\end{equation}
The two paths meet at the same one-strip endpoint.

Combining \eqref{eq:low-occurrence-homogeneous-error}, \eqref{eq:low-occurrence-terminal-error}, \eqref{eq:low-occurrence-reference-law}, and \eqref{eq:low-occurrence-strip-error}, define
\begin{equation}\label{eq:low-occurrence-total-comparison-error}
 \beta_l
 =
 C_\tau N_l^{-1/4}
 +N_l^{-c_{\mathrm{term}}}
 +2\ee^{-1}N_l^{-1/4}
 +2C_\tau N_l^{-9}.
\end{equation}
Then \(\beta_l\to0\), and on the intersection of the good histories,
\begin{equation}\label{eq:low-occurrence-laplace-comparison}
 \left|
 \E[\exp(-\tau S_l^X)\mid\mathcal G_l]
 -
 \E[\exp(-\tau S_l^T)\mid\mathcal G_l]
 \right|
 \leq\beta_l.
\end{equation}

\noindent\textbf{Step 3. Second-moment argument.}
Put $R_l=\widetilde R_l$ and $m_l=\E[R_l\mid\mathcal G_l]$.
Proposition~\ref{prop:full-matrix-poisson-ladder} gives, on its good
histories,
\[
 S_l^T\ge R_l,\qquad m_l\ge N_l^{\eta/2}(1-o(1)),\qquad
 v_l^-:=\frac{\Var(R_l\mid\mathcal G_l)}{m_l^2}=o(1).
\]
The moments of this retained count are deterministic because it is a
function of the independent Gaussian array. Apply the variance estimate
in Lemma~\ref{lem:conditional-occurrence-closure} with $a=1/2$.
The support implication \eqref{eq:low-occurrence-target-support} and
comparison \eqref{eq:low-occurrence-laplace-comparison} give
\[
 \Prob(L_l(b)\mid\mathcal G_l)\ge1-\epsilon_l,
\]
where
\begin{equation}\label{eq:low-occurrence-final-error}
 \epsilon_l=4v_l^-+e^{-\tau m_l/2}+\beta_l\longrightarrow0.
\end{equation}
Let $\mathcal H_l^-$ be the intersection of the good histories used for
the past, the homogeneous flow, and the two strip paths. Their
complements have summable probabilities, and the preceding bound holds
on $\mathcal H_l^-$.
\end{proof}

The two occurrence estimates will be combined with the fixed-size tail
bounds and control of the intervening matrix sizes developed next.

\section{Tail estimates and eigenvalue increments}\label{sec:fixed-level}

\subsection*{Tail estimates and short-range interpolation}

We now bound excursions beyond the proposed endpoints. Gaussian Laguerre
tails and fixed-size comparison give polynomial probability bounds with
exponents strictly above \(1/3\). One-step row and column estimates,
combined with martingale control, bound excursions between consecutive
points of a polynomial grid. These two inputs yield the summable
pathwise exclusions used in the main proof.

\begin{prop}[Gaussian tail estimates]\label{prop:lfl-local-gaussian-supercritical-margins}
Let \(M_N/N\to\gamma>0\), and let \(G_N\) be an \(M_N\times N\) matrix with independent centered circular complex Gaussian entries of second absolute moment one.  With
\[
 \chi_{N,G}^+
 =
 \frac{\lambda_{\max}(N^{-1}G_NG_N^*)-(\sqrt{M_N}+\sqrt N)^2/N}
 {N^{-1}(\sqrt{M_N}+\sqrt N)(M_N^{-1/2}+N^{-1/2})^{1/3}},
\]
\(r_N=(\log N)^{2/3}\), \(s_N=(\log N)^{1/3}\),
\(A_2=(1/4)^{2/3}\), and \(B_2=4^{1/3}\), the following hold:
\begin{enumerate}
\item for every \(q>A_2\), there are \(\eta_q^{G,+}>1/3\) and \(N_q^{G,+}\) such that
\[
 \Prob(\chi_{N,G}^+\geq qr_N)\leq N^{-\eta_q^{G,+}}
 \quad(N\geq N_q^{G,+});
\]
\item for every \(q>B_2\), there are \(\eta_q^{G,-}>1/3\) and \(N_q^{G,-}\) such that
\[
 \Prob(\chi_{N,G}^+\leq-qs_N)\leq N^{-\eta_q^{G,-}}
 \quad(N\geq N_q^{G,-}).
\]
\end{enumerate}
\end{prop}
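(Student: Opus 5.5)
Both parts will follow from known non-asymptotic tail bounds for the largest eigenvalue of complex Laguerre (LUE) matrices, applied uniformly in the aspect ratio. By Lemma~\ref{lem:normalization-dictionary}, $\chi_{N,G}^+$ equals $(\lambda_{\max}(G_N^*G_N)-a_N)/b_N$, which is the standardized LUE largest eigenvalue with parameters $(M_N,N)$ in the physical normalization. The input is the moderate-deviation tail estimates for LUE at the soft edge (as in \cite{BBBK24} and the Ledoux--Rider type bounds). These hold uniformly for $x$ in a range $1\le x\le N^{c}$ and for aspect ratios in a compact subset of $(0,\infty)$, in the form
\[
 \Prob(\chi_{N,G}^+\ge x)\le C\exp\bigl(-(\tfrac43-\epsilon)x^{3/2}\bigr),\qquad
 \Prob(\chi_{N,G}^+\le -x)\le C\exp\bigl(-(\tfrac1{12}-\epsilon)x^{3}\bigr),
\]
for every fixed $\epsilon>0$. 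I would take these as the citable inputs. The upper bound can also be derived from the determinantal representation and Plancherel--Rotach asymptotics of Laguerre kernels. The lower bound comes from the Coulomb-gas/large-deviation estimate at the $x^3/12$ rate, which is the complex ($\beta=2$) case of the $\beta x^3/24$ left tail.

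Given these bounds, part~(1) is arithmetic. Set $x=qr_N=q(\log N)^{2/3}$. Then $x^{3/2}=q^{3/2}\log N$, so the probability is at most $CN^{-(4/3-\epsilon)q^{3/2}}$. Since $q>A_2=(1/4)^{2/3}$, we have $\tfrac43q^{3/2}>\tfrac13$. Choose $\epsilon$ so small that $\eta_q^{G,+}:=(\tfrac43-\epsilon)q^{3/2}-\epsilon'>1/3$, and absorb the constant $C$ for large $N$. Part~(2) is identical with $x=qs_N$, where $x^3=q^3\log N$ and $q^3/12>4/12=1/3$. The window condition $x\le N^c$ clearly holds, since $x$ is polylogarithmic.

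The main obstacle is securing the tail inputs with the correct constants \emph{uniformly} along the sequence $M_N/N\to\gamma$, which is not exactly equal to $\gamma$. The standardization uses $(M_N,N)$ rather than the $p+\tfrac12$-shifted parameters of Paper~I, and the difference between these centerings is $O(N^{-1/3})$ in standardized units, which is negligible against $(\log N)^{1/3}$. For the left tail the cited bound must have the sharp rate $1/12$ rather than just some cubic rate, so I would rely on the complex-case estimate of \cite{BBBK24}. If its stated form is for a fixed ratio, I would rerun its argument with the ratio varying in a compact set. Alternatively, I would use Paper~I's Gaussian product-scale estimates, which give the one-point probabilities $\Prob(\chi^+_{N,G}\ge qr_N)=N^{-\frac43q^{3/2}+o(1)}$ and $\Prob(\chi_{N,G}^+\le -qs_N)=N^{-q^3/12+o(1)}$ uniformly under bounded shifts. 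Either route yields exponents strictly above $1/3$.
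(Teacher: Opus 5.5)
Your proposal is correct and follows the paper's route. Transpose if needed, which leaves $\chi_{N,G}^+$ unchanged because the center and scale are symmetric in the two dimensions, and apply the sharp $\beta=2$ bounded-aspect Laguerre tail upper bounds \cite[Theorems~1.1(ii) and 1.4(ii)]{BBBK24} at $t=q(\log N)^{2/3}$ and $t=q(\log N)^{1/3}$; these lie in the permitted ranges $t\le\gamma(\varepsilon,M)d_N^{2/3}$ and $t\le d_N^{1/6-\delta}$, and you then pick the exponent strictly between $1/3$ and $\frac43(1-\varepsilon)q^{3/2}$, respectively $\frac1{12}(1-\varepsilon)q^3$. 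Those cited theorems are already uniform over aspect ratios bounded by a fixed $M$, so neither rerunning their argument for a varying ratio nor the Paper~I alternative is needed.
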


\begin{proof}
Put
\[
 d_N=\min(M_N,N),
 \qquad
 e_N=\max(M_N,N).
\]
After conjugate transposition when necessary, the nonzero squared singular values of \(G_N\) form the \(\beta=2\) Laguerre ensemble with ordered dimensions \((d_N,e_N)\).  Since \(M_N/N\to\gamma>0\),
\[
 1\leq\frac{e_N}{d_N}=O(1),
 \qquad
 \frac{\log d_N}{\log N}\longrightarrow1.
\]
Fix a deterministic constant \(M>1\) such that
\[
 \frac{e_N}{d_N}\leq M
\]
for all sufficiently large \(N\).  The unnormalized soft-edge center
\[
 (\sqrt{d_N}+\sqrt{e_N})^2
\]
and scale
\[
 (\sqrt{d_N}+\sqrt{e_N})
 (d_N^{-1/2}+e_N^{-1/2})^{1/3}
\]
are symmetric in the two dimensions, so this transposition leaves \(\chi_{N,G}^+\) unchanged.

For the right tail, fix \(q>A_2\) and choose \(\varepsilon_+>0\) so small that
\[
 \frac43(1-\varepsilon_+)q^{3/2}>\frac13.
\]
Apply the bounded-aspect Laguerre right-tail upper bound
\cite[Theorem~1.1(ii)]{BBBK24} with \(\beta=2\) and
\[
 t=q(\log N)^{2/3}.
\]
Because \(d_N\asymp N\), this value of \(t\) is eventually above the theorem's lower threshold and below its upper range
\(\gamma(\varepsilon_+,M)d_N^{2/3}\).  The normalization reduction therefore gives
\[
 \Prob(\chi_{N,G}^+\geq q r_N)
 \leq
 N^{-\frac43(1-\varepsilon_+)q^{3/2}}
\]
for all sufficiently large \(N\).  Choose
\[
 \frac13<\eta_q^{G,+}<\frac43(1-\varepsilon_+)q^{3/2}.
\]

For the left tail, fix \(q>B_2\), choose any sufficiently small
\(\delta>0\), and then choose \(\varepsilon_->0\) so small that
\[
 \frac1{12}(1-\varepsilon_-)q^3>\frac13.
\]
Apply the bounded-aspect Laguerre left-tail upper bound
\cite[Theorem~1.4(ii)]{BBBK24} with \(\beta=2\) and
\[
 t=q(\log N)^{1/3}.
\]
Again \(d_N\asymp N\), so this value is eventually in the permitted range
\(t\leq d_N^{1/6-\delta}\).  Hence
\[
 \Prob(\chi_{N,G}^+\leq-q s_N)
 \leq
 N^{-\frac1{12}(1-\varepsilon_-)q^3}
\]
for all sufficiently large \(N\).  Choose
\[
 \frac13<\eta_q^{G,-}<\frac1{12}(1-\varepsilon_-)q^3.
\]
The identities
\[
 \frac43A_2^{3/2}=\frac13,
 \qquad
 \frac1{12}B_2^3=\frac13
\]
ensure that the two choices are possible.  
\end{proof}

To control the matrix sizes between grid points, we estimate one-step
increments in the two directions of the northwest path. A new column
tests a left singular vector; a new row tests a right singular vector.
After subtraction of the edge-center increment, the leading terms have
conditional mean zero and provide the martingale part of the interpolation.

\begin{lem}[One-step eigenvalue increments]\label{lem:one-step-growth}
Let \(0<c<C<\infty\) be fixed. For each pair of positive integers \((m,n)\) with
\[
cn\leq m\leq Cn,
\]
let \(Y_{m,n}=(x_{ij})_{1\leq i\leq m,\,1\leq j\leq n}\), where the entries are independent centered complex random variables satisfying
\[
\E\abs{x_{ij}}^2=1,
\qquad
\E x_{ij}^2=0,
\]
and have uniformly bounded fixed absolute moments. Put
\[
L_{m,n}=\lambda_{\max}(Y_{m,n}Y_{m,n}^*),
\qquad
A(m,n)=(\sqrt m+\sqrt n)^2.
\]
Let
\[
g=(x_{1,n+1},\ldots,x_{m,n+1})^{\mathsf T},
\]
choose a measurable unit top eigenvector \(u\) of \(Y_{m,n}Y_{m,n}^*\), and put
\[
W_{\mathrm{col}}=\abs{u^*g}^2.
\]
Let
\[
h=(\overline{x_{m+1,1}},\ldots,\overline{x_{m+1,n}})^{\mathsf T},
\]
choose a measurable unit top eigenvector \(v\) of \(Y_{m,n}^*Y_{m,n}\), and put
\[
W_{\mathrm{row}}=\abs{v^*h}^2.
\]
For every \(\rho>0\) and \(D>0\), there is a constant \(C_{\rho,D}<\infty\) such that, for all sufficiently large \(n\), outside an event of probability at most \(C_{\rho,D}n^{-D}\),
\[
L_{m,n+1}-L_{m,n}
\geq
\left(1+\sqrt{\frac mn}\right)W_{\mathrm{col}}
-C_{\rho,D}n^{-1/3+\rho},
\]
and
\[
L_{m+1,n}-L_{m,n}
\geq
\left(1+\sqrt{\frac nm}\right)W_{\mathrm{row}}
-C_{\rho,D}n^{-1/3+\rho}.
\]
Consequently, after increasing \(C_{\rho,D}\) if necessary,
\[
\begin{split}
&[L_{m,n+1}-A(m,n+1)]-[L_{m,n}-A(m,n)]\\
&\qquad\geq
\left(1+\sqrt{\frac mn}\right)(W_{\mathrm{col}}-1)
-C_{\rho,D}n^{-1/3+\rho},
\end{split}
\]
and
\[
\begin{split}
&[L_{m+1,n}-A(m+1,n)]-[L_{m,n}-A(m,n)]\\
&\qquad\geq
\left(1+\sqrt{\frac nm}\right)(W_{\mathrm{row}}-1)
-C_{\rho,D}n^{-1/3+\rho}.
\end{split}
\]
Conditionally on \(Y_{m,n}\), both \(W_{\mathrm{col}}-1\) and \(W_{\mathrm{row}}-1\) have conditional mean zero and uniformly bounded fixed moments.
\end{lem}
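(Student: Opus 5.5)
The plan is to prove the column statement by a rank-one variational bound; the row statement follows by the same argument applied to $Y^*$, with the roles of $m,n$ exchanged. Write $Y'=[Y_{m,n}\mid g]$, so $Y'Y'^*=Y_{m,n}Y_{m,n}^*+gg^*$. Testing $Y'Y'^*$ against a unit vector
\[
 w=\frac{u+tP^\perp g/\lambda}{\norm{u+tP^\perp g/\lambda}}
\]
in the span of $u$ and $g$, or more simply taking the largest root of the $2\times2$ compression of $Y'Y'^*$ onto $\mathrm{span}(u,\,(L-YY^*)^{-1}g)$, gives
\[
 L_{m,n+1}-L_{m,n}\ge\frac{\abs{u^*g}^2}{1-g^*(L_{m,n}+\epsilon-YY^*)^{-1}_{\perp}g}\quad\text{approximately}.
\]
More precisely, I will use the secular equation: $L'=\lambda_{\max}(YY^*+gg^*)$ satisfies $1=g^*(L'-YY^*)^{-1}g$. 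Comparing its top root with $L$ then gives
\[
 L'-L\ge \frac{W_{\mathrm{col}}}{1-g^*R_\perp(L')g},
\]
where $R_\perp$ is the resolvent restricted to the orthogonal complement of $u$.

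The key input is that, with high probability, $g^*R_\perp(z)g$ evaluated at a spectral parameter just above the edge is close to its deterministic value $\operatorname{tr}R(z)$. I will establish this by a large deviation bound for quadratic forms, since $g$ is independent of $Y$, together with the local law. The quantity $\operatorname{tr}R$ at the edge is $\gamma g_\gamma(b_\gamma)$-type, so
\[
 1-\operatorname{tr}\approx(1+\sqrt{m/n})^{-1},
\]
which matches Lemma~\ref{lem:edge-resolvent-derivatives}. The error comes from two sources. The first is rigidity of the eigenvalue gap $L-\lambda_2\gtrsim n^{-1/3-\rho}$, which holds only with probability $1-n^{-\rho'}$, not $1-n^{-D}$. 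To avoid needing a gap, I would instead evaluate at $z=L+n^{-1/3+\rho}$ in unnormalized units, where the local law from Theorem~\ref{thm:flow-local-law} holds with overwhelming probability, and use monotonicity of $x\mapsto g^*(x-YY^*)^{-1}g$ on $(L,\infty)$. If $L'<z$, the secular equation gives a bound in which the contribution from $u$ is $W_{\mathrm{col}}/(L'-L)$ and the remaining part is at most the value at $z$, namely $\frac{\sqrt\gamma}{1+\sqrt\gamma}+O(n^{-1/3+\rho})$ after normalization. Solving this inequality yields $L'-L\ge(1+\sqrt{m/n})W_{\mathrm{col}}(1-O(n^{-1/3+\rho}))$. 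If $L'\ge z$, the claim is trivial because $W_{\mathrm{col}}\le n^{\rho}$ with overwhelming probability by the moment bounds.

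I expect the main obstacle to be making the $u$-direction excision rigorous without a gap: the local law controls the full resolvent at $z$, which includes the $u$-term $\abs{u^*g}^2/(z-L)$, and that term is large. My first attempt is to note that this term only helps in the bound, because it is positive. Writing $g^*R(z)g\ge$ (remaining part), I bound the remaining part above by the full quadratic form at $z$, which is controlled by the local law plus a Hanson--Wright-type fluctuation of order $n^{-1/2+\rho}\norm{R}_F$.

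After that, the center increment $A(m,n+1)-A(m,n)=1+\sqrt{m/n}+O(n^{-1})$ from Lemma~\ref{lem:edge-resolvent-derivatives} gives the centered forms. Conditional mean zero follows from $\E[\abs{u^*g}^2\mid Y]=\norm u^2=1$ by independence; the fixed moments of $W_{\mathrm{col}}$ follow from Rosenthal's inequality for the linear form $u^*g$ with a unit vector $u$.
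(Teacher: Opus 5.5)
Your overall route is the right one: the rank-one update $YY^*+gg^*$, the edge value $\frac mn g_{m/n}(b_{m/n})=\sqrt{m/n}/(1+\sqrt{m/n})$ producing the factor $1+\sqrt{m/n}=\partial_nA$, quadratic-form concentration, the centre increment, and independence for the conditional moments. \textbf{But the central step is carried out in the wrong direction and at the wrong scale, so as written it does not give the lower bound.}

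Write the secular equation as $1=W_{\mathrm{col}}/\Delta+S(L')$, with $\Delta=L_{m,n+1}-L_{m,n}$ and $S(x)=\sum_{k\ge2}\abs{u_k^*g}^2/(x-\lambda_k)$.
\begin{itemize}
\item \emph{Direction.} Since $\Delta=W_{\mathrm{col}}/(1-S(L'))$, a lower bound on $\Delta$ needs a \emph{lower} bound on $S(L')$. You propose an upper bound (``at most the value at $z$'', ``bound the remaining part above by the full quadratic form''), which can only bound $\Delta$ from above.
\item \emph{Monotonicity.} $S$ decreases on $(\lambda_2,\infty)$, so $L'<z$ gives $S(L')\ge S(z)$, not $\le$.
\item \emph{Case split.} It is inverted. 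Since $\Delta\ge u^*gg^*u=W_{\mathrm{col}}$ always, $L'<L+n^{-1/3+\rho}$ forces $W_{\mathrm{col}}<n^{-1/3+\rho}$, so that is the trivial case. The typical case $L'\ge z$ is not settled by $W_{\mathrm{col}}\le n^\rho$.
\item \emph{Scale.} $n^{-1/3+\rho}$ is the admissible error in $\Delta$, not a resolvent scale. At $z=L+n^{-1/3+\rho}$ one has $\norm{(z-YY^*)^{-1}}_F\ge n^{1/3-\rho}$, so no concentration of $g^*(z-YY^*)^{-1}g$ is available. The $u$-term alone there is $W_{\mathrm{col}}n^{1/3-\rho}$.
\end{itemize}

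The repair uses your remark that positive terms only help, but on the correct side. Take the deterministic point $x_0=A(m,n+1)+n^{1/3+\epsilon}$; this is unnormalized, i.e.\ distance $n^{-2/3+\epsilon}$ after division by $n$.
\begin{itemize}
\item Edge rigidity for $Y$ and for $[Y\mid g]$ gives, with overwhelming probability, $L'\le x_0$ and $x_0-L\ge n^{1/3+\epsilon}/2$.
\item Hence $S(L')\ge S(x_0)=g^*(x_0-YY^*)^{-1}g-W_{\mathrm{col}}/(x_0-L)$. Eigenvalues close to $L$ only increase $S$, so no gap estimate is needed.
\item At $x_0$ one has $\norm{(x_0-YY^*)^{-1}}_F\lesssim n^{-1/3}$. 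The large-deviation bound therefore costs $n^{-1/3+\epsilon}$, with no extra factor $n^{-1/2}$ since $g$ is unnormalized.
\item The averaged law outside the spectrum, together with the square-root behaviour of $g_\gamma$ at $b_\gamma$, gives $\Tr(x_0-YY^*)^{-1}\ge\sqrt{m/n}/(1+\sqrt{m/n})-Cn^{-1/3+\epsilon}$.
\item Thus $1-S(L')\le(1+\sqrt{m/n})^{-1}+Cn^{-1/3+\epsilon}$, and $W_{\mathrm{col}}\le n^{\epsilon}$ finishes the column bound.
\end{itemize}

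Equivalently, your first (variational) idea works at this point, and more directly. Put $P^\perp=I-uu^*$, $s=g^*(x_0-YY^*)^{-1}P^\perp g$, $w=u+t(x_0-YY^*)^{-1}P^\perp g$ and $t=\overline{u^*g}/(1-s)$. Using $g^*(x_0-YY^*)^{-2}P^\perp g\le s/(x_0-L)$, the Rayleigh quotient of $YY^*+gg^*$ at $w$ is at least $L+W_{\mathrm{col}}(1-s)^{-1}\bigl(1-O(n^{-1/3+\epsilon})\bigr)$. This gives the bound without needing $L'\le x_0$.
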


\supplementproof{supp.I.1}{Section I.1}

Proposition~\ref{prop:fixed-level-derivative} compares the Gaussian and
original ensembles at a fixed matrix size. Along the Ornstein--Uhlenbeck
flow, its differential inequality passes the derivative term to the next
shifted cutoff. The remaining cumulant and Taylor terms are
\(O(N^{-\sigma})\), with \(\sigma>1/3\).
Here $\mathcal C_{r,K,j}$ denotes the order-$r$ cumulant contribution
and $\mathcal R_{6,K,j}$ the sixth-order Taylor remainder.

\begin{prop}[Derivative bound for shifted cutoffs]\label{prop:fixed-level-derivative}
Fix constants
\[
\gamma\geq1,\qquad q_R,q_L>0,\qquad
0<\kappa_0<\frac1{288},
\qquad
12\kappa_0<\nu<\frac1{12}.
\]
Let \(M_N\geq N\) be integers with \(M_N/N\to\gamma\). For
\(1\leq\alpha\leq M_N\) and \(1\leq b\leq N\), let \(x_{\alpha b}\) be independent complex random variables satisfying
\[
\E x_{\alpha b}=0,
\qquad
\E\abs{x_{\alpha b}}^2=1,
\qquad
\E x_{\alpha b}^2=0,
\]
and
\[
\sup_{N,\alpha,b}\E\abs{x_{\alpha b}}^p<\infty
\]
for every fixed \(p\). Let \(g_{\alpha b}\) be independent standard circular complex Gaussian variables, independent of the \(x_{\alpha b}\), and put
\[
\begin{aligned}
\xi_{\alpha b}(t)
&=\ee^{-t/2}x_{\alpha b}
+\sqrt{1-\ee^{-t}}\,g_{\alpha b},\\
X_N(t)_{\alpha b}&=N^{-1/2}\xi_{\alpha b}(t),
&0&\leq t\leq8\log N.
\end{aligned}
\]
Define
\[
H_N(t,z)=
\begin{pmatrix}
-zI_N&X_N(t)^*\\
X_N(t)&-I_{M_N}
\end{pmatrix},
\qquad
G_N(t,z)=H_N(t,z)^{-1},
\]
and
\[
E_{+,N}=\left(1+\sqrt{\frac{M_N}{N}}\right)^2,
\]
\[
\begin{aligned}
\sigma_{+,N}
&=\frac{\sqrt{M_N}+\sqrt N}{N}
\left(\frac1{\sqrt{M_N}}+\frac1{\sqrt N}\right)^{1/3},\\
r_N&=(\log N)^{2/3},
&s_N&=(\log N)^{1/3},
\end{aligned}
\]
\[
\eta_N=N^{-2/3-12\kappa_0},
\qquad
\ell_N=N^{-2/3-6\kappa_0},
\qquad
B_N=E_{+,N}+4N^{-2/3+\kappa_0},
\]
\[
A_{R,N}=E_{+,N}+q_Rr_N\sigma_{+,N},
\qquad
A_{L,N}=E_{+,N}-q_Ls_N\sigma_{+,N}.
\]

Fix an integer \(J\geq1\). For \(K\in\{R,L\}\) and \(0\leq j\leq J\), put
\[
a_{R,j}=A_{R,N}-(2j+1)\ell_N,
\qquad
a_{L,j}=A_{L,N}+(2j+1)\ell_N,
\]
\[
Z_{K,j}(t)
=
\frac1\pi
\int_{a_{K,j}}^{B_N}
\Im\Tr\!\left(P_NG_N(t,x+\ii\eta_N)\right)\dd x,
\qquad
\mathcal X_{K,j}(t)=\pi Z_{K,j}(t),
\]
where \(P_N\) projects onto the first \(N\) linearization coordinates.

Let \(F_0,\ldots,F_J\colon\R\to[0,1]\) be smooth, with every fixed derivative bounded uniformly in \(j\). In right mode, assume that for each \(j<J\) there is a Borel set \(S_j\) with finite infimum such that every positive derivative of \(F_j\) vanishes outside \(S_j\), and
\[
F_{j+1}(u)=1
\qquad\text{for }u\geq\inf S_j.
\]
In left mode, assume instead that \(S_j\) has finite supremum, every positive derivative of \(F_j\) vanishes outside \(S_j\), and
\[
F_{j+1}(u)=1
\qquad\text{for }u\leq\sup S_j.
\]
Put
\[
f_{K,j}(t)=\E F_j(Z_{K,j}(t)),
\qquad
\widehat F_j(u)=F_j(u/\pi).
\]

For a label \(e=(\alpha,b)\), define
\[
S_1=E_{\alpha b}+E_{b\alpha},
\qquad
S_2=\ii E_{\alpha b}-\ii E_{b\alpha},
\qquad
D_aG=-GS_aG,
\]
and
\[
J_1=G_{b\alpha}+G_{\alpha b},
\qquad
J_2=\ii(G_{b\alpha}-G_{\alpha b}).
\]
Let \(\Delta_{K,j}\) denote evaluation at \(B_N+\ii\eta_N\) minus evaluation at \(a_{K,j}+\ii\eta_N\). For \(a_1,\ldots,a_r\in\{1,2\}\), let
\[
\mathcal K_{e;a_1\cdots a_r}(t)
\]
be the joint cumulant of the indicated real coordinates of \(\xi_e(t)\). For \(r\geq3\),
\[
\mathcal K_{e;a_1\cdots a_r}(t)
=
\ee^{-rt/2}\mathcal K_{e;a_1\cdots a_r}(0).
\]
For \(r=3,4,5\), define
\[
\begin{split}
\mathcal C_{r,K,j}(t)
={}&
\frac1{2(r-1)!N^{r/2}}
\sum_e
\sum_{a_1,\ldots,a_r\in\{1,2\}}
\mathcal K_{e;a_1\cdots a_r}(t)\\
&\times
\E D_{a_2}\cdots D_{a_r}
\left\{
\widehat F_j'(\mathcal X_{K,j})
\Delta_{K,j}\Im J_{a_1}
\right\}.
\end{split}
\]
Then, for almost every \(t\in[0,8\log N]\),
\begin{equation}
f'_{K,j}(t)
=
\mathcal C_{3,K,j}(t)
+\mathcal C_{4,K,j}(t)
+\mathcal C_{5,K,j}(t)
+\mathcal R_{6,K,j}(t).
\label{eq:ou-source-expansion}
\end{equation}
There are constants \(C<\infty\), \(\sigma>1/3\), and \(N_0\) such that, simultaneously for \(K=R,L\), \(0\leq j<J\), \(0\leq t\leq8\log N\), and \(N\geq N_0\),
\begin{equation}
\abs{\mathcal C_{3,K,j}}
+
\abs{\mathcal C_{5,K,j}}
+
\abs{\mathcal R_{6,K,j}}
\leq CN^{-\sigma},
\label{eq:odd-source-bound}
\end{equation}
\begin{equation}
\abs{\mathcal C_{4,K,j}}
\leq
CN^{-1/3+\nu}f_{K,j+1}(t)+CN^{-\sigma},
\label{eq:fourth-source-bound}
\end{equation}
and hence
\begin{equation}
\abs{f'_{K,j}(t)}
\leq
CN^{-1/3+\nu}f_{K,j+1}(t)+CN^{-\sigma}.
\label{eq:shifted-derivative-bound}
\end{equation}

The scalar-cutoff recursion used in \eqref{eq:odd-source-bound} has the following form. Open every raw derivative of \(Z_{K,j}\) into endpoint Green-function monomials. A resolvent monomial with a scalar cutoff is a normalized free-label average whose coefficient is a bounded deterministic factor times one derivative
\[
\widehat F_j^{(k)}(\mathcal X_{K,j}),
\]
and whose explicit factors are possibly conjugated endpoint Green-function entries, possibly conjugated centered diagonal entries, or possibly conjugated centered partial traces. Its degree is the number of off-diagonal, centered-diagonal, and centered-partial-trace factors. It is unmatched if some free label has odd total multiplicity in the row and column positions. Put
\[
\Psi_N=N^{-1/3+\nu}.
\]
For every fixed \(D_*\geq4\), every fixed choice of initial bounds on the factor count and cutoff-derivative order, and every \(\epsilon>0\), each initial unmatched scalar-cutoff monomial \(Q_d\) of degree \(d<D_*\) satisfies
\begin{equation}
\abs{\E Q_d}
\leq
C_{\epsilon,D_*}N^\epsilon
\left\{N^{-1}+\Psi_N^{D_*}\right\}
+CN^{-20}.
\label{eq:scalar-cutoff-recursion}
\end{equation}
All eight ordered triples of real-coordinate indices
\[
111,\ 112,\ 121,\ 211,\ 122,\ 212,\ 221,\ 222
\]
are retained. No cancellation between non-identical third-cumulant coefficients is assumed.
\end{prop}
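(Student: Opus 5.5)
The proof will follow the standard Ornstein--Uhlenbeck cumulant expansion for a single fixed-size statistic. After that expansion, the fourth-order term is fed into the shifted-cutoff structure, and the third-order term into the scalar version of the unmatched recursion.

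\emph{Step 1: the exact expansion.} First I differentiate $f_{K,j}(t)=\E F_j(Z_{K,j}(t))$ in $t$. The generator of the flow gives
\[
\partial_t\E\Phi(\xi(t))=-\tfrac12\sum_e\E\bigl[\xi_e\cdot\nabla_e\Phi-\tfrac12\Delta_e\Phi\bigr].
\]
Next I apply the real cumulant expansion to $\E[\xi_{e,a}\partial_{a}\Phi]$ through order five, with a sixth-order Taylor remainder. The covariance term, which uses $\E\xi_e^2=0$ and $\E|\xi_e|^2=1$, cancels the Laplacian exactly. The cumulant scaling $e^{-rt/2}$ holds because the Gaussian part contributes no cumulants of order three or higher. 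By Lemma~\ref{lem:covariance-direction-derivatives}, each first raw derivative of $\mathcal X_{K,j}$ in direction $a$ equals $N^{-1/2}\Delta_{K,j}\Im J_{a}$, up to sign. Pulling out one derivative as $\widehat F_j'(\mathcal X)\Delta\Im J_{a_1}$ produces exactly the displayed $\mathcal C_{r,K,j}$, with coefficient $[2(r-1)!]^{-1}N^{-r/2}$. This proves \eqref{eq:ou-source-expansion}.

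\emph{Step 2: the even, fifth and remainder terms.} I work on the simultaneous local-law event of Theorem~\ref{thm:flow-local-law}, including the deletion and insertion-segment versions from Theorem~\ref{thm:frozen-corner-local-law}. The complement is handled by the polynomial resolvent bound $\|G\|\le C\eta_N^{-1}$ together with moment truncation, which gives $CN^{-20}$.

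On that event, every fixed-order endpoint derivative is $O(\Psi_N)$ by \eqref{eq:direction-bound}. After the product rule, each term of $\mathcal C_4$ contains some $\widehat F_j^{(k)}$ with $k\ge1$ times at least one factor $O(\Psi_N)$. Summing over $O(N^2)$ labels against $N^{-2}$ gives $O(\Psi_N)$ times the expectation of the derivative indicator. By hypothesis, $F_j^{(k)}\ne0$ forces $Z_{K,j}\in S_j$. Since $a_{K,j+1}$ differs from $a_{K,j}$ by $2\ell_N$ in the direction that enlarges the integration interval in right mode and shrinks it in left mode, Lemma~\ref{lem:downward-threshold}, in the appropriate orientation, gives $\ind_{S_j}(Z_{K,j})\le F_{j+1}(Z_{K,j+1})$. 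This yields \eqref{eq:fourth-source-bound}.

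For the fifth cumulant, $N^{-5/2}\cdot N^2\cdot O(1)=O(N^{-1/2})$. For the sixth-order remainder, $N^{-3}N^2\cdot N^{\epsilon}$ is handled with moment weights. Both are $O(N^{-\sigma})$ for some $\sigma>1/3$.

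\emph{Step 3: the third cumulant, which is the main obstacle.} Naive counting for $\mathcal C_3$ gives $N^{-3/2}N^2\Psi_N=N^{1/6+\nu}$, which is useless. I therefore open each of the eight ordered triples into resolvent monomials with scalar cutoff $\widehat F_j^{(k)}(\mathcal X)$, as described in the last paragraph of the statement. Each such monomial has both source labels $\alpha$ and $b$ with odd multiplicity (three rank-two insertions), so it is unmatched.

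I then prove \eqref{eq:scalar-cutoff-recursion} by the same finite induction as in Theorem~\ref{thm:four-orientation-recursion}, specialized to an empty old block and a single scalar test function in place of the Laplace tensor. The steps are as follows:
\begin{itemize}
\item Expand at an unmatched label using $G=\Pi+\dots$ and the row or column self-consistent identities, with their invertible coefficients \eqref{eq:recursion-row-self}--\eqref{eq:recursion-column-self}.
\item Covariance contractions either raise the degree or preserve it while transporting the odd multiplicity.
\item Third-cumulant branches cost an extra $N^{-1/2+\epsilon}$, so one more cubic expansion gives $N^{-1+\epsilon}$.
\item Collisions cost $N^{-1}$.
\item The degree-lowering mixed term \eqref{eq:revision-cubic-drop} is handled by the single-off-diagonal estimate \eqref{eq:revision-single-offdiag}.
\item Stopping at degree $D_*$ gives $\Psi_N^{D_*}$.
\end{itemize}

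With $D_*=4$ and $\nu<1/12$, this gives $\mathcal C_3=O(N^{1/2}(N^{-1+\epsilon}+\Psi_N^4))=O(N^{-1/2+\epsilon}+N^{-5/6+4\nu+\epsilon})$, which is $O(N^{-\sigma})$ with $\sigma>1/3$. The delicate bookkeeping is keeping the scalar cutoff derivatives, which are bounded because only one matrix is involved, attached through every expansion, and verifying that each recursive step terminates within the fixed budgets. Summing Steps 2 and 3 gives \eqref{eq:odd-source-bound} and \eqref{eq:shifted-derivative-bound}.
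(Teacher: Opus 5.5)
Your proposal is correct and takes essentially the same route as the paper. The paper uses the Ornstein--Uhlenbeck cumulant expansion with exact cancellation of the covariance term, absorbs the fourth-cumulant term into $f_{K,j+1}$ through monotonicity of the shifted Poisson integrals (Lemma~\ref{lem:downward-threshold}, in the orientation matching each mode), bounds $\mathcal C_5$ and $\mathcal R_6$ directly, and handles $\mathcal C_3$ with the scalar-cutoff specialization of the unmatched-index recursion of Theorem~\ref{thm:four-orientation-recursion}, obtaining $\mathcal C_3=O(N^{1/2}N^{\epsilon}(N^{-1}+\Psi_N^4))=O(N^{-1/2+\epsilon})$ because $\nu<1/12$.
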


\supplementproof{supp.I.2}{Section I.2}

The derivative estimate is applied with the exact orientation and entry structure of each fixed-level clause. The first clause below is a tall-orientation triangular-array statement with \(M_N\geq N\). The other two clauses are common-array estimates valid across both tall and wide orientations. Their event conventions also remain distinct: the tall right event uses \(\geq\), the common-array right event uses \(>\), and the common-array left event uses \(\leq\).

\begin{thm}[Tail estimates at a fixed matrix size]\label{thm:fixed-level-margins}
The following three fixed-level estimates hold. The three clauses are proved in Supplementary Sections I.3--I.5, respectively.

\begin{enumerate}
\item\label{item:fixed-level-right-tall}
\textup{\textbf{Tall-orientation triangular-array right margin.}}
Let \(\gamma\geq1\), and let \((M_N)_{N\geq1}\) be positive integers such that
\[
M_N\geq N
\]
for all sufficiently large \(N\), and \(M_N/N\to\gamma\). For each \(N\), let
\[
x_{\alpha b}^{(N)},
\qquad
1\leq\alpha\leq M_N,
\quad
1\leq b\leq N,
\]
be independent complex random variables satisfying
\[
\E x_{\alpha b}^{(N)}=0,
\qquad
\E\abs{x_{\alpha b}^{(N)}}^2=1,
\qquad
\E[(x_{\alpha b}^{(N)})^2]=0,
\]
and
\[
\sup_{N,\alpha,b}\E\abs{x_{\alpha b}^{(N)}}^p<\infty
\]
for every fixed \(p\geq1\). Let \(X_N\) be the \(M_N\times N\) matrix with entries
\[
N^{-1/2}x_{\alpha b}^{(N)},
\]
let
\[
L_N=X_N^*X_N,
\qquad
\lambda_N=\lambda_{\max}(L_N),
\]
and define
\[
E_{+,N}=\left(1+\sqrt{\frac{M_N}{N}}\right)^2,
\]
\[
\sigma_{+,N}
=
\frac{\sqrt{M_N}+\sqrt N}{N}
\left(\frac1{\sqrt{M_N}}+\frac1{\sqrt N}\right)^{1/3},
\qquad
r_N=(\log N)^{2/3},
\]
\[
A_2=\left(\frac14\right)^{2/3},
\qquad
\chi_N=\frac{\lambda_N-E_{+,N}}{\sigma_{+,N}}.
\]
For every fixed \(q>A_2\), there are \(\theta_q>1/3\) and \(N_q\) such that
\[
\Prob(\chi_N\geq qr_N)\leq N^{-\theta_q}
\]
for every \(N\geq N_q\).

For the remaining two clauses, let \(\gamma>0\), let
\((M_N)_{N\geq1}\) be positive integers with \(M_N/N\to\gamma\), and let
\((x_{ij})_{i,j\geq1}\) be independent complex random variables such
that, for every fixed \(p\geq1\),
\[
\E x_{ij}=0,
\qquad
\E\abs{x_{ij}}^2=1,
\qquad
\E x_{ij}^2=0,
\qquad
\sup_{i,j}\E\abs{x_{ij}}^p<\infty.
\]
Set
\[
X^{(N)}=(x_{ij})_{1\leq i\leq M_N,\,1\leq j\leq N},
\qquad
Q_N=N^{-1}X^{(N)}(X^{(N)})^*,
\]
\[
\mu_{+,N}=\frac{(\sqrt{M_N}+\sqrt N)^2}{N},
\qquad
\sigma_{+,N}
=
\frac{\sqrt{M_N}+\sqrt N}{N}
\left(\frac1{\sqrt{M_N}}+\frac1{\sqrt N}\right)^{1/3},
\]
and
\[
\chi_N^+
=
\frac{\lambda_{\max}(Q_N)-\mu_{+,N}}{\sigma_{+,N}}.
\]

\item\label{item:fixed-level-right-all}
\textup{\textbf{Both-orientation common-array right margin.}}
Put
\[
r_N=(\log N)^{2/3},
\qquad
A_2=\left(\frac14\right)^{2/3}.
\]
For every fixed \(q>A_2\), there are \(\theta_q>1/3\) and \(N_q\) such that
\[
\Prob(\chi_N^+>qr_N)\leq N^{-\theta_q}
\]
for every \(N\geq N_q\).

\item\label{item:fixed-level-left-all}
\textup{\textbf{Both-orientation common-array left margin.}}
Put
\[
s_N=(\log N)^{1/3},
\qquad
B_2=4^{1/3}.
\]
For every fixed \(q>B_2\), there are \(\theta_q>1/3\) and \(N_q\) such that
\[
\Prob(\chi_N^+\leq-qs_N)\leq N^{-\theta_q}
\]
for every \(N\geq N_q\).
\end{enumerate}
\end{thm}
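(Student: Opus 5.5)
The plan is to obtain all three clauses by transferring the Gaussian tail bounds of Proposition~\ref{prop:lfl-local-gaussian-supercritical-margins} through the iterated differential inequality of Proposition~\ref{prop:shifted-gronwall}. The shifted-cutoff derivative bound of Proposition~\ref{prop:fixed-level-derivative} supplies the step hypothesis, and Corollary~\ref{prop:left-smoothing} supplies the two smoothing endpoints.

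For clause~\ref{item:fixed-level-right-tall}, fix $q>A_2$ and choose $A_2<q_*<q$ with $q_*$ close enough to $A_2$ that the Gaussian exponent $\eta_{q_*}^{G,+}$ still exceeds $1/3$. Choose the integer $J$ with $J\rho>1/3$, where $\rho=1/3-\nu>1/4$; thus $J=2$ suffices. In the right mode of Corollary~\ref{prop:left-smoothing}, take the thresholds $E_{N,j}=A_{R,N}-(2j+1)\ell_N$ and cutoffs $F_j$ with transition intervals $[a_j,b_j]$ that nest downward: $F_{j+1}=1$ on $[a_j,\infty)$, so $\inf S_j=a_j$ and $F_{j+1}=1$ there. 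This is exactly the right-mode hypothesis of Proposition~\ref{prop:fixed-level-derivative}.

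With $f_{N,j}(t)=\E F_j(Z_{R,j}(t))$ and $T_N=8\log N$, hypothesis~(4) of Proposition~\ref{prop:shifted-gronwall} is \eqref{eq:shifted-derivative-bound}. Hypothesis~(2) is the first smoothing inequality, with $H_N^X=\{\chi_N\ge qr_N\}$. For hypothesis~(3), take $g_{N,j}=\E F_j(Z_{R,j})$ for the Gaussian matrix; the terminal coupling of Proposition~\ref{prop:terminal-grid} (single-statistic clause, exponent $157/120>1/3$) bounds the difference $f_{N,j}(T_N)-g_{N,j}$. For hypothesis~(5), the second smoothing inequality for the Gaussian matrix gives
\[
 g_{N,j}\le\Prob\bigl(\lambda^G\ge E_N-(2j+2)\ell_N\bigr)+N^{-D}.
\]
By Lemma~\ref{lem:normalization-dictionary}\eqref{item:finite-shifts}, this event is contained in $\{\chi^G\ge q_*r_N\}$. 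Proposition~\ref{prop:lfl-local-gaussian-supercritical-margins} then bounds it by $N^{-c_G}$ with $c_G>1/3$, and this bound also serves for $g_{N,0}$ with $H_N^G=\emptyset$. The conclusion of Proposition~\ref{prop:shifted-gronwall} is then $N^{-\theta}$ with $\theta>1/3$.

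Clauses~\ref{item:fixed-level-right-all} and~\ref{item:fixed-level-left-all} concern the common array, in either orientation. If $M_N\ge N$ eventually, use the tall form directly. Otherwise conjugate-transpose, so that $X^{(N)*}$ is tall with aspect $1/\gamma$. The covariance variable is unchanged, because the center $a(m,n)$ and scale $b(m,n)$ are symmetric in $(m,n)$ and the operator norm is transpose invariant (Lemma~\ref{lem:normalization-dictionary}). Renormalizing entries by $M_N^{-1/2}$ in place of $N^{-1/2}$ is a deterministic factor, absorbed by the same center and scale dictionary. If $M_N/N$ oscillates around $1$, split $\N$ into the two index sets and apply the appropriate orientation on each. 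The strict inequality $>$ in the right clause is harmless, since it only makes the event smaller.

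For the left clause, fix $q>B_2$ and $B_2<q_*<q$, and run the same scheme in left mode. The thresholds are now $E_{N,j}=A_{L,N}+(2j+1)\ell_N$ and the $F_j$ equal one on $(-\infty,a_F]$, with the nesting $F_{j+1}=1$ on $(-\infty,\sup S_j]$. The Gaussian endpoint uses the lower-tail bound of Proposition~\ref{prop:lfl-local-gaussian-supercritical-margins}(2). The shifted threshold $E_N+(2j+2)\ell_N$ corresponds to $\chi\le -q_*s_N$ by the analogue of Lemma~\ref{lem:normalization-dictionary}\eqref{item:finite-shifts} with $s_N$ in place of $r_N$, since $\ell_N/(\sigma_{+,N}s_N)\to0$ as well.

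The main obstacle is \eqref{eq:odd-source-bound}: the unmatched third-cumulant term must be $O(N^{-\sigma})$ with $\sigma>1/3$. A naive bound gives only $N^{-1/6}$ after the $N^{1/2}$ source normalization. I would invoke the scalar-cutoff recursion \eqref{eq:scalar-cutoff-recursion} with $D_*=4$. Since $\Psi_N^4=N^{-4/3+4\nu}$ and the source normalization costs $N^{1/2}$, the third-cumulant contribution is
\[
 O\bigl(N^{-1/2+\epsilon}+N^{-5/6+4\nu+\epsilon}\bigr),
\]
which is below $N^{-1/3}$ because $\nu<1/12$. Beyond this, only bookkeeping is needed: check that the fixed $J$ and the $F_j$ nesting can be chosen before $N_0$, and that the window condition of Proposition~\ref{prop:poisson-count-dichotomy} holds at all the shifted thresholds.
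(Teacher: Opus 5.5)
Your proposal is correct and follows essentially the same route as the paper. For the tall right clause, Proposition~\ref{prop:shifted-gronwall} is fed by the shifted-cutoff bound \eqref{eq:shifted-derivative-bound}, the smoothing inequalities of Corollary~\ref{prop:left-smoothing}, the single-statistic terminal coupling of Proposition~\ref{prop:terminal-grid}, item~\ref{item:finite-shifts} of Lemma~\ref{lem:normalization-dictionary}, and the Gaussian tails of Proposition~\ref{prop:lfl-local-gaussian-supercritical-margins}; the common-array clauses are then reduced to the tall orientation by conjugate transposition, and the left clause is run in left mode.

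The remaining points are bookkeeping. The left target $\{\chi_N^+\le -qs_N\}$ is non-strict, while the left smoothing inequality controls $\{\lambda_N<E_N\}$; you fix this by running the argument at a slightly smaller $q'>B_2$. When transposing wide matrices, re-index by the new column dimension $M_N$, for example by choosing a worst-case $N$ for each value of $M_N$, and absorb $r_{M_N}/r_N\to1$ into the amplitude slack.
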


Choose the grid \(N_k=\lceil k^\alpha\rceil\) with
\(\alpha\theta>1\) and \(\alpha<3\). The first condition makes the
fixed-size probability bounds summable; the second makes the grid gaps
\(o(N^{2/3})\). The increment and martingale estimates control normalized
excursions over these gaps with overwhelming probability, transferring
the grid bounds to the intervening matrix sizes.

\begin{prop}[Control between consecutive grid points]\label{prop:mesh-path-control}
Assume a northwest-nested sample-covariance array with independent centered complex variance-one entries, uniformly bounded fixed moments, and zero complex second moment. Let \((M_N)_{N\geq1}\) be nondecreasing, satisfy
\[
\frac{M_N}{N}\longrightarrow\gamma\in(0,\infty),
\qquad
K:=\sup_N(M_{N+1}-M_N)<\infty,
\]
and put
\[
L_N=\lambda_{\max}(X^{(N)}(X^{(N)})^*),
\]
\[
a_N=(\sqrt{M_N}+\sqrt N)^2,
\qquad
b_N=(\sqrt{M_N}+\sqrt N)
\left(M_N^{-1/2}+N^{-1/2}\right)^{1/3},
\]
\[
\chi_N=\frac{L_N-a_N}{b_N},
\qquad
r_N=(\log N)^{2/3},
\qquad
s_N=(\log N)^{1/3}.
\]
Fix
\[
1<\alpha<3,
\qquad
N_k=\lceil k^\alpha\rceil.
\]
For arbitrary \(q\geq0\), \(\delta>0\), and sufficiently large \(k\), let \(E_k^+(q,\delta)\) be the event that
\[
\chi_{N_k}\leq qr_{N_k},
\]
but
\[
\chi_n\geq(q+\delta)r_n
\]
for at least one integer \(n\in[N_{k-1},N_k]\). Let \(E_k^-(q,\delta)\) be the event that
\[
\chi_{N_{k-1}}\geq-qs_{N_{k-1}},
\]
but
\[
\chi_n\leq-(q+\delta)s_n
\]
for at least one integer \(n\in[N_{k-1},N_k]\). For every \(D>0\), there is \(C_D<\infty\) such that
\[
\Prob(E_k^+(q,\delta))+\Prob(E_k^-(q,\delta))
\leq C_DN_k^{-D}
\]
for every sufficiently large \(k\).

Consequently, almost surely,
\[
\limsup_{k\to\infty}\frac{\chi_{N_k}}{r_{N_k}}\leq q
\quad\Longrightarrow\quad
\limsup_{N\to\infty}\frac{\chi_N}{r_N}\leq q,
\]
and
\[
\liminf_{k\to\infty}\frac{\chi_{N_k}}{s_{N_k}}\geq-q
\quad\Longrightarrow\quad
\liminf_{N\to\infty}\frac{\chi_N}{s_N}\geq-q.
\]
\end{prop}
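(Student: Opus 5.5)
We prove the two event bounds separately, using the monotonicity of $L_n$ along the northwest path and Lemma~\ref{lem:one-step-growth} for the increments. The consequence then follows from Borel--Cantelli, since $\sum_k N_k^{-D}<\infty$. Fix $D$.

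\textbf{Right excursions.} Here we need no martingale argument, because monotonicity suffices. For $N_{k-1}\le n\le N_k$, nesting gives $L_n\le L_{N_k}$. On $E_k^+$ we have $L_{N_k}\le a_{N_k}+qr_{N_k}b_{N_k}$, hence
\[
 \chi_n\le \frac{a_{N_k}-a_n}{b_n}+q\,\frac{b_{N_k}r_{N_k}}{b_nr_n}.
\]
Since $N_k-N_{k-1}=O(k^{\alpha-1})=O(N_k^{1-1/\alpha})$ and $\alpha<3$, the gap is $o(N_k^{2/3})$. Bounded row increments then give $a_{N_k}-a_n=O(N_k-n)$, by Lemma~\ref{lem:edge-resolvent-derivatives}, and this is $o(N^{2/3})$. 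As $b_n\asymp N^{1/3}$, the first term is $o(N^{1/3})$. This is not yet $o(r_n)$, so the gap size must be checked against the precise exponent: we need $N^{1-1/\alpha}=o(N^{1/3}(\log N)^{2/3})$, i.e.\ $\alpha\le 3/2$. For $3/2<\alpha<3$ the deterministic center loss is too large, and the upward drift must instead be compensated by the increments $L_{n+1}-L_n$. The ratio $b_{N_k}r_{N_k}/(b_nr_n)$ tends to $1$. So for $\alpha$ close to $1$, $E_k^+$ is eventually empty; for general $\alpha$ we use the lower-bound mechanism below, run backwards from $N_k$.

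\textbf{Left excursions (main step).} Starting from $N_{k-1}$, we telescope the one-step bounds of Lemma~\ref{lem:one-step-growth} along the path. Each step from $n$ to $n+1$ adds one column and at most $K$ rows, so it is a composition of at most $K+1$ elementary updates. Let $\mathcal F_j$ be generated by the entries revealed up to update $j$. The lemma gives, outside probability $Cn^{-D-1}$ per update,
\[
 [L-A]_{\rm after}-[L-A]_{\rm before}\ge c_j(W_j-1)-Cn^{-1/3+\rho},
\]
where $c_j$ is bounded and $\mathcal F_{j-1}$-measurable, and $W_j-1$ is a martingale difference with bounded moments. Summing over the at most $(K+1)(N_k-N_{k-1})=O(N^{1-1/\alpha})$ updates, the drift error is $O(N^{1-1/\alpha-1/3+\rho})$. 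We need this to be $o(N^{1/3}s_n)$, which holds since $1-1/\alpha<2/3$ and $\rho$ is small. For the martingale part, Burkholder (or Freedman after truncating $W_j$ at $N^{\rho}$) gives a maximal bound $N^{(1-1/\alpha)/2+\rho}=o(N^{1/3})$ outside probability $N^{-D}$. We replace $A(m,n)$ by $a_n$, which is exact at path points. Along the way we use the ratios $b_n/b_{N_{k-1}}=1+o(1)$ and $s_n/s_{N_{k-1}}\to1$. On $E_k^-$ we then obtain $\chi_n\ge -qs_{N_{k-1}}(1+o(1))-o(1)>-(q+\delta)s_n$, a contradiction outside the bad set.

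For the right tail with general $\alpha<3$, the same supermartingale-type bound is applied in reverse. We have $[L-A]_{N_k}-[L-A]_n\ge\sum c_j(W_j-1)-o(N^{1/3})$, and the martingale sum is controlled by the same maximal inequality. Thus $\chi_n\ge(q+\delta)r_n$ forces $\chi_{N_k}>qr_{N_k}$ outside probability $N^{-D}$.

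The main obstacle is the martingale step. The top eigenvectors $u,v$ must be chosen measurably and adaptedly, so that $W_j-1$ is conditionally centered. We also need the maximal inequality to hold uniformly over the $O(N^{1-1/\alpha})$ sizes with the union of the lemma's exceptional events. For this we request failure exponent $D+2$ in Lemma~\ref{lem:one-step-growth}.
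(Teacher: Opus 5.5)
Your proposal is correct and follows the paper's route: you telescope the one-step row and column lower bounds of Lemma~\ref{lem:one-step-growth}, which already subtract the center increment, and you control the centered terms $c_j(W_j-1)$ by a maximal moment bound over the $O(N^{1-1/\alpha})=o(N^{2/3})$ elementary updates, anchoring at $N_k$ for the right tail and at $N_{k-1}$ for the left tail. Two small cleanups: delete the opening claim that monotonicity alone settles the right tail (as you observe, it only covers $\alpha\le 3/2$), and in the almost-sure consequence apply the event bound with $(q+\epsilon,\epsilon)$ for rational $\epsilon>0$, since $\limsup_k\chi_{N_k}/r_{N_k}\le q$ only gives $\chi_{N_k}\le(q+\epsilon)r_{N_k}$ eventually.
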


\supplementproof{supp.I.6}{Section I.6}

The fixed-size tail bounds and the control between grid points give the
dyadic exclusions in the next section.

\section{Tail bounds on dyadic intervals}\label{sec:recurrence}

\subsection*{From the mesh to a dyadic block}

We pass from the summable polynomial-grid bounds to the two dyadic
tail estimates used in the main proof.

\begin{lem}[Dyadic interpolation]
\label{lem:mesh-dyadic-bridge}
Let \((X_N)_{N\geq3}\) be real random variables, let \(t_N>0\), and put
\(I_k=[2^k,2^{k+1})\cap\N\).  Fix \(c,\epsilon>0\), set
\[
 q=c+\frac\epsilon2,
 \qquad
 \delta=\frac\epsilon2,
\]
and suppose that, for some \(\theta>1/3\),
\[
 \max\left\{1,\frac1\theta\right\}<\alpha<3.
\]
Write \(n_j=\lceil j^\alpha\rceil\) and
\(J_j=[n_{j-1},n_j]\cap\N\).

For the upper sign, assume
\[
 \Prob(X_N>q t_N)\leq N^{-\theta}
\]
for all sufficiently large \(N\), and, for every \(D>0\),
\[
 \Prob\!\left(
 X_{n_j}\leq q t_{n_j},\ 
 \max_{n\in J_j}\frac{X_n}{t_n}\geq q+\delta
 \right)\leq C_Dn_j^{-D}.
\]
Then
\[
 \sum_k\Prob\!\left(
 \sup_{N\in I_k}\frac{X_N}{t_N}>c+\epsilon
 \right)<\infty.
\]
For the lower sign, the analogous assumptions
\[
 \Prob(X_N<-q t_N)\leq N^{-\theta}
\]
and
\[
 \Prob\!\left(
 X_{n_{j-1}}\geq-q t_{n_{j-1}},\ 
 \min_{n\in J_j}\frac{X_n}{t_n}\leq-(q+\delta)
 \right)\leq C_Dn_j^{-D}
\]
imply
\[
 \sum_k\Prob\!\left(
 \inf_{N\in I_k}\frac{X_N}{t_N}<-c-\epsilon
 \right)<\infty.
\]
\end{lem}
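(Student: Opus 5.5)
The plan is a union bound over the mesh points $n_j$ that meet the dyadic block $I_k$, followed by a summation over $k$. I treat the upper sign in detail; the lower sign is the mirror argument with the roles of $n_{j-1}$ and $n_j$ exchanged.

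\emph{Step 1: the block is covered by few intervals.} Fix $k$ and let $\mathcal J_k$ be the set of indices $j$ with $J_j\cap I_k\ne\varnothing$. Since $n_j=\lceil j^\alpha\rceil$, every such $j$ has $j\asymp 2^{k/\alpha}$, so $n_j\asymp 2^k$. The gaps satisfy $n_j-n_{j-1}\asymp j^{\alpha-1}\asymp 2^{k(1-1/\alpha)}$, and $\alpha>1$ makes them grow. Hence
\[
\abs{\mathcal J_k}\le C\,2^{k/\alpha}+2 .
\]
Because the intervals $J_j$ cover all large integers, the event $\sup_{N\in I_k}X_N/t_N>c+\epsilon$ forces some $j\in\mathcal J_k$ and some $n\in J_j$ with $X_n/t_n>q+\delta$; here I use $c+\epsilon=q+\delta$.

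\emph{Step 2: split at each mesh point.} For such a $j$, either $X_{n_j}>q\,t_{n_j}$, or $X_{n_j}\le q\,t_{n_j}$ while $\max_{n\in J_j}X_n/t_n\ge q+\delta$. The first case has probability at most $n_j^{-\theta}\le C2^{-k\theta}$ by the fixed-size assumption. The second has probability at most $C_Dn_j^{-D}\le C_D2^{-kD}$ by the interpolation assumption.

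\emph{Step 3: sum.} The union bound over $j\in\mathcal J_k$ gives
\[
\Prob\Bigl(\sup_{N\in I_k}\frac{X_N}{t_N}>c+\epsilon\Bigr)\le C\,2^{k/\alpha}\bigl(2^{-k\theta}+C_D2^{-kD}\bigr).
\]
This is summable in $k$ as soon as $\theta>1/\alpha$, which holds by the hypothesis $\alpha>1/\theta$, and $D>1$. The condition $\alpha<3$ is not needed here. It only enters through the interpolation hypothesis, which is supplied by Proposition~\ref{prop:mesh-path-control}.

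\emph{Lower sign.} The interpolation event is anchored at the left endpoint $n_{j-1}$. So in Step 2 I split according to whether $X_{n_{j-1}}<-q\,t_{n_{j-1}}$. The tail bound then costs $n_{j-1}^{-\theta}\asymp 2^{-k\theta}$, and the rest is identical.

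The only delicate point is the counting in Step 1. The number of mesh intervals meeting $I_k$ is of order $2^{k/\alpha}$, not of order $2^k$, and this is exactly what the condition $\alpha\theta>1$ is designed to absorb. One must also check that $n_j$ and $n_{j-1}$ are comparable to $2^k$ for every $j\in\mathcal J_k$. This holds because $n_j/n_{j-1}\to1$.
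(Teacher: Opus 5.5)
Your proposal is correct and follows essentially the paper's argument: a union bound over the mesh intervals meeting each dyadic block, split at the anchor point into the fixed-level tail event and the excursion event, with summability coming from $\alpha\theta>1$. The only difference is bookkeeping. You count $O(2^{k/\alpha})$ mesh intervals per block $I_k$, whereas the paper reverses the double sum, using that each late $J_j$ meets at most two dyadic blocks, and sums $n_j^{-\theta}$ and $C_Dn_j^{-D}$ over $j$ directly.
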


\begin{proof}
We prove the upper assertion; the lower one is identical with inequalities
reversed and the left endpoint of \(J_j\) as anchor.  Since
\(\alpha\theta>1\), the fixed-level bound is summable on \((n_j)\).
Choose \(D>1/\alpha\); then the excursion bound is also summable.
Moreover, \(n_j/n_{j-1}\to1\), so every sufficiently late \(J_j\)
intersects at most two dyadic blocks.

If the upper dyadic event occurs, choose a witness \(N\) and a mesh interval
\(J_j\) containing it.  Either \(X_{n_j}>qt_{n_j}\), or the corresponding
excursion event occurs.  After the union bound, reversing the nonnegative
double sum costs a factor at most two.  The two summable mesh series therefore
give the asserted dyadic series.  The same argument anchored at \(n_{j-1}\)
proves the lower assertion.
\end{proof}

\begin{prop}[Dyadic tail bounds]
\label{prop:right-dyadic-exclusion}\label{prop:left-dyadic-exclusion}
Under the assumptions of Theorem~\ref{thm:main}, let
$I_k=[2^k,2^{k+1})\cap\N$. For every $\epsilon>0$,
\begin{align}
 \sum_{k\ge2}\Prob\!\left(\sup_{N\in I_k}
       \frac{\chi_N^+}{r_N}>A_2+\epsilon\right)&<\infty,
 \label{eq:main-high-exclusion}\\
 \sum_{k\ge2}\Prob\!\left(\inf_{N\in I_k}
       \frac{\chi_N^+}{s_N}<-B_2-\epsilon\right)&<\infty.
 \label{eq:main-low-exclusion}
\end{align}
\end{prop}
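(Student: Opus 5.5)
The plan is to apply Lemma~\ref{lem:mesh-dyadic-bridge} twice, with $X_N=\chi_N^+$. For the upper sign take $t_N=r_N$ and $c=A_2$; for the lower sign take $t_N=s_N$ and $c=B_2$. In each case the lemma sets $q=c+\epsilon/2$ and $\delta=\epsilon/2$, so what has to be supplied is a fixed-size tail bound at level $q$ with an exponent $\theta>1/3$, together with a superpolynomially small excursion bound on the mesh $n_j=\lceil j^\alpha\rceil$.

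\textbf{Fixed-size bounds.} Since $q>A_2$ in the upper case, Theorem~\ref{thm:fixed-level-margins}\eqref{item:fixed-level-right-all} gives
\[
 \Prob(\chi_N^+>qr_N)\le N^{-\theta_q}
\]
for some $\theta_q>1/3$. This is exactly the strict event required by the lemma, and the theorem covers both orientations of the common array. In the lower case, $q>B_2$ and Theorem~\ref{thm:fixed-level-margins}\eqref{item:fixed-level-left-all} gives
\[
 \Prob(\chi_N^+<-qs_N)\le\Prob(\chi_N^+\le-qs_N)\le N^{-\theta_q}
\]
with $\theta_q>1/3$. We then fix $\alpha$ with $\max\{1,1/\theta_q\}<\alpha<3$, which is possible because $\theta_q>1/3$. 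This $\alpha$ determines the mesh used in the next step.

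\textbf{Excursion bounds.} By Lemma~\ref{lem:normalization-dictionary}\eqref{item:gram-covariance-normalization}, $\chi_N^+$ coincides with the Gram variable $\chi_N=(L_N-a_N)/b_N$ of Proposition~\ref{prop:mesh-path-control}, which we apply with the same $\alpha\in(1,3)$, with $q$ and with $\delta$. In the upper case the event required by the lemma, namely $X_{n_j}\le qt_{n_j}$ together with $\max_{n\in J_j}X_n/t_n\ge q+\delta$, is precisely $E_j^+(q,\delta)$. In the lower case the event anchored at $n_{j-1}$ is precisely $E_j^-(q,\delta)$. Proposition~\ref{prop:mesh-path-control} bounds both probabilities by $C_Dn_j^{-D}$ for every $D$ and all large $j$. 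Finitely many small indices do not affect summability, so the hypotheses of Lemma~\ref{lem:mesh-dyadic-bridge} hold and its conclusions are \eqref{eq:main-high-exclusion} and \eqref{eq:main-low-exclusion}.

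\textbf{Main obstacle.} The real difficulty lies in the inputs, namely the fixed-level margins and the mesh control, which were established earlier. Within this proof the step needing care is the bookkeeping. One must check that $\chi_N^+$ and $\chi_N$ agree exactly, including the case $M_N<N$; that the strict versus non-strict inequalities line up; and that a single $\alpha$ serves both the summability of $N^{-\theta_q}$ along the mesh and Proposition~\ref{prop:mesh-path-control}, which requires $\alpha<3$. Since $\theta_q$ depends on $\epsilon$, we choose $\alpha$ after fixing $\epsilon$, and separately for each sign.
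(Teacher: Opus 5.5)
Your proposal is correct and follows the paper's proof essentially verbatim. It takes the fixed-level margins of Theorem~\ref{thm:fixed-level-margins} with exponent $\theta>1/3$, chooses a mesh exponent $\max(1,1/\theta)<\alpha<3$ separately for each tail, applies the between-mesh estimate of Proposition~\ref{prop:mesh-path-control} anchored at the right (respectively left) endpoint, and concludes with Lemma~\ref{lem:mesh-dyadic-bridge} for $t_N=r_N$ (respectively $s_N$); your checks of the identity $\chi_N^+=\chi_N$ and of the strict versus non-strict inequalities are accurate.
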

\begin{proof}
For the upper assertion, put $q=A_2+\epsilon/2$ and
$\delta=\epsilon/2$. Theorem~\ref{thm:fixed-level-margins} gives a
fixed-level exponent $\theta>1/3$. Choose
$\max(1,1/\theta)<\alpha<3$; this is possible precisely because
$\theta>1/3$. Proposition~\ref{prop:mesh-path-control} gives the
between-mesh estimate, anchored at the right endpoint. Apply the upper
clause of Lemma~\ref{lem:mesh-dyadic-bridge} with $t_N=r_N$.
For the lower assertion use $q=B_2+\epsilon/2$, $t_N=s_N$, and the
left endpoint as anchor, with the exponent furnished for that tail.
The lower clause of the same lemma applies. The two choices of mesh
exponent need not coincide.
\end{proof}

\section{Proof of the main theorem}\label{sec:main}

\subsection*{Endpoint and cluster-set assembly}

We combine the full-matrix block occurrences from
Corollary~\ref{cor:full-grid-occurrence} with the dyadic exclusions,
then apply the deterministic cluster-set criterion.

\begin{proof}[Proof of Theorem~\ref{thm:main}]
For every integer $k\geq 2$, put
\[
 I_k=\set{N\in\N\mid 2^k\leq N<2^{k+1}}.
\]

\noindent\textbf{Step 1. Supercritical exclusions.}
Proposition~\ref{prop:right-dyadic-exclusion} gives
\eqref{eq:main-high-exclusion}--\eqref{eq:main-low-exclusion} under the
present model assumptions.

\noindent\textbf{Step 2. Subcritical occurrences and endpoints.}
Fix rational $0<a<A_2$ and $0<b<B_2$. Apply
Corollary~\ref{cor:full-grid-occurrence} with $N=2^l$, and denote its
right- and left-tail target unions by $E_l^+$ and $E_l^-$.
For either sign, $\Prob(E_l^\pm)\to1$. For every fixed $L$,
\[
 \Prob\left(\bigcup_{l\ge L}E_l^\pm\right)
 \ge \sup_{l\ge L}\Prob(E_l^\pm)=1.
\]
Taking the intersection over $L$ shows that these events occur infinitely
often almost surely. Since their indices lie in $[2^l,2^{l+1})$, this
gives arbitrarily large $n$ with $\chi_n^+/r_n\ge a$ and arbitrarily
large $n$ with $\chi_n^+/s_n<-b$.

Intersect these probability-one statements over rational $a,b$.
Borel--Cantelli applied to the summable supercritical probabilities
\eqref{eq:main-high-exclusion}--\eqref{eq:main-low-exclusion}, followed
by a countable choice of the threshold buffers, gives on the same event
\begin{equation}\label{eq:main-endpoints}
 \limsup_{N\to\infty}\frac{\chi_N^+}{r_N}=A_2,
 \qquad
 \liminf_{N\to\infty}\frac{\chi_N^+}{s_N}=-B_2.
\end{equation}

\noindent\textbf{Step 3. Cluster sets.}
Define
\[
 L_N=\lambda_{\max}\bigl(X^{(N)}(X^{(N)})^*\bigr),
 \qquad
 a_N=(\sqrt{M_N}+\sqrt N)^2,
\]
\[
 b_N=(\sqrt{M_N}+\sqrt N)
      \left(M_N^{-1/2}+N^{-1/2}\right)^{1/3},
 \qquad
 \xi_N=\frac{L_N-a_N}{b_N}.
\]
The definitions give the exact identities
\begin{equation}\label{eq:main-normalization-identities}L_N=N\lambda_1^{(N)},
 \qquad
 a_N=N\mu_{+,N},
 \qquad
 b_N=N\sigma_{+,N},
 \qquad
 \xi_N=\chi_N^+.
\end{equation}

The matrices $X^{(N)}$ are northwest nested because each is the upper-left $M_N$ by $N$ submatrix of the same infinite array and $(M_N)$ is nondecreasing. The dimension path also satisfies
\[
 \frac{M_N}{N}\longrightarrow\gamma>0,
 \qquad
 0\leq M_{N+1}-M_N\leq K.
\]
Extend $r_N$ and $s_N$ to $N=2$ by the same formulas. Applying Proposition~\ref{prop:tail-cluster-closure} to
\eqref{eq:main-endpoints} and
\eqref{eq:main-normalization-identities}, with $A=A_2$ and $B=B_2$, yields
\begin{equation}\label{eq:main-high-cluster}
 \bigcap_{m=2}^{\infty}
 \cl_{\R}\set{\frac{\chi_N^+}{r_N}\mid N\geq m}
 =[0,A_2]
\end{equation}
and
\begin{equation}\label{eq:main-low-cluster}
 \bigcap_{m=2}^{\infty}
 \cl_{\R}\set{\frac{\chi_N^+}{s_N}\mid N\geq m}
 =[-B_2,\infty).
\end{equation}
Since the tail closures decrease with $m$, beginning the intersections at $m=3$ gives the same sets. This proves all four assertions.
\end{proof}

\appendix
\section{Auxiliary Gaussian estimates}
\label{app:companion-estimates}

We give an Airy-kernel resolvent bound and a restricted Laguerre-function
estimate used in the Gaussian input \cite{YangWishart}. The latter
combines the approximations on the positive and negative half-axes.

\subsection{Airy kernel resolvent}

Let $K_s$ be the Airy-kernel operator on $L^2((s,\infty))$ and let
$\lambda_0(s)$ be its largest eigenvalue. The fixed-index expansion in
\cite[Corollary~1.7, arXiv version~2]{BothnerV2}, with index zero, gives
\[
 1-\lambda_0(-R)
 =\sqrt\pi\,2^{9/4}R^{3/4}
   \exp\!\left(-\frac{2\sqrt2}{3}R^{3/2}\right)(1+o(1)).
\]
The integral kernel $\int_0^\infty\Ai(x+u)\Ai(y+u)\,\dd u$
agrees with the Christoffel--Darboux form in that reference: integrate
the derivative of
$\Ai'(x+u)\Ai(y+u)-\Ai(x+u)\Ai'(y+u)$ and use the Airy
differential equation and the decay at positive infinity.
Since $K_s$ is a positive self-adjoint contraction, the spectral theorem
then gives, for sufficiently large $R$,
\[
 \|(I-K_{-R})^{-1}\|_{\op}
 =(1-\lambda_0(-R))^{-1}\le \exp(CR^{3/2}).
\]
For $R\le D(\log N)^{1/3}$ this is $N^{o(1)}$. The fixed-index estimate above supplies the input for Proposition~3.5 of
\cite{YangWishart}.

\subsection{Restricted Laguerre functions}

Use the normalized Laguerre functions $\phi_{n,m}$ and the parameters
$\mu_{n,m},\sigma_{n,m},\delta_{n,m}$ of
\cite[Propositions~3.2--3.3]{YangWishart}. Put
\[
 G_{n,m}(s)=(-1)^n\sqrt{\frac{\sigma_{n,m}}{\delta_{n,m}}}
       \phi_{n,m}(\mu_{n,m}+\sigma_{n,m}s).
\]
When $cN\le n\le m\le CN$, these inputs give
$\delta_{n,m}\asymp N^{-1/3}$ and
\[
 |G_{n,m}(s)-\Ai(s)|\le C N^{-2/3}e^{-s/4},\qquad s\ge0,
\]
whereas for $-R\le s\le0$, $R\le D(\log N)^{1/3}$, they give
\[
 |G_{n,m}(s)-\Ai(s)|\le
 E_N:=C N^{-2/3}(1+R)^3\exp(C(1+R)^2).
\]
Suppose a restriction interval $B$ has its lower endpoint no smaller
than $\mu_{n,m}-\sigma_{n,m}R$. The exact change of variables yields
\[
 \|\ind_B\phi_{n,m}\|_2^2
 \le\delta_{n,m}\int_{-R}^{\infty}|G_{n,m}(s)|^2\,\dd s.
\]
Split the integral at zero. The two displayed approximation bounds give
\begin{align*}
 \int_{-R}^{\infty}|G_{n,m}(s)|^2\,\dd s
 &\le 2\int_{-R}^{\infty}|\Ai(s)|^2\,\dd s
       +2R E_N^2+C N^{-4/3}\int_0^\infty e^{-s/2}\,\dd s\\
 &\le C(1+R)^{1/2}+2R E_N^2+C N^{-4/3}
 =N^{o(1)}.
\end{align*}
Here the negative-axis Airy envelope gives the square-integral bound,
and $(1+R)^2=o(\log N)$ controls $E_N$.
It follows that
\[
 \|\ind_B\phi_{n,m}\|_2^2\le N^{-1/3+o(1)},\qquad
 \|\ind_B\phi_{n,m}\|_2\le N^{-1/6+o(1)}.
\]
For the central modes in the proof of \cite[Equation~(154)]{YangWishart},
write $n=p_i-\ell$, $m=q_i-\ell$, with
$|\ell|\le L_N=\lfloor N^{1/3-\epsilon/2}\rfloor$.
The dimensions remain comparable with $N$. The derivatives of
$\mu_{n,m}$ in these dimensions are bounded and
$\sigma_{n,m}\asymp N^{1/3}$, so the center shift in mode coordinates is
$O(L_NN^{-1/3})=O(N^{-\epsilon/2})$; the scale ratio changes by
$O(L_N/N)$. Thus the same lower threshold is covered uniformly by
$R=O((\log N)^{1/3})$.
The restricted-mode bound is therefore uniform over the stated central modes.

\section{Recurrence on a common filtration}\label{app:recurrence-refinements}

This appendix treats occurrence estimates with prescribed histories.
The conditional bounds of Sections~\ref{sec:high-occurrence} and
\ref{sec:low-occurrence} can be arranged on a common filtration,
including the threshold shifts below. We record that stronger conditional
organization together with its abstract probability argument.

\begin{lem}[Construction of a common filtration]\label{lem:lacunary-scheduling}
Let $(\mathcal R_j)_{j\ge0}$ be increasing sigma-algebras and let
$\mathcal Q$ be countable. Suppose integers and labels satisfy
\[
 0\le u_1<v_1\le u_2<v_2\le\cdots,
 \qquad q_t\in\mathcal Q,
\]
with every label occurring infinitely often. Let
$E_t\in\mathcal R_{v_t}$, $G_t\in\mathcal R_{u_t}$, and deterministic
$p_t\in[0,1]$ satisfy
\begin{equation}\label{eq:scheduling-conditional}
 \Prob(E_t\mid\mathcal R_{u_t})\ge p_t\quad\hbox{on }G_t.
\end{equation}
For every $q$ assume
\begin{equation}\label{eq:scheduling-series}
 \sum_{t:q_t=q}\Prob(G_t^c)<\infty,\qquad
 \sum_{t:q_t=q}p_t=\infty.
\end{equation}
Define $\mathcal F_k=\mathcal R_0$ for $k<u_1$, and thereafter
\[
 \mathcal F_k=
 \begin{cases}
 \mathcal R_{u_t},&u_t\le k<v_t,\\
 \mathcal R_{v_t},&v_t\le k<u_{t+1}.
 \end{cases}
\]
At $(k,q)=(v_t,q_t)$ put
$(E_k(q),G_k(q),p_k(q))=(E_t,G_t,p_t)$; use
$(\varnothing,\Omega,0)$ at all other pairs. Then the filtration is
increasing, $\mathcal F_{v_t-1}=\mathcal R_{u_t}$,
$E_k(q)\in\mathcal F_k$, $G_k(q)\in\mathcal F_{k-1}$, and the
conditional lower bounds and both series in
\eqref{eq:scheduling-series} are preserved.

For a nested array with $M_N$ nondecreasing and
$C_M=\sup_N M_N/N<\infty$, take
$\mathcal R_j=\sigma\{x_{ab}:a\le M_{2^{j+1}},\ b\le2^{j+1}\}$.
When $u_t=v_{t-1}$, the target indices can be chosen recursively so that,
for each prescribed $d>0$ and $N_t=2^{v_t}$,
\begin{equation}\label{eq:scheduling-polylog}
 2^{u_t+1}\le(\log N_t)^d.
\end{equation}
The revealed rectangle then has at most $(\log N_t)^d$ columns,
$C_M(\log N_t)^d$ rows, and $C_M(\log N_t)^{2d}$ entries.
\end{lem}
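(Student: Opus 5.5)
This lemma is a bookkeeping statement, and I would prove it by direct verification of each claim from the definitions.

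\emph{Monotonicity of the filtration.} The assignment is piecewise constant on the consecutive intervals $[u_t,v_t)$ and $[v_t,u_{t+1})$, taking successively the values $\mathcal R_0,\mathcal R_{u_1},\mathcal R_{v_1},\mathcal R_{u_2},\ldots$. Since $u_1<v_1\le u_2<\cdots$ and $(\mathcal R_j)$ is increasing, $\mathcal F_k$ is increasing. Two degenerate cases need a word. If $v_t=u_{t+1}$, the interval $[v_t,u_{t+1})$ is empty and the passage from $\mathcal R_{u_t}$ to $\mathcal R_{u_{t+1}}$ is still monotone. If $u_1=0$, the initial clause is vacuous.

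\emph{The identity $\mathcal F_{v_t-1}=\mathcal R_{u_t}$ and measurability.} Since $v_t-1\in[u_t,v_t)$, the identity $\mathcal F_{v_t-1}=\mathcal R_{u_t}$ is immediate. At an active pair $(k,q)=(v_t,q_t)$ we have $\mathcal F_{v_t}=\mathcal R_{v_t}$, so $E_t\in\mathcal F_k$. Likewise $G_t\in\mathcal R_{u_t}=\mathcal F_{k-1}$. Inactive pairs use $(\varnothing,\Omega,0)$, which are trivially measurable and satisfy the conditional bound with $p=0$.

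\emph{Preservation of the bound and the series.} The conditional bound $\Prob(E_k(q)\mid\mathcal F_{k-1})\ge p_k(q)$ on $G_k(q)$ is exactly \eqref{eq:scheduling-conditional}, because $\mathcal F_{k-1}=\mathcal R_{u_t}$. One subtlety needs checking: the map $t\mapsto(v_t,q_t)$ must be injective, which holds since the $v_t$ are strictly increasing. Hence for each $q$ the series over $k$ of $\Prob(G_k(q)^c)$ and of $p_k(q)$ coincide term by term with the series over $\{t:q_t=q\}$, with zeros added for inactive pairs. Both conditions in \eqref{eq:scheduling-series} are therefore preserved. This is what lets Lemma~\ref{lem:conditional-recurrence} be applied label by label.

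\emph{The nested-array clause.} Here $\mathcal R_j$ is increasing because $M_N$ is nondecreasing. I would choose $v_t$ recursively: given $u_t=v_{t-1}$, pick $v_t>u_t$ large enough that $2^{u_t+1}\le(v_t\log 2)^d=(\log N_t)^d$, which is possible since the right side tends to infinity in $v_t$. The revealed rectangle defining $\mathcal R_{u_t}$ has $2^{u_t+1}$ columns and $M_{2^{u_t+1}}\le C_M2^{u_t+1}$ rows, which gives the stated counts. The recursive choice can also be made compatible with the lacunary scales required by the conditional theorems by taking the maximum of the finitely many lower bounds.

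I expect no real obstacle. The only care is with the edge cases $v_t=u_{t+1}$ and $u_1=0$, and with the injectivity of the reindexing, which guarantees the series are not altered.
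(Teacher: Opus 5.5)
Your proposal is correct and follows the same route as the paper. You verify directly that the piecewise-constant filtration is increasing with $\mathcal F_{v_t-1}=\mathcal R_{u_t}$, that active slots inherit their conditional bounds while inactive slots are trivial, and that both series are only reindexed; your injectivity remark makes this last point explicit. You then choose $v_t$ large given $u_t$ to obtain the polylogarithmic bound. The paper's proof is a terser version of the same checks and takes $v_t\ge u_t+2$ rather than merely $v_t>u_t$. That difference only matters for compatibility with the lacunary spacing used later, which you also note.
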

\begin{proof}
The inequalities between the paired indices make the displayed filtration
increasing. Since $u_t<v_t$ are integers, its value at $v_t-1$ is
exactly $\mathcal R_{u_t}$. Every nonempty event therefore retains its
original conditional probability; all other slots have lower bound zero.
The two series are reindexed. Finally, for fixed $u_t$, the right
side of \eqref{eq:scheduling-polylog} tends to infinity with $v_t$.
Choose $v_t\ge u_t+2$ sufficiently large and use the bound on $M_N$.

\end{proof}

\begin{prop}[Borel--Cantelli argument]
\label{prop:dyadic-borel-cantelli}
Let \((\Omega,\mathcal F,\Prob)\) be a probability space, and let
\((X_N)_{N\geq3}\) be real-valued random variables. Let \(\log\) denote the natural logarithm, and put
\[
 r_N=(\log N)^{2/3},\qquad
 s_N=(\log N)^{1/3}.
\]
Let \(A,B>0\). For \(k\geq2\), define
\[
 I_k=\set{N\in\N\mid2^k\leq N<2^{k+1}},
\]
and
\[
 K_+
 =
 \bigcap_{m\geq3}
 \cl_{\R}\set{\frac{X_N}{r_N}\mid N\geq m},
 \qquad
 K_-
 =
 \bigcap_{m\geq3}
 \cl_{\R}\set{\frac{X_N}{s_N}\mid N\geq m}.
\]
Assume the following hypotheses.

\begin{enumerate}
\item[\({\rm(H1)}\)] For every rational \(\epsilon>0\),
\[
 \sum_{k=2}^{\infty}
 \Prob\left(
 \sup_{N\in I_k}\frac{X_N}{r_N}>A+\epsilon
 \right)<\infty
\]
and
\[
 \sum_{k=2}^{\infty}
 \Prob\left(
 \inf_{N\in I_k}\frac{X_N}{s_N}<-B-\epsilon
 \right)<\infty.
\]

\item[\({\rm(H2+)}\)] There is an increasing sequence
\((\mathcal F_k)_{k\geq1}\) of sub-sigma-algebras such that, for every rational \(a\) with \(0<a<A\), there are, for all sufficiently large \(k\),
\[
 U_k(a)\in\mathcal F_k,\qquad
 G_k^+(a)\in\mathcal F_{k-1},\qquad
 p_k^+(a)\in[0,1],
\]
with the following properties:
on \(U_k(a)\), there exists \(N\in I_k\) such that
\[
 \frac{X_N}{r_N}\geq a;
\]
moreover,
\[
 \sum_k\Prob((G_k^+(a))^c)<\infty,
 \qquad
 \sum_kp_k^+(a)=\infty,
\]
and
\[
 \Prob(U_k(a)\mid\mathcal F_{k-1})
 \geq p_k^+(a)
 \quad\text{on }G_k^+(a)\text{ almost surely}.
\]

\item[\({\rm(H2-)}\)] For every rational \(b\) with \(0<b<B\), there are, for all sufficiently large \(k\),
\[
 L_k(b)\in\mathcal F_k,\qquad
 G_k^-(b)\in\mathcal F_{k-1},\qquad
 p_k^-(b)\in[0,1],
\]
such that, on \(L_k(b)\), there exists \(N\in I_k\) satisfying
\[
 \frac{X_N}{s_N}\leq-b;
\]
moreover,
\[
 \sum_k\Prob((G_k^-(b))^c)<\infty,
 \qquad
 \sum_kp_k^-(b)=\infty,
\]
and
\[
 \Prob(L_k(b)\mid\mathcal F_{k-1})
 \geq p_k^-(b)
 \quad\text{on }G_k^-(b)\text{ almost surely}.
\]
\end{enumerate}
Then, with probability one,
\[
 \limsup_{N\to\infty}\frac{X_N}{r_N}=A,
 \qquad
 \liminf_{N\to\infty}\frac{X_N}{s_N}=-B.
\]
On the same probability-one event,
\[
 K_+\subseteq[0,A],\qquad 0,A\in K_+,
\]
and
\[
 K_-\subseteq[-B,\infty),\qquad -B\in K_-.
\]
Furthermore, for every \(R>0\) and \(m\geq3\), there exists \(N\geq m\) such that
\[
 \frac{X_N}{s_N}>R.
\]
\end{prop}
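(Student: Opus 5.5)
The plan is to prove the endpoint identities first and then read off the cluster-set inclusions, working on one probability-one event assembled from countably many statements. For the upper bound $\limsup X_N/r_N\le A$, fix a rational $\epsilon>0$. By (H1) and the ordinary Borel--Cantelli lemma, almost surely only finitely many dyadic blocks $I_k$ contain an $N$ with $X_N/r_N>A+\epsilon$. Since the blocks $I_k$, $k\ge2$, exhaust $\{N\ge4\}$, this gives $\limsup_N X_N/r_N\le A+\epsilon$. Intersecting over rational $\epsilon$ gives $\limsup\le A$. The same argument with the second series in (H1) gives $\liminf_N X_N/s_N\ge -B$.

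For the matching lower bounds, fix a rational $0<a<A$. Apply Lemma~\ref{lem:conditional-recurrence} to the filtration $(\mathcal F_k)$, with $E_k=U_k(a)\in\mathcal F_k$, $H_k=G_k^+(a)\in\mathcal F_{k-1}$ and $p_k=p_k^+(a)$, discarding the finitely many small $k$. Its hypotheses are exactly those in (H2+). It shows that $U_k(a)$ occurs for infinitely many $k$ almost surely. Each occurrence yields some $N\in I_k$ with $X_N/r_N\ge a$, and these $N$ tend to infinity, so $\limsup X_N/r_N\ge a$. Letting $a\uparrow A$ through the rationals gives $\limsup=A$. The same argument applied to (H2-) gives $\liminf X_N/s_N\le -b$ for every rational $b<B$, hence $\liminf=-B$. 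The probability-one event is the intersection of these countably many almost-sure events.

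The cluster-set statements then follow deterministically on this event. The closure $\cl\{X_N/r_N: N\ge m\}$ is contained in $(-\infty,\sup_{N\ge m}X_N/r_N]$, so $K_+\subseteq(-\infty,A]$. The point $A$ lies in $K_+$ because $\limsup=A$ is finite and is attained as a limit along a subsequence.

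The nontrivial points are $0\in K_+$, the inclusion $K_+\subseteq[0,\infty)$, and the final unboundedness claim. Each comes from the mismatch of scales $s_N=o(r_N)$.
\begin{itemize}
\item Since $\liminf X_N/s_N=-B$ is finite, $X_N\ge -(B+1)s_N$ eventually. Hence $\liminf X_N/r_N\ge \liminf\bigl(-(B+1)s_N/r_N\bigr)=0$. This gives $K_+\subseteq[0,\infty)$ and, combined with the above, $K_+\subseteq[0,A]$.
\item Along the subsequence where $X_N/s_N\to -B$, we get $X_N/r_N=(X_N/s_N)(s_N/r_N)\to0$. So $0\in K_+$.
\item Similarly $K_-\subseteq[-B,\infty)$ and $-B\in K_-$ follow from $\liminf X_N/s_N=-B$.
\item Along the subsequence with $X_N/r_N\to A>0$, we get $X_N/s_N=(X_N/r_N)(r_N/s_N)\to+\infty$. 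So for every $R>0$ and $m\ge3$ there is $N\ge m$ with $X_N/s_N>R$.
\end{itemize}

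The main obstacle is the second step: conditional recurrence must be applied with $p_k$ deterministic and the measurability exactly as Lemma~\ref{lem:conditional-recurrence} requires. The structure of (H2$\pm$) is designed to match that lemma, so the rest is bookkeeping of countable intersections.
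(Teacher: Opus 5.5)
Your proposal is correct and follows essentially the same route as the paper. You apply Lemma~\ref{lem:conditional-recurrence} to each rational subcritical threshold and ordinary Borel--Cantelli to (H1) on a countable intersection of almost-sure events, then use the scale mismatch $s_N/r_N\to0$ to get $K_+\subseteq[0,A]$, $0\in K_+$, and the unboundedness of $X_N/s_N$ along the subsequence realizing $A$, exactly as the paper does.
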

\begin{proof}
For each rational subcritical threshold, apply
Lemma~\ref{lem:conditional-recurrence} to its stated event, good history,
and lower-bound sequence. The two rational threshold sets are countable,
so all these occurrences hold on one probability-one event. Ordinary
Borel--Cantelli applied to (H1) excludes every rational supercritical
threshold. Letting the thresholds approach $A$ and $B$ proves the two
endpoint equalities.

For every $\epsilon>0$, eventually
$X_N\ge-(B+\epsilon)s_N$ and $X_N\le(A+\epsilon)r_N$.
Since $s_N/r_N\to0$, the first inequality shows
$K_+\subseteq[0,A]$. A subsequence realizing the lower endpoint yields
$X_N/r_N\to0$, and a subsequence realizing the upper endpoint yields
$X_N/r_N\to A$. The lower bound gives
$K_-\subseteq[-B,\infty)$ and $-B\in K_-$. Along the upper-endpoint
subsequence, $X_N/s_N=(X_N/r_N)(r_N/s_N)\to\infty$, which proves the
last assertion.
\end{proof}

\begin{thm}[Recurrence on a common filtration]
\label{thm:common-filtration-recurrence}
Work under Theorem~\ref{thm:main}, and let
$\mathcal F=\sigma(x_{ij}:i,j\ge1)$ and
$I_k=[2^k,2^{k+1})\cap\N$. Let
\[
 \mathcal Q=
 \{(+,a,h):a,h\in\mathbb Q,\ 0<a<A_2\}
 \cup\{(-,b,h):b,h\in\mathbb Q,\ 0<b<B_2\}.
\]
There is one increasing filtration $(\mathcal F_k^*)_{k\ge1}$ in
$\mathcal F$ and, for every $q\in\mathcal Q$, events
$E_k^*(q)\in\mathcal F_k^*$, $G_k^*(q)\in\mathcal F_{k-1}^*$,
and deterministic $p_k^*(q)\in[0,1]$ such that
\begin{equation}\label{eq:global-conditional}
 \Prob(E_k^*(q)\mid\mathcal F_{k-1}^*)\ge p_k^*(q)
 \quad\hbox{on }G_k^*(q),
\end{equation}
\begin{equation}\label{eq:global-sums}
 \sum_k\Prob(G_k^*(q)^c)<\infty,\qquad\sum_k p_k^*(q)=\infty.
\end{equation}
Every $E_k^*((+,a,h))$ has a witness $N\in I_k$ with
\begin{equation}\label{eq:global-high-witness}
 \chi_N^+\ge a r_N+h,
\end{equation}
and every $E_k^*((-,b,h))$ has a witness $N\in I_k$ with
\begin{equation}\label{eq:global-low-witness}
 \chi_N^+\le-b s_N+h.
\end{equation}
The unshifted events $U_k^*(a)$ and $L_k^*(b)$ are the cases $h=0$.
\end{thm}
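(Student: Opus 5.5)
The plan is to build the filtration from Lemma~\ref{lem:lacunary-scheduling}, using $\mathcal R_j=\sigma\{x_{ab}:a\le M_{2^{j+1}},\ b\le 2^{j+1}\}$. Enumerate $\mathcal Q$ and fix a schedule of labels $q_t\in\mathcal Q$ in which every label occurs infinitely often. Take $u_t=v_{t-1}$, and choose each $v_t\ge u_t+2$ recursively, large enough for \eqref{eq:scheduling-polylog}. The scale $N_t=2^{v_t}$ should also be large enough for the conditional results for label $q_t$ to be in their asymptotic regime. The revealed past $\mathcal R_{u_t}$ is then the northwest rectangle with $P=2^{u_t+1}\le(\log N_t)^d$ columns. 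This is exactly $\mathcal F_l^{\rm past}$ of Sections~\ref{sec:high-occurrence}--\ref{sec:low-occurrence}, with $P_l=2^{k_{l-1}+1}$. The lacunary scales there were only required to satisfy $k_l-k_{l-1}\ge2$, $P_l\le(\log N_l)^d$, and sufficient growth, so I would rerun them with $k_l=v_t$ along the subsequence of times carrying label $q$. This works provided the selection of scales in Propositions~\ref{prop:high-poisson-ladder} and \ref{prop:corrected-gaussian-moments} only needs $v_t$ large relative to $u_t$, which should hold since the Gaussian grid estimates are deterministic and asymptotic.

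For a label $(+,a,h)$, set $E_t=U_l(a')$ with $a<a'$ rational, and apply Theorem~\ref{thm:high-occurrence} with $a_0=a'$ and $a'<a_1<a_2<A_2$. This gives $G_t=\mathcal H_l^+\in\mathcal R_{u_t}$ and $p_t=1-\epsilon_l^+$, for sufficiently large $t$; for small $t$ set $p_t=0$ and $G_t=\Omega$. The target $U_l(a')$ lies in $[N_l,2N_l)$. Its witness $n$ satisfies $\chi_n^+\ge a'r_n\ge ar_n+h$ once $(a'-a)r_n\ge|h|$, which holds for large $l$. Measurability $E_t\in\mathcal R_{v_t}$ needs the grid inside $[2^{v_t},2^{v_t+1})$, and $\mathcal R_{v_t}$ contains the entries up to column $2^{v_t+1}$. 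For $(-,b,h)$, use Theorem~\ref{thm:low-endpoint-occurrence} with $b<b'<B_2$ and the target $L_l(b')$; the inequality $-b's_n\le -bs_n+h$ holds eventually. Both theorems give summable $\Prob(G_t^c)$ along each label's subsequence, because the exceptional probabilities are polynomially small in $N_t$ or summable in $l$. They also give $p_t\to1$, so $\sum p_t=\infty$.

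Lemma~\ref{lem:lacunary-scheduling} then reindexes by $k=v_t$, with $\mathcal F_{k-1}^*=\mathcal R_{u_t}$. It preserves \eqref{eq:global-conditional} and \eqref{eq:global-sums}, setting trivial events at the other slots, and the witness indices lie in $I_k$ with $k=v_t$.

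The main obstacle is the uniformity of the scale selection: one schedule must work for all countably many labels at once. The existing theorems construct their own lacunary sequences $k_l$, so I must check that each needs only lower bounds on $k_l$ given $k_{l-1}$. This means checking that its $o(1)$ errors, the grid existence, and the good-history summability depend only on $N_l\to\infty$ and $P_l\le(\log N_l)^d$. If so, a diagonal recursion choosing $v_t$ as the maximum of the finitely many thresholds relevant at step $t$ works, and summability holds because $v_t$ grows at least linearly in $t$.
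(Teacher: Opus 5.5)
Your use of Lemma~\ref{lem:lacunary-scheduling}, the $h$-shifts, the witness blocks and the measurability are all fine. The gap is the step you flag yourself and do not supply: that Theorems~\ref{thm:high-occurrence} and \ref{thm:low-endpoint-occurrence} remain valid along an externally prescribed, merely sufficiently sparse, sequence of scales. The paper never states this. Propositions~\ref{prop:high-poisson-ladder} and \ref{prop:corrected-gaussian-moments} only assert that \emph{there exist} lacunary exponents $k_l$, ``allowed to depend on all these choices.'' The summability of $\Prob((\mathcal H_l^\pm)^c)$ and the deterministic $\epsilon_l\to0$ are likewise asserted only along those sequences.

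Your bookkeeping also has a mismatch. With $u_t=v_{t-1}$, the past at slot $t$ is fixed by the preceding slot, which usually carries a different label. Taking $k_l=v_t$ along the times carrying label $q$ then gives $P_l=2^{k_{l-1}+1}$ from the previous label-$q$ time, not $2^{u_t+1}$. A conditional lower bound given that smaller past does not yield one given $\mathcal R_{u_t}$. So you would need the occurrence theorems with a past cutoff decoupled from the label's own sequence, and with errors uniform over all cutoffs $P\le(\log N)^d$. That means reopening the Gaussian grid inputs and every good-history estimate.

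The missing observation is that $u_t=v_{t-1}$ is unnecessary: Lemma~\ref{lem:lacunary-scheduling} only requires $v_{t-1}\le u_t$. The paper fixes, for each label $q$, the complete construction supplied by its occurrence theorem, with the following data:
\begin{itemize}
\item exponents $e_q(m)$ and past $\mathcal R_{e_q(m-1)}$;
\item event $C_q(m)$ and good history $D_q(m)$;
\item conditional lower bound $1/2$ after finitely many $m$.
\end{itemize}
At step $t$ it picks a previously unused index $m_t$ for $q_t$ with $u_t=e_{q_t}(m_t-1)\ge v_{t-1}+2$, and sets $v_t=e_{q_t}(m_t)$. This is possible because $e_q(m)\to\infty$.

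Each window is then literally a consecutive pair of that label's own sequence. Hence the past is exactly its $\mathcal F^{\rm past}$, and the conditional bound holds verbatim. Each label's bad-history series is a subseries of its original convergent series, and its lower-bound series contains infinitely many terms $1/2$. No uniformity of scale selection across labels is needed, and the polylogarithmic past condition is inherited from each label's construction.
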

\begin{proof}
Let
$\mathcal R_j=\sigma\{x_{in}:i\le M_{2^{j+1}},\ n\le2^{j+1}\}$.
For a high label $q=(+,a,h)$ choose $a<a_0<a_1<a_2<A_2$ and
positive parameters satisfying
\begin{equation}\label{eq:high-parameter-choice}
 \frac43a_2^{3/2}+\epsilon+2\eta<\frac13.
\end{equation}
For a low label $q=(-,b,h)$ choose $b<b_0<B_2$ and
\begin{equation}\label{eq:low-parameter-choice}
 b_0^3/12+\eta+\epsilon<\frac13.
\end{equation}
The respective occurrence theorem, with $d=1$, supplies a complete
label-specific construction on exponents $e_q(m)$, with
$e_q(m)-e_q(m-1)\ge2$, target scale $N_q(m)=2^{e_q(m)}$, and
past cutoff $P_q(m)=2^{e_q(m-1)+1}$. Its past is exactly
\begin{equation}\label{eq:literal-past}
 \sigma\{x_{in}:i\le M_{P_q(m)},\ n\le P_q(m)\}
 =\mathcal R_{e_q(m-1)}.
\end{equation}
Write $C_q(m)$ and $D_q(m)$ for its event and good history. For each label,
\begin{equation}\label{eq:high-good-sum}\sum_m\Prob(D_q(m)^c)<\infty,
\end{equation}
and, after discarding finitely many $m$,
\begin{equation}\label{eq:high-conditional}\Prob(C_q(m)\mid\mathcal R_{e_q(m-1)})\ge1/2
 \quad\hbox{on }D_q(m).
\end{equation}
The event is $\mathcal R_{e_q(m)}$-measurable and has a witness in
$I_{e_q(m)}$, with
\begin{align}
 \chi_N^+&\ge a_0r_N &&\text{for a high label},\label{eq:high-witness}\\
 \chi_N^+/s_N&<-b_0 &&\text{for a low label}.\label{eq:low-witness}
\end{align}
The normalization follows from \eqref{eq:main-normalization-identities}.
Further finite discards ensure, throughout every remaining target block,
\begin{align}
 a_0r_N&\ge a r_N+h &&\text{for a high label},\label{eq:high-shift}\\
 -b_0s_N&\le-b s_N+h &&\text{for a low label}.\label{eq:low-shift}
\end{align}
This follows from $r_N,s_N\to\infty$ and the strict amplitude gaps.

Choose a deterministic sequence $(q_t)$ visiting every label infinitely
often. Starting with $v_0=0$, choose a previously unused index $m_t$ for
$q_t$, beyond its discarded prefix, such that
\[
 u_t=e_{q_t}(m_t-1)\ge v_{t-1}+2,\qquad
 v_t=e_{q_t}(m_t)\ge u_t+2.
\]
Such a choice exists since each past exponent tends to infinity.
Apply Lemma~\ref{lem:lacunary-scheduling} to
$(C_{q_t}(m_t),D_{q_t}(m_t),1/2)$ at these pairs.
Its filtration obeys $\mathcal F_{v_t-1}^*=\mathcal R_{u_t}$, so every
conditional lower bound remains conditioned on its original past. Each
label's bad-history sum is a subseries of its original convergent sum,
while its lower-bound series contains infinitely many terms $1/2$.
The target exponent and witness block are unchanged. The shifted
inequalities above establish \eqref{eq:global-high-witness} and
\eqref{eq:global-low-witness}; $h=0$ gives the named unshifted events.
\end{proof}

\makeatletter
\enddoc@text
\let\enddoc@text\relax
\makeatother
\input{supplement_pages.tex}

\end{document}

%% file: supplement_navigation.tex
\providecommand{\NCLdestination}[3]{%
  \put(#1,-#2){\pdfdest name {#3} xyz}%
}
\providecommand{\NCLlink}[5]{%
  \put(#1,-#2){\pdfannot width #3bp height #4bp depth 0bp%
    {/Subtype /Link /Border [0 0 0] /A << /S /GoTo /D (#5) >>}}%
}
\providecommand{\NCLurl}[5]{%
  \put(#1,-#2){\pdfannot width #3bp height #4bp depth 0bp%
    {/Subtype /Link /Border [0 0 0] /A << /S /URI /URI (#5) >>}}%
}

\expandafter\def\csname NCLsupppage1\endcsname{%
\NCLdestination{0.000000}{0.000000}{arxiv.supplement}%
\NCLlink{87.843002}{256.390991}{209.387009}{16.490967}{supp.A}%
\NCLlink{87.843002}{270.888000}{332.668991}{16.489990}{supp.B}%
\NCLlink{87.843002}{285.385986}{176.961990}{16.489990}{supp.B.1}%
\NCLlink{87.843002}{299.884003}{188.962997}{16.489990}{supp.B.2}%
\NCLlink{87.843002}{314.381989}{266.992996}{16.490997}{supp.B.3}%
\NCLlink{87.843002}{328.878998}{190.175003}{16.489990}{supp.B.4}%
\NCLlink{87.843002}{343.377014}{232.206009}{16.490021}{supp.C}%
\NCLlink{87.843002}{357.875000}{204.038010}{16.490997}{supp.C.1}%
\NCLlink{87.843002}{372.372009}{159.780991}{16.490021}{supp.C.2}%
\NCLlink{87.843002}{386.869995}{389.259995}{16.489990}{supp.D}%
\NCLlink{87.843002}{401.368011}{245.370987}{16.490021}{supp.E}%
\NCLlink{87.843002}{415.865997}{150.235001}{16.490997}{supp.E.1}%
\NCLlink{87.843002}{430.363007}{140.264999}{16.490021}{supp.E.2}%
\NCLlink{87.843002}{444.860992}{142.961990}{16.489990}{supp.E.3}%
\NCLlink{87.843002}{459.359009}{132.992996}{16.490997}{supp.E.4}%
\NCLlink{87.843002}{473.855988}{172.090988}{16.489990}{supp.E.4}%
\NCLlink{87.843002}{488.354004}{279.578995}{16.489990}{supp.G}%
\NCLlink{87.843002}{502.851990}{179.887009}{16.489990}{supp.G.1}%
\NCLlink{87.843002}{517.348999}{195.401993}{16.489990}{supp.G.2}%
\NCLlink{87.843002}{531.846985}{157.069000}{16.489990}{supp.G.3}%
\NCLlink{87.843002}{546.344971}{180.917007}{16.489990}{supp.G.4}%
\NCLlink{87.843002}{560.843018}{135.128998}{16.491028}{supp.G.5}%
\NCLlink{87.843002}{575.339966}{154.705002}{16.489990}{supp.G.6}%
\NCLlink{87.843002}{589.838013}{180.643997}{16.489990}{supp.G.7}%
\NCLlink{87.843002}{604.335999}{160.007996}{16.489990}{supp.G.8}%
\NCLlink{87.843002}{618.833008}{171.826004}{16.489990}{supp.G.9}%
\NCLlink{87.843002}{633.330994}{139.946991}{16.489990}{supp.G.10}%
\NCLlink{87.843002}{647.828979}{278.155991}{16.489990}{supp.H}%
\NCLlink{87.843002}{662.327026}{129.175003}{16.491028}{supp.H.1}%
\NCLlink{87.843002}{676.823975}{136.446991}{16.489990}{supp.H.2}%
\NCLlink{87.843002}{691.322021}{208.143997}{16.489990}{supp.H.3}%
\NCLlink{87.843002}{705.820007}{130.416992}{16.489990}{supp.H.4}%
\NCLlink{87.843002}{720.317017}{177.598984}{16.489990}{supp.H.5}%
}
\expandafter\def\csname NCLsupppage2\endcsname{%
\NCLlink{87.843002}{110.502991}{173.052994}{16.489990}{supp.H.6}%
\NCLlink{87.843002}{125.000977}{124.311005}{16.489990}{supp.H.7}%
\NCLlink{87.843002}{139.499023}{284.835007}{16.490051}{supp.I}%
\NCLlink{87.843002}{153.997009}{182.749985}{16.491028}{supp.I.1}%
\NCLlink{87.843002}{168.494019}{202.628983}{16.489990}{supp.I.2}%
\NCLlink{87.843002}{182.992004}{192.568985}{16.489990}{supp.I.3}%
\NCLlink{87.843002}{197.489990}{112.750000}{16.490967}{supp.I.4}%
\NCLlink{87.843002}{211.987000}{105.477997}{16.489990}{supp.I.5}%
\NCLlink{87.843002}{226.484985}{224.296005}{16.489990}{supp.I.6}%
\NCLlink{87.843002}{240.982971}{185.486984}{16.489990}{supp.J}%
\NCLlink{87.843002}{255.479980}{158.190002}{16.489990}{supp.J.1}%
\NCLlink{87.843002}{269.978027}{230.447006}{16.490051}{supp.J.2}%
\NCLlink{87.843002}{284.476013}{152.175003}{16.489990}{supp.J.3}%
\NCLlink{87.843002}{298.973999}{237.584000}{16.490997}{supp.J.4}%
\NCLlink{87.843002}{313.471008}{150.886993}{16.490021}{supp.J.5}%
\NCLlink{87.843002}{327.968994}{223.143997}{16.489990}{supp.J.6}%
\NCLlink{87.843002}{342.467010}{181.022995}{16.490021}{supp.K}%
\NCLlink{87.843002}{356.963989}{117.296005}{16.489990}{supp.K.1}%
\NCLlink{87.843002}{371.462006}{110.022995}{16.490021}{supp.K.2}%
\NCLlink{87.843002}{385.959991}{60.895004}{16.489990}{supp.references}%
}
\expandafter\def\csname NCLsupppage3\endcsname{%
\NCLdestination{77.469002}{84.825806}{supp.A}%
\NCLdestination{77.469002}{89.212830}{supp.A.1}%
\NCLdestination{77.469002}{89.212830}{supp.A.2}%
\NCLdestination{77.469002}{89.212830}{supp.A.3}%
\NCLdestination{77.760002}{135.904053}{supp.B}%
\NCLdestination{77.760002}{151.272949}{supp.B.1}%
\NCLdestination{77.469002}{655.265381}{supp.B.2}%
\NCLdestination{77.469002}{713.870483}{supp.B.3}%
\NCLdestination{77.469002}{346.043701}{supp.proof.B_delta}%
\NCLlink{248.591003}{118.237854}{15.932007}{15.334045}{thm.2.1}%
\NCLlink{157.496994}{186.270935}{16.488998}{11.690002}{thm.2.2}%
\NCLlink{128.783005}{686.551453}{15.931992}{11.688965}{thm.3.4}%
\NCLdestination{77.469000}{84.825809}{supp.outline.001}%
\NCLdestination{77.760000}{135.904060}{supp.outline.002}%
\NCLdestination{77.760000}{151.272950}{supp.outline.003}%
\NCLdestination{77.469000}{655.265400}{supp.outline.004}%
\NCLdestination{77.469000}{713.870500}{supp.outline.005}%
}
\expandafter\def\csname NCLsupppage4\endcsname{%
\NCLdestination{77.469002}{117.116272}{supp.B.4}%
\NCLdestination{77.760002}{334.647583}{supp.C}%
\NCLdestination{77.760002}{350.015472}{supp.C.1}%
\NCLdestination{77.469002}{119.382263}{supp.lemma.B.1}%
\NCLlink{332.989014}{102.749268}{15.931976}{11.690002}{thm.3.4}%
\NCLlink{164.175003}{385.013458}{16.489990}{11.689026}{thm.4.7}%
\NCLlink{147.934998}{477.729462}{12.884293}{12.901001}{equation.66}%
\NCLlink{356.710815}{477.729462}{12.901978}{12.901001}{equation.64}%
\NCLlink{182.529999}{555.150024}{12.902008}{12.902039}{equation.68}%
\NCLdestination{77.469000}{117.116270}{supp.outline.006}%
\NCLdestination{77.760000}{334.647590}{supp.outline.007}%
\NCLdestination{77.760000}{350.015480}{supp.outline.008}%
}
\expandafter\def\csname NCLsupppage5\endcsname{%
\NCLlink{104.420998}{101.272217}{15.931999}{11.690002}{thm.4.5}%
\NCLlink{81.005997}{114.829224}{12.902000}{12.901001}{equation.62}%
\NCLlink{145.945999}{166.248230}{12.901001}{12.902039}{equation.69}%
\NCLlink{431.808014}{429.255188}{12.901978}{12.901001}{equation.71}%
\NCLlink{145.945999}{541.379211}{12.901001}{12.902039}{equation.72}%
\NCLlink{165.457001}{554.330200}{15.932007}{12.901978}{thm.4.5}%
\NCLlink{471.644012}{554.330200}{12.901978}{12.901978}{equation.63}%
\NCLlink{390.735992}{673.920227}{12.902008}{12.902039}{equation.49}%
}
\expandafter\def\csname NCLsupppage6\endcsname{%
\NCLlink{520.025024}{100.654968}{15.931946}{14.815002}{thm.4.6}%
\NCLlink{102.676003}{113.148010}{12.902000}{12.902039}{equation.56}%
\NCLlink{146.248993}{127.070984}{12.901001}{12.901001}{equation.73}%
\NCLlink{171.097000}{127.070984}{12.901993}{12.901001}{equation.74}%
\NCLlink{256.703003}{398.935974}{12.902008}{12.901978}{equation.80}%
\NCLlink{129.089996}{474.707001}{12.902008}{12.902008}{equation.81}%
\NCLlink{165.612000}{667.148987}{12.902008}{12.901001}{equation.80}%
\NCLlink{209.854996}{667.148987}{12.902008}{12.901001}{equation.81}%
}
\expandafter\def\csname NCLsupppage7\endcsname{%
\NCLdestination{77.469002}{187.298218}{supp.C.2}%
\NCLlink{442.044006}{172.268188}{12.902008}{12.901978}{equation.83}%
\NCLlink{466.893005}{172.268188}{12.901001}{12.901978}{equation.84}%
\NCLlink{394.447998}{206.316223}{15.932007}{11.688965}{thm.4.8}%
\NCLlink{343.083008}{372.441223}{15.932007}{11.690002}{thm.3.3}%
\NCLlink{103.078003}{571.049255}{21.385994}{12.901001}{equation.91}%
\NCLdestination{77.469000}{187.298220}{supp.outline.009}%
}
\expandafter\def\csname NCLsupppage8\endcsname{%
\NCLdestination{77.469002}{602.837891}{supp.lemma.C.1}%
\NCLlink{116.663002}{142.884705}{21.387001}{18.526001}{equation.89}%
\NCLlink{255.136002}{393.471893}{21.387009}{12.902008}{equation.91}%
\NCLlink{300.226990}{561.942932}{26.841003}{12.902039}{equation.103}%
\NCLlink{330.529999}{561.942932}{26.841003}{12.902039}{equation.104}%
\NCLlink{349.265015}{713.156921}{27.640991}{12.902039}{equation.100}%
\NCLlink{380.489014}{713.156921}{27.640991}{12.902039}{equation.101}%
}
\expandafter\def\csname NCLsupppage9\endcsname{%
}
\expandafter\def\csname NCLsupppage10\endcsname{%
\NCLlink{141.731995}{141.075928}{21.387009}{12.901001}{equation.96}%
\NCLlink{166.580994}{141.075928}{21.386002}{12.901001}{equation.97}%
\NCLlink{469.187012}{368.128876}{26.841003}{12.901978}{equation.103}%
\NCLlink{466.454987}{394.031891}{26.841003}{12.901978}{equation.104}%
}
\expandafter\def\csname NCLsupppage11\endcsname{%
\NCLlink{413.407013}{193.059448}{15.931976}{14.125000}{thm.4.5}%
\NCLlink{168.442993}{351.020294}{26.841003}{12.901978}{equation.100}%
\NCLlink{198.746002}{351.020294}{26.841003}{12.901978}{equation.101}%
\NCLlink{156.188995}{709.992310}{26.841003}{15.185974}{equation.102}%
}
\expandafter\def\csname NCLsupppage12\endcsname{%
\NCLdestination{77.760002}{76.000000}{supp.D}%
\NCLdestination{77.469002}{93.920166}{supp.prose.D_moment_intro}%
\NCLlink{346.024994}{105.429321}{15.932007}{11.255310}{thm.7.5}%
\NCLdestination{178.946000}{275.715200}{supp.target.085}%
\NCLlink{254.147003}{345.093201}{12.902008}{12.901978}{supp.target.085}%
\NCLdestination{252.805000}{514.157200}{supp.target.086}%
\NCLdestination{77.760000}{76.000000}{supp.outline.010}%
}
\expandafter\def\csname NCLsupppage13\endcsname{%
\NCLdestination{77.469002}{263.939392}{supp.E}%
\NCLdestination{77.469002}{292.350403}{supp.theorem.E.1}%
\NCLlink{263.325012}{249.113586}{12.901978}{12.205261}{supp.target.086}%
\NCLlink{313.083008}{249.113586}{18.355988}{12.205261}{supp.target.087}%
\NCLlink{519.036987}{315.778442}{16.489990}{12.902039}{thm.4.8}%
\NCLlink{343.332001}{344.190430}{16.489014}{12.901978}{thm.5.4}%
\NCLlink{250.942001}{461.609436}{16.488998}{11.689026}{thm.4.8}%
\NCLlink{355.436005}{475.167419}{16.488983}{12.902008}{thm.6.4}%
\NCLlink{377.128998}{651.237427}{15.932007}{12.901978}{thm.4.8}%
\NCLdestination{77.469000}{263.939400}{supp.outline.011}%
}
\expandafter\def\csname NCLsupppage14\endcsname{%
\NCLdestination{77.469002}{444.441681}{supp.proof.E2_intro}%
\NCLlink{366.321991}{211.127808}{15.932007}{11.689941}{thm.4.8}%
\NCLlink{265.934998}{224.684814}{26.841003}{12.901001}{equation.105}%
\NCLlink{296.238007}{224.684814}{26.841003}{12.901001}{equation.106}%
\NCLlink{335.498993}{349.046387}{15.932007}{13.290253}{thm.5.3}%
\NCLlink{171.059006}{425.924377}{15.931992}{11.689026}{thm.4.8}%
\NCLdestination{164.672000}{668.359770}{supp.target.088}%
}
\expandafter\def\csname NCLsupppage15\endcsname{%
\NCLdestination{77.469002}{683.615906}{supp.E_joint_iteration}%
\NCLlink{473.872406}{187.858948}{18.313019}{12.902039}{supp.theorem.E.1}%
\NCLdestination{150.498000}{307.176980}{supp.target.089}%
\NCLdestination{235.517000}{350.130930}{supp.target.090}%
}
\expandafter\def\csname NCLsupppage16\endcsname{%
\NCLdestination{77.469002}{640.908569}{supp.prose.E_activation_proof}%
\NCLlink{275.066986}{182.360168}{12.902008}{12.901978}{supp.target.088}%
\NCLlink{461.187012}{267.757202}{12.901978}{12.902039}{supp.target.089}%
\NCLlink{374.140991}{293.660217}{12.901001}{12.902039}{supp.target.090}%
\NCLlink{446.700012}{609.096191}{16.488983}{11.690002}{thm.3.1}%
\NCLlink{457.255005}{634.908569}{16.489990}{11.599365}{thm.3.3}%
\NCLlink{473.368988}{691.705383}{15.931000}{11.688965}{thm.3.1}%
\NCLlink{516.565002}{705.263428}{15.930969}{14.125000}{thm.3.3}%
\NCLdestination{77.760000}{296.412180}{supp.target.128}%
}
\expandafter\def\csname NCLsupppage17\endcsname{%
\NCLdestination{77.469002}{84.825806}{supp.E.1}%
\NCLdestination{77.469002}{143.454895}{supp.E.2}%
\NCLdestination{77.469002}{431.061951}{supp.E.3}%
\NCLdestination{77.469002}{502.643097}{supp.E.4}%
\NCLdestination{77.469002}{502.643097}{supp.F}%
\NCLlink{318.692993}{116.111816}{15.932007}{11.688965}{thm.5.4}%
\NCLlink{122.564003}{128.629089}{17.902000}{11.255249}{supp.theorem.E.1}%
\NCLlink{176.582001}{168.142944}{16.488998}{11.689026}{thm.5.5}%
\NCLlink{392.320007}{169.544922}{15.932007}{15.724976}{thm.5.3}%
\NCLlink{136.406006}{219.948914}{15.931992}{11.690002}{thm.5.4}%
\NCLlink{273.726013}{247.994934}{21.385986}{11.690002}{thm.4.10}%
\NCLlink{317.791992}{260.945923}{15.932007}{11.689026}{thm.3.4}%
\NCLlink{130.429001}{273.896912}{15.931992}{11.688965}{thm.3.3}%
\NCLlink{136.162003}{449.396973}{15.931992}{11.688965}{thm.6.4}%
\NCLlink{123.865997}{475.299988}{15.932007}{11.690002}{thm.4.8}%
\NCLlink{517.625000}{475.299988}{17.901978}{11.690002}{supp.theorem.E.1}%
\NCLlink{176.582001}{527.331116}{16.488998}{11.689026}{thm.6.5}%
\NCLlink{320.437988}{553.234070}{15.931000}{11.688965}{thm.6.3}%
\NCLlink{123.737000}{647.533081}{15.932007}{11.688965}{thm.6.4}%
\NCLlink{380.746002}{660.485107}{21.386993}{11.690002}{thm.4.10}%
\NCLdestination{77.469000}{84.825809}{supp.outline.012}%
\NCLdestination{77.469000}{143.454900}{supp.outline.013}%
\NCLdestination{77.469000}{431.061960}{supp.outline.014}%
\NCLdestination{77.469000}{502.643100}{supp.outline.015}%
\NCLdestination{77.469000}{502.643100}{supp.outline.016}%
}
\expandafter\def\csname NCLsupppage18\endcsname{%
\NCLlink{222.822006}{118.109802}{15.931992}{15.239014}{thm.3.3}%
\NCLlink{261.539001}{118.109802}{15.932007}{15.239014}{thm.3.4}%
\NCLlink{167.882004}{142.611816}{21.386993}{11.690063}{thm.4.10}%
}
\expandafter\def\csname NCLsupppage19\endcsname{%
\NCLdestination{77.469002}{175.763367}{supp.proof.F_top}%
\NCLlink{148.371002}{711.707275}{15.931992}{12.901978}{thm.4.2}%
\NCLdestination{213.895000}{725.585290}{supp.target.093}%
}
\expandafter\def\csname NCLsupppage20\endcsname{%
\NCLlink{147.692001}{481.580139}{15.931992}{11.690002}{thm.7.3}%
\NCLdestination{215.882000}{496.038160}{supp.target.091}%
\NCLlink{317.558990}{530.671143}{12.902008}{12.901978}{supp.target.091}%
\NCLlink{290.431000}{544.081177}{15.932007}{13.360046}{thm.4.2}%
\NCLdestination{188.456000}{545.613180}{supp.target.092}%
\NCLlink{439.135986}{631.661621}{12.902008}{12.901001}{supp.target.092}%
\NCLlink{242.300995}{681.248596}{15.932007}{12.901001}{thm.7.1}%
\NCLdestination{238.495000}{695.101600}{supp.target.094}%
}
\expandafter\def\csname NCLsupppage21\endcsname{%
\NCLlink{180.173996}{108.790161}{15.932007}{11.688965}{thm.2.7}%
\NCLlink{132.431000}{179.450195}{12.901001}{12.901001}{supp.target.093}%
\NCLlink{157.279007}{179.450195}{12.901993}{12.901001}{supp.target.094}%
\NCLdestination{236.766000}{224.932250}{supp.target.095}%
\NCLlink{94.400002}{294.060211}{12.902000}{12.902008}{supp.target.095}%
\NCLdestination{77.760000}{296.812240}{supp.target.125}%
}
\expandafter\def\csname NCLsupppage22\endcsname{%
\NCLdestination{221.018000}{126.561520}{supp.target.096}%
\NCLlink{147.552002}{261.933533}{12.901001}{12.901978}{supp.target.096}%
\NCLlink{518.812988}{550.373535}{12.901001}{12.901001}{supp.target.096}%
\NCLlink{282.390015}{678.081543}{12.901978}{12.901978}{supp.target.096}%
}
\expandafter\def\csname NCLsupppage23\endcsname{%
\NCLdestination{77.760002}{630.911255}{supp.G}%
\NCLdestination{77.760002}{678.658142}{supp.G.1}%
\NCLlink{166.253006}{576.459045}{18.203995}{13.360962}{thm.B.1}%
\NCLdestination{77.760000}{630.911280}{supp.outline.017}%
\NCLdestination{77.760000}{678.658200}{supp.outline.018}%
}
\expandafter\def\csname NCLsupppage24\endcsname{%
\NCLlink{118.873001}{92.186829}{16.488998}{11.690002}{thm.2.3}%
\NCLlink{297.320007}{315.364746}{7.446991}{12.901978}{equation.1}%
\NCLlink{165.718994}{328.315796}{7.447006}{12.901001}{equation.2}%
}
\expandafter\def\csname NCLsupppage25\endcsname{%
\NCLdestination{77.760002}{612.590088}{supp.G.2}%
\NCLlink{174.067001}{232.491943}{7.447006}{12.902039}{equation.3}%
\NCLlink{128.128006}{348.125916}{7.446991}{12.901001}{equation.3}%
\NCLlink{164.067001}{483.852905}{7.447006}{12.901001}{equation.4}%
\NCLlink{296.066986}{483.852905}{7.447021}{12.901001}{equation.1}%
\NCLlink{114.734001}{532.331177}{7.446999}{12.205261}{equation.4}%
\NCLlink{168.378998}{547.929993}{15.932007}{11.598816}{thm.2.1}%
\NCLlink{215.311996}{547.929993}{7.447006}{11.598816}{Item.1}%
\NCLlink{118.873001}{647.588013}{16.488998}{11.690002}{thm.2.4}%
\NCLdestination{77.760000}{612.590100}{supp.outline.019}%
}
\expandafter\def\csname NCLsupppage26\endcsname{%
\NCLdestination{153.112000}{447.159190}{supp.target.097}%
\NCLlink{287.203003}{575.248291}{12.902008}{12.902039}{supp.target.097}%
\NCLdestination{234.849000}{578.906280}{supp.target.098}%
}
\expandafter\def\csname NCLsupppage27\endcsname{%
\NCLlink{111.823997}{161.571472}{12.902000}{12.901978}{supp.target.098}%
\NCLdestination{241.900000}{258.651500}{supp.target.099}%
\NCLlink{227.278000}{541.436462}{12.901001}{12.901978}{supp.target.099}%
}
\expandafter\def\csname NCLsupppage28\endcsname{%
\NCLdestination{77.760002}{122.711731}{supp.G.3}%
\NCLdestination{77.760002}{598.840454}{supp.G.4}%
\NCLlink{285.638000}{104.280762}{12.901001}{12.901001}{supp.target.099}%
\NCLlink{118.873001}{154.982727}{16.488998}{11.690002}{thm.2.5}%
\NCLlink{118.873001}{629.661926}{16.488998}{11.688965}{thm.2.7}%
\NCLdestination{77.760000}{122.711730}{supp.outline.020}%
\NCLdestination{77.760000}{598.840480}{supp.outline.021}%
}
\expandafter\def\csname NCLsupppage29\endcsname{%
}
\expandafter\def\csname NCLsupppage30\endcsname{%
\NCLdestination{77.469002}{206.410950}{supp.G.5}%
\NCLdestination{77.469002}{252.063965}{supp.G.6}%
\NCLdestination{77.469002}{310.694092}{supp.G.7}%
\NCLdestination{77.760002}{345.546417}{supp.G.8}%
\NCLlink{127.847000}{224.745972}{18.203995}{11.688965}{thm.B.1}%
\NCLlink{372.783997}{270.399963}{18.205017}{11.690002}{thm.B.2}%
\NCLlink{76.473000}{294.180969}{18.205002}{9.446960}{thm.B.3}%
\NCLlink{128.526001}{329.029144}{15.931992}{11.688965}{thm.2.9}%
\NCLlink{118.873001}{376.367828}{16.488998}{11.688965}{thm.3.1}%
\NCLlink{267.919006}{507.148712}{7.446991}{12.205872}{equation.6}%
\NCLdestination{77.469000}{206.410950}{supp.outline.022}%
\NCLdestination{77.469000}{252.063970}{supp.outline.023}%
\NCLdestination{77.469000}{310.694130}{supp.outline.024}%
\NCLdestination{77.760000}{345.546430}{supp.outline.025}%
}
\expandafter\def\csname NCLsupppage31\endcsname{%
\NCLdestination{77.760000}{111.895999}{supp.target.100}%
\NCLlink{234.485992}{154.313965}{12.902008}{12.901978}{supp.target.100}%
\NCLdestination{182.778000}{281.876000}{supp.target.101}%
\NCLlink{129.106995}{528.540955}{12.902008}{12.901978}{supp.target.101}%
\NCLlink{342.980011}{528.540955}{12.901978}{12.901978}{equation.7}%
\NCLlink{357.372986}{541.492004}{12.902008}{12.901001}{equation.8}%
\NCLlink{331.125000}{554.443970}{12.902008}{12.901978}{supp.target.101}%
\NCLlink{307.181000}{567.394958}{12.901001}{12.901978}{equation.9}%
\NCLlink{165.718994}{580.346008}{12.902008}{12.901001}{equation.10}%
\NCLdestination{205.515000}{179.019960}{supp.target.102}%
}
\expandafter\def\csname NCLsupppage32\endcsname{%
\NCLdestination{77.760002}{558.525818}{supp.G.9}%
\NCLdestination{77.469002}{347.208862}{supp.prose.G_expectation_end}%
\NCLlink{138.955994}{101.156860}{12.901001}{12.902039}{supp.target.102}%
\NCLlink{325.207001}{394.726685}{21.385986}{12.901978}{equation.10}%
\NCLlink{118.873001}{591.402832}{16.488998}{11.690002}{thm.3.2}%
\NCLlink{220.817993}{592.008850}{12.901001}{12.901978}{equation.12}%
\NCLdestination{185.704000}{599.408900}{supp.target.103}%
\NCLlink{388.440002}{640.157837}{12.901001}{12.901001}{supp.target.103}%
\NCLdestination{77.760000}{558.525800}{supp.outline.026}%
}
\expandafter\def\csname NCLsupppage33\endcsname{%
\NCLlink{114.734001}{181.624634}{12.901001}{12.902100}{equation.13}%
\NCLlink{162.688995}{324.737579}{12.901001}{12.902008}{equation.14}%
\NCLlink{178.279007}{430.499573}{12.901993}{12.902008}{equation.13}%
\NCLlink{235.037003}{430.499573}{12.901001}{12.902008}{equation.15}%
\NCLlink{145.945999}{525.605591}{12.901001}{12.901001}{equation.16}%
\NCLlink{160.070999}{538.557617}{15.932007}{13.494019}{thm.3.1}%
\NCLlink{297.080994}{551.508606}{12.901001}{12.901001}{equation.14}%
\NCLlink{291.546997}{564.637634}{12.902008}{12.902039}{equation.7}%
\NCLlink{317.516998}{564.637634}{12.901001}{12.902039}{equation.8}%
\NCLlink{364.584015}{564.637634}{12.901978}{12.902039}{equation.9}%
\NCLlink{81.005997}{577.588623}{12.902000}{12.901001}{equation.17}%
\NCLlink{105.855003}{577.588623}{12.900993}{12.901001}{equation.19}%
\NCLlink{156.460999}{713.616943}{12.902008}{12.205322}{equation.16}%
}
\expandafter\def\csname NCLsupppage34\endcsname{%
\NCLdestination{77.760002}{76.000000}{supp.G.10}%
\NCLlink{118.873001}{106.821472}{16.488998}{11.688965}{thm.3.3}%
\NCLdestination{190.925000}{617.489470}{supp.target.105}%
\NCLdestination{254.349000}{455.076490}{supp.target.109}%
\NCLdestination{77.760000}{76.000000}{supp.outline.027}%
}
\expandafter\def\csname NCLsupppage35\endcsname{%
\NCLdestination{202.429000}{218.711000}{supp.target.104}%
\NCLlink{465.212006}{277.809998}{12.902008}{12.902039}{supp.target.104}%
\NCLlink{208.007004}{329.615997}{12.901993}{12.902008}{supp.target.104}%
\NCLlink{340.885010}{342.566986}{12.901978}{12.901001}{supp.target.105}%
\NCLdestination{182.203000}{346.694000}{supp.target.106}%
\NCLlink{518.812988}{393.898987}{12.901001}{12.902008}{supp.target.106}%
\NCLdestination{156.010000}{111.344970}{supp.target.107}%
\NCLlink{346.132996}{537.285950}{12.901001}{12.901978}{supp.target.107}%
\NCLdestination{261.200000}{418.088000}{supp.target.108}%
\NCLlink{390.375000}{537.285950}{12.902008}{12.901978}{supp.target.108}%
\NCLlink{98.278999}{713.299988}{12.902000}{12.672974}{supp.target.109}%
\NCLdestination{77.760000}{590.094000}{supp.target.110}%
}
\expandafter\def\csname NCLsupppage36\endcsname{%
\NCLdestination{77.760002}{378.947693}{supp.H}%
\NCLdestination{77.469002}{440.009033}{supp.H.1}%
\NCLdestination{77.469002}{485.662048}{supp.H.2}%
\NCLdestination{77.760002}{520.515381}{supp.H.3}%
\NCLdestination{77.469002}{76.000000}{supp.proof.G_coefficient}%
\NCLdestination{77.469002}{113.026001}{supp.proof.G_exponential}%
\NCLlink{247.246002}{157.165710}{12.901001}{12.901001}{supp.target.110}%
\NCLlink{202.369995}{170.117737}{12.902008}{12.901978}{equation.20}%
\NCLlink{283.687012}{261.239746}{12.901978}{12.902039}{equation.20}%
\NCLlink{340.445007}{261.239746}{12.901978}{12.902039}{equation.21}%
\NCLlink{301.273010}{458.344055}{15.931976}{11.688965}{thm.4.2}%
\NCLlink{406.713989}{458.344055}{15.932007}{11.688965}{thm.4.1}%
\NCLlink{411.346985}{503.998047}{15.932007}{11.690002}{thm.4.2}%
\NCLlink{118.873001}{551.337830}{16.488998}{11.690002}{thm.4.1}%
\NCLlink{177.453995}{549.216797}{27.144012}{9.446960}{supp.target.111}%
\NCLdestination{77.760000}{378.947700}{supp.outline.028}%
\NCLdestination{77.469000}{440.009040}{supp.outline.029}%
\NCLdestination{77.469000}{485.662060}{supp.outline.030}%
\NCLdestination{77.760000}{520.515400}{supp.outline.031}%
}
\expandafter\def\csname NCLsupppage37\endcsname{%
\NCLdestination{77.469002}{394.618561}{supp.H.4}%
\NCLdestination{77.469002}{440.271576}{supp.H.5}%
\NCLdestination{77.760002}{488.375916}{supp.H.6}%
\NCLlink{224.455994}{341.634613}{12.902008}{12.204834}{equation.33}%
\NCLlink{419.524994}{341.634613}{12.902008}{12.204834}{equation.33}%
\NCLlink{158.535004}{354.586639}{12.901993}{12.902039}{equation.34}%
\NCLlink{515.781982}{354.586639}{12.902039}{12.902039}{equation.35}%
\NCLlink{103.776001}{423.783600}{15.931999}{9.567993}{thm.4.3}%
\NCLlink{127.690002}{458.607574}{15.931992}{11.690002}{thm.4.4}%
\NCLlink{281.928009}{484.075897}{15.931976}{11.255310}{thm.4.5}%
\NCLlink{118.873001}{520.384888}{16.488998}{11.688965}{thm.4.6}%
\NCLdestination{77.469000}{394.618570}{supp.outline.032}%
\NCLdestination{77.469000}{440.271590}{supp.outline.033}%
\NCLdestination{77.760000}{488.375930}{supp.outline.034}%
}
\expandafter\def\csname NCLsupppage38\endcsname{%
\NCLlink{349.710999}{434.740723}{12.902008}{12.901978}{equation.49}%
\NCLdestination{165.425000}{330.566700}{supp.target.112}%
\NCLlink{375.772003}{434.740723}{12.901001}{12.901978}{supp.target.112}%
\NCLdestination{254.278000}{197.845700}{supp.target.113}%
\NCLdestination{248.125000}{437.262740}{supp.target.114}%
}
\expandafter\def\csname NCLsupppage39\endcsname{%
\NCLdestination{77.469002}{575.446960}{supp.H.7}%
\NCLdestination{77.760002}{636.202271}{supp.I}%
\NCLdestination{77.760002}{686.072144}{supp.I.1}%
\NCLlink{148.914993}{100.195984}{12.902008}{12.901001}{supp.target.113}%
\NCLlink{173.764008}{100.195984}{12.901993}{12.901001}{supp.target.114}%
\NCLlink{139.643005}{216.758972}{12.901001}{12.901001}{equation.52}%
\NCLlink{164.490997}{216.758972}{12.902008}{12.901001}{equation.53}%
\NCLlink{286.688995}{229.710999}{12.902008}{12.902039}{equation.45}%
\NCLlink{145.945999}{289.305969}{12.901001}{12.901001}{equation.54}%
\NCLlink{191.162003}{326.367981}{12.901001}{12.902008}{equation.55}%
\NCLlink{204.203995}{339.319000}{12.902008}{12.901001}{equation.56}%
\NCLlink{179.369995}{477.522980}{12.902008}{12.901978}{equation.51}%
\NCLdestination{227.697000}{364.491000}{supp.target.115}%
\NCLlink{255.279007}{477.522980}{12.901993}{12.901978}{supp.target.115}%
\NCLlink{147.729996}{593.781982}{15.932007}{11.688965}{thm.4.9}%
\NCLdestination{77.469000}{575.447000}{supp.outline.035}%
\NCLdestination{77.760000}{636.202300}{supp.outline.036}%
\NCLdestination{77.760000}{686.072170}{supp.outline.037}%
}
\expandafter\def\csname NCLsupppage40\endcsname{%
\NCLlink{118.873001}{91.310181}{16.488998}{11.690002}{thm.7.2}%
\NCLlink{350.121002}{91.310181}{15.932007}{11.690002}{thm.2.2}%
}
\expandafter\def\csname NCLsupppage41\endcsname{%
\NCLdestination{246.896000}{683.646000}{supp.target.116}%
}
\expandafter\def\csname NCLsupppage42\endcsname{%
\NCLlink{457.071991}{267.259644}{18.356018}{12.901978}{supp.I.1}%
\NCLlink{81.005997}{349.500641}{18.356003}{12.901001}{supp.target.116}%
\NCLlink{378.583008}{689.910583}{7.446991}{11.688965}{section.2}%
}
\expandafter\def\csname NCLsupppage43\endcsname{%
\NCLdestination{77.760002}{127.505859}{supp.I.2}%
\NCLdestination{77.469002}{426.224518}{supp.I_joint_scalar}%
\NCLlink{118.873001}{160.726807}{16.488998}{11.689941}{thm.7.3}%
\NCLdestination{189.881000}{233.108890}{supp.target.117}%
\NCLlink{151.253006}{289.578857}{18.356995}{12.901978}{supp.target.117}%
\NCLlink{81.005997}{399.172852}{18.356003}{12.901001}{equation.187}%
\NCLlink{318.169006}{544.931458}{15.932007}{11.690002}{thm.4.8}%
\NCLlink{200.873993}{669.954773}{26.841003}{12.583313}{equation.103}%
\NCLlink{231.177002}{669.954773}{26.842010}{12.583313}{equation.104}%
\NCLdestination{77.760000}{127.505860}{supp.outline.038}%
}
\expandafter\def\csname NCLsupppage44\endcsname{%
\NCLlink{200.873993}{76.318726}{26.841003}{0.318726}{equation.103}%
\NCLlink{231.177002}{76.318726}{26.842010}{0.318726}{equation.104}%
\NCLlink{122.564003}{88.663696}{15.931999}{11.688965}{thm.4.5}%
\NCLlink{465.716003}{169.495728}{26.841003}{12.901978}{equation.191}%
\NCLlink{385.295013}{327.328705}{15.931976}{12.901978}{thm.3.4}%
\NCLlink{233.975998}{415.414032}{21.387009}{11.688965}{thm.2.9}%
\NCLlink{335.996002}{601.729065}{7.446991}{11.689026}{section.4}%
\NCLlink{316.226990}{676.571045}{18.356018}{12.902039}{supp.I_joint_scalar}%
\NCLdestination{244.328000}{555.224090}{supp.target.118}%
\NCLlink{348.527008}{676.571045}{18.355988}{12.902039}{supp.target.118}%
\NCLdestination{253.350000}{629.460000}{supp.target.119}%
\NCLlink{402.665985}{676.571045}{18.356018}{12.902039}{supp.target.119}%
\NCLlink{129.854996}{718.946045}{18.356003}{12.902039}{equation.187}%
\NCLdestination{211.444000}{492.996040}{supp.target.120}%
\NCLlink{161.369995}{718.946045}{18.356003}{12.902039}{supp.target.120}%
\NCLlink{192.884995}{718.946045}{18.356003}{12.902039}{supp.target.118}%
\NCLlink{245.613007}{718.946045}{18.355988}{12.902039}{supp.target.119}%
\NCLlink{303.522003}{718.946045}{18.355988}{12.902039}{equation.188}%
\NCLlink{333.825012}{718.946045}{18.355988}{12.902039}{equation.190}%
}
\expandafter\def\csname NCLsupppage45\endcsname{%
\NCLdestination{77.760002}{78.363708}{supp.I.3}%
\NCLlink{164.175003}{110.634705}{16.489990}{11.690002}{thm.7.4}%
\NCLlink{218.539001}{110.634705}{7.567993}{11.690002}{Item.34}%
\NCLlink{168.865005}{574.413940}{15.931992}{11.689026}{thm.7.3}%
\NCLdestination{220.518000}{637.991970}{supp.target.121}%
\NCLlink{452.459991}{723.956909}{18.356018}{12.901978}{supp.target.121}%
\NCLdestination{77.760000}{78.363710}{supp.outline.039}%
}
\expandafter\def\csname NCLsupppage46\endcsname{%
\NCLdestination{77.469002}{328.423767}{supp.I_scalar_terminal}%
\NCLdestination{77.469002}{426.705750}{supp.proof.I_ref}%
\NCLlink{147.445999}{101.272217}{15.932007}{11.690002}{thm.4.2}%
\NCLlink{360.522003}{254.696228}{21.385986}{11.689026}{thm.4.9}%
\NCLlink{341.928009}{341.431580}{15.931976}{12.753967}{thm.4.9}%
\NCLlink{302.173004}{439.260773}{15.932007}{12.301178}{thm.4.2}%
\NCLlink{116.361000}{480.809631}{15.931999}{11.598846}{thm.2.1}%
\NCLlink{163.296005}{480.809631}{7.446991}{11.598846}{Item.3}%
\NCLlink{214.365997}{529.335632}{18.356003}{12.901001}{supp.proof.I_ref}%
\NCLdestination{238.209000}{495.176650}{supp.target.122}%
\NCLlink{244.669006}{529.335632}{18.355988}{12.901001}{supp.target.122}%
\NCLlink{135.490997}{667.514648}{15.932007}{15.714966}{thm.7.1}%
\NCLdestination{203.044000}{102.687198}{supp.target.124}%
}
\expandafter\def\csname NCLsupppage47\endcsname{%
\NCLdestination{77.760002}{472.789978}{supp.I.4}%
\NCLdestination{77.760002}{652.493469}{supp.I.5}%
\NCLlink{168.218994}{320.046265}{15.931000}{11.689026}{thm.2.7}%
\NCLlink{432.009003}{355.737305}{18.355988}{12.901001}{supp.target.121}%
\NCLdestination{238.711000}{252.764280}{supp.target.123}%
\NCLlink{106.954002}{368.689270}{18.355995}{12.901978}{supp.target.123}%
\NCLlink{81.005997}{381.640259}{18.356003}{12.901001}{supp.target.124}%
\NCLlink{112.521004}{381.640259}{18.357002}{12.901001}{supp.I_scalar_terminal}%
\NCLlink{144.037003}{381.640259}{18.356003}{12.901001}{supp.target.121}%
\NCLlink{196.764008}{381.640259}{18.355988}{12.901001}{supp.target.123}%
\NCLlink{164.175003}{503.611450}{16.489990}{11.688965}{thm.7.4}%
\NCLlink{218.539001}{503.611450}{7.567993}{11.688965}{Item.35}%
\NCLlink{304.108002}{503.611450}{17.597992}{11.688965}{supp.target.125}%
\NCLlink{273.996002}{546.260437}{15.932007}{11.688965}{thm.7.4}%
\NCLlink{329.342010}{546.260437}{7.446991}{11.688965}{Item.34}%
\NCLlink{164.175003}{687.491455}{16.489990}{11.690002}{thm.7.4}%
\NCLlink{218.539001}{687.491455}{7.567993}{11.690002}{Item.36}%
\NCLlink{304.108002}{687.491455}{17.597992}{11.690002}{supp.target.125}%
\NCLdestination{77.760000}{472.789990}{supp.outline.040}%
\NCLdestination{77.760000}{652.493490}{supp.outline.041}%
}
\expandafter\def\csname NCLsupppage48\endcsname{%
\NCLdestination{77.760002}{157.860352}{supp.I.6}%
\NCLdestination{77.469002}{76.000000}{supp.prose.I_orientation_input}%
\NCLlink{118.873001}{189.423401}{16.488998}{11.689026}{thm.7.5}%
\NCLlink{149.570007}{391.250366}{15.931992}{12.901978}{thm.7.2}%
\NCLdestination{77.760000}{157.860350}{supp.outline.042}%
}
\expandafter\def\csname NCLsupppage49\endcsname{%
\NCLdestination{211.962000}{325.930640}{supp.target.087}%
\NCLlink{209.067001}{168.588623}{15.931992}{11.690002}{thm.7.2}%
\NCLlink{372.278992}{318.254639}{10.326019}{11.690002}{supp.D}%
\NCLlink{81.005997}{383.178650}{18.356003}{12.902008}{supp.target.087}%
}
\expandafter\def\csname NCLsupppage50\endcsname{%
\NCLdestination{77.760002}{666.790405}{supp.J}%
\NCLdestination{209.538000}{92.370910}{supp.target.126}%
\NCLlink{202.612000}{164.437927}{18.356995}{12.901978}{supp.target.126}%
\NCLlink{117.088997}{649.928955}{15.931999}{11.690002}{thm.2.1}%
\NCLlink{165.662003}{649.928955}{7.446991}{11.690002}{Item.2}%
\NCLdestination{77.760000}{666.790400}{supp.outline.043}%
}
\expandafter\def\csname NCLsupppage51\endcsname{%
\NCLdestination{77.760002}{77.757568}{supp.J.1}%
\NCLdestination{77.469002}{643.409546}{supp.J_right_moments}%
\NCLlink{118.873001}{110.634583}{16.488998}{11.690002}{thm.5.1}%
\NCLlink{165.307999}{121.464539}{37.143997}{9.446960}{supp.target.127}%
\NCLdestination{77.760000}{77.757570}{supp.outline.044}%
}
\expandafter\def\csname NCLsupppage52\endcsname{%
\NCLdestination{77.760002}{223.409546}{supp.J.2}%
\NCLdestination{77.469002}{472.051544}{supp.J_right_input}%
\NCLlink{118.873001}{256.286499}{16.488998}{11.688965}{thm.5.2}%
\NCLlink{168.453995}{351.870544}{15.932007}{11.690002}{thm.5.1}%
\NCLlink{391.238007}{484.135742}{15.932007}{11.688965}{thm.5.1}%
\NCLdestination{77.760000}{223.409550}{supp.outline.045}%
}
\expandafter\def\csname NCLsupppage53\endcsname{%
\NCLdestination{77.469002}{77.626892}{supp.J_right_conditional}%
\NCLlink{185.531006}{133.207703}{15.931992}{16.545593}{thm.2.5}%
\NCLlink{135.490997}{543.660706}{15.932007}{11.690002}{thm.2.3}%
}
\expandafter\def\csname NCLsupppage54\endcsname{%
\NCLlink{422.829010}{184.593018}{7.446991}{11.689026}{section.3}%
\NCLlink{147.755005}{290.440979}{15.931992}{11.688965}{thm.4.1}%
\NCLlink{381.299011}{390.658997}{15.931000}{11.690002}{thm.4.1}%
}
\expandafter\def\csname NCLsupppage55\endcsname{%
\NCLdestination{77.469002}{559.808838}{supp.J_right_retention}%
\NCLlink{470.008484}{88.204834}{17.812378}{12.204834}{supp.target.128}%
\NCLlink{424.226990}{572.999084}{15.931000}{12.901001}{thm.2.6}%
}
\expandafter\def\csname NCLsupppage56\endcsname{%
\NCLdestination{77.760002}{288.749817}{supp.J.3}%
\NCLdestination{77.469002}{167.707153}{supp.proof.J_coefficient}%
\NCLdestination{77.469002}{259.154846}{supp.prose.J_cubic_scope}%
\NCLlink{160.386002}{87.598816}{17.901001}{11.598816}{supp.target.128}%
\NCLlink{117.865166}{319.572266}{16.084785}{11.690002}{thm.5.3}%
\NCLlink{286.796936}{319.572266}{15.541443}{11.690002}{thm.5.2}%
\NCLlink{160.772003}{346.081268}{18.356003}{12.901978}{equation.109}%
\NCLlink{378.268127}{466.218262}{17.788483}{12.901978}{equation.110}%
\NCLlink{149.966995}{492.121277}{18.357010}{12.902008}{equation.111}%
\NCLlink{399.734375}{505.072266}{17.906525}{12.901001}{equation.112}%
\NCLlink{216.477005}{556.878296}{18.355988}{12.902039}{equation.113}%
\NCLlink{279.002014}{593.642273}{18.355988}{12.901978}{equation.114}%
\NCLlink{241.742996}{632.496277}{18.356003}{12.901001}{equation.115}%
\NCLlink{372.440002}{632.496277}{18.355988}{12.901001}{equation.114}%
\NCLlink{275.127991}{667.878296}{18.356018}{12.902039}{equation.115}%
\NCLdestination{77.760000}{288.749830}{supp.outline.046}%
}
\expandafter\def\csname NCLsupppage57\endcsname{%
\NCLdestination{77.760002}{156.272278}{supp.J.4}%
\NCLlink{202.673004}{98.766296}{18.356003}{12.901001}{equation.116}%
\NCLlink{118.873001}{189.149292}{16.488998}{11.690002}{thm.6.1}%
\NCLlink{516.994995}{189.149292}{15.931030}{11.690002}{thm.4.1}%
\NCLdestination{251.691000}{421.319300}{supp.target.129}%
\NCLdestination{150.479000}{623.991300}{supp.target.130}%
\NCLdestination{77.760000}{156.272280}{supp.outline.047}%
}
\expandafter\def\csname NCLsupppage58\endcsname{%
\NCLlink{272.315002}{88.204834}{18.356995}{12.204834}{supp.target.129}%
\NCLlink{344.606995}{101.156860}{18.356018}{12.902039}{supp.target.129}%
\NCLlink{297.492004}{443.389832}{15.932007}{13.208984}{thm.4.1}%
\NCLlink{224.985992}{549.873840}{18.356003}{12.902039}{supp.target.130}%
\NCLlink{484.565002}{585.564819}{18.355988}{12.901001}{supp.target.129}%
\NCLlink{476.832001}{598.516846}{18.355988}{12.901978}{supp.target.129}%
\NCLlink{102.218002}{659.387817}{18.356995}{12.901001}{supp.target.129}%
}
\expandafter\def\csname NCLsupppage59\endcsname{%
\NCLdestination{77.760002}{418.074097}{supp.J.5}%
\NCLdestination{77.469002}{76.000000}{supp.proof.J_family}%
\NCLdestination{77.469002}{504.809082}{supp.prose.J_fixed_prefix}%
\NCLlink{147.065002}{87.509094}{26.841003}{11.509094}{equation.135}%
\NCLlink{177.367996}{87.509094}{26.841003}{11.509094}{equation.136}%
\NCLdestination{218.046000}{209.686100}{supp.target.131}%
\NCLlink{94.278999}{272.668091}{18.356003}{12.901001}{supp.target.131}%
\NCLlink{139.643005}{358.781067}{18.355988}{12.901978}{equation.137}%
\NCLlink{169.945999}{358.781067}{18.356003}{12.901978}{equation.138}%
\NCLlink{290.096985}{412.595093}{18.357025}{12.902008}{equation.137}%
\NCLlink{320.399994}{412.595093}{18.356995}{12.902008}{equation.138}%
\NCLlink{118.228210}{450.345093}{16.230392}{11.689026}{thm.6.2}%
\NCLdestination{77.760000}{418.074100}{supp.outline.048}%
}
\expandafter\def\csname NCLsupppage60\endcsname{%
\NCLdestination{77.760002}{328.848724}{supp.J.6}%
\NCLdestination{77.469002}{533.452637}{supp.J_left_sharp}%
\NCLlink{118.873001}{361.119720}{16.488998}{11.690002}{thm.6.3}%
\NCLlink{194.177002}{374.070740}{15.931992}{11.690002}{thm.6.2}%
\NCLdestination{77.760000}{328.848730}{supp.outline.049}%
}
\expandafter\def\csname NCLsupppage61\endcsname{%
\NCLdestination{210.976000}{317.281230}{supp.target.132}%
\NCLlink{419.664001}{356.513245}{18.355988}{12.902008}{supp.target.132}%
\NCLlink{195.097000}{457.142242}{18.356003}{12.902008}{equation.150}%
\NCLlink{448.664001}{469.487244}{15.932007}{11.688995}{thm.6.1}%
\NCLlink{125.552002}{670.793213}{18.356003}{12.901001}{supp.target.132}%
\NCLlink{446.074127}{724.276184}{15.931976}{11.688965}{thm.6.1}%
}
\expandafter\def\csname NCLsupppage62\endcsname{%
\NCLdestination{77.469002}{279.385742}{supp.J_left_retention}%
\NCLlink{136.123993}{185.524170}{15.932007}{12.895996}{thm.6.1}%
\NCLlink{115.413002}{377.939667}{15.931999}{11.688965}{thm.2.6}%
\NCLlink{135.110001}{422.403076}{26.841003}{12.361359}{equation.154}%
\NCLlink{184.807007}{422.403076}{26.840988}{12.361359}{equation.155}%
\NCLdestination{225.334000}{237.530280}{supp.target.133}%
\NCLlink{341.421814}{438.608887}{18.355988}{12.205811}{supp.target.133}%
\NCLlink{128.128006}{512.894897}{18.355988}{12.902008}{supp.target.132}%
\NCLdestination{193.587000}{582.142900}{supp.target.134}%
}
\expandafter\def\csname NCLsupppage63\endcsname{%
\NCLdestination{77.469002}{579.365723}{supp.prose.J_terminal_end}%
\NCLlink{309.646820}{90.448364}{18.355988}{12.901001}{supp.target.133}%
\NCLlink{208.580994}{171.698364}{18.356003}{12.901001}{supp.target.134}%
\NCLlink{135.878006}{225.661682}{18.355988}{12.477539}{supp.target.133}%
\NCLdestination{217.405000}{185.459350}{supp.target.135}%
\NCLlink{184.106995}{225.661682}{18.356003}{12.477539}{supp.target.135}%
\NCLlink{372.855011}{338.163727}{15.931976}{11.690002}{thm.3.2}%
\NCLlink{297.985992}{506.917725}{17.902008}{12.902008}{supp.target.128}%
\NCLlink{270.226990}{542.867676}{18.356018}{12.901001}{equation.154}%
\NCLlink{304.295990}{542.867676}{18.356018}{12.901001}{equation.155}%
\NCLlink{327.054993}{591.914551}{26.841003}{12.548828}{equation.156}%
\NCLlink{377.628998}{591.914551}{26.841003}{12.548828}{equation.157}%
}
\expandafter\def\csname NCLsupppage64\endcsname{%
\NCLdestination{77.469002}{206.244934}{supp.K}%
\NCLdestination{77.469002}{225.995972}{supp.K.1}%
\NCLdestination{77.469002}{284.600952}{supp.K.2}%
\NCLdestination{274.553986}{319.556213}{supp.references}%
\NCLdestination{77.760000}{510.267070}{supp.target.111}%
\NCLdestination{77.760000}{532.185080}{supp.target.127}%
\NCLlink{315.806000}{244.330994}{15.932007}{11.690002}{thm.8.2}%
\NCLlink{115.413002}{268.112976}{15.931999}{9.447021}{thm.8.1}%
\NCLurl{464.829987}{546.382080}{71.126984}{10.958984}{https://arxiv.org/abs/2608.15137}%
\NCLdestination{77.469000}{206.244940}{supp.outline.050}%
\NCLdestination{77.469000}{225.995970}{supp.outline.051}%
\NCLdestination{77.469000}{284.600960}{supp.outline.052}%
\NCLdestination{274.554000}{319.556200}{supp.outline.053}%
}

%% file: external_inputs.tex
\makeatletter
\expandafter\gdef\csname r@cor:gaussian-grid-moments\endcsname{{3.8}{76}{Separated Gaussian grid moments}{thm.3.8}{}}
\makeatother

%% file: supplement_pages.tex
\clearpage
\pagenumbering{arabic}
\renewcommand{\thepage}{S-\arabic{page}}
\bookmark[level=0,dest={arxiv.supplement}]{Technical supplement}

\bookmark[level=1,dest={supp.outline.001}]{Appendix A. Normalization identities}
\bookmark[level=1,dest={supp.outline.002}]{Appendix B. Standard resolvent and statistic differentiation}
\bookmark[level=2,dest={supp.outline.003}]{B.1. Edge-resolvent differentiation}
\bookmark[level=2,dest={supp.outline.004}]{B.2. Covariance direction derivatives}
\bookmark[level=2,dest={supp.outline.005}]{B.3. Coordinate derivatives of covariance observables}
\bookmark[level=2,dest={supp.outline.006}]{B.4. Norm of the linearized resolvent}
\bookmark[level=1,dest={supp.outline.007}]{Appendix C. Row and column expansions}
\bookmark[level=2,dest={supp.outline.008}]{C.1. Scaling and Gaussian interpolation}
\bookmark[level=2,dest={supp.outline.009}]{C.2. Joint cumulant expansion}
\bookmark[level=1,dest={supp.outline.010}]{Appendix D. A maximal moment estimate for the exposed increments}
\bookmark[level=1,dest={supp.outline.011}]{Appendix E. Right- and left-tail comparison}
\bookmark[level=2,dest={supp.outline.012}]{E.1. Right-tail specialization}
\bookmark[level=2,dest={supp.outline.013}]{E.2. Right-tail comparison}
\bookmark[level=2,dest={supp.outline.014}]{E.3. Left-tail specialization}
\bookmark[level=2,dest={supp.outline.015}]{E.4. Left-tail comparison}
\bookmark[level=1,dest={supp.outline.016}]{Appendix F. Auxiliary lemmas}
\bookmark[level=1,dest={supp.outline.017}]{Appendix G. Probability and smoothing estimates}
\bookmark[level=2,dest={supp.outline.018}]{G.1. Comparison with submatrices}
\bookmark[level=2,dest={supp.outline.019}]{G.2. Cluster sets from endpoint limits}
\bookmark[level=2,dest={supp.outline.020}]{G.3. Independent submatrices}
\bookmark[level=2,dest={supp.outline.021}]{G.4. Iterated differential inequality}
\bookmark[level=2,dest={supp.outline.022}]{G.5. A common filtration}
\bookmark[level=2,dest={supp.outline.023}]{G.6. Borel--Cantelli argument}
\bookmark[level=2,dest={supp.outline.024}]{G.7. Monotonicity in the threshold}
\bookmark[level=2,dest={supp.outline.025}]{G.8. Weighted cutoff functions}
\bookmark[level=2,dest={supp.outline.026}]{G.9. Smoothed eigenvalue counts}
\bookmark[level=2,dest={supp.outline.027}]{G.10. Derivative estimates}
\bookmark[level=1,dest={supp.outline.028}]{Appendix H. Local laws and Gaussian comparison}
\bookmark[level=2,dest={supp.outline.029}]{H.1. Left-tail smoothing}
\bookmark[level=2,dest={supp.outline.030}]{H.2. Right-tail smoothing}
\bookmark[level=2,dest={supp.outline.031}]{H.3. Approximation of eigenvalue counts}
\bookmark[level=2,dest={supp.outline.032}]{H.4. Entrywise local law}
\bookmark[level=2,dest={supp.outline.033}]{H.5. Uniformity in the parameters}
\bookmark[level=2,dest={supp.outline.034}]{H.6. Schur complement estimates}
\bookmark[level=2,dest={supp.outline.035}]{H.7. Gaussian coupling}
\bookmark[level=1,dest={supp.outline.036}]{Appendix I. Tail bounds and eigenvalue increments}
\bookmark[level=2,dest={supp.outline.037}]{I.1. One-step eigenvalue increments}
\bookmark[level=2,dest={supp.outline.038}]{I.2. Derivative bound for shifted cutoffs}
\bookmark[level=2,dest={supp.outline.039}]{I.3. Right-tail bound for tall matrices}
\bookmark[level=2,dest={supp.outline.040}]{I.4. Right-tail bound}
\bookmark[level=2,dest={supp.outline.041}]{I.5. Left-tail bound}
\bookmark[level=2,dest={supp.outline.042}]{I.6. Control between consecutive grid points}
\bookmark[level=1,dest={supp.outline.043}]{Appendix J. Cutoff constructions}
\bookmark[level=2,dest={supp.outline.044}]{J.1. Gaussian right-tail counts}
\bookmark[level=2,dest={supp.outline.045}]{J.2. Smooth cutoff functions for the right tail}
\bookmark[level=2,dest={supp.outline.046}]{J.3. Change of normalization}
\bookmark[level=2,dest={supp.outline.047}]{J.4. Approximation of Gaussian left-tail events}
\bookmark[level=2,dest={supp.outline.048}]{J.5. Gaussian left-tail counts}
\bookmark[level=2,dest={supp.outline.049}]{J.6. Smooth cutoff functions for the left tail}
\bookmark[level=1,dest={supp.outline.050}]{Appendix K. Dyadic tail bounds}
\bookmark[level=2,dest={supp.outline.051}]{K.1. Right-tail bound}
\bookmark[level=2,dest={supp.outline.052}]{K.2. Left-tail bound}
\bookmark[level=1,dest={supp.outline.053}]{References}
\includepdf[pages={1},fitpaper=true,noautoscale=true,pagecommand={\thispagestyle{empty}\AddToShipoutPictureFG*{\AtPageUpperLeft{\setlength{\unitlength}{1bp}\csname NCLsupppage1\endcsname}}}]{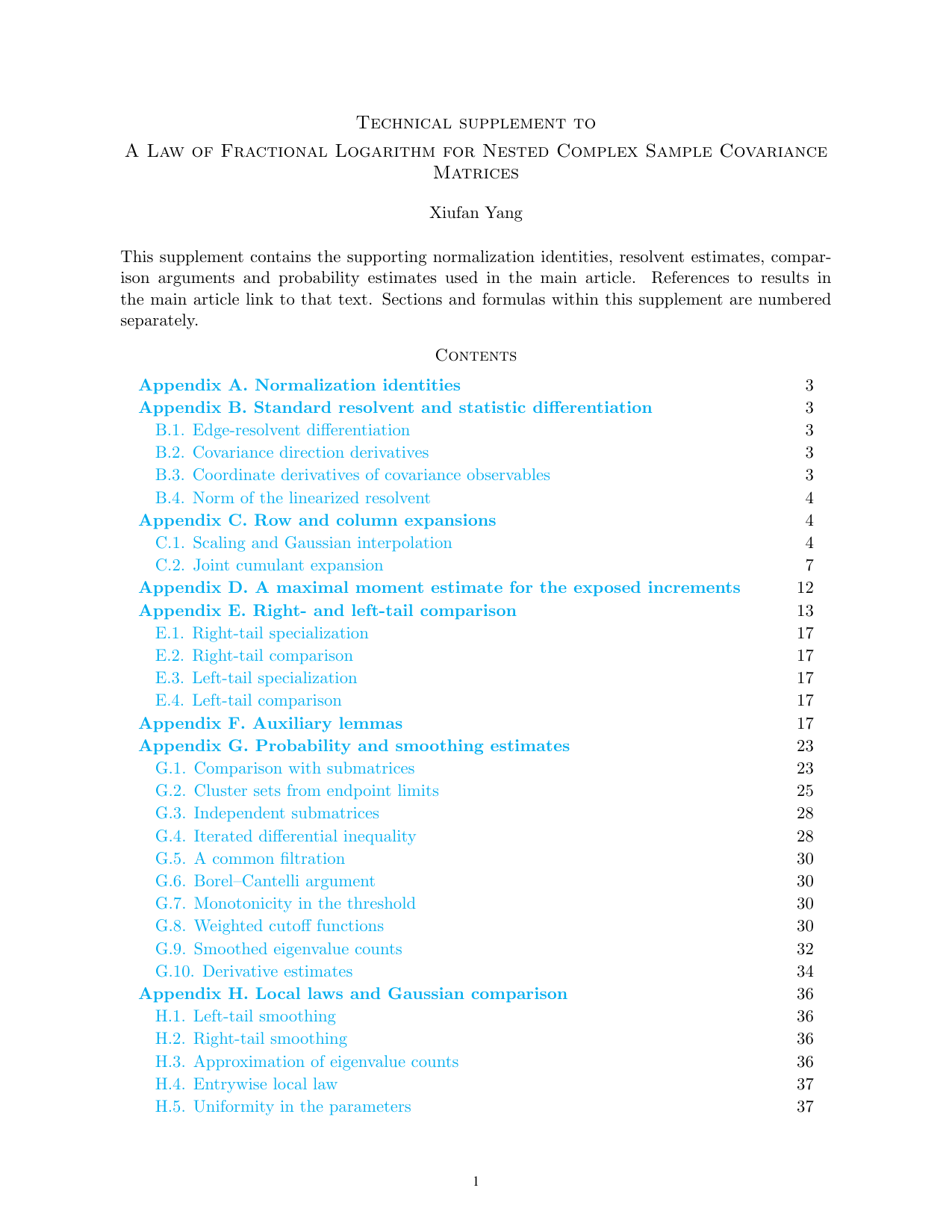}
\includepdf[pages={2},fitpaper=true,noautoscale=true,pagecommand={\thispagestyle{empty}\AddToShipoutPictureFG*{\AtPageUpperLeft{\setlength{\unitlength}{1bp}\csname NCLsupppage2\endcsname}}}]{supplement.pdf}
\includepdf[pages={3},fitpaper=true,noautoscale=true,pagecommand={\thispagestyle{empty}\AddToShipoutPictureFG*{\AtPageUpperLeft{\setlength{\unitlength}{1bp}\csname NCLsupppage3\endcsname}}}]{supplement.pdf}
\includepdf[pages={4},fitpaper=true,noautoscale=true,pagecommand={\thispagestyle{empty}\AddToShipoutPictureFG*{\AtPageUpperLeft{\setlength{\unitlength}{1bp}\csname NCLsupppage4\endcsname}}}]{supplement.pdf}
\includepdf[pages={5},fitpaper=true,noautoscale=true,pagecommand={\thispagestyle{empty}\AddToShipoutPictureFG*{\AtPageUpperLeft{\setlength{\unitlength}{1bp}\csname NCLsupppage5\endcsname}}}]{supplement.pdf}
\includepdf[pages={6},fitpaper=true,noautoscale=true,pagecommand={\thispagestyle{empty}\AddToShipoutPictureFG*{\AtPageUpperLeft{\setlength{\unitlength}{1bp}\csname NCLsupppage6\endcsname}}}]{supplement.pdf}
\includepdf[pages={7},fitpaper=true,noautoscale=true,pagecommand={\thispagestyle{empty}\AddToShipoutPictureFG*{\AtPageUpperLeft{\setlength{\unitlength}{1bp}\csname NCLsupppage7\endcsname}}}]{supplement.pdf}
\includepdf[pages={8},fitpaper=true,noautoscale=true,pagecommand={\thispagestyle{empty}\AddToShipoutPictureFG*{\AtPageUpperLeft{\setlength{\unitlength}{1bp}\csname NCLsupppage8\endcsname}}}]{supplement.pdf}
\includepdf[pages={9},fitpaper=true,noautoscale=true,pagecommand={\thispagestyle{empty}\AddToShipoutPictureFG*{\AtPageUpperLeft{\setlength{\unitlength}{1bp}\csname NCLsupppage9\endcsname}}}]{supplement.pdf}
\includepdf[pages={10},fitpaper=true,noautoscale=true,pagecommand={\thispagestyle{empty}\AddToShipoutPictureFG*{\AtPageUpperLeft{\setlength{\unitlength}{1bp}\csname NCLsupppage10\endcsname}}}]{supplement.pdf}
\includepdf[pages={11},fitpaper=true,noautoscale=true,pagecommand={\thispagestyle{empty}\AddToShipoutPictureFG*{\AtPageUpperLeft{\setlength{\unitlength}{1bp}\csname NCLsupppage11\endcsname}}}]{supplement.pdf}
\includepdf[pages={12},fitpaper=true,noautoscale=true,pagecommand={\thispagestyle{empty}\AddToShipoutPictureFG*{\AtPageUpperLeft{\setlength{\unitlength}{1bp}\csname NCLsupppage12\endcsname}}}]{supplement.pdf}
\includepdf[pages={13},fitpaper=true,noautoscale=true,pagecommand={\thispagestyle{empty}\AddToShipoutPictureFG*{\AtPageUpperLeft{\setlength{\unitlength}{1bp}\csname NCLsupppage13\endcsname}}}]{supplement.pdf}
\includepdf[pages={14},fitpaper=true,noautoscale=true,pagecommand={\thispagestyle{empty}\AddToShipoutPictureFG*{\AtPageUpperLeft{\setlength{\unitlength}{1bp}\csname NCLsupppage14\endcsname}}}]{supplement.pdf}
\includepdf[pages={15},fitpaper=true,noautoscale=true,pagecommand={\thispagestyle{empty}\AddToShipoutPictureFG*{\AtPageUpperLeft{\setlength{\unitlength}{1bp}\csname NCLsupppage15\endcsname}}}]{supplement.pdf}
\includepdf[pages={16},fitpaper=true,noautoscale=true,pagecommand={\thispagestyle{empty}\AddToShipoutPictureFG*{\AtPageUpperLeft{\setlength{\unitlength}{1bp}\csname NCLsupppage16\endcsname}}}]{supplement.pdf}
\includepdf[pages={17},fitpaper=true,noautoscale=true,pagecommand={\thispagestyle{empty}\AddToShipoutPictureFG*{\AtPageUpperLeft{\setlength{\unitlength}{1bp}\csname NCLsupppage17\endcsname}}}]{supplement.pdf}
\includepdf[pages={18},fitpaper=true,noautoscale=true,pagecommand={\thispagestyle{empty}\AddToShipoutPictureFG*{\AtPageUpperLeft{\setlength{\unitlength}{1bp}\csname NCLsupppage18\endcsname}}}]{supplement.pdf}
\includepdf[pages={19},fitpaper=true,noautoscale=true,pagecommand={\thispagestyle{empty}\AddToShipoutPictureFG*{\AtPageUpperLeft{\setlength{\unitlength}{1bp}\csname NCLsupppage19\endcsname}}}]{supplement.pdf}
\includepdf[pages={20},fitpaper=true,noautoscale=true,pagecommand={\thispagestyle{empty}\AddToShipoutPictureFG*{\AtPageUpperLeft{\setlength{\unitlength}{1bp}\csname NCLsupppage20\endcsname}}}]{supplement.pdf}
\includepdf[pages={21},fitpaper=true,noautoscale=true,pagecommand={\thispagestyle{empty}\AddToShipoutPictureFG*{\AtPageUpperLeft{\setlength{\unitlength}{1bp}\csname NCLsupppage21\endcsname}}}]{supplement.pdf}
\includepdf[pages={22},fitpaper=true,noautoscale=true,pagecommand={\thispagestyle{empty}\AddToShipoutPictureFG*{\AtPageUpperLeft{\setlength{\unitlength}{1bp}\csname NCLsupppage22\endcsname}}}]{supplement.pdf}
\includepdf[pages={23},fitpaper=true,noautoscale=true,pagecommand={\thispagestyle{empty}\AddToShipoutPictureFG*{\AtPageUpperLeft{\setlength{\unitlength}{1bp}\csname NCLsupppage23\endcsname}}}]{supplement.pdf}
\includepdf[pages={24},fitpaper=true,noautoscale=true,pagecommand={\thispagestyle{empty}\AddToShipoutPictureFG*{\AtPageUpperLeft{\setlength{\unitlength}{1bp}\csname NCLsupppage24\endcsname}}}]{supplement.pdf}
\includepdf[pages={25},fitpaper=true,noautoscale=true,pagecommand={\thispagestyle{empty}\AddToShipoutPictureFG*{\AtPageUpperLeft{\setlength{\unitlength}{1bp}\csname NCLsupppage25\endcsname}}}]{supplement.pdf}
\includepdf[pages={26},fitpaper=true,noautoscale=true,pagecommand={\thispagestyle{empty}\AddToShipoutPictureFG*{\AtPageUpperLeft{\setlength{\unitlength}{1bp}\csname NCLsupppage26\endcsname}}}]{supplement.pdf}
\includepdf[pages={27},fitpaper=true,noautoscale=true,pagecommand={\thispagestyle{empty}\AddToShipoutPictureFG*{\AtPageUpperLeft{\setlength{\unitlength}{1bp}\csname NCLsupppage27\endcsname}}}]{supplement.pdf}
\includepdf[pages={28},fitpaper=true,noautoscale=true,pagecommand={\thispagestyle{empty}\AddToShipoutPictureFG*{\AtPageUpperLeft{\setlength{\unitlength}{1bp}\csname NCLsupppage28\endcsname}}}]{supplement.pdf}
\includepdf[pages={29},fitpaper=true,noautoscale=true,pagecommand={\thispagestyle{empty}\AddToShipoutPictureFG*{\AtPageUpperLeft{\setlength{\unitlength}{1bp}\csname NCLsupppage29\endcsname}}}]{supplement.pdf}
\includepdf[pages={30},fitpaper=true,noautoscale=true,pagecommand={\thispagestyle{empty}\AddToShipoutPictureFG*{\AtPageUpperLeft{\setlength{\unitlength}{1bp}\csname NCLsupppage30\endcsname}}}]{supplement.pdf}
\includepdf[pages={31},fitpaper=true,noautoscale=true,pagecommand={\thispagestyle{empty}\AddToShipoutPictureFG*{\AtPageUpperLeft{\setlength{\unitlength}{1bp}\csname NCLsupppage31\endcsname}}}]{supplement.pdf}
\includepdf[pages={32},fitpaper=true,noautoscale=true,pagecommand={\thispagestyle{empty}\AddToShipoutPictureFG*{\AtPageUpperLeft{\setlength{\unitlength}{1bp}\csname NCLsupppage32\endcsname}}}]{supplement.pdf}
\includepdf[pages={33},fitpaper=true,noautoscale=true,pagecommand={\thispagestyle{empty}\AddToShipoutPictureFG*{\AtPageUpperLeft{\setlength{\unitlength}{1bp}\csname NCLsupppage33\endcsname}}}]{supplement.pdf}
\includepdf[pages={34},fitpaper=true,noautoscale=true,pagecommand={\thispagestyle{empty}\AddToShipoutPictureFG*{\AtPageUpperLeft{\setlength{\unitlength}{1bp}\csname NCLsupppage34\endcsname}}}]{supplement.pdf}
\includepdf[pages={35},fitpaper=true,noautoscale=true,pagecommand={\thispagestyle{empty}\AddToShipoutPictureFG*{\AtPageUpperLeft{\setlength{\unitlength}{1bp}\csname NCLsupppage35\endcsname}}}]{supplement.pdf}
\includepdf[pages={36},fitpaper=true,noautoscale=true,pagecommand={\thispagestyle{empty}\AddToShipoutPictureFG*{\AtPageUpperLeft{\setlength{\unitlength}{1bp}\csname NCLsupppage36\endcsname}}}]{supplement.pdf}
\includepdf[pages={37},fitpaper=true,noautoscale=true,pagecommand={\thispagestyle{empty}\AddToShipoutPictureFG*{\AtPageUpperLeft{\setlength{\unitlength}{1bp}\csname NCLsupppage37\endcsname}}}]{supplement.pdf}
\includepdf[pages={38},fitpaper=true,noautoscale=true,pagecommand={\thispagestyle{empty}\AddToShipoutPictureFG*{\AtPageUpperLeft{\setlength{\unitlength}{1bp}\csname NCLsupppage38\endcsname}}}]{supplement.pdf}
\includepdf[pages={39},fitpaper=true,noautoscale=true,pagecommand={\thispagestyle{empty}\AddToShipoutPictureFG*{\AtPageUpperLeft{\setlength{\unitlength}{1bp}\csname NCLsupppage39\endcsname}}}]{supplement.pdf}
\includepdf[pages={40},fitpaper=true,noautoscale=true,pagecommand={\thispagestyle{empty}\AddToShipoutPictureFG*{\AtPageUpperLeft{\setlength{\unitlength}{1bp}\csname NCLsupppage40\endcsname}}}]{supplement.pdf}
\includepdf[pages={41},fitpaper=true,noautoscale=true,pagecommand={\thispagestyle{empty}\AddToShipoutPictureFG*{\AtPageUpperLeft{\setlength{\unitlength}{1bp}\csname NCLsupppage41\endcsname}}}]{supplement.pdf}
\includepdf[pages={42},fitpaper=true,noautoscale=true,pagecommand={\thispagestyle{empty}\AddToShipoutPictureFG*{\AtPageUpperLeft{\setlength{\unitlength}{1bp}\csname NCLsupppage42\endcsname}}}]{supplement.pdf}
\includepdf[pages={43},fitpaper=true,noautoscale=true,pagecommand={\thispagestyle{empty}\AddToShipoutPictureFG*{\AtPageUpperLeft{\setlength{\unitlength}{1bp}\csname NCLsupppage43\endcsname}}}]{supplement.pdf}
\includepdf[pages={44},fitpaper=true,noautoscale=true,pagecommand={\thispagestyle{empty}\AddToShipoutPictureFG*{\AtPageUpperLeft{\setlength{\unitlength}{1bp}\csname NCLsupppage44\endcsname}}}]{supplement.pdf}
\includepdf[pages={45},fitpaper=true,noautoscale=true,pagecommand={\thispagestyle{empty}\AddToShipoutPictureFG*{\AtPageUpperLeft{\setlength{\unitlength}{1bp}\csname NCLsupppage45\endcsname}}}]{supplement.pdf}
\includepdf[pages={46},fitpaper=true,noautoscale=true,pagecommand={\thispagestyle{empty}\AddToShipoutPictureFG*{\AtPageUpperLeft{\setlength{\unitlength}{1bp}\csname NCLsupppage46\endcsname}}}]{supplement.pdf}
\includepdf[pages={47},fitpaper=true,noautoscale=true,pagecommand={\thispagestyle{empty}\AddToShipoutPictureFG*{\AtPageUpperLeft{\setlength{\unitlength}{1bp}\csname NCLsupppage47\endcsname}}}]{supplement.pdf}
\includepdf[pages={48},fitpaper=true,noautoscale=true,pagecommand={\thispagestyle{empty}\AddToShipoutPictureFG*{\AtPageUpperLeft{\setlength{\unitlength}{1bp}\csname NCLsupppage48\endcsname}}}]{supplement.pdf}
\includepdf[pages={49},fitpaper=true,noautoscale=true,pagecommand={\thispagestyle{empty}\AddToShipoutPictureFG*{\AtPageUpperLeft{\setlength{\unitlength}{1bp}\csname NCLsupppage49\endcsname}}}]{supplement.pdf}
\includepdf[pages={50},fitpaper=true,noautoscale=true,pagecommand={\thispagestyle{empty}\AddToShipoutPictureFG*{\AtPageUpperLeft{\setlength{\unitlength}{1bp}\csname NCLsupppage50\endcsname}}}]{supplement.pdf}
\includepdf[pages={51},fitpaper=true,noautoscale=true,pagecommand={\thispagestyle{empty}\AddToShipoutPictureFG*{\AtPageUpperLeft{\setlength{\unitlength}{1bp}\csname NCLsupppage51\endcsname}}}]{supplement.pdf}
\includepdf[pages={52},fitpaper=true,noautoscale=true,pagecommand={\thispagestyle{empty}\AddToShipoutPictureFG*{\AtPageUpperLeft{\setlength{\unitlength}{1bp}\csname NCLsupppage52\endcsname}}}]{supplement.pdf}
\includepdf[pages={53},fitpaper=true,noautoscale=true,pagecommand={\thispagestyle{empty}\AddToShipoutPictureFG*{\AtPageUpperLeft{\setlength{\unitlength}{1bp}\csname NCLsupppage53\endcsname}}}]{supplement.pdf}
\includepdf[pages={54},fitpaper=true,noautoscale=true,pagecommand={\thispagestyle{empty}\AddToShipoutPictureFG*{\AtPageUpperLeft{\setlength{\unitlength}{1bp}\csname NCLsupppage54\endcsname}}}]{supplement.pdf}
\includepdf[pages={55},fitpaper=true,noautoscale=true,pagecommand={\thispagestyle{empty}\AddToShipoutPictureFG*{\AtPageUpperLeft{\setlength{\unitlength}{1bp}\csname NCLsupppage55\endcsname}}}]{supplement.pdf}
\includepdf[pages={56},fitpaper=true,noautoscale=true,pagecommand={\thispagestyle{empty}\AddToShipoutPictureFG*{\AtPageUpperLeft{\setlength{\unitlength}{1bp}\csname NCLsupppage56\endcsname}}}]{supplement.pdf}
\includepdf[pages={57},fitpaper=true,noautoscale=true,pagecommand={\thispagestyle{empty}\AddToShipoutPictureFG*{\AtPageUpperLeft{\setlength{\unitlength}{1bp}\csname NCLsupppage57\endcsname}}}]{supplement.pdf}
\includepdf[pages={58},fitpaper=true,noautoscale=true,pagecommand={\thispagestyle{empty}\AddToShipoutPictureFG*{\AtPageUpperLeft{\setlength{\unitlength}{1bp}\csname NCLsupppage58\endcsname}}}]{supplement.pdf}
\includepdf[pages={59},fitpaper=true,noautoscale=true,pagecommand={\thispagestyle{empty}\AddToShipoutPictureFG*{\AtPageUpperLeft{\setlength{\unitlength}{1bp}\csname NCLsupppage59\endcsname}}}]{supplement.pdf}
\includepdf[pages={60},fitpaper=true,noautoscale=true,pagecommand={\thispagestyle{empty}\AddToShipoutPictureFG*{\AtPageUpperLeft{\setlength{\unitlength}{1bp}\csname NCLsupppage60\endcsname}}}]{supplement.pdf}
\includepdf[pages={61},fitpaper=true,noautoscale=true,pagecommand={\thispagestyle{empty}\AddToShipoutPictureFG*{\AtPageUpperLeft{\setlength{\unitlength}{1bp}\csname NCLsupppage61\endcsname}}}]{supplement.pdf}
\includepdf[pages={62},fitpaper=true,noautoscale=true,pagecommand={\thispagestyle{empty}\AddToShipoutPictureFG*{\AtPageUpperLeft{\setlength{\unitlength}{1bp}\csname NCLsupppage62\endcsname}}}]{supplement.pdf}
\includepdf[pages={63},fitpaper=true,noautoscale=true,pagecommand={\thispagestyle{empty}\AddToShipoutPictureFG*{\AtPageUpperLeft{\setlength{\unitlength}{1bp}\csname NCLsupppage63\endcsname}}}]{supplement.pdf}
\includepdf[pages={64},fitpaper=true,noautoscale=true,pagecommand={\thispagestyle{empty}\AddToShipoutPictureFG*{\AtPageUpperLeft{\setlength{\unitlength}{1bp}\csname NCLsupppage64\endcsname}}}]{supplement.pdf}